\documentclass{article}
\usepackage{amsmath,amsthm,amssymb,amscd}



\newcommand{\ga}{\alpha}
\newcommand{\gb}{\beta}
\renewcommand{\gg}{\gamma}
\newcommand{\gd}{\delta}
\newcommand{\gw}{\omega}

\newcommand{\gS}{\Sigma}
\newcommand{\gs}{\sigma}
\newcommand{\eps}{\varepsilon}

\newcommand{\R}{\mathbb{R}}

\newcommand{\cH}{\mathcal{H}}

\newcommand{\cG}{\mathcal{G}}

\newcommand{\liff}{\leftrightarrow}


\newcommand{\cantor}{2^\gw}
\newcommand{\baire}{\gw^\gw}
\newcommand{\bintree}{2^{<\gw}}
\newcommand{\gwtree}{\gw^{<\gw}}


\newcommand{\dottgen}{{\dot T}_{\mathit{gen}}}

\newcommand{\dotxgen}{{\dot x}_{\mathit{gen}}}
\newcommand{\dotbgen}{{\dot b}_{\mathit{gen}}}
\newcommand{\dotygen}{{\dot y}_{\mathit{gen}}}
\newcommand{\vecygen}{{\vec y}_{\mathit{gen}}}
\newcommand{\xgen}{x_{\mathit{gen}}}


\newcommand{\cmin}{c_{\mathrm{min}}}

\newcommand{\dom}{\mathrm{dom}}

\newcommand{\rng}{\mathrm{rng}}
\newcommand{\power}{\mathcal{P}}
\newcommand{\pioneoneonsigmaoneone}{${\mathbf{\Pi}}^1_1$ on ${\mathbf{\gS}}^1_1$}

\newcommand{\Prod}{\Pi}

\newtheorem{theorem}{Theorem}[section]

\newtheorem{claim}[theorem]{Claim}
\newtheorem{corollary}[theorem]{Corollary}
\newtheorem{fact}[theorem]{Fact}
\newtheorem{proposition}[theorem]{Proposition}

\theoremstyle{definition}
\newtheorem{definition}[theorem]{Definition}
\newtheorem{example}[theorem]{Example}
\newtheorem{question}[theorem]{Question}

\title{Hypergraphs and proper forcing}

\author{
Jind{\v r}ich Zapletal\\
University of Florida}

\begin{document}
\maketitle

\begin{abstract}
Given a Polish space $X$ and a countable collection of analytic hypergraphs on $X$, I consider the $\gs$-ideal generated by Borel anticliques for the hypergraphs in the family. It turns out that many of the quotient posets are proper. I investigate the forcing properties of these posets, certain natural operations on them, and prove some related dichotomies.
\end{abstract}

\section{Introduction}

The purpose of this paper is to introduce a class of proper forcing notions which can be in a natural way associated with hypergraphs on Polish spaces. There is a number of theorems which connect simple combinatorial properties of the hypergraphs with deep forcing properties of the resulting forcing notions. The story begins with the concept of analytic hypergraphs and their associated $\gs$-ideals: 

\begin{definition}
Let $X$ be a Polish space.

\begin{enumerate}
\item A \emph{hypergraph} on $X$ is a subset of $X^{\leq \gw}$;
\item if $G$ is a hypergraph on $X$, a $G$-\emph{anticlique} is a set $B\subset X$ such that $B^{\leq\gw}\cap G=0$;
\item if $\cG$ is a countable collection of hypergraphs on $X$, then $I_{\cG}$ is the $\gs$-ideal on $X$ $\gs$-generated by the Borel subsets of $X$ which happen to be anticliques in at least one of the hypergraphs in $\cG$. $\gs$-ideals generated by countable collection of analytic hypergraphs are called \emph{hypergraphable}.
\end{enumerate}
\end{definition}

In the spirit of \cite{z:book2}, I will be interested in the quotient posets $P_{I_\cG}$ of Borel sets positive with respect to the ideal $I_{\cG}$, ordered by inclusion. Such posets may fail to be proper already in very simple circumstances. Still, a great majority of definable, proper forcings preserving Baire category in the literature can be conveniently presented as quotient forcings of hypergraphable ideals. They can be so presented, but invariably they are not, since the authors have not been aware of the existence and the great advantages and comfort of such a presentation. The purpose of this paper is to change this unfortunate situation.

In Section~\ref{featuresection}
I isolate two several broad classes of hypergraphs for which the quotient is proper: the actionable hypergraphs, associated with a countable group action on the underlying Polish space and the nearly open hypergraphs.
Theorem~\ref{maintheorem} shows the properness of the actionable posets; these in fact can be canonically decomposed into (as opposed to regularly embedded to) a two step $\gs$-closed*c.c.c.\ iteration--Theorem~\ref{intermediatetheorem}.  
The nearly open hypergraphs give posets with quite different properties, one of the distictions being that they generate a minimal forcing extension unless they add a Cohen real.

Section~\ref{fubinisection} shows that there is an enormous amount of general information that one can derive just from the very basic properties of the generating hypergraphs. For example, if the generating hypergraphs all have arity two, then in the resulting extension, every element of $\baire$ is a branch through some ground model $2$-branching tree on $\gw$--Corollary~\ref{localizationcorollary}. If each of the generating hypergraphs has finite arity, then the poset has the Sacks property--Corollary~\ref{sackscorollary}.
If the edges of the hypergraphs can be diagonalized in a natural sense, then the resulting forcing is bounding--Corollary~\ref{bounding1corollary}. If the hypergraphs have a simple Fubini property, then the quotient forcing preserves outer Lebesgue measure--Corollary~\ref{measurecorollary}. In any case, the hypergraphable forcings preserve the Baire category--Corollary~\ref{categorycorollary}. There are useful combinatorial criteria for dealing with compact anticliques in closed graphs, such as the ones presented in Theorem~\ref{closedanticliquetheorem} or~\ref{millertheorem}. Other such criteria concern adding an independent real, Corollary~\ref{boocorollary} or Theorem~\ref{independenttheorem}. Each of these theorems is adorned with a number of examples, some new, others well-known. Other similar theorems will appear in forthcoming work.

It may seem that one could classify quotient posets coming from very restrictive hypergraph classes. Section~\ref{invariantsection} attempts to do just that for some types of invariant graphs. There are two well-known examples, the Silver forcing (Example~\ref{silvergraphexample}) and the Vitali forcing (Example~\ref{vitaligraphexample}), and all the other posets in the category are in a precise sense between these two. Many invariant graphs give rise to quotient posets which are naturally isomorphic to one of these two--Theorems~\ref{seconddichotomytheorem} and ~\ref{thirddichotomytheorem}. However, there are some intermediate examples such as the Vitali-odd forcing (Example~\ref{vitalioddexample}), and many examples where I cannot determine what exactly is happening, typically connected with fine additive or group combinatorics (Subsection~\ref{groupsubsection}). Subsection~\ref{kstsubsection} shows that there is a natural proper poset derived from the (non-invariant) KST graph and proves an attendant dichotomy.

Section~\ref{othersection} studies operations on hypergraphs that lead to operations on partial orders. The countable support product is studied in Section~\ref{productsubsection}, and it turns out that in the case of posets given by actionable families of finitary hypergraphs, the product is hypergraphable again and the computation of the associated ideal is so simple that it gives rise to many preservation properties for product
that seem to be very awkward to obtain in any other way. One can also represent countable support iterations, even illfounded ones, using natural operations on hypergraphs, Subsection~\ref{iterationsubsection}.

The last section of the paper provides the descriptive complexity computations necessary to push all the proofs through. The computations were mostly known previously, either as folklore theorems or as published results.

There is an overwhelming number of open questions; I will mention two of strategic nature. The classes  of analytic hypergraps isolated in this paper do not exhaust the class of hypergraphable proper forcings by any stretch. The most natural question in the area is wide open:

\begin{question}
Characterize the analytic hypergraphs $G$ such that the quotient poset $P_{I_G}$ is proper.
\end{question}

\noindent It is just as unclear to me which proper partial orders can be presented as hypergraphable. It is clear from the work in this paper that such a presentation advances the understanding of the forcing properties of the poset more than any other piece of information.
The only two significant restrictions on the class of hypergraphable forcing are that the associated $\gs$-ideal is \pioneoneonsigmaoneone\ and the poset preserves Baire category. However, within these limitations there are many posets which I suspect cannot be presented as hypergraphable, such as the product of two copies of Sacks forcing. Thus, another question begs an answer:

\begin{question}
Characterize the idealized forcings on Polish spaces which can be presented as hypergraphable.
\end{question}

The notation follows the set theoretic standard of \cite{jech:set}.  If $I$ is a $\gs$-ideal on a Polish space, the symbol $P_I$ denotes the poset of Borel $I$-positive subsets of the space, ordered by inclusion. A hypergraph is finitary if all its edges are finite, possibly of arbitrarily large finite sizes. If $C$ is a class of hypergraphs, a $\gs$-ideal is $C$-hypergraphable if there is a a countable family $\cG$ of analytic hypergraphs in the class $C$ such that $I=I_{\cG}$; thus, I speak of finitary hypergraphable ideals, graphable ideals, nearly open hypergraphable ideals etc. I will need precise terminology regarding maps between hypergraphs: suppose that $X, Y$ are Polish spaces with respective hypergraphs $G, H$ on them, and suppose that $h\colon X\to Y$ is a continuous injection. I will say that a function $h\colon X\to Y$ is a \emph{homomorphism} of $G$ to $H$ if for every edge $e\in G$, the sequence $h\circ e$ belongs to $H$. The function $h$ is a \emph{reduction} of $G$ to $H$ if for every sequence $e\in X^{\leq\gw}$, $e\in G\liff h\circ e\in H$. The function $h$ is a \emph{near reduction}
if $X$ can be decomposed into countably many Borel sets $B_n$ for $n\in\gw$ such that the function $h\restriction B_n$ is a reduction of $G$ to $H$ for every number $n\in\gw$. Note that if $h$ is a near reduction of $G$ to $H$, then the $h$- image of any Borel $G$-anticlique
decomposes into countably many Borel $H$-anticliques and the $h$-preimage of any Borel $H$-anticlique decomposes into countably many Borel $G$-anticliques. Thus, any near reduction $h$ of $G$ to $H$ transports the ideal $I_G$ to $I_H$ restricted to the range of $h$.
If $\cG$ and $\cH$ are countable families of hypergraphs on the respective Polish spaces $X, Y$, then a \emph{near reduction} of $\cG$ to $\cH$ is a continuous injection $h\colon X\to Y$ together with a bijection $\pi\colon \cG\to\cH$ such that
for every $G\in\cG$, $h$ is a near reduction of $G$ to $\pi(G)$. As before, a near reduction clearly transports the ideal $I_{\cG}$ to the ideal $I_{\cH}$ restricted to the range of $h$.

\section{Properness theorems}
\label{featuresection}

\subsection{Actionable hypergraphs}

\begin{definition}
Let $X$ be a Polish space and $\cG$ a countable set of analytic hypergraphs on $X$. Say that $\cG$ is \emph{actionable} if there is a countable group $\Gamma$ and a Borel action of $\Gamma$ on $X$ such that 

\begin{enumerate}
\item every edge of every hypergraph in $\cG$ is a subset of a single orbit;
\item for every $G\in\cG$ and every $\gg\in\Gamma$, $\gg\cdot G\in\cG$. 
\end{enumerate}

\noindent A $\gs$-ideal $I$ on $X$ is \emph{actionable} if there is an actionable collection $\cG$ of hypergraphs on $X$ such that $I$ is $\gs$-generated by Borel sets which are anticliques in at least one of the hypergraphs in $\cG$.
\end{definition}

\begin{theorem}
\label{maintheorem}
Suppose that $I$ is an actionable $\gs$-ideal on a Polish space $X$.  Then
the poset $P_I$ of Borel $I$-positive sets ordered by inclusion is proper.
\end{theorem}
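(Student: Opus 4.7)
The plan is the standard Zapletal-style properness argument for idealized forcings. Let $\gq$ be a sufficiently large regular cardinal, $M\prec H_\gq$ a countable elementary submodel containing $X$, $I$, $\Gamma$, the action, and $\cG$, and let $B\in P_I\cap M$. I will produce a Borel $I$-positive master condition $C\subseteq B$ whose elements are all $M$-generic reals for $P_I$. Since $\Gamma$ and $\cG$ are countable and named in $M$, both are actually subsets of $M$, so $M$ is closed under the induced $\Gamma$-action on Borel codes.

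The candidate master condition is
\[C=B\setminus\bigcup\bigl\{A\in M:A\text{ is a Borel anticlique for some }G\in\cG\bigr\}.\]
$C$ is Borel (the union is countable) and is $I$-positive, since each set being subtracted lies in $I$. The substance of the proof is verifying that every $x\in C$ is $M$-generic for $P_I$.

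By the \pioneoneonsigmaoneone\ complexity of $I$ (established in the final section of the paper), every Borel set in $I\cap M$ decomposes inside $M$ as a countable union of Borel anticliques for hypergraphs in $\cG$; consequently $x\in C$ avoids every $M$-coded Borel $I$-small set. It then suffices to establish the following countable reflection: for every open dense $D\in M$ of $P_I$, the complement $X\setminus\bigcup(D\cap M)$ lies in $I$. Once this is known, that complement belongs to $I\cap M$ by elementarity from $D$, is therefore covered by the anticliques removed in forming $C$, and so $x\in C$ must lie in some $B'\in D\cap M$, giving genericity.

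The reflection is where the actionability hypothesis is used essentially, and constitutes the main obstacle. The key tool is the $\Gamma$-invariance of $I$, which is immediate from the $\Gamma$-closure of $\cG$ and which makes the $\Gamma$-action on $X$ descend to an action on $P_I$. The plan is a fusion-style construction inside $M$: recursively pick conditions $B_n\in D\cap M$ using density, and track coverage of the $\Gamma$-orbits of $B$ by the countable family $\{\gg\cdot B_n:\gg\in\Gamma,\,n\in\gw\}\subseteq M$. The confinement of every hypergraph edge to a single $\Gamma$-orbit decouples the anticlique obligations across distinct orbits, so the fusion can be carried out orbit by orbit and still terminates in $\gw$ stages, producing a countable subfamily of $M$ whose union is $I$-equivalent to $B$. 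This yields the reflection, and hence properness. (This orbit-wise fusion is also the concrete content of the $\gs$-closed$*$c.c.c.\ factorization announced in Theorem~\ref{intermediatetheorem}, and an alternative route would be to prove that decomposition first and derive properness from it.)
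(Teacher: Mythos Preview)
Your setup is reasonable---defining the candidate master condition $C$ as $B$ minus all $M$-coded Borel anticliques and checking that $C$ is $I$-positive is exactly right, and the reduction to the reflection statement ``$B\setminus\bigcup(D\cap M)\in I$ and is covered by $M$-coded anticliques'' correctly isolates what must be shown. But both halves of your proposed verification of that reflection fail.

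First, the ``orbit-by-orbit fusion'' sketch is not an argument. There are continuum-many $\Gamma$-orbits, so one cannot handle them one at a time in $\gw$ stages; and Borel anticliques are global objects cutting across all orbits, so the assertion that the ``anticlique obligations decouple across distinct orbits'' has no content. Nothing in the sketch produces a countable subfamily of $D\cap M$ whose union is $I$-close to $B$. Second, the elementarity step is unjustified: the set $\bigcup(D\cap M)$ is not definable in $M$ (the model does not know its own trace $D\cap M$ is countable), so even if you knew $B\setminus\bigcup(D\cap M)\in I$ in $V$, you could not invoke elementarity to place a covering family of anticliques inside $M$.

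The paper's argument does not go through your reflection at all. The decisive idea is that the $\Gamma$-action induces automorphisms of $P_I\cap M$, so if $x$ is $P_I\cap M$-generic then so is every $\gamma\cdot x$. One then proves directly that the countable poset $P_I\cap M$ forces $\dotxgen$ to lie outside \emph{every} Borel $G$-anticlique $C$, including those not coded in $M$: if some $A\in P_I\cap M$ forced $\dotxgen\in C$, pass to the $I$-positive set of points in $A$ that sit on a $G$-edge entirely inside $A$, take a generic $x$ there, and observe that the whole edge through $x$ consists of orbit-equivalent---hence equally generic---points of $A$, all of which are then forced into $C$; so $C$ was not a $G$-anticlique. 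This single claim immediately yields that the set of $M$-generic points below $B$ is $I$-positive, without ever needing your reflection. The actionability hypothesis enters precisely through ``orbit-mates of generics are generic,'' not through any fusion.
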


\begin{proof}
Let $\cG$ be a countable family of analytic hypergraphs and let $\Gamma$ be a countable group with an action witnessing the assumption that the ideal $I$ is actionable. Let $\dotxgen$ be the $P_I$-name for the generic element of the space $X$. The next claim yields a key homogeneity feature of the poset $P_I$.

\begin{claim}
\label{prepclaim2}
Let $\gamma\in\Gamma$ be any element. The map $x\mapsto\gamma\cdot x$ induces an automorphism of the ordering $P_I$.
\end{claim}

\begin{proof}
Note that if a Borel set $B\subset X$ is a $G$-anticlique for some graph $G\in\cG$ and $\gamma\in\Gamma$ is any element, then $\gamma\cdot B$ is a Borel $\gamma\cdot G$-anticlique and $\gamma\cdot G\in\cG$. This means that the set of generators of $I$ and consequently the whole ideal $I$ is invariant under the action of the group. The claim immediately follows.
\end{proof}

\noindent The second preparatory claim describes a key derivative operation on conditions in the poset $P_I$.

\begin{claim}
\label{prepclaim3}
Let $A\subset X$ be a Borel $I$-positive set and $G\in\cG$. Then the set $B=\{x\in A\colon \exists e\in G\cap A^\gw\ x\in\rng(e)\}$ is analytic and $I$-positive.
\end{claim}

\begin{proof}
Suppose that the set $B$ belongs to the ideal $I$. Then, since the ideal  $I$ is by definition generated by Borel sets, there must be a Borel set $B'\in I$ which is a superset of $B$. The definition of the set $B$ shows that the set $A\setminus B$ is a $G$-anticlique and so the set $A\setminus B'$ is a Borel $G$-anticlique.  By the definitions, the set $A$ is in the ideal $I$, contradicting the initial assumptions.
\end{proof}

Let $M$ be a countable elementary submodel of a large structure. The poset $P_I\cap M$ still adds a single point $\dotxgen$ such that the generic filter is the collection of those sets in $P_I\cap M$ which  contain $\dotxgen$, and the action still induces automorphisms of the poset $P_I\cap M$. The following claim is central.

\begin{claim}
\label{keyclaim}
If $G\in\cG$ and $C\subset X$ is a Borel $G$-anticlique, then $P_I\cap M\Vdash\dotxgen\notin\dot C$.
\end{claim}

\begin{proof}
Suppose that $C\subset X$ is a Borel set and $A\in P_I\cap M$ is a condition forcing $\dotxgen\in\dot C$; I must find a $G$-edge in $C^\gw$. Let $B=\{x\in A\colon\exists e\in G\cap A^\gw\ x\in\rng(e)\}$; by Claim~\ref{prepclaim3}, this is an analytic $I$-positive subset of $A$ in the model $M$. By Fact~\ref{millerfact}, it contains a Borel $I$-positive subset $B'$. Now, suppose that $H\subset P_I\cap M$ be a generic filter containing the condition $B'$ and $x\in X$ the associated generic point. Then $x\in B$, and so there is an edge $e\in G\cap A^\gw$ such that $x$ is in its range. By the initial assumptions on the hypergraph $G$, the range of the edge $e$ is contained in the orbit of $x$, and so each point in the range of $e$ is also generic for the poset $P_I$. Since the points on the range of $e$ all belong to the set $A$, by the forcing theorem they must all belong to the set $C$. Thus, the Borel set $C$ fails to be a $G$-anticlique in the extension $V[H]$, and by the Mostowski absoluteness it cannot be a $G$-anticlique in the ground model either.
\end{proof}

\begin{claim}
\label{lesskeyclaim}
Let $A\in P_I\cap M$ is a condition, then the set $B=\{x\in A\colon x$ is $P_I$-generic over the model $M\}$ is Borel and $I$-positive.
\end{claim}

\begin{proof}
The Borelness of the set of generics over countable models is a general fact, proved in \cite[Fact 1.4.8]{z:book2}. To see that $B\notin I$, suppose that $\{C_n\colon n\in\gw\}$ are Borel anticliques for some hypergraphs in $\cG$; I must produce a point $x\in B\setminus\bigcup_nC_n$. To this end, let $N$ be a countable elementary submodel of large structure containing $M, C_n$ for $n\in\gw$ and $\cG$ as elements, let $H\subset P_I\cap M$ be a filter generic over the model $N$ containing the condition $A$, and let $x$ be its associated generic real. By Claim~\ref{keyclaim} applied in the model $N$, $N[H]\models x\in B\setminus\bigcup_nC_n$, and by the Mostowski absoluteness between the models $N[H]$ and $V$, $x\in B\setminus\bigcup_nC_n$ holds as required.
\end{proof}

The theorem now immediately follows by the characterization of properness in \cite[Proposition 2.2.2]{z:book2}.
\end{proof}

\begin{example}
\label{silvergraphexample}
Let $G$ be the \emph{Silver graph} on $\cantor$, connecting points $x, y$ just in case they disagree on exactly one entry. The graph $G$ is invariant under the usual action of the rational points of the Cantor group $\cantor$ on the whole group. The quotient poset
is well known to have a dense subset naturally isomorphic to the Silver forcing \cite[Theorem 2.3.37]{z:book}.
\end{example}

\begin{example}
\label{vitaligraphexample}
Let $G$ be the \emph{Vitali graph} on $\cantor$, connecting points $x, y$ just in case they disagree on only finite number of entries. The graph $G$ is invariant under the usual action of the rational points of the Cantor group on the whole group. The quotient poset is well-known
to have a dense subset naturally isomorphic to the Vitali forcing, or $E_0$-forcing as it is called in \cite[Section 4.7.1]{z:book2}. The main difference between the Vitali and Silver forcings is that Vitali forcing adds no independent reals while Silver forcing does, even though below I identify another profound iterable difference--Corollary~\ref{neverclosedcorollary}.
\end{example}

\begin{example}
\label{kstexample}
Let $G$ be the \emph{KST graph} (for Kechris--Solecki--Todorcevic \cite{kechris:chromatic}) on $\cantor$. To define it, let $s_n\in 2^n$ be binary strings for each $n\in\gw$ such that the set $\{s_n\colon n\in\gw\}$ is dense in $\bintree$. Put $\langle x, y\rangle\in G$ if $x, y$ differ in exactly one entry and their longest common initial segment belongs to the set $\{s_n\colon n\in\gw\}$. It is well-known and easy to prove that the KST graph is closed, acyclic, and spans the Vitali equivalence relation. Borel anticliques of $G$ must be meager.
Let $\cG$ be the family of all rational shifts of the graph $G$. By the definitions, the family $\cG$ is actionable. The quotient poset does not depend on the initial choices  as proved in Subsection~\ref{kstsubsection} and I call it the KST forcing. The main iterable difference between the KST forcing and the Silver forcing is that in the Silver extension, ground model coded compact anticliques of any closed acyclic graph still cover their domain Polish space--Corollary~\ref{silveracycliccorollary}. In the KST extension this clearly fails for the initial graph $G$.
\end{example}

\begin{example}
Let $\Gamma$ be a countable group acting continuously on a Polish space $X$ and let $\mu$ be an invariant probability measure on $X$. The hypergraph $G$ consisting of all elements $e\in X^\gw$ such that the set $\rng(e)$ consists of pairwise orbit equivalent points and has $\mu$-positive closure is certainly invariant. Does the quotient forcing depend on the initial choice of the action and the measure?
\end{example}

\subsection{The canonical intermediate extension}

The generic extensions associated by the actionable posets share a certain important feature: there is a large, natural intermediate forcing extension. The main theorem of this section identifies the most important features of this extension.

\begin{theorem}
\label{intermediatetheorem}
Let $X$ be a Polish space and $I$ an actionable $\gs$-ideal on it; let $G\subset P_I$ be a generic filter. Then there is an intermediate extension $V\subset W\subset V[G]$ such that a set $a\in V[G]$ of ordinals belongs to $W$ just in case its intersection with every ground model countable set belongs to the ground model. Moreover, $V[G]$ is a c.c.c.\ extension of $W$.
\end{theorem}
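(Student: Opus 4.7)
The natural candidate for $W$ is $V[[\xgen]_E]$, where $E$ is the orbit equivalence relation of the $\Gamma$-action on $X$; since $\Gamma$ is countable, $[\xgen]_E$ is a countable subset of $X$ in $V[G]$. I would realize this model as $V[G_Q]$ for the subposet $Q\subset P_I$ consisting of $\Gamma$-invariant Borel $I$-positive sets, with $G_Q:=G\cap Q$.

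The first step is to verify that $Q$ is a complete subposet of $P_I$. The $\Gamma$-saturation $A\mapsto \Gamma\cdot A:=\bigcup_{\gg\in\Gamma}\gg\cdot A$ sends $P_I$ into $Q$: the saturation is $\Gamma$-invariant Borel, and it remains $I$-positive by the $\Gamma$-invariance of $I$ from Claim~\ref{prepclaim2} combined with $\sigma$-additivity. A predense set of $Q$ is then predense in $P_I$: given $A\in P_I$, some $B$ in the predense set meets $\Gamma\cdot A$ in an $I$-positive set, and $\sigma$-additivity plus $\Gamma$-invariance of $B$ force $\gg\cdot A\cap B$ to be $I$-positive for some $\gg$, witnessing compatibility of $A$ with $B$ via a translate.

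Next, I would show that $V[G]$ is a c.c.c.\ extension of $W$. In $V[G_Q]$ the orbit $[\xgen]_E$ is a countable set, and every $A$ in the quotient $P_I/G_Q$ must meet this orbit (otherwise $\Gamma\cdot A$ would be disjoint from $[\xgen]_E$ and hence incompatible with conditions in $G_Q$ asserting $\xgen\in \Gamma\cdot A$). For a proposed antichain $\{A_\alpha:\alpha<\omega_1\}$ in the quotient, set $S_\alpha:=\{\gg\in\Gamma:\gg\cdot\xgen\in A_\alpha\}\neq\emptyset$; pigeonhole on the countable group $\Gamma$ gives $\alpha\neq\beta$ with a common $\gg\in S_\alpha\cap S_\beta$, making $A_\alpha$ and $A_\beta$ compatible through $\gg\cdot\xgen$, a contradiction.

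For the characterization of $W$, the forward direction---$a\in W$ implies $a\cap c\in V$ for every countable $c\in V$---reduces to showing that $Q$ is $\omega$-distributive. I would prove this by a fusion argument in $Q$: given a $Q$-name $\dot f$ for a function $\omega\to V$ and $A\in Q$, iteratively refine $A$ within $Q$ to decide longer finite portions of $\dot f$, using that $\Gamma$-invariance is preserved under $\Gamma$-saturation at each stage. The converse---that any $a\in V[G]$ satisfying the countable-intersection condition is in $W$---uses the c.c.c.\ of $P_I/Q$: the name $\dot a$ decomposes into countably many antichains deciding its restrictions to countable $c\in V$, and by hypothesis each such restriction is an element of $V\subseteq W$, so $\dot a$ can be assembled inside $W$. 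The main obstacle is the $\omega$-distributivity of $Q$; this does not follow from properness alone and requires direct use of the defining assumption that every edge in $\cG$ is contained in a single $\Gamma$-orbit, which is what keeps the fusion from collapsing into the ideal.
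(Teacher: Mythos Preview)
Your overall architecture matches the paper's: the intermediate model is $V[G\cap Q]$ where $Q\subset P_I$ is the poset of $E$-invariant Borel $I$-positive sets, the saturation map is the projection witnessing completeness of the embedding, and the c.c.c.\ of the quotient comes from the fact that in $V[G]$ the quotient is covered by the countably many filters $H_x=\{B:x\in B\}$ as $x$ ranges over $[\xgen]_E$. So far so good.

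The genuine gap is exactly where you locate it: the claim that $Q$ adds no new reals. Your proposed ``fusion argument'' is not an argument---you refine $A$ to decide $\dot f\restriction n$, but nothing you have said prevents the intersection $\bigcap_n A_n$ from falling into $I$, and $\Gamma$-invariance alone does not help. The paper does \emph{not} use fusion here. Instead it argues in two steps. First, any $P_I$-name $\tau$ for a real definable from $[\xgen]_E$ is represented by a Borel function $f\colon B\to\cantor$; passing to the set of $P_I$-generics over a countable elementary $M$, one gets a positive $C\subset B$ on which $f$ is constant on $E$-classes (since $E$-related generics give the same model and the same $[\xgen]_E$). Second---and this is the key idea you are missing---a Borel function constant on $E$-classes must be constant on an $I$-positive set: if not, every fiber $f^{-1}\{z\}$ is in $I$, so by the uniformization Theorem~\ref{millerfact}(3) the graph of $f$ is covered by Borel sets $F_n$ whose vertical sections are $G_n$-anticliques; one of the preimages $\{x: (f(x),x)\in F_n\}$ is positive, hence contains a $G_n$-edge $e$, and since $\rng(e)$ lies in a single $E$-class, $f$ is constant on it---contradicting that $(F_n)_{f(e(0))}$ is an anticlique. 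This is where the assumption ``every edge lies in one orbit'' actually does its work.

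Your sketch of the converse direction is also too loose: ``decompose $\dot a$ into countably many antichains'' does not by itself show $a\in W$, since there may be uncountably many ordinals in play and knowing each $a\cap c\in V$ does not give you the assignment $c\mapsto a\cap c$ in $W$. The paper's trick is to look at the countable set $d=\{\tau/\gamma\cdot G:\gamma\in\Gamma\}$, which is definable from $[\xgen]_E$ and hence in $W$; distinct elements of $d$ are separated by some countable set $c$, which by properness of $P_I$ sits inside a countable $b\in V$, and then $a$ is the unique element of $d$ whose intersection with $b$ equals $a\cap b\in V$.
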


\noindent The second sentence identifies the intermediate model uniquely; a moment's thought will show that $W$ must be the largest, in the sense of inclusion, intermediate model of ZFC with no new reals. The last sentence implies that $W$ is a nontrivial extension of the ground model except in the case that $P_I$ is equivalent to the Cohen forcing: if $W=V$ then $P_I$ is c.c.c.\ below some condition, and the only definable c.c.c.\ poset preserving Baire category is the Cohen forcing by \cite{Sh:630}. A good part of the theorem is the precise identification of the intermediate model $W$; the details of this have been incorporated into the proof rather than the statement of the theorem.

\begin{proof}
Start in the ground model $V$. Let $\cG$ be a family of analytic hypergraphs generating the $\gs$-ideal $I$, with the associated action of a countable group $\Gamma$ on $X$. Write $E$ for the resulting orbit equivalence relation on $X$; $E$ is Borel and all its classes are countable. Mover to the generic extension $V[G]$ and write $\xgen\in X$ for the generic point added by the filter $G\subset P_I$. The model $W$ is the class of all sets in $V[G]$ which such that every element of the transitive closure of $x$ is in $V[G]$ definable from parameters in $V$ and the additional parameter $[\xgen]_E$. It is well-known that classes of this type are models of ZFC. The following claim provides the central piece of information about the model $W$.

\begin{claim}
$\cantor\cap V=\cantor\cap W$.
\end{claim}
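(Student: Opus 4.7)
The forward inclusion $\cantor\cap V\subseteq\cantor\cap W$ is immediate since $V\subseteq W$. For the reverse, take any $y\in\cantor\cap W$ and unwind the definition of $W$: there are a formula $\phi$ and parameters $\vec p\in V$ such that $y$ is the unique real in $V[G]$ satisfying $\phi(y,\vec p,a)$, where $a=[\xgen]_E$. Let $\dot y\in V$ be the canonical $P_I$-name read off this definition; so $\dot y[G]=y$. Since $\vec p$ and $a$ are both $\Gamma$-fixed, the $P_I$-automorphism induced by any $\gamma\in\Gamma$ via Claim~\ref{prepclaim2} sends $\dot y$ to itself.

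The plan is now to apply the continuous reading of names for hypergraphable ideals (established in the final section of the paper) to obtain a condition $A_0\in G$ and a Borel function $f\colon A_0\to\cantor$ in $V$ with $A_0\Vdash\dot y=f(\dotxgen)$. Combining with the $\Gamma$-invariance of $\dot y$ yields $A_0\Vdash f(\gamma\cdot\dotxgen)=f(\dotxgen)$ for every $\gamma\in\Gamma$. Using countability of $\Gamma$ and removing a countable union of $I$-null sets, one may assume $f$ is literally $E$-invariant on $A_0$. The whole claim then reduces to the subclaim: some level set $f^{-1}(y_0)\cap A_0$ with $y_0\in V$ is $I$-positive. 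Granting this, the forcing theorem produces a condition in $G$ below $A_0$ forcing $\dot y=\check y_0$, whence $y\in V$.

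For the subclaim I would first record that $E$ is non-smooth on every $I$-positive Borel set. Indeed, a Borel transversal $T$ of $E$ on such a set would be a Borel anticlique for every $G\in\cG$ (edges of $G$ live in single $E$-orbits, which $T$ hits at most once), and by actionability its countable $\Gamma$-saturation would cover the whole set by anticliques for the $\Gamma$-shifts of these graphs, contradicting $I$-positivity. Supposing the subclaim fails, every $f^{-1}(y_0)\cap A_0$ with $y_0\in V$ lies in $I$. Fixing a countable elementary submodel $M$ containing the relevant data, Claim~\ref{lesskeyclaim} furnishes an $I$-positive Borel set of $P_I$-generics for $M$ inside $A_0$. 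Any such generic $z$ avoids the $M$-coded $I$-null sets $f^{-1}(y_0)\cap A_0$ with $y_0\in M$, so $f(z)\notin M$, yet $f(z)\in V$ because $f,z\in V$; by $E$-invariance this common value is shared on the whole $E$-orbit of $z$. A Harrington--Kechris--Louveau type argument—ruling out that $f$ is injective on $E$-classes on any $I$-positive piece, since that would give a Borel reduction of $E$ to equality contradicting the non-smoothness just established—then forces $f$ to be essentially constant on some $I$-positive Borel piece, with constant value necessarily a ground-model real.

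The main obstacle is this last step: turning the local observation that every $M$-generic $z\in A_0$ has $f(z)\in V\setminus M$ into the global conclusion that $f$ is constant modulo $I$ on some $I$-positive piece. This is where the actionable structure of $\cG$ and the non-smoothness of $E$ must be carefully combined with descriptive set-theoretic rigidity for countable Borel equivalence relations; every alternative to essential constancy of $f$ would produce a Borel reduction of $E$ to equality on an $I$-positive set, contradicting the non-smoothness established above. The earlier steps are essentially bookkeeping within the framework already set up in the paper.
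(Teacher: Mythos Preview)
Your outline tracks the paper's proof closely through the reduction to an $E$-invariant Borel function $f$ on an $I$-positive set, and the subclaim that some level set of $f$ must be $I$-positive. The paper obtains $E$-invariance of $f$ on the set $C$ of $P_I$-generics over a countable $M$ directly from the forcing theorem (two $E$-related generics give the same model $M[x]=M[y]$, hence the same value of the definable real), which sidesteps the domain issues in your ``$A_0\Vdash f(\gamma\cdot\dotxgen)=f(\dotxgen)$'' formulation; but that is a minor point.

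The genuine gap is your final step. The non-smoothness of $E$ on $I$-positive sets is correct, but it does not do the work you want. An $E$-invariant Borel $f$ with all level sets in $I$ need not induce an injection on $E$-classes on any $I$-positive piece: many distinct $E$-classes can land on the same value while every fibre stays $I$-small. So there is no dichotomy ``constant on an $I$-positive piece versus Borel reduction of $E$ to equality'', and the Harrington--Kechris--Louveau machinery gives you nothing here. The paper's argument is different and uses the hypergraph structure of $I$ directly: assuming every fibre $\{x\in C: f(x)=z\}$ lies in $I$, the set $F=\{\langle y,x\rangle: f(x)=y\}\subset\cantor\times C$ has all vertical sections in $I$, so by Theorem~\ref{millerfact}(3) one covers $F$ by Borel sets $F_n$ whose vertical sections are $G_n$-anticliques for suitable $G_n\in\cG$. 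By $\sigma$-additivity some $A=\{x\in C:\langle f(x),x\rangle\in F_n\}$ is $I$-positive and hence contains a $G_n$-edge $e$; since $\rng(e)$ sits in a single $E$-class and $f$ is $E$-invariant, $f$ is constant on $\rng(e)$ with value $y$, so $\rng(e)\subset (F_n)_y$, contradicting that $(F_n)_y$ is a $G_n$-anticlique. This uniformization step, not non-smoothness, is the missing idea.
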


\begin{proof}
 Move to the ground model and let $B\in P_I$ be a condition, $\tau$ a $P_I$-name, and $B\Vdash\tau\in W$; I will find a condition $D\subset B$ and a point $z\in\cantor$ such that $D\Vdash\tau=\check z$. Strengthening the condition $W$, I may identify the parameters in $V$ and the formula $\phi$ which defines $\tau$ in $V[G]$ with those parameters plus the parameter $[\dotxgen]_E$. Strengthening the condition $B$ even further, I can find a Borel function $f\colon B\to\cantor$ such that $B\Vdash\tau=\dot f(\dotxgen)$. I will find a Borel $I$-positive set $C\subset B$ such that $f\restriction C$ is constant on $E$-classes, and then I argue that for every such a function there must be an $I$-positive Borel set $D\subset C$ on which $f$ is constant. Clearly, if $z\in\cantor$ is the constant value, then $D\Vdash\tau=\check z$ as required.

For the construction of the set $C$, let $M$ be a countable elementary submodel of a large structure containing all the objects named so far. Let $C\subset B$ be the set of all points $P_I$-generic over the model $M$; by Theorem~\ref{maintheorem}, this set is Borel and $I$-positive. To see that the function $f\restriction C$ is constant on $E$-classes, suppose that $x, y\in C$ are $E$-related points and $\gamma\in\Gamma$ is a group element such that $\gamma\cdot x=y$. Both points $x, y$ are $P_I$-generic over $M$ and so (identifying $M$ with its transitive collapse) I may consider the generic extensions $M[x]$, $M[y]$. Since $\gamma\cdot x=y$, these two generic extensions coincide. By the forcing theorem, the formula $\phi$ applied to the parameters and the $E$-class containing \emph{both} $x, y$ gives a point $z\in\cantor$ which must be equal to both $f(x)$ and $f(y)$. Thus $f(x)=f(y)$ as desired.

For the construction of the set $D$, suppose for contradiction that for every $z\in\cantor$, the set $\{x\in C\colon f(x)=z\}$ belongs to the ideal $I$.  Then the set $F\subset\cantor\times C$ of all pairs $\langle y, x\rangle$ such that $f(x)=y$ has all vertical sections in the ideal $I$. By Fact~\ref{millerfact}, there are Borel sets $F_n\subset\cantor\times B$ and hypergraphs $G_n\in\cG$ for each $n\in\gw$ such that $F\subset \bigcup_nF_n$ and each vertical section of the set $F_n$ is a $G_n$-anticlique. By the $\gs$-additivity of the $\gs$-ideal $I$, there is a number $n\in\gw$ such that the Borel set $A=\{x\in C\colon \langle f(x), x\rangle\in D_n\}$ is $I$-positive. Find a $G_n$-edge $e$ in the set $A$. Since the edge $e$ is a subset of a single $E$-class, the function $f$ is constant on the edge $e$ with value $y$, then a contradiction with the assumption that the section $(F_n)_y$ is $G_n$-anticlique appears. 
\end{proof}

It immediately follows that if $a\in V[G]$ is a set of ordinals such that for some countable set $b\in V$, $a\cap b\notin V$, then $a\notin W$: if $a$ were an element of $W$, so would be the set $a\cap b$, contradicting the claim. On the other hand, suppose that $a\in V[G]$ is a set of ordinals such that $a\cap b\in V$ for every countable set $b\in V$. Let $\tau$ be a $P_I$-name in the ground model such that $a=\tau/G$. In the model $V[G]$, the set $d=\{\tau/\gamma\cdot G\colon\gamma\in\Gamma\}$ is countable and definable from $\tau$ and the equivalence class $[\xgen]_E$. There is a countable set $c$ of ordinals such that whenever $a_0, a_1\in d$ are distinct sets of ordinals then they disagree on the membership of some ordinal in $c$. Since $V[G]$ is a proper extension of $V$, there is a countable set $b$ of ordinals in the ground model such that $c\subset b$. By the assumption on the set $a$, the intersection $e=a\cap b$ belongs to the ground model, and so $a$ can be defined from $[\dotxgen]_E, \tau, b, e$ as the only set of ordinals in $d$ whose intersection with $b$ is equal to $e$. Thus, $a\in W$ and the second sentence of the theorem is proved.

To evaluate the properties of the forcing that leads from $W$ to $V[G]$, one has to evaluate the forcing that leads from $V$ to $W$. This is a result of an entirely general procedure. Step back into the ground model. Let $Q_I\subset P_I$ be the poset of all $I$-positive Borel $E$-invariant sets, ordered by inclusion.

\begin{claim}
$G\cap Q_I$ is a filter on $Q_I$ generic over $V$.
\end{claim}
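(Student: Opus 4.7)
The plan is to show that the inclusion $Q_I\hookrightarrow P_I$ is a regular embedding; genericity of $G\cap Q_I$ then follows from a standard fact about regular subposets. The upward closure and pairwise compatibility of $G\cap Q_I$ inside $Q_I$ are immediate from the corresponding properties of $G$ on $P_I$, so the entire content of the claim is the genericity assertion.

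For regularity, I would produce an explicit reduct map $r\colon P_I\to Q_I$. Given $A\in P_I$, define $r(A)=[A]_E=\bigcup_{\gamma\in\Gamma}\gamma\cdot A$, the $E$-saturation of $A$. Because $\Gamma$ is countable and the action is Borel, $r(A)$ is Borel; it is plainly $E$-invariant and contains $A$, so it is $I$-positive. Thus $r(A)\in Q_I$ and $r(A)\geq A$ in $P_I$. The key property to verify is that for any $B\in Q_I$ with $B\subset r(A)$, $B$ and $A$ are compatible in $P_I$. Writing $B=B\cap r(A)=\bigcup_{\gamma\in\Gamma}(B\cap\gamma\cdot A)$ and using the $\sigma$-additivity of $I$, some $\gamma\in\Gamma$ satisfies $B\cap\gamma\cdot A\notin I$. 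Since $B$ is $E$-invariant we have $\gamma^{-1}\cdot B=B$, so $B\cap\gamma\cdot A=\gamma\cdot(B\cap A)$. By Claim~\ref{prepclaim2} the ideal $I$ is $\Gamma$-invariant, hence $B\cap A\notin I$; after passing to a Borel $I$-positive subset (via Fact~\ref{millerfact} applied to the analytic set $B\cap A$, if needed), this is the common strengthening in $P_I$.

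With the reduct map in hand, genericity is a routine matter: given a maximal antichain $\mathcal A\subset Q_I$ in $V$ and any condition $A\in P_I$, some $C\in\mathcal A$ is compatible with $r(A)$ in $Q_I$, hence $C\cap r(A)\in Q_I$ is $I$-positive and below $r(A)$; by the reduct property it is compatible with $A$ in $P_I$. Therefore $\mathcal A$ remains a maximal antichain of $P_I$, and meeting $G$ it must meet $G\cap Q_I$. This establishes the $V$-genericity of the filter $G\cap Q_I$ on $Q_I$.

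The only place where anything delicate happens is the verification that $r$ is a reduct; the main obstacle is the invariance bookkeeping, which is handled cleanly by noting simultaneously that $B$ is $\Gamma$-invariant as a set and $I$ is $\Gamma$-invariant as an ideal, so a translate of $B\cap A$ being $I$-positive forces $B\cap A$ itself to be $I$-positive. Everything else reduces to counting orbits and invoking $\sigma$-additivity.
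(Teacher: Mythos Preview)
Your proof is correct and follows the same strategy as the paper: both show that the saturation map $A\mapsto [A]_E$ witnesses $Q_I$ as a regular subposet of $P_I$, the paper phrasing this as a projection property and you as a reduct property. Your version is in fact more explicit than the paper's, which announces what must be shown but omits the verification; the invariance bookkeeping you supply (that $B\cap\gamma\cdot A=\gamma\cdot(B\cap A)$ by $E$-invariance of $B$, then $B\cap A\notin I$ by $\Gamma$-invariance of $I$) is exactly the missing step, and the appeal to Fact~\ref{millerfact} is unnecessary since $B\cap A$ is already Borel.
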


\begin{proof}
Move to the ground model. To prove the genericity, suppose that $B\in P_I$ is a condition and $C\subset [B]_E$ is a Borel $E$-invariant $I$-positive set. It will be enough to find a condition $D\subset B$ in $P_I$ such that its $E$-saturation is a subset of $C$. That way, it will be confirmed that the map $B\mapsto [B]_E$ is a projection of $P_I$ to $Q_I$, and the genericity follows.
\end{proof}

Now, the remainder poset  leading from the model $V[G\cap Q_I]$ to $V[G]$ is the poset $R$ of all Borel $I$-positive subsets $B\subset X$ coded in the ground model such that $[B]_E\in G\cap Q_I$. The following is easy to show via a genericity argument.

\begin{claim}
Let $x\in X$ be a point $E$-related to $\xgen$. The collection $H_x\subset R$ of all sets containing $x$ is a filter generic over $V[G\cap Q_I]$. Moreover, $R=\bigcup_xH_x$.
\end{claim}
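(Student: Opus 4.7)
The plan is to exploit the homogeneity of $P_I$ under the $\Gamma$-action to reduce genericity of $H_x$ to genericity of a translated copy of $G$, and then to appeal to the standard identification of $R$ as the quotient $P_I/\dot G_0$, where $\dot G_0$ is the $Q_I$-name for the generic filter given by the projection $B\mapsto[B]_E$ from the previous claim.

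First, since $x$ is $E$-related to $\xgen$, I would pick $\gg\in\Gamma$ with $x=\gg\cdot\xgen$. By Claim~\ref{prepclaim2}, $\gg\cdot G$ is a $P_I$-generic filter over $V$ whose associated generic point is $x$; unraveling definitions then gives $H_x=R\cap(\gg\cdot G)$. The filter property of $H_x$ in $R$ follows from the corresponding property of $\gg\cdot G$ in $P_I$, using that the intersection of two members of $H_x$ is $I$-positive (because $\gg\cdot G$ is directed) and has its $E$-saturation in $G\cap Q_I$ (because the intersection contains $x$, hence the saturation contains $\xgen$). Crucially, because every condition in $Q_I$ is $E$-invariant and therefore $\Gamma$-fixed, the filters $\gg\cdot G$ and $G$ restrict to the same filter on $Q_I$, namely $G\cap Q_I$.

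The previous claim exhibits $B\mapsto[B]_E$ as a projection from $P_I$ onto $Q_I$, so that $P_I$ factors as the two-step iteration $Q_I\ast\dot R'$, where $\dot R'$ evaluates in $V[G\cap Q_I]$ to precisely the poset $R$ from the statement. Standard two-step iteration calculus then says that for any $P_I$-generic filter over $V$ whose restriction to $Q_I$ is $G\cap Q_I$, its intersection with $R$ is $R$-generic over $V[G\cap Q_I]$; applied to the filter $\gg\cdot G$, this yields the desired genericity of $H_x$. For the covering assertion $R=\bigcup_xH_x$, I would fix $B\in R$: the condition $[B]_E\in G\cap Q_I$ translates, via the standard equivalence $A\in G\cap Q_I\liff\xgen\in A$ for $E$-invariant ground-model Borel $A$, to $[\xgen]_E\cap B\neq\emptyset$, and any witness $x$ in this intersection realizes $B\in H_x$.

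The main piece of bookkeeping, and the step requiring the most care, is the precise identification of $R$ with the quotient $P_I/\dot G_0$ from the two-step factorization --- verifying that ``ground-model Borel $I$-positive $B$ with $[B]_E$ in the $Q_I$-generic filter'' really is the remainder forcing in the sense of the iteration theorem. Once this identification is cleanly in hand, the genericity of $H_x$ and the covering property $R=\bigcup_xH_x$ follow by direct translation using the $\Gamma$-action and the invariance of $Q_I$ under it.
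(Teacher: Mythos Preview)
Your proposal is correct and is precisely the genericity argument the paper alludes to (the paper itself gives no details beyond ``easy to show via a genericity argument''). The key observations---that the automorphism $\gg$ fixes $Q_I$ pointwise so $(\gg\cdot G)\cap Q_I=G\cap Q_I$, that the quotient $P_I/\dot G_0$ is exactly $R$ by the standard description of quotient forcing under a projection, and that $\xgen\in[B]_E$ yields a witness $x\in B\cap[\xgen]_E$---are all correctly identified, and the bookkeeping you flag as needing care is indeed routine once the projection $B\mapsto[B]_E$ from the previous claim is in hand.
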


It follows that $W=V[G\cap Q_I]$. On one hand, the filter $G\cap Q_I$ is definable in $V[G]$ from the parameter $[\xgen]_E$ as the set of all $E$-invariant Borel sets coded in the ground model which contain $[\xgen]_E$ as a subset. This shows that $V[G\cap Q_I]\subset W$. To prove the opposite inclusion, move to the model $V[G\cap Q_I]$. Suppose that $\tau$ is an $R$-name for a set of ordinals. Suppose that $B\in R$ is a condition forcing that $\tau$ is definable in the $R$-extension using a formula $\phi$ with parameters in $V$ and an additional parameter $[\dotxgen]_E$. It will be enough to show that the condition $B$ decides the membership of all ordinals in $\tau$, since then $B\Vdash\tau\in V[G\cap Q_I]$ as desired. To see this, note that the filters $H_x$ for $x\in [\xgen]_E$  in the claim cover the whole poset $R$ and since their generic points are all $E$-related, the filters all evaluate the formula $\phi$ and so the name $\tau$ in the same way.

It also follows that $W\models R$ is c.c.c.\ since the poset $R$ is covered by countably many filters in the extension $V[G]$ which has the same $\aleph_1$ as $W$. The theorem has just been proved.
\end{proof}

\subsection{Nearly open hypergraphs}

There is an entirely different class of hypergraphable proper forcings which is perhaps more frequent in the literature, even though never in the following most convenient presentation:

\begin{definition}
Let $G$ be an analytic hypergraph on a Polish space $X$. The hypergraph is \emph{nearly open} if for each edge $e\in G$ and every $i\in\dom(e)\setminus \{0\}$ there are open sets $O_i\subset X$ containing $e(i)$ such that for every choice of points
$x_i\in O_i$, the sequence $\langle e(0), x_i\colon i\in\dom(e)\setminus\{0\}\rangle$ is an edge in the hypergraph $G$. 
\end{definition}

It is clear from the definition that the first vertex in an edge of a nearly open hypergraph may carry priviledged information about the edge, and in the more interesting examples this is indeed the case. Note that I do not demand the hypergraphs to be invariant under the permutation of vertices in the edges. There is an important subclass of the nearly open hypergraphs, namely the hypergraphs which are open in the box topology on $X^\gw$. In these hypergraphs no vertex in an edge is clearly priviledged, and the treatment becomes
much simpler; in particular, the associated $\gs$-ideal is $\gs$-generated by closed sets.

Unlike the case of actionable ideals, the definition of a nearly open hypergraph depends on the choice of the topology on the underlying space $X$, and it may occur that one needs to make a rather unnatural change of topology to present a given hypergraph
as nearly open. Note that the poset $P_I$ and all its properties do not depend on the topology of $X$ as long as the algebra of Borel sets remains the same.

\begin{theorem}
Let $\cG$ be a family of nearly open hypergraphs on a Polish space $X$. The quotient poset $P_{I_{\cG}}$ is proper.
\end{theorem}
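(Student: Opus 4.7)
The plan is to adapt the proof of Theorem~\ref{maintheorem} by replacing group-theoretic homogeneity with the geometric flexibility furnished by near-openness. By the characterization of properness in \cite[Proposition 2.2.2]{z:book2}, it suffices to establish, for every countable elementary submodel $M$ containing $\cG$ and every condition $A\in P_{I_\cG}\cap M$, that the set of $P_{I_\cG}$-generic points over $M$ inside $A$ is Borel and $I_\cG$-positive. As in the actionable case, this reduces to the following analog of Claim~\ref{keyclaim}: for every $G\in\cG\cap M$ and every Borel $G$-anticlique $C\subset X$, the forcing $P_{I_\cG}\cap M$ forces $\dotxgen\notin\dot C$. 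The analog of Claim~\ref{lesskeyclaim}, and hence properness, then follows as in the actionable proof.

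To prove the key claim, suppose toward contradiction that $A\in P_{I_\cG}\cap M$ forces $\dotxgen\in\dot C$. First replace $A$ by its \emph{topologically full part} $A^{*}=\{x\in A:A\cap U\notin I_\cG\text{ for every basic open neighborhood }U\text{ of }x\}$. The countably many sets $A\cap U$ for basic opens $U\in M$ with $A\cap U\in I_\cG$ cover $A\setminus A^{*}$, so $A\setminus A^{*}\in I_\cG$, whence $A^{*}\in P_{I_\cG}\cap M$ and $A^{*}\Vdash\dotxgen\in\dot C$. Next, in analogy with Claim~\ref{prepclaim3}, define the first-vertex set $B_0=\{x\in A^{*}:\exists e\in G\cap (A^{*})^\gw\ e(0)=x\}$. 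If $B_0$ were contained in a Borel set $B'\in I_\cG$, then $A^{*}\setminus B'$ would be a Borel $G$-anticlique (any edge in its product would have first vertex in $B_0\subset B'$), hence $A^{*}\setminus B'\in I_\cG$, contradicting $A^{*}\in P_{I_\cG}$. Thus $B_0$ is analytic and $I_\cG$-positive, and contains a Borel $I_\cG$-positive subset $B^{+}\in M$.

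Now let $H\ni B^{+}$ be a filter on $P_{I_\cG}\cap M$ that is $V$-generic. The associated generic point $x=\dotxgen^H$ lies in $B^{+}$ and, because $A\Vdash\dotxgen\in\dot C$, also in $C$; it carries a witnessing edge $e\in G\cap (A^{*})^\gw$ with $e(0)=x$. Near-openness of $G$ at $e$ furnishes, for each $i\geq 1$, an open set $O_i\ni e(i)$ such that any choice of $z_i\in O_i$ produces an edge of $G$ starting at $x$. Since $e(i)\in A^{*}$, I may shrink $O_i$ to a basic open neighborhood $U_{n(i)}\in M$ of $e(i)$ still contained in $O_i$, with $A\cap U_{n(i)}\in P_{I_\cG}\cap M$. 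Each such $A\cap U_{n(i)}$ is a condition below $A$, so also forces $\dotxgen\in\dot C$; the Borel set $A\cap U_{n(i)}\cap C$ is therefore non-empty by Mostowski absoluteness. Choosing $y_i\in A\cap U_{n(i)}\cap C$ for each $i\geq 1$ and invoking near-openness, the sequence $\langle x,y_1,y_2,\ldots\rangle$ is an edge of $G$ lying entirely in $C$, contradicting the anticlique property of $C$.

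The main obstacle I expect is the passage through $A^{*}$: without it, an edge $e\in G\cap A^\gw$ could have vertices $e(i)$ whose every basic open neighborhood $U$ satisfies $A\cap U\in I_\cG$, rendering the conditions $A\cap U_{n(i)}$ needed to place $y_i$ into $C$ unavailable in $P_{I_\cG}\cap M$. A related feature driven by the very definition of near-openness is the use of the first-vertex variant $B_0$ in place of the arbitrary-vertex variant of Claim~\ref{prepclaim3}, which forces the slightly different $I_\cG$-positivity argument sketched above.
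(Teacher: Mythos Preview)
Your proof is correct and follows essentially the same approach as the paper's: shrink to the topologically full part, pass to the first-vertex set, take a generic point $x$ there, and then use near-openness together with the fact that each $A\cap U_{n(i)}$ is a condition below $A$ to place the remaining vertices of an edge inside $C$. The only cosmetic difference is that you extract the points $y_i\in A\cap U_{n(i)}\cap C$ via an absoluteness argument, whereas the paper phrases this as choosing separate sufficiently generic filters below each $B\cap O_i$; these are equivalent.
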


\begin{proof}
Write $I=I_{\cG}$.
Let $M$ be a countable elementary submodel of a large structure and consider the (countable) poset $P_I\cap M$.  As in the case of actionable ideals, the following claim will be central.

\begin{claim}
Let $G\in\cG$ be a hypergraph and $C\subset X$ be a Borel $G$-anticlique. Then $P_I\cap M\Vdash\dotxgen\notin\dot C$.
\end{claim}

\begin{proof}
Suppose on the contrary that some condition $B\in P_I\cap M$ forces $\dotxgen\in\dot C$. Shrinking the set $B$ if necessary, I may assume that for every open set $O\subset X$, $B\cap O=0$ or $B\cap O\notin I$.
The set $B'=\{x\in B\colon\lnot\exists e\in G\ e(0)=x\land\rng(e)\subset B\}$ is a coanalytic $G$-anticlique. Its complement $B\setminus B'$ cannot belong to $I$, since then it would be covered by a Borel
set $B''\in I$ and $B$ would be the union of $B''$ and the $G$-anticlique $B'\setminus B''$ and therefore in $I$. Now, by Theorem~\ref{millerfact}(2), the $I$-positive set $B\setminus B'$ has a Borel $I$-positive subset $D\in P_I\cap M$.
Now, in $V$ choose a sufficiently generic filter on the poset $P_I$ containing $D$ and let $x$ be its associated generic point. By the forcing theorem, it will be the case that $x\in D$. 

Now, since $x\in C$, there is an edge $e\in G$ such that $x=e(0)$ and $\rng(e)\subset B$.
Use the openness of the graph $G$ to find basic open sets $O_i\subset X$ for $i\in\dom(e)\setminus \{0\}$ such that $e(i)\in O_i$ and for every choice of points
$x_i\in O_i$, the sequence $\langle e(0), x_i\colon i\in\dom(e)\setminus\{0\}\rangle$ is an edge in the hypergraph $G$. For every $i\in \dom(e)\setminus\{0\}$, the set $B\cap O_i$ is nonempty, containing $e(i)$, and so it is $I$-positive. It also belongs to the model $M$. Thus, I can choose a sufficiently generic filter on the poset $P_I\cap M$ containing the set $B\cap O_i$
and let $x_i$ be its generic point. By the forcing theorem, $x_i\in D$ holds.

Finally, consider the tuple $\langle e(0), x_i\colon i\in\dom(e)\setminus\{0\}\rangle$. It is an edge in the hypergraph $G$, and it consists of points in the set $C$. This contradicts the assumption that $C$ was a $G$-anticlique.
\end{proof}

Let $B\in P_I$ be a condition in the model $M$. I must show that the set $\{x\in B\colon x$ is $P_I$-generic over the model $M\}$ is $I$-positive. To this end, let $C_n$ for $n\in\gw$ be Borel subsets of $X$ which are anticliques in at least one hypergraph
on the generating family $\cG$. I must produce a point $x\in B$ which is $P_I$-generic over $M$ and does not belong to the set $\bigcup_nC_n$. To do this, choose a sufficiently generic filter on the poset $P_I\cap M$ containing the set $B$, and let $x\in X$ be its generic point.
Clearly, $x\in B$, and by the claim $x\notin\bigcup_nC_n$ as desired. The proof is complete!
\end{proof}

\begin{example}
If $I$ is a \pioneoneonsigmaoneone\ $\gs$-ideal on a Polish space $X$ generated by closed sets, then there is a nearly open and in fact box open hypergraph $G$ on $X$ such that $I=I_G$. Just let $e\in G$ just in case the closure of $\rng(e)$ does not belong to $I$.
\end{example}

\begin{example}
Let $X, Y$ be Polish spaces and $f\colon X\to Y$ be a Borel function. Let $G$ be the hypergraph on $X$ consisting of those tuples $e\in X^\gw$ such that $e(0)=\lim_ie(i)$ but there is an open set $O\subset Y$ containing $f(e(0))$ and no other points of $f''\rng(e)$.
The hypergraph is not nearly open as it stands, but becomes nearly open if the topology of the space $X$ is updated to make the function $f$ continuous. The $\gs$-ideal $I_G$ is generated by Borel subsets of $X$ on which the function $f$ is continuous (with the original topology on the space $X$).
\end{example}

The $\gs$-ideals generated by nearly open graphs differ from the actionable ideals in many significant ways. One remarkable difference is Theorem~\ref{coincidencetheorem} proved below, which provides for many iterable preservation properties
of the quotient forcings of the nearly open hypergraphs that the actionable ideals can never have.

\subsection{Limitations}

 In the only result in this subsection, I isolate a rather humble class of hypergraphable posets which are not proper.

\begin{theorem}
\label{nonpropertheorem}
Let $G$ be a locally countable, acyclic, analytic graph on a Polish space $X$ and let $I$ be the $\gs$-ideal $\gs$-generated by Borel $G$-anticliques. The quotient forcing $P_I$ is either trivial or not proper.
\end{theorem}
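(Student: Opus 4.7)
I would start by reducing to the non-trivial case: if $X\in I$, then $P_I$ is trivial, so the conclusion is vacuous. Assume therefore that $I\neq\power(X)$, which, by the definition of $I$ as the $\gs$-ideal generated by Borel $G$-anticliques, means $G$ has uncountable Borel chromatic number. My goal is then to derive non-properness. I would invoke the characterization of properness in \cite[Proposition 2.2.2]{z:book2}: it suffices to fix a countable elementary submodel $M\prec H(\theta)$ containing $G$ and to exhibit a condition $B\in P_I\cap M$ such that the Borel set $H_B=\{x\in B\colon x\text{ is }P_I\text{-generic over }M\}$ lies in $I$.

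The structural input is a Feldman--Moore-type decomposition $G=\bigcup_{n\in\gw}\mathrm{graph}(f_n)$, where the $f_n$ are Borel partial involutions of $X$ (possibly after passing to an $I$-positive Borel subset, using the descriptive-complexity section of the paper). Then the connectedness equivalence relation $E$ of $G$ is a countable Borel equivalence relation, and by acyclicity each $E$-class, equipped with the $G$-edges, is a countable tree. Inside $M$ one has the $f_n$, and consequently the tree structure on every $E$-class is $M$-definable.

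The heart of the proof is to produce a Borel $2$-coloring $p\colon H_B\to\{0,1\}$ of the restriction of $G$ to $H_B$. Because $G$ is acyclic, distances inside a single $E$-class are unambiguous, so any Borel choice of reference vertex per $E$-class meeting $H_B$ yields a parity. The plan is to obtain the reference vertex Borel-uniformly on $H_B$ (but not on all of $X$): for $x\in H_B$, use $M$ to select the lexicographically least finite word $f_{n_1}\dots f_{n_k}$ in the Feldman--Moore involutions taking $x$ to a point in a fixed $M$-coded Borel marker that meets every $E$-class intersecting $H_B$. The parity of $k$ is declared $p(x)$; acyclicity guarantees that the reduced word is unique, so $p$ is well-defined, and the fact that $\{x,f_n(x)\}\in G$ flips $p$ shows $p^{-1}(0)$ and $p^{-1}(1)$ are Borel $G$-anticliques. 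Hence $H_B=p^{-1}(0)\cup p^{-1}(1)\in I$, contradicting properness.

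The main obstacle is the Borel construction of the marker and the reduced-word selector inside $M$. One has to show that the additional information carried by an $M$-generic $x$ (namely, that $x$ avoids every $M$-coded Borel $G$-anticlique) suffices to make the construction Borel on $H_B$, even though a globally Borel analogue must fail: if it did not, one would obtain a Borel $2$-coloring of $G$ on all of $X$, contradicting the standing assumption of uncountable Borel chromatic number. Acyclicity and local countability enter precisely here, ensuring that reduced words are unique finite combinatorial objects and that neighborhoods of $M$-generic points are countable and $M$-enumerable.
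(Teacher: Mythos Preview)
Your plan has a genuine gap precisely where you flag ``the main obstacle.'' The parity function $p(x)=$ (length mod $2$) of the lex-least word in the $f_n$ taking $x$ into a fixed marker $S$ is \emph{not} a proper $2$-coloring of $G$: adjacent points $x,y$ may be routed by their respective lex-least words to \emph{different} points of $S\cap[x]_E$, and parities of distances to distinct roots of a tree need not alternate along an edge. (On the path graph on $\mathbb{Z}$ with $S=\{0,3\}$, the neighbors $1$ and $2$ each reach $S$ in a single step and both receive parity $1$.) Moreover, your construction uses $M$-genericity nowhere: $S$ and the $f_n$ are $M$-coded, so $p$ is globally Borel on its domain; if it $2$-colored $G\restriction H_B$ it would $2$-color $G$ on every $E$-class meeting $S$, contradicting uncountable Borel chromatic number. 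A parity argument would require a Borel \emph{transversal} for $E$, but $E$ is non-smooth here (smoothness would already yield a global Borel $2$-coloring of $G$). You correctly diagnose that something special about $H_B$ must be exploited, but the proposal never actually does so.

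The paper's argument is different and in fact stronger: it shows the set $C$ of $M$-generic points is itself a $G$-\emph{anticlique}. If $x,y\in C$ were $G$-adjacent, reflect into $M$ to obtain a condition $B\in P_I\cap M$ and a group element $\gamma$ (from Feldman--Moore applied to a countable Borel equivalence relation containing $G$-path-connectedness) with $B\Vdash\langle\dotxgen,\gamma\cdot\dotxgen\rangle\in G$ and $\gamma\cdot\dotxgen$ generic. A lemma, proved via the Lecomte--Miller embedding of the KST graph, thins $B$ so that any two $G$-path-connected points of $B$ are joined by a path lying entirely inside $B$, and so that $\gamma\cdot B\cap B=0$. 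Since $\gamma\cdot B$ is forced to contain generic points it cannot be an anticlique; pick an edge $\{\gamma x',\gamma y'\}\in G$ with $x',y'\in B$. Then $x',\gamma x',\gamma y',y'$ is a $G$-path through $\gamma\cdot B$, and the internal $B$-path from $x'$ to $y'$ closes a cycle, contradicting acyclicity.
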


\begin{proof}
Let $E$ be the equivalence relation connecting points $x, y\in X$ if there is a $G$-path from $x$ to $y$. By the second reflection theorem, $E$ is a subset of a countable Borel equivalence relation $F$. By the Feldman--Moore theorem, there is a Borel action of some countable group $\Gamma$ which induces $F$ as the orbit equivalence relation. Consider the following a small claim of independent interest which does not use the acyclicity assumption:

\begin{claim}
If $B\in P_I$ is a Borel $I$-positive set, then there is a Borel $I$-positive set $C\subset B$ such that two elements of $C$ are connected by a $G$-path if and only if they are connected by a $G$-path whose vertices are all in $C$.
\end{claim}

\begin{proof}
Since the Borel chromatic number of $G$ on $B$ is uncountable, by a result of Miller \cite{lecomte:basis} there is a continuous map $h\colon\cantor\to B$ which is a homomorphism of the KST graph to $G$ and antihomomorphism of the Vitali equivalence relation to $F$.
Since the KST graph spans the Vitali equivalence relation, the compact set $C=\rng(h)$ is a witness to the validity of the claim.
\end{proof}

Now, let $M$ be a countable elementary submodel of a large structure containing $G, X$, and the action. By the properness criterion \cite[Proposition 2.2.2]{z:book2}, it will be enough to show that the set $C=\{x\in X\colon x$ is $P_I$-generic over $M\}$ is a $G$-anticlique.
Suppose towards contradiction that there are points $x, y\in C$ which are $G$-connected. Let $\gamma\in\Gamma$ be an element such that $\gamma\cdot x=y$. By the forcing theorem applied in the model $M$, there must be a Borel set $B\in P_I\cap M$
which forces $\gamma\cdot\dotxgen$ to be $P_I$-generic and $G$-connected to $\dotxgen$. Now, thinning down the set $B$ successively, I may arrange that for each $x\in B$, $\langle x, \gamma\cdot x\rangle\in G$, $\gamma\cdot B\cap B=0$, and finally, by the claim,
that two elements of $B$ are connected by a $G$-path if and only if they are connected by a $G$-path whose vertices are all in $B$. 

Look at the Borel set $\gamma\cdot B$. Since $B\Vdash\gamma\cdot\dotxgen$ is $P_I$-generic, the Borel set $\gamma\cdot B$ cannot be a $G$-anticlique. Thus, there are points $x\neq y\in B$ such that $\langle \gamma\cdot x, \gamma\cdot y\rangle\in G$.
Thus, $x, \gamma\cdot x, \gamma\cdot y, y$ is a $G$-path from $x$ to $y$. By the choice of the set $B$, there must be also such a path using exclusively points from the set $B$. The two paths together form a cycle in the graph $G$, which is a contradiction.
\end{proof}

\section{Fubini-type preservation properties}
\label{fubinisection}

In this section, I provide several results which, from simple combinatorial properties of the collection of hypergraphs $\cG$, obtain central forcing properties of the quotient posets. The theorems are proved through the Fubini property with various
$\gs$-ideals generated by Suslin ergodic forcings. This approach is in an important class of cases optimal (Theorem~\ref{fubinitheorem}) and has the advantage of automatically yielding preservation theorems for countable support iteration and product (Corollary~\ref{aacorollary} and~\ref{bbcorollary}). Recall the central definition:

\begin{definition}
\textnormal{\cite[Definition 3.2.1]{z:book2}}
Let $I, J$ be $\gs$-ideals on respective Polish spaces $X, Y$. The ideals have the \emph{Fubini property} ($I\not\perp J$) if for every Borel $I$-positive set $B_X\subset X$, every Borel $J$-positive set $B_Y\subset Y$,
and every Borel set $C\subset B_X\times B_Y$, either $C$ has a $J$-positive vertical section or the complement of $C$ has a horizontal $I$-positive section.
\end{definition}

\noindent The Fubini property is always going to be verified through the following property of Suslin partial orders and the attendant theorem:

\begin{definition}
\label{perpdefinition}
Let $\cG$ be an analytic family of hypergraphs on a Polish space $X$. Let $P$ be a Suslin forcing. The notation $\cG\not\perp P$ denotes the following  statement: for every Borel $I_{\cG}$-positive set $B\subset X$, every Borel function $h\colon B\to P$, and every hypergraph $G\in\cG$, there is a condition $p\in P$ which forces that there is an edge $e\in G$ which consists of ground model elements of the set $B$, and $h''\rng(e)$ is a subset of the generic filter.
\end{definition}

\noindent One good way to satisfy the statement $\cG\not\perp P$ is to actually find an edge $e\in G$ such that the set $h''\rng(e)$ has a lower bound in the poset $P$, which then will serve as the condition $p$. This is clearly the only way to act if the edges in the hypergraph $G$ are finite. However, in the case of hypergraphs of infinite arity, this approach may not be flexible enough, and the sought edge $e\in G$ may be found only in the $P$-extension.

The following theorem is one of the reasons why hypergraphable forcings are so special. To state it, if $P$ is a forcing, $Y$ is a Polish space and $\dot y$ is a $P$-name for an element of $Y$, write $J_{\dot y}$ for the $\gs$-ideal generated by the analytic sets $A\subset Y$ such that $P\Vdash\dot y\notin\dot A$.

\begin{theorem}
\label{keytheorem}
Let $\cG$ be an analytic family of hypergraphs on a Polish space $X$ such that the quotient forcing $P_{I_\cG}$ is proper. Let $P$ be a Suslin c.c.c.\ forcing, $Y$ a Polish space, $\dot y$ a $P$-name for an element of $Y$. Then, $\cG\not\perp P$ implies $I_{\cG}\not\perp J_{\dot y}$. 
\end{theorem}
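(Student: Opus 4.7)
The plan is to argue by contradiction. Suppose the triple $(B_X, B_Y, C)$ violates the Fubini conclusion: $B_X\in P_{I_\cG}$, $B_Y$ is a Borel $J_{\dot y}$-positive subset of $Y$, $C\subset B_X\times B_Y$ is Borel, every vertical section $C_x$ lies in $J_{\dot y}$, and every horizontal complement section $B_X\setminus C^y$ lies in $I_\cG$. Since $B_Y\notin J_{\dot y}$, fix $p_0\in P$ with $p_0\Vdash \dot y\in B_Y$ and restrict attention below $p_0$. Let $D=B_X\times B_Y\setminus C$. Then every vertical section $D_x$ is Borel $J_{\dot y}$-positive (indeed every condition below $p_0$ forces $\dot y\in D_x$), and every horizontal section $D^y$ lies in $I_\cG$.

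Apply Fact~\ref{millerfact} to uniformize $D$: produce Borel sets $R_n\subset X\times Y$ and hypergraphs $G_n\in\cG$ such that $D\subset\bigcup_nR_n$ and, for every $y\in Y$ and every $n\in\gw$, the horizontal section $R_n^y$ is a Borel $G_n$-anticlique. For each $x\in B_X$, since $D_x\subset\bigcup_n(R_n)_x$ and $D_x\notin J_{\dot y}$, the $\gs$-additivity of $J_{\dot y}$ produces the least $n(x)\in\gw$ with $(R_{n(x)})_x\notin J_{\dot y}$. The assignment $x\mapsto n(x)$ is Borel (membership in $J_{\dot y}$ of a parametrized Borel set is Borel in the parameter, since $P$ is Suslin c.c.c.) and partitions $B_X$ into Borel pieces $A'_n=\{x\in B_X:n(x)=n\}$. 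By $\gs$-additivity of $I_\cG$ some $A'_n$ is $I_\cG$-positive; fix such an $n$ and abbreviate $G=G_n$, $R=R_n$, $A=A'_n$.

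Now invoke the hypothesis $\cG\not\perp P$. Using a standard Borel selection for the Suslin forcing $P$, choose a Borel function $h\colon A\to P$ with $h(x)\leq p_0$ and $h(x)\Vdash\dot y\in\dot R_x$ for every $x\in A$. The hypothesis, applied to $(A,h,G)$, yields a condition $p\in P$ forcing the existence of an edge $e\in G$ whose range is a subset of the ground model set $A$ and satisfies $h''\rng(e)\subset\dot H$, where $\dot H$ names the $P$-generic filter. In any $P$-extension $V[H]$ with $p\in H$, every condition $h(x)$ for $x\in\rng(e)$ lies in $H$ and therefore forces $\dot y/H\in R_x$; equivalently, $\rng(e)\subset R^{\dot y/H}$. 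This contradicts the statement ``for every $y\in Y$, $R^y$ contains no $G$-edge'', which is $\mathbf\Pi^1_1$ in codes (the hypergraph $G$ is analytic) and therefore absolute from $V$ to $V[H]$.

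The main technical step is the uniform covering $D\subset\bigcup_nR_n$ in the second paragraph: one must simultaneously choose anticliques witnessing $D^y\in I_\cG$ for every $y$, with the hypergraph $G_n\in\cG$ attached to the $n$th cover the same for all $y$. This is precisely the \pioneoneonsigmaoneone\ uniformization cited as Fact~\ref{millerfact}. The two Borel-selection steps, for $n(x)$ and $h$, are routine for Suslin c.c.c.\ forcings, resting on the fact that for such $P$ the relation ``$p\Vdash\dot y\in\dot B$'' is Borel in the Borel code of $B$.
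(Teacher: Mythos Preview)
Your proof is correct and follows the same overall strategy as the paper: decompose $D$ via Theorem~\ref{millerfact}(3) into Borel pieces whose $y$-sections are $G_n$-anticliques, locate an $I_\cG$-positive Borel $A\subset B_X$, an index $n$, and a Borel $h\colon A\to P$ with $h(x)\Vdash\dot y\in (R_n)_x$, then apply $\cG\not\perp P$ and an absoluteness argument to produce a $G_n$-edge inside what was supposed to be a $G_n$-anticlique.

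The only difference is in packaging. You carry out the selection of $n$ and $h$ by hand, invoking Borelness of the forcing relation for Suslin c.c.c.\ posets and calling the rest ``routine''. The paper instead isolates exactly this step as a standalone result, Theorem~\ref{uniformizationtheorem}, whose proof does not go through your complexity computation but rather refines the topology on $X$ (Corollary~\ref{meagercorollary}) so that $I_\cG$-small sets become meager, and then reads off $n$ and $h$ from Cohen forcing over a countable elementary submodel. Your complexity claim is correct, but be aware that Borelness of $\{(x,p):p\Vdash\dot y\in (R_n)_x\}$ alone does not hand you a \emph{Borel} uniformizing $h$: Jankov--von Neumann only gives a $\gs(\bgS^1_1)$-measurable selection, and one must still thin $A$ (say via Baire-measurability in the refined topology) to make $h$ Borel---which is in effect what the paper's uniformization theorem does. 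So what you label routine is where the paper actually invests its technical effort. For the final contradiction you pass to a full $P$-extension and invoke Shoenfield absoluteness, whereas the paper works with a filter generic over a countable elementary submodel and uses Mostowski absoluteness; both routes are valid.
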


\begin{proof}
Suppose that $\cG\not\perp P$ holds. To prove the theorem, suppose that $B_X\subset X$ and $B_Y\subset Y$ are Borel $I_\cG$ and $J_{\dot y}$-positive sets respectively and $C\subset B_X\times B_Y$ is a Borel set with all vertical sections in the ideal $J_{\dot y}$. I must prove that the complement $D=(B_X\times B_Y)\setminus C$ has an $I_{\cG}$-positive horizontal section.

Suppose for contradiction that this fails. Then, by Theorem~\ref{millerfact}(3), there are Borel sets $D_n\subset D$ and hypergraphs $G_n\in\cG$ for $n\in\gw$ such that $D=\bigcup_nD_n$ and each vertical section of $D_n$ is a $G_n$-anticlique. By Theorem~\ref{uniformizationtheorem}, there is a Borel $I_{\cG}$-positive set $B\subset B_X$, a natural number $n\in\gw$, and a Borel function $h\colon B\to P$ such that for all $x\in B$, the condition $h(x)\in P$ forces $\langle\check x, \dot y\rangle\in \dot D_n$. 

Now, use the assumption $\cG\not\perp P$ to find a condition $p\in P$ and a $P$-name $\dot e$ such that $p\Vdash\dot e\in\dot G_n$, $\rng(\dot e)$ consists of ground model elements of $\dot B$, and for each $x\in\rng(\dot e)$, $h(x)$ belongs to the generic filter. Let $M$ be a countable elementary submodel of a large structure containing all objects named so far, and let $H\subset P\cap M$ be a filter $P$-generic over $M$, and let $y=\dot y/H$ and $e=\dot e/H$. By the forcing theorem, $M[H]$ satisfies $e\in G_n$ and $\forall x\in\rng(e)\ \langle x, y\rangle\in D_n$. By the Mostowski absoluteness, these statements transfer from $M[H]$ to $V$ and so $e$ is a $G_n$-edge consisting of points in the horizontal section $(D_n)^y$. This contradicts the choice of the set $D_n$.
\end{proof}

\subsection{The localization property}

The quotient posets arising from families of analytic hypergraphs in which every edge is finite with a fixed bound on its arity share a strong preservation property.

\begin{definition}
A Suslin poset $P$ is \emph{analytic $\gs$-$n$-linked} if it can be covered by the union of countably many analytic sets such that if $\{p_m\colon m\in n\}$ are conditions in one of the analytic sets, then they have a common lower bound.
\end{definition}

\begin{theorem}
\label{localizationtheorem} 
Suppose that $n\in\gw$ is a number and $\cG$ is a countable family of analytic hypergraphs on a Polish space $X$ such that each hypergraph in $\cG$ has arity $\leq n$. Suppose that $P$ is a Suslin poset which is analytic $\gs$-$n$-linked. Then
$\cG\not\perp P$.
\end{theorem}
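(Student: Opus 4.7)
The plan is a direct pigeonhole-plus-reflection argument. Fix a hypergraph $G\in\cG$, a Borel $I_\cG$-positive set $B\subset X$, and a Borel map $h\colon B\to P$. Write $P=\bigcup_{m\in\gw} P_m$ where each $P_m$ is analytic and has the property that any $n$ conditions chosen from $P_m$ admit a common lower bound in $P$. Set $A_m=h^{-1}(P_m)\subset B$; each $A_m$ is analytic and $B=\bigcup_m A_m$.

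The first main step is to locate an $m$ and a $G$-edge $e\in G$ with $\rng(e)\subset A_m$. Suppose no such $m,e$ exist. Then each $A_m$ is a $G$-anticlique. The hypergraph $G$ is analytic, so the property ``$A$ is a $G$-anticlique'' is $\Pi^1_1$ in $A$ over the analytic parameter $G$, and by a standard first reflection argument (of the \pioneoneonsigmaoneone\ type that underlies the whole paper, cf.\ the final section) every analytic $G$-anticlique is contained in a Borel $G$-anticlique. Thus each $A_m$ sits inside a Borel $G$-anticlique $A'_m$, and $B\subset\bigcup_m A'_m$ exhibits $B$ as an element of $I_G\subset I_\cG$, contradicting the positivity of $B$. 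So some $A_m$ does contain a $G$-edge $e$.

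For the second step, fix such $e$ and such $m$. Since $G$ has arity $\leq n$, the range $\rng(e)$ has at most $n$ elements, so $h''\rng(e)$ consists of at most $n$ conditions, all lying in the analytic $n$-linked piece $P_m$. Padding with repetitions if fewer than $n$ distinct values appear, the $n$-linkedness of $P_m$ yields a common lower bound $p\in P$ with $p\leq h(e(i))$ for every $i\in\dom(e)$. I claim $p$ is the condition demanded by Definition~\ref{perpdefinition}: the check name for $e$ is forced by $p$ to be an edge of $G$ (using Mostowski absoluteness for the analytic statement ``$e\in G$''), its range lies in $B$ and consists of ground model points by construction, and $p\Vdash h(e(i))\in\dot G_{\mathrm{gen}}$ for each $i$ since $p$ extends each $h(e(i))$.

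The one point that is not entirely cosmetic is the reflection step used to conclude that an analytic $G$-anticlique is contained in a Borel one; everything else is pigeonhole and linkedness. I expect this to be the only place one has to invoke descriptive-set-theoretic machinery, and it is precisely the feature the paper's complexity calculations are designed to supply.
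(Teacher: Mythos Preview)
Your proposal is correct and follows essentially the same approach as the paper: partition $P$ into analytic $n$-linked pieces, pull them back along $h$, use the first reflection theorem to conclude that if every preimage were a $G$-anticlique then $B$ would lie in $I_\cG$, and then use $n$-linkedness on the finitely many values $h(e(i))$ to produce the desired lower bound. The extra remarks you make (padding with repetitions, Mostowski absoluteness for ``$e\in G$'') are harmless elaborations of steps the paper leaves implicit.
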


\begin{proof}
Let $P=\bigcup_mP_m$ be a countable cover of the poset $P$ by analytic $n$-linked pieces.
Suppose $B\subset X$ is a Borel $I_{\cG}$-positive set, $h\colon B\to P$ is a Borel function, and $G\in \cG$ is a hypergraph. 
For each number $m\in\gw$, the preimage $h^{-1}P_m\subset B$ is analytic. If it is a $G$-anticlique, by the first reflection theorem it can be extended to a Borel $G$-anticlique $B_m\subset B$. If this occurred for each number $m\in\gw$, I would have $B=\bigcup_mB_m$, contradicting the assumption that $B\notin\cG$. Thus, there is a number $m\in\gw$ and an edge $e\in G$ such that $h''\rng(e)\subset P_m$. Since the set $P_m$ is $n$-linked and the set $\rng(e)$ has size at most $n$, there is a lower bound $p\in P$ of $h''\rng(e)$. This lower bound witnesses the statement ${\cG}\not\perp P$.
\end{proof}

\noindent Among the corollaries of the theorem, one appears to be exceptionally powerful:

\begin{definition}
\textnormal{\cite{roslanowski:localization}}
Let $m$ be a natural number. A poset $P$ has the $m$-localization property if every element of $\baire$ in the $P$-extension is a branch through some $\leq m$-branching tree in the ground model.
\end{definition}

\begin{corollary}
\label{localizationcorollary}
Suppose that $n\in\gw$ is a number and $\cG$ is a countable family of analytic hypergraphs on a Polish space $X$ such that each hypergraph in $\cG$ has arity $\leq n$. If the poset $P_{I_\cG}$ is proper, then it has the $n$-localization property.
\end{corollary}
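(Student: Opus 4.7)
The plan is to combine Theorem~\ref{localizationtheorem} with a level-by-level pruning construction to produce, for any $P_{I_\cG}$-name $\dot f$ for an element of $\baire$ and any condition $B\in P_{I_\cG}$, a Borel $I_\cG$-positive $B''\subseteq B$ whose support tree under a Borel representative of $\dot f$ is $\leq n$-branching; once this is achieved, $B''$ forces $\dot f$ to be a branch of an $\leq n$-branching tree coded in the ground model.

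First, properness and Fact~\ref{millerfact} shrink $B$ to an $I_\cG$-positive Borel $B'\subseteq B$ admitting a Borel representation $f\colon B'\to\baire$ with $B'\Vdash \dot f = f(\dotxgen)$. Define the support tree $T^{B'}=\{s\in\gwtree : f^{-1}([s])\cap B'\notin I_\cG\}$; then $B'\setminus f^{-1}[T^{B'}]\in I_\cG$ since it is a countable union of $I_\cG$-null sets, so after removing this null part we have $f''B'\subseteq[T^{B'}]$. It therefore suffices, after further shrinking, to arrange $T^{B'}$ to be $\leq n$-branching.

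At each vertex $s\in T^{B'}$, single-vertex pruning is handled by Theorem~\ref{localizationtheorem} applied to the Suslin $\gs$-$n$-linked poset $P=[\gw]^{\leq n}$ ordered by reverse inclusion (with $\gs$-$n$-linkedness witnessed by the countable cover $P=\bigcup\{\mathcal{P}(K):K\in[\gw]^{=n}\}$) and the Borel function $h\colon f^{-1}([s])\cap B'\to P$ given by $h(x)=\{f(x)(|s|)\}$; unwinding the proof technique of Theorem~\ref{localizationtheorem}, one writes $f^{-1}([s])\cap B'=\bigcup_{K\in[\gw]^{=n}}\bigsqcup_{j\in K}f^{-1}([s^\frown j])\cap B'$ as a countable union, so by $\gs$-additivity some $K$ yields an $I_\cG$-positive piece, and restricting $B'$ to this piece trims the immediate successors of $s$ in the support tree to a subset of $K$, hence to at most $n$. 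Iterating this pruning across all levels of the tree via an $\omega$-step fusion produces the desired $B''$.

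The main obstacle is the fusion limit: the pruning step may discard $I_\cG$-positive subsets at every level (the successors of $s$ outside the chosen $K$), and a naive fusion can therefore collapse $B''=\bigcap_k B_k$ to an $I_\cG$-null set, which would defeat the argument. The arity bound $\leq n$ together with the hypergraphable structure of $I_\cG$ must be used a second time to guarantee that the fusion limit remains $I_\cG$-positive --- I expect this step to invoke properness of $P_{I_\cG}$ (to identify $I_\cG$-positive Borel sets with the collections of generics over a countable elementary submodel, in the spirit of the proof of Theorem~\ref{maintheorem}), Fact~\ref{millerfact} (to control analytic $I_\cG$-positive sets across fusion levels), and a coherent choice of the pruning sets $K_s$ so that the total discarded portion across all levels remains an $I_\cG$-null set. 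Managing this bookkeeping is the technical heart of the proof.
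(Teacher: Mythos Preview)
Your proposal has a genuine gap, and it is exactly the one you flag yourself: you never show that the fusion limit $B''=\bigcap_k B_k$ stays $I_\cG$-positive. For a general hypergraphable ideal with proper quotient there is no off-the-shelf fusion lemma to quote, and nothing in the hypotheses (properness, arity $\leq n$, Fact~\ref{millerfact}) directly gives you one. The single-step pruning is fine---$\gs$-additivity alone picks some $K\in[\gw]^{=n}$ with $\bigcup_{j\in K}f^{-1}([s^\frown j])\cap B'\notin I_\cG$---but each such step may discard an $I_\cG$-positive set, and you offer no mechanism for controlling the cumulative loss. Saying you ``expect'' properness and bookkeeping to handle this is not a proof; this is precisely the kind of fusion argument the paper's machinery is designed to \emph{avoid}.

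The paper takes a completely different route that sidesteps fusion. It builds a concrete Suslin c.c.c.\ poset $P$ whose conditions are pairs $\langle t_p,a_p\rangle$ with $t_p$ a finite $n$-branching tree and $a_p$ a finite set of reals each extending a distinct terminal node of $t_p$; conditions sharing the same first coordinate are $n$-linked, so $P$ is analytic $\gs$-$n$-linked and Theorem~\ref{localizationtheorem} yields $\cG\not\perp P$. The generic object is an $n$-branching tree $\dottgen$ through which every ground-model element of $\baire$ has a finite modification as a branch. Theorem~\ref{keytheorem} then converts $\cG\not\perp P$ into the Fubini relation $I_\cG\not\perp J_{\dottgen}$, and a single application of that Fubini relation to the Borel set $C=\{\langle x,T\rangle:$ no finite modification of $f(x)$ is a branch of $T\}$ produces a fixed tree $T$ and an $I_\cG$-positive $B'\subset B$ forcing $\tau$ into a finite modification of $T$. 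No fusion, no bookkeeping---the $n$-branching tree is supplied wholesale by the auxiliary poset rather than assembled level by level inside $P_{I_\cG}$.
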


\begin{proof}
I will need a poset adding a large $n$-branching tree. Let $P$ be the partial order of all pairs $p=\langle t_p, a_p\rangle$ where $t_p\subset\gwtree$ is a finite $n$-branching tree, $a_p\subset\baire$ is a finite set, every element of $a_p$
contains a terminal node of $t_p$ as an initial segment, and every terminal node of $t_p$ is an initial segment of at most one element of $a_p$. The ordering is defined by $q\leq p$ if $t_q$ is an end-extension of $t_p$ and $a_p\subset a_q$. 
The poset $P$ is Suslin c.c.c., and in fact any $n$ many conditions sharing the same first coordinate have a lower bound. This also proves that the poset is analytic $\gs$-$n$-linked. 

The poset $P$ adds a generic $n$-branching tree $\dottgen$ which is the union of the first coordinates of conditions in the generic filter. A simple genericity argument shows that every ground model element $z\in\baire$ has a finite modification which is a branch through $\dottgen$. Let $Y$ be a the space of all $n$-branching trees on $\gw$, and let $J$ be the $\gs$-ideal generated by those analytic sets $A\subset Y$ such that $P\Vdash\dottgen\notin \dot A$. 

Suppose that $B\in P_{I_\cG}$ is a condition and $\tau$ is a $P_{I_\cG}$-name
for an element of $\baire$. By the Borel reading of names in proper forcing, it is possible to thin out the condition $B$ to find a Borel function $f\colon B\to\baire$ such that $B\Vdash\tau=\dot f(\dotxgen)$. Let $C\subset B\times Y$ be the set of all pairs $\langle x, T\rangle$ such that no finite modificaton of $f(x)$ is a branch through the tree $T$. The vertical sections of $C$ belong to the $\gs$-ideal $J$. By Theorems~\ref{localizationtheorem} and~\ref{keytheorem}, there has to be an $n$-branching tree $T$ such that the horizontal section of the complement of the set $C$ associated with $T$,
the Borel set $B'=\{x\in B\colon$ a finite modification of $f(x)$ is a branch through the tree $T\}$ is $I_{\cG}$-positive. The condition $B'$ forces some finite modification of $\tau$ to be a branch through $T$, and so $\tau$ to be a branch to some finite modification of the tree $T$.
\end{proof}

\begin{example}
For a fixed number $n\in\gw$ consider the hypergraph $G_n$ of arity $n$ on the space $n^\gw$ which contains exactly all tuples $\{x_i\colon i\in n\}$ such that for some $m\in\gw$, $x_i(m)=i$ while all functions $x_i$ are identical on the set $\gw\setminus\{m\}$. The resulting poset $P_n$ is the wider version of the Silver forcing. It does have the $n$-localization property, while it fails to have the $n-1$-localization property.
\end{example}

\begin{example}
Let $G_0$ be the graph on $X=\cantor$ connecting points $x\neq y$ if the smallest $n\in\gw$ such that $x(n)\neq y(n)$ is even. Let $G_1$ be the graph on $X=\cantor$ connecting points $x\neq y$ if the smallest $n\in\gw$ such that $x(n)\neq y(n)$ is odd, and let $\cG=\{G_0, G_1\}$. The resulting forcing is the $\cmin$ forcing studied by Kojman among others \cite[Section 4.1.5]{z:book2}, \cite{geschke:coloring}. Since the two graphs are open, the poset is proper, and by Corollary~\ref{localizationcorollary} it has the $2$-localization property.
\end{example}

\subsection{The Sacks property}

If one drops the demand that there be a uniform bound on the arities of edges in all hypergraphs in the generating collection, while each hypergraphs contains only edges of finite and bounded arity, then the resulting poset  still maintains
strong preservation properties.

\begin{theorem}
\label{sackstheorem}
Suppose that $\cG$ is a countable family of analytic hypergraphs on a Polish space $X$ such that for each $G\in\cG$ there is $n\in\gw$ such that $G$ has arity $\leq n$. and $I$ is a hypergraphable $\gs$-ideal on $X$. Suppose that $P$ is a Suslin poset which is analytic $\gs$-$n$-centered for every $n$. Then $\cG\not\perp P$. 
\end{theorem}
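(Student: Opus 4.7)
The plan is to run the proof of Theorem~\ref{localizationtheorem} one hypergraph at a time, since the only new ingredient is that the arity bound on each $G\in\cG$ is now allowed to depend on $G$. Because $P$ is assumed analytic $\gs$-$n$-centered for every $n$ simultaneously, this loss of uniformity in the generating family causes no real difficulty: I simply choose the cover of $P$ after the hypergraph has been fixed.

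More concretely, I would let $B\subset X$ be Borel $I_{\cG}$-positive, let $h\colon B\to P$ be Borel, and fix $G\in\cG$. Pick $n\in\gw$ with $G$ of arity $\leq n$, and use the assumption to write $P=\bigcup_m P_m$ as a countable union of analytic $n$-linked pieces. For each $m\in\gw$, the preimage $h^{-1}P_m\subset B$ is analytic. The property ``being a $G$-anticlique'' is $\Pi^1_1$ in the set variable (since $G$ is analytic) and is $\subseteq$-hereditary, so by the first reflection theorem any analytic $G$-anticlique extends to a Borel $G$-anticlique. Thus if each $h^{-1}P_m$ were a $G$-anticlique, the resulting Borel $G$-anticliques $B_m\supset h^{-1}P_m$ would cover $B$ and place $B$ into $I_{\cG}$, a contradiction. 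Hence some $m\in\gw$ admits an edge $e\in G$ with $h''\rng(e)\subset P_m$. Since $|\rng(e)|\leq n$ and $P_m$ is $n$-linked, $h''\rng(e)$ has a common lower bound $p\in P$; this $p$ forces $\check e$ to be a $\dot G$-edge consisting of ground model elements of $\dot B$ whose $h$-images all lie in the generic filter, witnessing $\cG\not\perp P$.

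There is no genuine obstacle here; the argument is essentially a one-line modification of the proof of Theorem~\ref{localizationtheorem}. The only mild subtlety is that edges of $G$ need not all have the same size, but every edge has size at most $n$, and the $n$-linkedness of $P_m$ delivers a common lower bound for any family of at most $n$ conditions in $P_m$ (by duplicating entries if necessary), so the same common-lower-bound step still applies uniformly.
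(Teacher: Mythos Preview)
Your proposal is correct and follows essentially the same approach as the paper's own proof: fix $G\in\cG$, choose the arity bound $n$ for that particular $G$, cover $P$ by analytic $n$-linked pieces $P_m$, use the first reflection theorem to rule out the possibility that every $h^{-1}P_m$ is a $G$-anticlique, and then take a common lower bound of the $h$-images of the resulting edge. Your added remarks on why reflection applies and on handling edges of size strictly less than $n$ are accurate and harmless elaborations.
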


\begin{proof}
Suppose $B\subset X$ is a Borel $I_{\cG}$-positive set, $h\colon B\to P$ is a Borel function, and $G\in \cG$ is a hypergraph. 
Let $n\in\gw$ be the arity of $G$, and let $P=\bigcup_mP_m$ be a countable cover of the poset $P$ by analytic $n$-linked pieces.
For each number $m\in\gw$, the preimage $h^{-1}P_m\subset B$ is analytic. If it is a $G$-anticlique, by the first reflection theorem it can be extended to a Borel $G$-anticlique $B_m\subset B$. If this occurred for each number $m\in\gw$, I would have $B=\bigcup_mB_m$, contradicting the assumption that $B\notin\cG$. Thus, there is a number $m\in\gw$ and an edge $e\in G$ such that $h''\rng(e)\subset P_m$. Since the set $P_m$ is $n$-linked and the set $\rng(e)$ has size at most $n$, there is a lower bound $p\in P$ of $h''\rng(e)$. This lower bound witnesses the statement ${\cG}\not\perp P$.
\end{proof}

Among the corollaries of the theorem, one again appears to be dominant. The requisite definitions:

\begin{definition}
An \emph{narrow tunnel} is a function $T\colon\gw\to\power(\gw)$ such that for all $n\in\gw$, $|f(n)|\leq 2^n$. A point $x\in\baire$ is \emph{enclosed} by the tunnel $T$ if for all $n\in\gw$, $x(n)\in T(n)$.
A poset $P$ has the \emph{Sacks property} if every point in $\baire$ in the $P$-extension enclosed by a ground model narrow tunnel.
\end{definition}

\begin{corollary}
\label{sackscorollary}
All the hypergraphable posets in the class identified in Theorem~\ref{sackstheorem} have the Sacks property.
\end{corollary}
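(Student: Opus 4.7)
The plan follows the template of Corollary~\ref{localizationcorollary}, with the $n$-branching tree forcing replaced by an auxiliary Suslin c.c.c.\ forcing that adds a generic narrow tunnel eventually enclosing every ground model element of $\baire$.

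First I would introduce an auxiliary forcing $P$ whose conditions are pairs $\langle T_p, F_p\rangle$ with $T_p\colon n_p\to[\gw]^{<\gw}$ a finite narrow tunnel ($|T_p(i)|\leq 2^i$ for $i<n_p$) and $F_p\subset\baire$ a finite set of reals enclosed by $T_p$, ordered so that $q\leq p$ iff $T_q$ end-extends $T_p$, $F_q\supseteq F_p$, and $z(i)\in T_q(i)$ for every $z\in F_p$ and every $i\in[n_p,n_q)$. With the condition set calibrated so that $|F_p|$ grows sufficiently slowly relative to $n_p$, one verifies that $P$ is analytic $\gs$-$n$-centered for every $n$: for fixed $n$, partition $P$ by the first coordinate $T$ together with a bound $k$ on $|F_p|$ satisfying $nk\leq 2^{n_T}$; any $n$ conditions in a common piece have a common extension because the union of their promises has at most $nk\leq 2^{n_T}$ elements, leaving room to extend the tunnel one more level while enclosing all promised reals jointly. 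A standard density argument shows that the generic narrow tunnel $\dottgen=\bigcup_{p\in G}T_p$ eventually encloses every ground model $z\in\baire$.

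Then let $Y$ be the Polish space of infinite narrow tunnels on $\gw$ and let $J$ be the $\gs$-ideal generated by those analytic sets $A\subset Y$ satisfying $P\Vdash\dottgen\notin\dot A$. The enclosing property of $\dottgen$ gives that for every $z\in\baire\cap V$, the set $\{T\in Y\colon z$ is not eventually enclosed by $T\}$ belongs to $J$. Now given a condition $B\in P_{I_{\cG}}$ and a $P_{I_{\cG}}$-name $\tau$ for an element of $\baire$, Borel reading of names in the proper forcing $P_{I_{\cG}}$ allows thinning $B$ and finding a Borel $f\colon B\to\baire$ with $B\Vdash\tau=\dot f(\dotxgen)$. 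Let $C\subset B\times Y$ be the Borel set of pairs $\langle x,T\rangle$ for which $f(x)$ is not eventually enclosed by $T$; each vertical section lies in $J$. Combining $\cG\not\perp P$ from Theorem~\ref{sackstheorem} with Theorem~\ref{keytheorem} gives $I_{\cG}\not\perp J$, producing a ground model narrow tunnel $T\in Y$ such that the horizontal section $B'=\{x\in B\colon f(x)$ is eventually enclosed by $T\}$ of the complement of $C$ is $I_{\cG}$-positive. This $B'$ forces $\tau$ to be eventually enclosed by the ground model tunnel $T$, which is the Sacks property.

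The main obstacle is the construction and verification of the auxiliary forcing $P$: the tunnel capacity $|T(i)|\leq 2^i$ restricts how many promises can be jointly extended at a fixed first coordinate, so one must calibrate the bound on $|F_p|$ carefully---as a slow enough function of $n_p$ ensuring $n|F_p|\leq 2^{n_p}$ asymptotically for every $n$---to secure analytic $\gs$-$n$-centeredness uniformly in $n$ while preserving density and hence the enclosing property for all ground model reals.
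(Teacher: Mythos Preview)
Your approach is essentially the paper's: build a Suslin c.c.c.\ poset adding a generic narrow tunnel that eventually encloses every ground model real, verify it is analytic $\gs$-$n$-centered for all $n$, and then run the Fubini argument via Theorems~\ref{sackstheorem} and~\ref{keytheorem}. The overall architecture is correct.

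The difference lies in the auxiliary poset, and here the paper's version is cleaner than your calibration. The paper takes conditions $\langle o_p,a_p\rangle$ with $o_p$ a finite tunnel of length $n_p$ and $a_p\subset\baire$ any set of size at most $2^{n_p}$---no further restriction. For $\gs$-$n$-centeredness one partitions not just by $o_p$ but by the pair $(o_p,\{x\restriction M:x\in a_p\})$ where $M=n_p+\lceil\log_2 n\rceil$; any $n$ conditions in the same piece have promise-sets whose union has size at most $n\cdot 2^{n_p}\leq 2^M$, and the values at levels $m\in[n_p,M)$ are already determined by the common restriction data and hence number at most $2^{n_p}\leq 2^m$. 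So a common extension of length $M$ exists. This resolves the obstacle you identify without modifying the poset.

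Your proposed partition by $(T,k)$ with $nk\leq 2^{n_T}$ does not by itself cover the whole poset: for any fixed $n\geq 2$ there will be conditions $p$ with $|F_p|$ close to the allowed maximum for which $n|F_p|>2^{n_p}$, and these fall outside every declared piece. You can repair this (e.g.\ by also refining by restrictions as above), but the calibration of $|F_p|$ you describe is then unnecessary. The paper's single bound $|a_p|\leq 2^{n_p}$ together with the refined partition handles everything at once.
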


\begin{proof}
I will need a Suslin poset which adds a generic narrow tunnel. Let $P$ be the set of all pairs $p=\langle o_p, a_p\rangle$ such that $o_p$ is a function with domain $n_p\in\gw$ such that for all $m\in n_p$, $o_p(m)\subset\gw$ is a set of size $\leq 2^m$.
Also, $a_p\subset\baire$ is a set of size $\leq 2^{n_p}$. The ordering is defined by $q\leq p$ if $o_p\subset o_q$, $a_p\subset a_q$, and for all $m\in n_q\setminus n_p$ and all $x\in a_p$, $x(m)\in o_q(m)$ holds.  It is not difficult to see that the forcing $P$ is c.c.c.\ and in fact analytic $n$-centered for every $n\in\gw$.

The forcing adds a generic narrow tunnel $\dottgen$ which is the union of the first coordinates of all conditions in the generic filter. A genericity argument shows that every ground model element of $\baire$ has a finite modification which is enclosed by the generic tunnel. 
 Let $Y$ be a the space of all narrow tunnels, and let $J$ be the $\gs$-ideal generated by those analytic sets $A\subset Y$ such that $P\Vdash\dottgen\notin \dot A$. 

Suppose that $B\in P_{I_\cG}$ is a condition and $\tau$ is a $P_{I_\cG}$-name
for an element of $\baire$. By the Borel reading of names in proper forcing, it is possible to thin out the condition $B$ to find a Borel function $f\colon B\to\baire$ such that $B\Vdash\tau=\dot f(\dotxgen)$. Let $C\subset B\times Y$ be the set of all pairs $\langle x, T\rangle$ such that no finite modification of $f(x)$ is not enclosed by the tunnel $T$. The vertical sections of $C$ belong to the $\gs$-ideal $J$. By Theorems~\ref{sackstheorem} and~\ref{keytheorem}, there has to be a narrow tunnel $T\in Y$ such that the horizontal section of the complement of the set $C$ associated with $T$,
the Borel set $B'=\{x\in B\colon$ a finite modification of $f(x)$ is enclosed by $T\}$ is $I_{\cG}$-positive. The condition $B'$ forces some finite modification of $\tau$ to be enclosed by $T$, and so $\tau$ to be enclosed by some finite modification of the tunnel $T$.
\end{proof}

\begin{example}
Let $X=\cantor$ and $\cG=\{G_n\colon n\in\gw\}$ be the family of analytic hypergraphs on $X$ in which a $G_n$-edge is a set $a\subset\cantor$ such that for some interval $i\subset\gw$ of length $n$, the sequences in $a$ are equal off $i$ while every function in $2^i$ is a subset of some element of $a$. Then $\cG$ is actionable as witnessed by the standard group action. The associated poset is the poset of all partial functions from $\gw$ to $2$ whose codomain contains arbitrarily long intervals, ordered by reverse inclusion.
\end{example}

Note that in the previous example there is no number $n\in\gw$ such that all hypergraphs in the generating family have arity $\leq n$. Thus, the conclusion is that the poset has the Sacks property by Corollary~\ref{sackscorollary}, but does not have the $n$-localization property for any number $n$. To see the failure of the $n$-localization property, consider the function $g_n$ in the generic extension which to each $i\in\gw$ assigns the binary string $g_n(i)\in 2^n$ such that for each $j\in n$, the value of the generic point at $i\cdot n+j$ is equal to $g_n(i)(j)$.

\subsection{The weak Sacks property}

If one further loosen the demands on the hypergraphs by allowing each of the hypergraphs to contain edges of all finite arities, the Sacks property may fail. Instead, I obtain the following.

\begin{definition}
Let $P$ be a Suslin poset. $P$ is \emph{analytic $\gs$-centered} if it can be written as a union of countably many analytic centered pieces.
\end{definition}

\begin{theorem}
\label{weaksackstheorem}
Suppose that $\cG$ is a countable family of analytic hypergraphs on a Polish space $X$ such that all of the edges of all hypergraphs in $\cG$ are finite. Suppose that $P$ is a Suslin poset which is analytic $\gs$-centered. Then $\cG\not\perp P$ holds. 
\end{theorem}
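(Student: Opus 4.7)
The plan is to mirror the proofs of Theorems~\ref{localizationtheorem} and~\ref{sackstheorem} almost verbatim, with the key adjustment that now edges may be of arbitrary finite arity and so one needs true countable centeredness (finite subsets having lower bounds) rather than $n$-linkedness. Fix a hypergraph $G\in\cG$, a Borel $I_\cG$-positive set $B\subset X$, and a Borel function $h\colon B\to P$. Let $P=\bigcup_m P_m$ witness the analytic $\sigma$-centeredness of $P$, so that every finite subset of any $P_m$ has a lower bound in $P$.

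For each $m\in\gw$ the preimage $h^{-1}P_m\subset B$ is analytic, being the preimage of an analytic set under a Borel function. The property of being a $G$-anticlique is $\mathbf{\Pi}^1_1$, so by the first reflection theorem, if $h^{-1}P_m$ is a $G$-anticlique then it extends to a Borel $G$-anticlique $B_m\subset B$. If this were the case for every $m$, then $B=\bigcup_m h^{-1}P_m\subset\bigcup_m B_m$ would be in $I_\cG$, contradicting the assumption that $B$ is $I_\cG$-positive. Hence there must be some $m\in\gw$ and some edge $e\in G$ such that $h''\rng(e)\subset P_m$.

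Since each edge of $G$ is finite, the set $h''\rng(e)$ is finite; and since $P_m$ is centered, $h''\rng(e)$ has a lower bound $p\in P$. Any such $p$ forces that $\check e\in\dot G$ is an edge consisting of ground model elements of $\check B$ and that $h''\rng(e)$ lies in the generic filter, which is precisely the statement $\cG\not\perp P$.

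No step here is really the main obstacle; the whole machinery is recycled from the two preceding theorems. The only point deserving a brief comment is why the bounded-arity hypothesis can be dropped: the finite arity of individual edges is exactly what permits invoking the centeredness of $P_m$ on the single finite set $h''\rng(e)$, without any uniform bound across $\cG$ or $G$. If one tried to further weaken the assumption to allow infinite edges, the centered piece $P_m$ would no longer automatically provide a lower bound of $h''\rng(e)$, and one would have to work instead with edges $e$ found in the $P$-extension (as foreshadowed in the remarks following Definition~\ref{perpdefinition}); but that extension of the theorem is not claimed here.
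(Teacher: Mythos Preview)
Your proof is correct and follows essentially the same approach as the paper's own proof: cover $P$ by analytic centered pieces, observe that not all preimages can be $G$-anticliques (else reflection would put $B$ in $I_\cG$), and use finiteness of the edge to extract a common lower bound. The closing commentary on why finite arity is exactly what is needed is accurate and matches the logical structure of the argument.
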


\begin{proof}
Let $P=\bigcup_mP_m$ be a countable cover of the poset $P$ by analytic centered pieces. Suppose $B\subset X$ is a Borel $I_{\cG}$-positive set, $h\colon B\to P$ is a Borel function, and $G\in \cG$ is a hypergraph. 
For each number $m\in\gw$, the preimage $h^{-1}P_m\subset B$ is analytic. If it is a $G$-anticlique, by the first reflection theorem it can be extended to a Borel $G$-anticlique $B_m\subset B$. If this occurred for each number $m\in\gw$, I would have $B=\bigcup_mB_m$, contradicting the assumption that $B\notin\cG$. Thus, there is a number $m\in\gw$ and an edge $e\in G$ such that $h''\rng(e)\subset P_m$. Since the set $P_m$ is centered and the set $\rng(e)$ is finite, there is a lower bound $p\in P$ of $h''\rng(e)$. This lower bound witnesses the statement $I_{\cG}\not\perp P$.
\end{proof}

Again, one of the consequences of the theorem seems to be dominant. For the definitions,

\begin{definition}
A point $x\in\baire$ is \emph{enclosed} by a narrow tunnel $T$  on an infinite set $b\subset\gw$ if for all  $n\in b$, $x(n)\in T(n)$.
A poset $P$ has the \emph{weak Sacks property} if every point in $\baire$ in the $P$-extension enclosed by a ground model narrow tunnel on a ground model infinite set.
\end{definition}

\begin{corollary}
\label{weaksackscorollary}
The hypergraphable posets in the class of Theorem~\ref{weaksackstheorem} have the weak Sacks property.
\end{corollary}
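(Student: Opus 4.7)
The plan is to follow the template of Corollary~\ref{sackscorollary}, but to replace the auxiliary Sacks poset with one whose generic codes both a narrow tunnel \emph{and} the infinite index set on which enclosure will be guaranteed. Concretely, I will define a Suslin poset $Q$ whose conditions are triples $q=\langle o_q,b_q,a_q\rangle$ such that $b_q\subseteq\gw$ is finite, $o_q$ is a function with domain $b_q$ with $o_q(n)\subseteq\gw$ and $|o_q(n)|\le 2^n$ for every $n\in b_q$, and $a_q\subseteq\baire$ is a finite promise set. The ordering is: $q'\le q$ iff $b_q\subseteq b_{q'}$, $o_q\subseteq o_{q'}$, $a_q\subseteq a_{q'}$, and for every $x\in a_q$ and every $n\in b_{q'}\setminus b_q$, $x(n)\in o_{q'}(n)$. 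Grouping conditions by the pair $(o_q,b_q)$ yields a countable decomposition of $Q$ into analytic centered pieces (any finite family sharing the same first two coordinates is compatible via the union of promise sets), so $Q$ is analytic $\gs$-centered.

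The forcing $Q$ adds a generic narrow tunnel $\dot T=\bigcup\{o_q:q\in G\}$ and a generic set $\dot b=\bigcup\{b_q:q\in G\}\subseteq\gw$. Density arguments show that $\dot b$ is infinite, and that for every ground model $x\in\baire$ the set of conditions with $x\in a_q$ is dense; hence $Q$ forces $x(n)\in\dot T(n)$ for every $n\in\dot b$ outside the finite set $b_q$ of the witnessing $q\in G$, so $x$ is enclosed by $\dot T$ on a cofinite subset of $\dot b$. By Theorem~\ref{weaksackstheorem}, $\cG\not\perp Q$, and by Theorem~\ref{keytheorem} applied to $\dot y=(\dot T,\dot b)$ living in the Polish space $Y$ of pairs $(T,b)$ with $T$ a narrow tunnel on $\gw$ and $b\subseteq\gw$ infinite, I obtain $I_\cG\not\perp J_{(\dot T,\dot b)}$.

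Now let $B\in P_{I_\cG}$ be a condition and $\tau$ a name for an element of $\baire$. By the Borel reading of names in proper forcing I may shrink $B$ to arrange $B\Vdash\tau=\dot f(\dotxgen)$ for some Borel $f\colon B\to\baire$. Set
\[
C=\bigl\{(x,T,b)\in B\times Y:\{n\in b: f(x)(n)\notin T(n)\}\text{ is infinite}\bigr\}.
\]
Each vertical section $C_x$ is Borel, and by the density argument above $Q\Vdash(\dot T,\dot b)\notin C_x$, so $C_x\in J_{(\dot T,\dot b)}$. The Fubini property then furnishes a ground model pair $(T,b)\in Y$ for which the horizontal section
\[
B'=\{x\in B:\{n\in b: f(x)(n)\in T(n)\}\text{ is cofinite in }b\}
\]
is $I_\cG$-positive. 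Under $B'$ the set $F=\{n\in b:\tau(n)\notin T(n)\}$ is forced to be a finite subset of $\gw$, hence lies in the ground model, and so $b\setminus F$ is a ground model infinite set on which $\tau$ is enclosed by the ground model tunnel $T$.

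The main design decision is to have the generic of $Q$ encode a pair $(T,b)$ rather than just a tunnel, so that the Fubini conclusion against $J_{(\dot T,\dot b)}$ delivers a ground model index set alongside the ground model tunnel. The foreseeable obstacle is that the Fubini step naively only gives enclosure on a cofinite subset of the ground model $b$ rather than on $b$ itself; this is dissolved by the absoluteness of finite subsets of $\gw$, which lets me replace $b$ by the ground model infinite set $b\setminus F$. Checking that $Q$ is analytic $\gs$-centered and that the vertical sections $C_x$ are Borel are both routine.
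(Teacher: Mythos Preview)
Your proof is correct and follows essentially the same approach as the paper: both define an analytic $\sigma$-centered poset adding a pair (narrow tunnel, infinite index set), invoke Theorems~\ref{weaksackstheorem} and~\ref{keytheorem}, and finish with the same Fubini-plus-finite-modification reasoning. The only cosmetic difference is that the paper's tunnel fragment $o_p$ is defined on a full initial segment $n_p\supseteq b_p$, so its generic tunnel is automatically total, whereas your $\dot T$ has domain $\dot b$ and hence needs a one-line trivial extension (or a redefinition of $Y$) to literally live in the Polish space of narrow tunnels on $\gw$ that you name.
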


\begin{proof} 
I will need a Suslin poset which adds a generic narrow tunnel and an infinite set. Let $P$ be the set of all triples $p=\langle o_p, a_p, b_p\rangle$ such that $o_p$ is a function with domain $n_p\in\gw$ such that for all $m\in n_p$, $o_p(m)\subset\gw$ is a set of size $\leq 2^m$. Also $b_p\subset n_p$ and $a_p\subset\baire$ is a finite set. The ordering is defined by $q\leq p$ if $o_p\subset o_q$, $a_p\subset a_q$, $b_p=b_q\cap n_p$ and for all $m\in b_q\setminus b_p$ and all $x\in a_p$, $x(m)\in o_q(m)$ holds.It is not difficult to see that the forcing $P$ is analytic $\gs$-centered; in fact, any finite set of conditions with the same first and third coordinate has a lower bound.

 The forcing $P$ adds a generic narrow tunnel $\dottgen$ which is the union of the first coordinates of all conditions in the generic filter, and a generic infinite set $\dotbgen\subset\gw$ which is the union of the third coordinates of the conditions in the generic filter. A genericity argument shows that the  every ground model element of $\baire$ has a finite modification which is enclosed bythe tunel $\dottgen$ on the set $\dotbgen$.  
Let $Y$ be the space of all pairs $\langle T, b\rangle$ where $T$ is a narrow tunnel and $b\subset\gw$ is an infinite set, and let $J$ be the $\gs$-ideal of all analytic set $A\subset A$ such that $P\Vdash\langle\dottgen, \dotbgen\rangle\notin\dot A$.

To prove the corollary, suppose that $B\in P_I$ is a condition and $\tau$ is a $P_I$-name for an element of $\baire$. Thinning the condition $B$ if necessary, I may find a Borel function $h\colon B\to\baire$ such that $B\Vdash\dot h(\dotxgen)=\tau$. Let $C\subset B\times Y$ be the set
of all triples $\langle x, T, b\rangle$ such that no finite modification of $h(x)\in\baire$ is enclosed by the tunnel $T$ on the set $b$. Then the vertical sections of the Borel set $C$ are $J$-small. By Theorems~\ref{sackstheorem} and~\ref{keytheorem}, there must be an $I$-positive horizontal section of the complement.
This horizontal section, call it $B'$, is attached to a pair $\langle T, b\rangle\in Y$. Clearly, $B'\Vdash$ a finite modification of $\tau$ is enclosed by the tunnel $T$ on the set $b$, and so $\tau$ is enclosed by the tunnel $T$ on a finite modification of the set $b$.
\end{proof}

It is easy to see that the weak Sacks property implies the bounding property. Thus, I can conclude

\begin{corollary}
\label{finitarycorollary}
Let $I$ be a hypergraphable $\gs$-ideal on a Polish space $X$. If every edge of every hypergraph in the generating family is finite and the quotient poset is proper, then the quotient poset is bounding.
\end{corollary}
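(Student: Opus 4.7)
The plan is a two-step reduction via Corollary~\ref{weaksackscorollary}. First, since every edge of every hypergraph in the generating family is finite and the quotient poset $P_I$ is proper, the hypotheses of Corollary~\ref{weaksackscorollary} are met, so $P_I$ has the weak Sacks property. It then remains to show, in complete generality, that the weak Sacks property implies $\gw^\gw$-bounding.

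For the second step, let $\tau$ be a $P_I$-name for an element of $\baire$ and $B\in P_I$ a condition. Define the $P_I$-name $\dot F$ for the non-decreasing function $F(n)=\max\{\tau(k)\colon k\leq n\}$. Apply the weak Sacks property to $\dot F$: some stronger condition $B'\leq B$ forces that there exist a ground model narrow tunnel $T$ and a ground model infinite set $b\subset\gw$ such that some finite modification of $\dot F$ is enclosed by $T$ on $b$, i.e.\ $\dot F(n)\in T(n)$ for all but finitely many $n\in b$. By a standard maximal antichain argument one may fix $T, b$ and a finite modification threshold in the ground model below a condition $B''\leq B'$.

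Now define $g\in\baire\cap V$ by choosing, for each $n\in\gw$, the smallest $m(n)\in b$ with $m(n)\geq n$ and setting $g(n)=\max T(m(n))$ (plus a finite fudge on the exceptional coordinates to absorb the finite modification); this is well-defined since $b$ is infinite, and both $m$ and $g$ lie in the ground model. Because $\dot F$ is forced to be non-decreasing, for almost all $n$ we have $\tau(n)\leq \dot F(n)\leq \dot F(m(n))\leq \max T(m(n))=g(n)$. Thus $B''$ forces $\tau\leq^* g$, and by density $P_I$ is bounding.

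The main obstacle, if any, is the mild bookkeeping to absorb the ``finite modification'' clause in the definition of weak Sacks enclosure into a single ground model dominating function; this is routine once one passes to the non-decreasing envelope $F$, which is the key trick that converts local bounds on an infinite set into a global dominating function.
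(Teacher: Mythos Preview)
Your proof is correct and follows exactly the route the paper takes: invoke Corollary~\ref{weaksackscorollary} to obtain the weak Sacks property, then observe that weak Sacks implies bounding. The paper records the second step only as ``it is easy to see,'' while you spell it out via the non-decreasing envelope $F(n)=\max_{k\leq n}\tau(k)$, which is the standard trick and works cleanly. One minor remark: the definition of the weak Sacks property already gives enclosure on an infinite ground model set with no finite-modification clause, so your bookkeeping about absorbing exceptions is unnecessary (though harmless).
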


The following corollary uses an entirely different $\gs$-centered poset, derived from the classical Solovay coding.

\begin{corollary}
\label{splittingcorollary}
Let $I$ be a hypergraphable $\gs$-ideal on a Polish space $X$ such that every edge of every hypergraph in the generating family is finite and the quotient poset is proper. In the $P_I$-extension, for every infinite set $b\subset Z$ there is a ground model coded Borel set $D\subset Z$ such that both sets $b\cap D$, $b\setminus D$ are infinite.
\end{corollary}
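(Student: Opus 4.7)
The plan is to reuse the Fubini machinery from the proof of Corollary~\ref{weaksackscorollary}, replacing the narrow-tunnel poset with the classical Solovay splitting-real forcing $P$. The conditions of $P$ are pairs $p = \langle s_p, F_p\rangle$ where $s_p \in \bintree$ and $F_p$ is a finite subset of $[\gw]^\gw$; the ordering puts $q \leq p$ iff $s_p \subseteq s_q$, $F_p \subseteq F_q$, and for every $a \in F_p$ the interval $[|s_p|, |s_q|)$ contains both some $n \in a$ with $s_q(n)=0$ and some $n \in a$ with $s_q(n)=1$. A routine check shows that $P$ is Suslin and analytic $\gs$-centered: any finitely many conditions sharing the same first coordinate have a common lower bound, obtained by extending the stem to meet all the splitting demands imposed by the finite union of their second coordinates. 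The generic filter produces a point $\dotygen \in \cantor$ whose preimage of $\{1\}$ splits every ground-model infinite subset of $\gw$ into two infinite pieces.

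Let $\cG$ be a generating family for $I$ in which every edge is finite. Theorem~\ref{weaksackstheorem} gives $\cG \not\perp P$, and Theorem~\ref{keytheorem} then yields the Fubini statement $I \not\perp J_{\dotygen}$, where $J_{\dotygen}$ is the $\gs$-ideal on $\cantor$ generated by the analytic sets $A$ with $P \Vdash \dotygen \notin \dot A$.

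Now suppose $B \in P_I$ and $\tau$ is a $P_I$-name for an infinite subset of $\gw$. Using Borel reading of names in proper idealized forcing, thin $B$ to a Borel $I$-positive $B_0 \subseteq B$ on which $\tau$ is realized by a Borel function $h\colon B_0 \to [\gw]^\gw$, so $B_0 \Vdash \tau = \dot h(\dotxgen)$. Form the Borel set $C \subseteq B_0 \times \cantor$ of pairs $\langle x, y\rangle$ for which $y^{-1}\{1\}$ fails to split $h(x)$ into two infinite pieces. For each fixed $x \in B_0$, $h(x)$ is an infinite set lying in the ground model, so $P \Vdash \dotygen^{-1}\{1\}$ splits $h(x)$; hence the vertical section $C_x$ lies in $J_{\dotygen}$. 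The Fubini property then supplies a point $y \in \cantor$ and a Borel $I$-positive $B_1 \subseteq B_0$ such that $y^{-1}\{1\}$ splits $h(x)$ for every $x \in B_1$, and the ground-model Borel set $D = y^{-1}\{1\}$ witnesses $B_1 \Vdash \tau \cap D$ and $\tau \setminus D$ are both infinite, as required.

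The main technical point is the verification that the Solovay splitting forcing is Suslin and analytic $\gs$-centered in exactly the sense required by Theorems~\ref{weaksackstheorem} and~\ref{keytheorem}; this is classical but must be stated carefully with respect to the coding of conditions. Once this is in place, the Borelness of $C$ and the identification of its vertical sections as $J_{\dotygen}$-small are routine, and the rest of the argument is a direct template of the proof of Corollary~\ref{weaksackscorollary}.
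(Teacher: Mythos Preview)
Your argument is clean and correct for the case $Z=\gw$, but the corollary is stated for an arbitrary Polish space $Z$: the conclusion is that every infinite $b\subset Z$ in the extension is split by a \emph{ground model coded Borel} subset $D\subset Z$. (The paper later applies this with $Z=\cantor$.) Your splitting forcing only produces a subset of $\gw$, and there is no evident way to translate that into a ground-model Borel subset of $Z$ that splits a given infinite set of points in the extension. The difficulty is exactly that the enumeration $\langle z_n:n\in\gw\rangle$ of a countable piece of $b$ lives in the extension, so $\{z_n:n\in y\}$ for a ground-model $y\subset\gw$ is not ground-model-coded.

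The paper resolves this by replacing the plain splitting forcing with a Solovay almost-disjoint coding poset: fix in the ground model a Borel $h\colon Z\to\power(\gw)$ with pairwise almost disjoint infinite values, and force with pairs $\langle f_p,a_p\rangle$ where $f_p$ is a finite partial function $Z\to 2$ and $a_p\subset\gw$ is finite, ordered so that the generic set $\dot y\subset\gw$ codes the Cohen-generic partition of $Z$ via $D=\{z\in Z:\dot y\cap h(z)\text{ is infinite}\}$. This $D$ \emph{is} a ground-model-coded Borel subset of $Z$ (it is Borel in the single real parameter $y$ and the fixed ground-model $h$), and the poset is still analytic $\gs$-centered, indexed by the finite $a_p$ together with a finite system of disjoint basic open sets in $Z$ recording the pattern of $f_p$. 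After that, the Fubini template you wrote down goes through verbatim with this poset in place of yours. So your overall strategy is exactly right; what is missing is the coding step that converts a generic real into a Borel subset of the target space.
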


\begin{proof}
I need the requisite Suslin poset. Fix a Borel function $h\colon Z\to\power(\gw)$ such that the range of $h$ consists of pairwise almost disjoint infinite subsets of $\gw$. The poset $P$ can be viewed as an iteration of adding a Cohen generic function from $Z$ to $2$ and then coding the resulting partition of $Z$ into a Borel set. More specifically, a condition $p\in P$ is a pair $\langle f_p, a_p\rangle$ such that $f_p$ is a finite function from $Z$ to $2$ and $a_p\subset\gw$ is a finite set. The ordering is defined by $q\leq p$ if $f_p\subset f_q$, $a_p\subset a_q$, and for all $n\in a_q\setminus a_p$ and all $z\in\dom(f_p)$ such that $f_p(z)=0$, $n\notin h(z)$. 

The poset $P$ is clearly Suslin. It is also analytic $\gs$-centered. To see this, if $a\subset\gw$ is finite, $o$ is a finite collection of pairwise disjoint basic open subsets of $Z$ and $f\colon o\to 2$ is a function, consider the set $A\subset P$ of those conditions $p$ such that
$a=a_p$, $\dom(f_p)\subset\bigcup o$ and for every $O\in o$ and every $x\in O\cap\dom(f_p)$ $f_p(x)=f(O)$ holds. It is not difficult to see that the set $A$ is analytic and centered. Moreover, the poset $P$ is covered by the countably many sets $A\subset P$ obtained in this way. 

Let $\dot y$ be the $P$-name for the union of the second coordinates of the conditions in the generic filter. A simple genericity argument shows that for every infinite set $b\subset Z$, the set $\{z\in b\colon \dot y\cap h(z)$ is infinite$\}$ is forced to be infinite co-infinite in $b$.
Let $Y=\power(\gw)$ and let $J$ be the $\gs$-ideal of all analytic sets $A\subset Y$ such that $P\Vdash\dot y\notin\dot A$.

To prove the corollary, suppose that $B\in P_I$ is a condition and  $B\Vdash\tau\subset\dot Z$ is an infinite set. Shrinking $\tau$ to a countable set and thinning out $B$ if necessary, it is possible to find Borel functions $g\colon B\to X^\gw$ such that
$B\Vdash\dot g(\dotxgen)$ is an injective enumeration of the set $\tau$. Let $C\subset B\times Y$ be the set of those pairs $\langle x, y\rangle$ for which the set $\{n\in\gw\colon y\cap h(g(x)(n))$ is infinite$\}$ is either finite or cofinite. The vertical
sections of the set $C$ belong to the ideal $J$, and so by Theorems~\ref{weaksackstheorem} and ~\ref{keytheorem} there is $y\in Y$ such that the horizontal section $B'\subset B$ of the complement of $C$ corresponding to $y$ is $I$-positive.
Let $D=\{z\in Z\colon y\cap h(z)$ is infinite$\}$ and observe that $B'\Vdash \tau\cap D$ and $\tau\setminus D$ are both infinite sets.
\end{proof}

\begin{example}
Let $f\in\baire$ be a function and let $X\subset\baire$ be the closed subset of all functions pointwise $\leq f$. Let $G\subset X^{<\gw}$ be the graph of all tuples $\langle x_i\colon i\in m\rangle$ such that there is $n\in\gw$ such that the functions $x_i$ agree everywhere except on $n$, and the set $\{x_i(n)\colon i\in m\}$ contains all numbers $\leq f(n)$. The hypergraph $G$ is actionable as witnessed by the group $\Prod_nf(n)$ of finite support product of the cyclic groups of size $f(n)$. Corollary~\ref{finitarycorollary} shows that the quotient poset is bounding. It is the wide version of the Silver forcing introduced in \cite[Definition 7.4.11]{bartoszynski:set}. Note that the hypergraph has edges of arbitrarily large finite sizes, so Theorem~\ref{sackstheorem} does not apply and indeed, the poset does not have the Sacks property as is immediately obvious from looking at its natural generic point.
\end{example}

\begin{example}
Another family of hypergraphs with finite edges only, whose quotient poset fails to have the Sacks property, is associated with packing measures and it is discussed in Example~\ref{packingexample}.
\end{example}

\subsection{The bounding property}

It turns out that there is a useful criterion for the hypergraphs to generate a bounding quotient forcing.

\begin{definition}
Let $G$ be an analytic hypergraph on a Polish space $X$. Call $G$ \emph{diagonalizable} if for every collection $\{d_n\colon n\in\gw\}$ of edges in $G$ such that the points $d_n(0)\in X$ are identical for all $n\in\gw$, there are finite sets $a_n\subset\rng(e_n)$ and an edge $e\in G$ such that $\rng(e)\subset\bigcup_na_n$.
\end{definition}

\noindent Note that I do not expect the hypergraph $G$ to be closed under permutation of vertices in the edges. Thus, $G$ can be formulated so that the first vertex $e(0)$ of an edge $e$ carries some specific information--for example, it may be the unique cluster point of $\rng(e)$ etc. This instrumental Suslin forcing in teis section is the \emph{Hechler forcing} $P$. This is the set of all pairs $p=\langle t_p, z_p\rangle$ such that $t_p\in\gwtree$ and $z_p\in\baire$, and the ordering is defined by $q\leq p$ if $t_p\subset t_q$, $z_q$ is larger than $z_p$ on all entries, and for all $n\in\dom(t_q\setminus t_p)$, $t_q(n)>z_p(n)$ holds. It is a well-known Suslin c.c.c.\ poset. It is also optimal for its designed purpose by the following

\begin{theorem}
\label{hechleroptimaltheorem}
Let $\cG$ be a countable family of  analytic hypergraphs on a Polish space $X$ such that the quotient forcing $P_{I_\cG}$ is proper. The following are equivalent:

\begin{enumerate}
\item $\cG\not\perp P$;
\item $P_{I_{\cG}}$ is bounding.
\end{enumerate}
\end{theorem}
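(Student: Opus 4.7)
The two directions split naturally, with (1)$\Rightarrow$(2) being a direct application of Theorem~\ref{keytheorem} and (2)$\Rightarrow$(1) requiring a partitioning argument tailored to the Hechler order.

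For (1)$\Rightarrow$(2), the plan is to apply Theorem~\ref{keytheorem} with $Y=\baire$ and $\dot y$ the name for the Hechler generic real. Since $\dot y$ dominates every ground-model real $z$, the Borel set $\{y\in\baire\colon\lnot(z\leq^* y)\}$ lies in $J_{\dot y}$. Given a $P_{I_\cG}$-name $\tau$ for an element of $\baire$ below a condition $B$, I would use the Borel reading of names in proper forcing to replace $\tau$ with a Borel function $h\colon B\to\baire$, and then apply the conclusion $I_\cG\not\perp J_{\dot y}$ of Theorem~\ref{keytheorem} to the Borel set $C=\{(x,y)\in B\times\baire\colon\lnot(h(x)\leq^* y)\}$, whose vertical sections all belong to $J_{\dot y}$. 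This yields some $y\in\baire$ and an $I_\cG$-positive horizontal complement $B'=\{x\in B\colon h(x)\leq^* y\}$, which forces $\tau\leq^* y$, so $P_{I_\cG}$ is bounding.

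For (2)$\Rightarrow$(1), given $B\in P_{I_\cG}$, a Borel function $h\colon B\to P$ written as $h(x)=(t_x,z_x)$, and a hypergraph $G\in\cG$, the first step is to partition $B$ according to the value of $t_x\in\gw^{<\gw}$ and pass to an $I_\cG$-positive Borel subset on which $t_x$ equals a fixed finite sequence $t$. Then I would apply the bounding property to the $P_{I_\cG}$-name $z_{\dotxgen}$ to obtain an $I_\cG$-positive Borel $B'$ and a ground-model $z^*\in\baire$ with $B'\Vdash z_{\dotxgen}\leq^* z^*$, and by Borel absoluteness assume $z_x\leq^* z^*$ holds for every $x\in B'$. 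Partition $B'$ further according to the least threshold $N$ beyond which $z_x\leq z^*$, and then according to the value of the initial segment $z_x\restriction N\in\gw^N$, arriving at an $I_\cG$-positive Borel $B'''$ on which $z_x\leq z^{**}$ holds pointwise for a single ground-model $z^{**}\in\baire$.

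Since $B'''$ is $I_\cG$-positive it cannot be a $G$-anticlique, so it contains a ground-model edge $e\in G$ with $\rng(e)\subset B'''$. The Hechler condition $p=(t,z^{**})$ then satisfies $p\leq h(x)$ for every $x\in\rng(e)$, because $h(x)$ and $p$ share the first coordinate $t$ and $z^{**}$ dominates $z_x$ pointwise. Hence $p$ forces $h''\rng(e)$ to lie in the generic filter, and $e$ is a ground-model $G$-edge inside $B$, verifying $\cG\not\perp P$. The main obstacle is the passage from the $\leq^*$ domination delivered by boundedness to the pointwise $\leq$ required to assemble a single Hechler lower bound on the finitely many conditions $h(x)$ for $x\in\rng(e)$; this is what forces the double partitioning by the threshold $N$ and by the initial segment $z_x\restriction N$.
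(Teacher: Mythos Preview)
Your proof is correct and follows essentially the same approach as the paper's own argument. In the $(2)\Rightarrow(1)$ direction you are more explicit than the paper: where the paper simply writes that after thinning ``(using the bounding property) the second coordinates are all bounded by some function $g\in\baire$,'' you spell out the two-stage partition (first by the threshold $N$ where eventual domination begins, then by the finite initial segment $z_x\restriction N$) that converts $\leq^*$ into pointwise $\leq$, which is exactly what is needed to produce a single Hechler lower bound $(t,z^{**})$.
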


\begin{proof}
To see that (1) implies (2),
Let $\dot y$ be the Hechler name for the union of the first coordinates of conditions in the generic filter; thus $\dot y$ is forced to be an element of $\baire$ which modulo finite dominates all ground model elements of $\baire$. Let $J$ be the $\gs$-ideal on $\baire$ generated by the analytic sets $A\subset\baire$ such that $P\Vdash\dot y\notin\dot A$. Theorems~\ref{boundingtheorem} and~\ref{keytheorem} imply that $I_{\cG}\not\perp J$ holds.

Now, suppose that $B\subset X$ is a Borel $I_{\cG}$-positive set and $\tau$ a $P_{I_{\cG}}$-name for an element of $\baire$. By the Borel reading of names in proper idealized forcing, thinning the set $B$ out if necessary one may find a Borel function $f\colon B\to\baire$ such that $B\Vdash\tau=\dot f(\dotxgen)$. Let $C\subset B\times\baire$ be the Borel set $\{\langle x, y\rangle\colon f(x)$ is not modulo finite dominated by $y\}$. The vertical sections of the set $C$ belong to the $\gs$-ideal $J$, and so there must be $y\in\baire$ such that the horizontal section $B'$ of the complement of $C$ indexed by $y$ is $I_{\cG}$-positive. It is immediate that the condition $B'$ forces $\tau$ to be modulo finite dominated by the function $y$.

To see why (2) implies (1), suppose that the quotient poset is bounding, $B\subset X$ is a Borel $I_{\cG}$-positive set, $f\colon B\to P$ is a Borel function and $G\in \cG$ is a hypergraph. Thinning out the set $B$ if necessary, I may assume that the first coordinates of the conditions $f(x)$ for $x\in B$ are all equal to some fixed $t\in\gwtree$, and (using the bounding property) the second coordinates are all bounded by some function $g\in\baire$. Let $e\in G$ be any edge all of whose vertices belong to the set $B$. Then the condition
$\langle t, g\rangle\in P$ forces $f''\rng(e)$ to be a subset of the generic filter as required.
\end{proof}

\begin{theorem}
\label{boundingtheorem}
Let $\cG$ be a countable family of diagonalizable analytic hypergraphs on a Polish space $X$. Then $\cG\not\perp P$.
\end{theorem}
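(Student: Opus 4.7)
The plan is to produce, for any $G\in\cG$, any Borel $I_\cG$-positive set $B\subset X$, and any Borel function $h\colon B\to P$, a Hechler condition $p\in P$ and an edge $e\in G$ with $\rng(e)\subset B$ such that $p\leq h(e(i))$ in the Hechler ordering for every $i\in\dom(e)$; such $p$ witnesses $\cG\not\perp P$. Writing $h(x)=\langle t_x,z_x\rangle$, I first partition $B$ by the countable parameter $t_x\in\gwtree$ and restrict to an $I_\cG$-positive Borel piece on which $t_x=t$ is constant, so that $h$ is encoded by a Borel function $z\colon B\to\baire$. The key reduction is this: if an edge $e\in G$ with $\rng(e)\subset B$ can be chosen so that $\bar z_e:=\sup_{i\in\dom(e)}z(e(i))$ lies in $\baire$, then for any $t'\in\gwtree$ extending $t$ with $t'(n)>\bar z_e(n)$ on $[|t|,|t'|)$ the condition $p:=\langle t',\bar z_e\rangle$ sits below every $\langle t,z(e(i))\rangle=h(e(i))$, completing the argument. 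The task thus reduces to finding such a pointwise-bounded edge in the ground model.

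I would produce it by applying the diagonalizability of $G$ to a family of edges sharing a common first vertex. A candidate vertex $x_0\in B$ exists because the set $\{x\in B\colon\exists d\in G,\ d(0)=x,\ \rng(d)\subset B\}$ is analytic and, by reasoning analogous to Claim~\ref{prepclaim3}, $I_\cG$-positive. I would then construct simultaneously a function $k\in\baire$ and, for each $n\in\gw$, an edge $d_n\in G$ with $d_n(0)=x_0$, $\rng(d_n)\subset B$, and $z(d_n(i))(j)\leq k(j)$ for all $i\in\dom(d_n)$ and all $j\leq n$. The bound $k(j)$ is chosen at stage $j$ by pigeonhole on the $\gs$-ideal: the current Borel refinement $B^{(j-1)}\subseteq B$, an $I_\cG$-positive set, decomposes as a countable disjoint union $B^{(j-1)}=\bigsqcup_{m\in\gw}\{x\in B^{(j-1)}\colon z(x)(j)=m\}$, at least one of whose classes is $I_\cG$-positive; I select $B^{(j)}$ to be such a class and $k(j)$ to be the corresponding value $m$. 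Applying the diagonalizability of $G$ to $\{d_n\}$ yields finite sets $a_n\subset\rng(d_n)$ and an edge $e\in G$ with $\rng(e)\subset\bigcup_n a_n$. At each coordinate $j$, vertices of $e$ lying in $\bigcup_{n\geq j}a_n$ have $z$-value at $j$ bounded by $k(j)$, while the remaining vertices, lying in the finite set $\bigcup_{n<j}a_n$, contribute only finitely many natural numbers; hence $\bar z_e\in\baire$, and the first paragraph supplies $p$.

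The hard part is arranging the common starting vertex $x_0$ and the decreasing chain of refinements $B\supseteq B^{(0)}\supseteq B^{(1)}\supseteq\cdots$ compatibly, so that the desired $d_n$'s exist inside $B^{(n)}$. The chain's intersection need not be $I_\cG$-positive, so demanding a single edge inside every $B^{(n)}$ through a common $x_0$ is not achievable by a naive argument. The whole point of diagonalizability is to bypass this: one only needs, at the finite stage $n$, an edge $d_n$ inside $B^{(n)}$ through $x_0$, and the diagonalization weaves these finite-stage edges into a global edge whose range is contained in a countable union of finite subsets of the $\rng(d_n)$. What remains is to secure that $x_0$ starts an edge in every $B^{(n)}$; I would handle this by applying Fact~\ref{millerfact} at each stage to pass from the analytic derivative $\{x\in B^{(n)}\colon\exists d\in G,\ d(0)=x,\ \rng(d)\subset B^{(n)}\}$ to a Borel $I_\cG$-positive subset, and by selecting $x_0$ so that the derivatives descend compatibly, possibly after a preliminary global refinement of $B$.
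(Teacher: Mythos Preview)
Your overall plan matches the paper's: reduce to a constant first Hechler coordinate $t$, find edges $d_n\in G$ through a common vertex $x_0$ whose second-coordinate functions $z(\cdot)$ are controlled on the initial segment $n$, diagonalize, and observe that the resulting edge $e$ has $\sup_{y\in\rng(e)} z(y)\in\baire$. The final paragraph bounding $\bar z_e$ and producing $p=\langle t,\bar z_e\rangle$ is exactly right.

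The gap is precisely where you locate it: securing a single $x_0$ that starts an edge inside every $B^{(n)}$. Your sequential construction fixes the values $k(0),k(1),\dots$ in advance, one coordinate at a time, and then asks for $x_0$ to start an edge inside each successive level set. But each $B^{(n)}$ is just $\{x\in B\colon z(x)\restriction(n+1)=k\restriction(n+1)\}$, and there is no reason the intersection of the associated derivatives is nonempty, let alone that a common $x_0$ can be extracted; your phrase ``selecting $x_0$ so that the derivatives descend compatibly, possibly after a preliminary global refinement'' does not name a mechanism. Applying the derivative-is-positive claim at each stage only gives you a descending chain of $I_\cG$-positive sets, and such chains can have empty intersection.

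The paper's resolution is to let the target values depend on the point rather than be chosen in advance. Define, in one stroke,
\[
C=\{x\in B\colon \forall n\ \exists e\in G\ \big(e(0)=x,\ \rng(e)\subset B,\ \forall y\in\rng(e)\ z(y)\restriction n=z(x)\restriction n\big)\},
\]
and prove $C\notin I_\cG$. The proof of this claim is a reflection argument: if $C\in I_\cG$, cover it by a Borel $D\in I_\cG$; the complement $B\setminus D$ is then covered by the coanalytic sets $E_n=\{x\in B\colon$ no such edge exists for this $n\}$, and separation yields Borel $F_n\subset E_n$ with $B\setminus D=\bigcup_n F_n$. Some $F_n$ is $I_\cG$-positive, and a further $\gs$-additivity step fixes a single string $s\in\gw^n$ so that $\{x\in F_n\colon z(x)\restriction n=s\}$ is positive; any $G$-edge inside this last set contradicts $F_n\subset E_n$. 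Once $C\notin I_\cG$, any $x_0\in C$ supplies the required family $\{d_n\}$ immediately, with bound $k=z(x_0)$. Your construction tried to pick $k$ externally; the paper lets $x_0$ carry its own $k$.
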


\begin{proof}
Suppose that $B\subset X$ is a Borel $I_{\cG}$-positive set , $f\colon B\to P$ is a Borel function, and $G\in\cG$ is a hypergraph. Let $f_0\colon B\to\gwtree$ and $f_1\colon B\to\baire$ be the Borel functions which for each $x\in\baire$ indicate the first and second coordinate
of the condition $f(x)\in P$. By the $\gs$-additivity of the $\gs$-ideal $I_{\cG}$, I can shrink the set $B$ if necessary to find some $t\in\gwtree$ such that $f_0(x)=t$ for all $x\in B$. Now, a small claim is helpful:

\begin{claim}
The set $C=\{x\in B\colon\forall n\ \exists e\in G\ \rng(e)\subset B\land x=e(0)\land \forall y\in\rng(e)\ f_1(y)\restriction n=f_1(x)\restriction n\}$ does not belong to the ideal $I_{\cG}$.
\end{claim}

\begin{proof}
If $C\in I$ then $C$ is covered by a Borel set $D\in I$. The Borel set $B\setminus D$ is covered by the union of the coanalytic sets $E_n=\{x\in B\colon\forall e\in G\ \rng(e)\not\subset B\lor x\neq e(0)\lor\exists y\in\rng(e)\ f_1(y)\restriction n\neq f_1(x)\restriction m\}$.
Use the separation theorem  to find Borel sets $F_n\subset E_n$ such that $B\setminus D=\bigcup_nF_n$. By the $\gs$-additivity of the $\gs$-ideal $I_{\cG}$ there is a number $n$ such that $F_n\notin I_\cG$. By the $\gs$-additivity
of $I_\cG$ again, there must be a string $s\in\gw^n$ such that the Borel set $\{x\in F_n\colon f_1(x)\restriction n=s\}$ is $I_\cG$-positive. Choose an edge $e\in G$ whose range is contained in this set. Clearly $e(0)\in E_n$, yet the edge $e$ violates the defining
property of the set $E_n$. This is a contradiction.
\end{proof}

Now, let $x\in C$ be any point, and for each number $n\in\gw$ select an edge $d_n\in G$ such that $\rng(d_n)\subset B$, $d(0)=x$, and for all $y\in\rng(d)$, $f_1(y)\restriction n=f_1(x)\restriction n$. Finally, use the diagonalizability of the hypergraph $G$
to find finite sets $a_n\subset\rng(d_n)$ and an edge $e\in G$ such that $\rng(e)\subset\bigcup_na_n$. Observe that the set $\{f_1(y)\colon y\in\rng(e)\}$ has an upper bound in the total domination ordering on $\baire$, since for every number $n\in\gw$,
among the numbers $\{f_1(y)(n)\colon y\in\rng(e)\}$ only the finitely many numbers $\{f_1(y)\colon y\in\bigcup_{m\in n}a_m\}$ are distinct from $f_1(x)(n)$. Let $g\in\baire$ be the upper bound, and observe that the condition $p=\langle t, g\rangle$
is stronger than all conditions $f(x)$ for $x\in\rng(e)$. This proves the theorem.
\end{proof}

\begin{corollary}
\label{bounding1corollary}
Suppose that $\cG$ is a countable family of diagonalizable hypergraphs on a Polish space $X$. The forcing $P_{I_{\cG}}$, if proper, is bounding.
\end{corollary}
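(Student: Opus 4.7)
The plan is to derive the corollary as an immediate consequence of chaining Theorems~\ref{boundingtheorem} and~\ref{hechleroptimaltheorem}. Theorem~\ref{boundingtheorem} shows that any countable family $\cG$ of diagonalizable analytic hypergraphs on $X$ satisfies $\cG\not\perp P$, where $P$ is the Hechler forcing. Theorem~\ref{hechleroptimaltheorem} then supplies the (1)$\Rightarrow$(2) direction: whenever $\cG$ is a countable family of analytic hypergraphs such that $P_{I_\cG}$ is proper, the condition $\cG\not\perp P$ (for $P$ Hechler) implies that $P_{I_\cG}$ is bounding. Composing these two implications under the hypotheses of the corollary---$\cG$ countable, every hypergraph in $\cG$ diagonalizable, and $P_{I_\cG}$ proper---one concludes at once that $P_{I_\cG}$ is bounding.

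There is no real obstacle here; the conceptual content has already been packaged into the two prior theorems, and one only needs to confirm that their hypotheses align. The properness assumption is consumed exactly once, at the step where Theorem~\ref{hechleroptimaltheorem} translates $\cG\not\perp P$ into the bounding property, via the continuous/Borel reading of names for elements of $\baire$ available in proper idealized forcing. Diagonalizability, by contrast, is used only inside Theorem~\ref{boundingtheorem}, where from any Borel assignment of Hechler conditions on an $I_\cG$-positive set one extracts---by the $\gs$-additivity of $I_\cG$ together with the diagonalization of countably many edges at a common vertex---a $G$-edge whose image has a common lower bound in $P$. Thus the corollary reduces to a one-line citation: apply Theorem~\ref{boundingtheorem} to obtain $\cG\not\perp P$, then invoke Theorem~\ref{hechleroptimaltheorem}(1)$\Rightarrow$(2) to conclude boundingness of $P_{I_\cG}$.
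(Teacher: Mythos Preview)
Your proposal is correct and is exactly the argument the paper intends: the corollary is stated without proof precisely because it follows by chaining Theorem~\ref{boundingtheorem} (diagonalizability gives $\cG\not\perp P$ for $P$ Hechler) with the (1)$\Rightarrow$(2) direction of Theorem~\ref{hechleroptimaltheorem} (under properness, $\cG\not\perp P$ implies bounding). Your additional remarks on where properness and diagonalizability are consumed are accurate and consistent with the paper's arguments.
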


\begin{corollary}
\label{boundingcorollary}
Let $\cG$ be a countable family of diagonalizable analytic hypergraphs on a Polish space $X$ such that the quotient poset is proper. Let $I$ be the associated $\gs$-ideal. Every analytic $I$-positive set contains a compact $I$-positive subset, and every Borel function with $I$-positive domain is continuous on an $I$-positive set.
\end{corollary}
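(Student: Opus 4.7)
The plan is to combine the bounding property proved a moment ago with general machinery for proper idealized forcing. By Corollary~\ref{bounding1corollary}, the quotient $P_I$ is proper and $\gw^\gw$-bounding. The descriptive-complexity section at the end of the paper shows that every hypergraphable $\gs$-ideal is \pioneoneonsigmaoneone. With these three properties in place, both conclusions are instances of Zapletal's theorems on proper bounding idealized forcings from \cite{z:book2}: for any \pioneoneonsigmaoneone\ $\gs$-ideal $I$ whose quotient is proper and bounding, every analytic $I$-positive set has a compact $I$-positive subset, and $P_I$ has the continuous reading of names, meaning that every Borel function with Borel $I$-positive domain is continuous on some $I$-positive Borel set.

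To extract the first clause, given an analytic $I$-positive set $A\subset X$ I would first use Fact~\ref{millerfact} to pass to a Borel $I$-positive subset $B\subset A$ and then invoke the compact-subset theorem on $B$. For the second clause, I would apply the continuous reading of names directly to the given Borel function on its Borel $I$-positive domain; a further appeal to the compact-subset theorem promotes the resulting Borel $I$-positive set of continuity to a compact one, if desired. No further genuine construction is needed beyond checking the hypotheses of the cited results.

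The only delicate point is matching hypotheses: the bounding property was just obtained from Theorem~\ref{boundingtheorem} via Hechler forcing, properness is assumed, and the \pioneoneonsigmaoneone\ complexity of $I$ is verified independently in the final section of the paper. Consequently this corollary is a packaging statement rather than a new construction; the real work lay in Theorem~\ref{boundingtheorem}, which via Theorem~\ref{hechleroptimaltheorem} secured the bounding property from the diagonalizability assumption.
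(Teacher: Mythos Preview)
Your proposal is correct and follows essentially the same route as the paper's own proof, which simply cites the standard characterization of the bounding property for quotient forcings \cite[Theorem 3.3.2]{z:book2} together with Theorem~\ref{boundingtheorem}. You have merely spelled out in more detail the ingredients that the paper leaves implicit: the passage from analytic to Borel via Fact~\ref{millerfact}, the \pioneoneonsigmaoneone\ complexity of $I$, and the fact that bounding is obtained from Theorem~\ref{boundingtheorem} through Corollary~\ref{bounding1corollary} (equivalently, Theorem~\ref{hechleroptimaltheorem}).
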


\begin{proof}
This follows from the standard characterization of the bounding property \cite[Theorem 3.3.2]{z:book2} and Theorem~\ref{boundingtheorem}.
\end{proof}

\begin{example}
Let $G$ be the hypergraph on $X=\baire$ such that $e\in X^\gw$ belongs to $G$ just in case there is $n\in\gw$ such that for every $m>n$ the set $\{x(m)\colon x\in \rng(e)\}$ has size greater than $2^m$. The hypergraph is box-open and so its $\gs$-ideal
is generated by closed sets and the quotient forcing is proper. It is easy to see that $G$ is diagonalizable. To see this, for every sequence $\langle e_n\colon n\in\gw\rangle$ of edges in the hypergraph $G$ find increasing sequence of numbers $m_n\in\gw$ such that 
for all $m>m_n$ the set $\{x(m)\colon x\in \rng(e_n)\}$ has size greater than $2^m$. Find finite sets $a_n\subset\rng(e_n)$ such that for all $m_n<m\leq m_{n+1}$ the set $\{x(m)\colon x\in \rng(a_n)\}$ has size greater than $2^m$ , and let $e\in X^\gw$ be any sequence enumerating the set $\bigcup_na_n$. It is immediate that $(\gw\setminus m_0)\times 2\subset \bigcup\rng(e)$ and so $e\in G$. Thus, the associated quotient poset is bounding. In contradistinction to the finitary hypergraphs, the poset does not have the weak Sacks property essentially by the definition of the hypergraph $G$.
\end{example}

\begin{example}
Let $G$ be the hypergraph on $X=\cantor$ such that $e\in X^\gw$ belongs to $G$ just in case $\bigcup\rng(e)$ contains all but finitely many elements of $\gw\times 2$. The hypergraph is box-open.
Similarly to the previous example, $G$ is diagonalizable. Thus, the associated quotient poset is bounding. In fact, the quotient poset is the
forcing introduced by Shelah \cite{Sh:326} and further investigated by Spinas \cite{spinas:splitting}.
\end{example}

\begin{example}
Let $G$ be the hypergraph on $X=\cantor$ such that $e\in X^\gw$ belongs to $G$ just in case $\rng(e)$ contains all but finitely many of the points which differ from $e(0)$ at exactly one entry. This is an actionable hypergraph, clearly invariant under the usual action of the rational Cantor group on $X$. The hypergraph is also diagonalizable and so the quotient forcing is bounding. Let $I$ be the $\gs$-ideal generated by Borel $G$-anticliques. In contradistinction to the finitary hypergraphs, the $P_I$-extension violates the conclusion of Corollary~\ref{splittingcorollary}; writing $\dot b$ for the set of all points in $X$ which differ from $\dotxgen$ in exactly one entry,  $P_I$ forces that every ground model coded Borel subset of $X$ contains either finitely many or cofinitely many elements of $\dot b$. To see this, suppose towards contradiction that $B$ is a condition in the quotient poset $P_I$ and $D\subset\cantor$ is a Borel set
and $B\Vdash\dot D$ splits the set $\dot b$ into two infinite pieces. Thin down $B$ to decide whether $\dotxgen\in \dot D$ holds or not; for definiteness assume the decision is affirmative. Let $M$ be a countable elementary submodel of a large structure containing $B$ and $D$. The set $C=\{x\in B\colon x$ is $P_I$-generic over the model $M\}$ is Borel and $I$-positive by the properness criterion \cite[Proposition 2.2.2]{z:book2}. Thus, it must contain an edge $e\in G$. All vertices of the edge are $P_I$-generic and meet the condition $B$, so they all belong to the set $D$.
But then, $e(0)\in B$ is a generic point such that almost all points which differ from it in exactly one entry (namely all points in $\rng(e)$) belong to $D$. This contradicts the forcing theorem.
\end{example}

\begin{example}
\label{porousexample}
Let $\langle X, d\rangle$ be a compact metric space. For every set $A\subset X$ and a point $x\in X$ say that $A$ is \emph{porous} at $x$ if there is an $\eps>0$ such that for each $\delta>0$ there is a point $y\in X$ such that $0<d(x, y)<\delta$, and the open ball centered
at $y$ with radius $\eps\cdot d(x, y)$ contains no elements of the set $A$. A set $A\subset X$ is \emph{porous} if it is porous at all of its points, and it is $\gs$\emph{-porous} if it is a union of countably many porous sets. Then, the $\gs$-ideal generated by Borel porous sets is hypergraphable
and the generating hypergraph is nearly open and diagonalizable. The quotient poset is proper and bounding.
\end{example}

\begin{proof}
Let $G\subset X^\gw$ consist of all sequences $e\in X^\gw$ such that the set $\rng(e)$ is not porous at $e(0)$. It is not difficult to see that this is in fact a Borel, nearly open hypergraph. Clearly, a set $A\subset X$ is porous just in case $A$ is a $G$-anticlique.
It remains to show that the hypergraph $G$ is diagonalizable. Suppose that $e_n\in G$ are edges for each $n\in\gw$, and for each $n\in\gw$ the value $e_n(0)$ is the same, equal to some point $x\in X$. For each number $n\in\gw$ find a real number $\gd_n>0$
such that for every point $y\in X$ with $0<d(x, y)\leq\gd_n$, the open ball around $y$ with radius $2^{-n}\cdot d(x, y)$ contains some element of $\rng(e_n)$. Shrinking the numbers $\delta_n$ if necessary, I may assume that they form a decreasing sequence. The compactness of the space $X$ provides a finite set $a_n\subset\rng(e_n)$ such that for every point $y\in X$ with $\gd_{n+1}\leq d(x, y)\leq\gd_n$, the open ball around $y$ with radius $2^{-n}\cdot d(x, y)$ contains some element of $a_n$. Let
$e\in X^\gw$ be any sequence with $e(0)=x$ and $\rng(e)=\{x\}\cup\bigcup_na_n$ and observe that $e$ is a $G$-edge with the required diagonalization properties.
\end{proof}

\begin{corollary}
\textnormal{\cite{dolezal:games}}
Let $\langle X, d\rangle$ be a compact metric space. Every non-$\gs$-porous analytic set has a compact non-$\gs$-porous subset. Every Borel function with non-$\gs$-porous domain is continuous on a non-$\gs$-porous domain.
\end{corollary}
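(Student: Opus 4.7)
The plan is to obtain this corollary as a direct specialization of Corollary~\ref{boundingcorollary} to the hypergraph constructed in Example~\ref{porousexample}. First I would invoke that example to fix an analytic, nearly open, diagonalizable hypergraph $G$ on $X$ (the hypergraph whose edges $e\in X^\gw$ are those sequences such that $\rng(e)$ is not porous at $e(0)$). By the definitions, the Borel $G$-anticliques are exactly the Borel porous subsets of $X$, so writing $I=I_G$ the $\gs$-ideal $I$ is precisely the $\gs$-ideal $\gs$-generated by Borel porous sets.

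Next I would identify $I$-positive analytic sets with non-$\gs$-porous analytic sets. Since the closure of a porous set is porous (after shrinking the porosity constant $\eps$ by a factor of $2$, any witness $y$ for $A$ at $x$ is also a witness for $\overline A$ at $x$), every porous analytic set is contained in a porous closed, hence Borel, set. Consequently an analytic set fails to be $\gs$-porous if and only if it is $I$-positive in the usual idealized-forcing sense.

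Since $G$ is nearly open, the properness theorem for nearly open hypergraphs proved earlier in this subsection gives that the quotient $P_I$ is proper. Since $G$ is additionally diagonalizable, Corollary~\ref{boundingcorollary} applies verbatim and delivers the two desired conclusions: every analytic $I$-positive set contains a compact $I$-positive subset, and every Borel function with $I$-positive domain is continuous on an $I$-positive subset of its domain. Translating back via the identification of the previous paragraph, this says exactly that every analytic non-$\gs$-porous set has a compact non-$\gs$-porous subset and that every Borel function with non-$\gs$-porous domain is continuous on a non-$\gs$-porous subset.

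No step here is a genuine obstacle: the real content of the corollary has been packaged into Example~\ref{porousexample} (verification of near openness and, more delicately, of diagonalizability, where one exploits compactness of $X$ to select, for each fixed initial vertex $x$ and each geometric scale, finitely many witnessing points from each of the given edges) and into the general machinery leading to Corollary~\ref{boundingcorollary}. The one place where one must be slightly attentive is the passage between ``porous'' and ``$I$-positive'' for analytic sets, which requires the elementary observation that the closure operator preserves porosity up to a harmless change of the porosity constant.
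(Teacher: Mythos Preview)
Your overall plan coincides with the paper's: reduce to Corollary~\ref{boundingcorollary} via the hypergraph of Example~\ref{porousexample}, after identifying ``analytic non-$\gs$-porous'' with ``analytic $I_G$-positive''. The difficulty lies entirely in that identification, and your justification of it contains a genuine error.

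You claim that the closure of a porous set is porous, arguing that a witness $y$ for a pore of $A$ at $x$ is also a witness for a pore of $\overline A$ at $x$. That observation is correct (in fact no shrinking of $\eps$ is needed, since an open ball disjoint from $A$ is already disjoint from $\overline A$), but it only shows that $\overline A$ is porous at points of $A$. It says nothing about porosity of $\overline A$ at points of $\overline A\setminus A$, and in general $\overline A$ need not be porous there. For a concrete counterexample in $X=[0,1]$, set $D=\{n+k/n:n\geq 1,\ 0\leq k<n\}$ and $A=\{e^{-t}:t\in D\}$. Every point of $A$ is isolated in $A$, so $A$ is porous; yet $\overline A=A\cup\{0\}$ is not porous at $0$, because the gaps of $D$ in $[n,n+1]$ have size $1/n\to 0$, so for every $\eps>0$ and all sufficiently small $y>0$ the interval $(y(1-\eps),y(1+\eps))$ meets $A$.

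The paper avoids this trap by invoking Theorem~\ref{coincidencetheorem}: for nearly open hypergraphs the ideal generated by Borel anticliques and the one generated by arbitrary anticliques coincide on analytic sets. The proof of that theorem (Claim~\ref{nearlyopenclaim}) does not pass to $\overline A$ but to the coanalytic set $\{x\in\overline A:\overline A$ is porous at $x\}$, which does contain $A$ and is itself porous. Once you replace your closure argument by a reference to Theorem~\ref{coincidencetheorem}, your proof becomes correct and is then essentially identical to the paper's.
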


\begin{proof}
Consider the ideals $I, J$, the former $\gs$-generated by Borel porous sets, and the latter $\gs$-generated by arbitrary porous sets. The hypergraph generating the two ideals is nearly open, and so by Theorem~\ref{coincidencetheorem}
the two ideals coincide on analytic sets.
The corollary now follows immediately from Example~\ref{porousexample} and the general characterization of the bounding property in quotient forcings in \cite[Theorem 3.3.2]{z:book2} applied to $P_I$.
\end{proof}

\subsection{Outer measure preservation}

Preserving outer measure is a desirable forcing property, and its verification often involves tricky fusion arguments. In the case of hypergraphable forcings, it is implied by a simple and useful criterion. Everywhere below, the letter $\mu$ denotes the usual Borel probability measure on $\cantor$.

\begin{definition}
Let $\cG$ be a countable family of analytic hypergraphs on a Polish space $X$. Say that $\cG$ is \emph{Fubini} if for every $G\in\cG$ there is $H\in\cG$ and $\eps>0$ such that for every edge $d\in H$ and every Borel set $D\subset\rng(d)\times\cantor$ all of whose vertical sections have $\mu$-mass larger than $1-\eps$,
there is a point $y\in\cantor$ and an edge $d\in G$ such that $\rng(e)\subset\rng(d)$ and for all $x\in\rng(e)$, $\langle x, y\rangle\in D$. If $\cG$ is Fubini and contains just one hypergraph $G$, I will call $G$ Fubini as well. 
\end{definition}

The instrumental Suslin forcing is of course the random forcing, i.e.\ the poset $P$ of closed subsets of $\cantor$ of positive $\mu$-mass, ordered by inclusion. This is the optimal poset for the purposes of this section, as is obvious from the following:

\begin{theorem}
\label{randomoptimaltheorem}
Let $\cG$ be a countable family of analytic hypergraphs on a Polish space $X$ such that the poset $P_{I_\cG}$ is proper. Then (1) implies (2), where

\begin{enumerate}
\item $\cG\not\perp P$;
\item $P_{I_\cG}$ preserves the outer Lebesgue measure.
\end{enumerate}

\noindent If $\cG$ contains only finitary hypergraphs, then (2) implies (1).
\end{theorem}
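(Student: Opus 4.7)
For the implication $(1)\Rightarrow(2)$, I plan to follow the template of Theorem~\ref{hechleroptimaltheorem}. Take $P$ to be random forcing and let $\dot y$ be the $P$-name for the random real, so that $J_{\dot y}$ is precisely the $\gs$-ideal of Lebesgue null subsets of $\cantor$. Theorem~\ref{keytheorem} then hands me $I_\cG\not\perp J_{\dot y}$. To deduce preservation of outer Lebesgue measure, I argue contrapositively: given $A\subset\cantor$ of positive outer measure in $V$ and a condition $B\in P_{I_\cG}$ forcing $\check A\subset\tau$ for a $P_{I_\cG}$-name $\tau$ for a Borel null set, use the Borel reading of names to thin $B$ so that $\tau=\dot h(\dotxgen)$ for a Borel function $h$ from $B$ into Borel codes for null sets. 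Choose a Borel hull $B_Y\supset A$ with $\mu(B_Y)=\mu^\ast(A)$ and form the Borel set $C=\{\langle x,y\rangle\in B\times B_Y:y\in h(x)\}$, whose vertical sections are all null. A standard iteration of the Fubini property, applied to every positive-measure Borel subset of $B_Y$, shows that the set $U=\{y\in B_Y:$ the horizontal section of $(B\times B_Y)\setminus C$ at $y$ is $I_\cG$-positive$\}$ is conull in $B_Y$. Hence $U\cap A\neq\emptyset$, and any $y$ in this intersection yields a stronger condition $B'\leq B$ with $B'\Vdash\check y\notin\tau$, contradicting $B'\Vdash\check y\in\check A\subset\tau$.

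For the converse $(2)\Rightarrow(1)$ under the finitary hypothesis, fix a Borel $I_\cG$-positive set $B$, a Borel function $h\colon B\to P$, and a finitary hypergraph $G\in\cG$. Set $B_y=\{x\in B:y\in h(x)\}$ and $Z=\{y\in\cantor:B_y$ contains a $G$-edge$\}$. Since $G$ is analytic with finite edges, $Z$ is analytic and hence Lebesgue measurable. The plan is to prove $\mu(Z)>0$: any closed positive-measure $p\subset Z$ will force the Borel name $\dot B_{\dot y}$ to contain a $G$-edge, and a $\Sigma^1_1$-uniformization yields a $P$-name $\dot e$ witnessing $\cG\not\perp P$.

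Suppose toward contradiction that $\mu(Z)=0$, so that for every $y$ in the conull Borel set $Z^c$ the Borel set $B_y$ is a $G$-anticlique of $V$. Consider the two-step iteration $P_{I_\cG}\ast P$ below the condition $\langle B,\dot h(\dotxgen)\rangle$: in the resulting extension $V[\dotxgen][\dot y]$ one has $\dotxgen\in B$ and $\dot y\in h(\dotxgen)$, so $\dotxgen\in B_{\dot y}$. Outer measure preservation forces $\dot y$ to be random over $V$ as well, so $\dot y\in Z^c$ by absoluteness of null Borel sets; since ``$B_y$ is a $G$-anticlique'' is a $\Pi^1_1$ statement absolute between $V$ and $V[\dot y]$, the set $B_{\dot y}$ remains a Borel $G$-anticlique in $V[\dot y]$, i.e.\ a member of $I_\cG^{V[\dot y]}$. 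The Fubini-type commutation of iteration provided by outer measure preservation then shows that $\dotxgen$ is in fact $P_{I_\cG}^{V[\dot y]}$-generic over $V[\dot y]$, so it must avoid every Borel set in $I_\cG^{V[\dot y]}$; in particular $\dotxgen\notin B_{\dot y}$, contradicting the preceding sentence.

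The main obstacle in the plan is this final commutation step. I need to know that the Fubini property $I_\cG\not\perp J_{null}$ --- equivalent to (2) in the present idealized proper setting --- permits the swap of $P_{I_\cG}\ast P$ for $P\ast P_{I_\cG}^{V[\dot y]}$ in such a way that the original $P_{I_\cG}^V$-generic $\dotxgen$ is reinterpreted as $P_{I_\cG}^{V[\dot y]}$-generic over $V[\dot y]$. This is the abstract Fubini theorem for iterations of idealized forcings from \cite[Section 3.2]{z:book2}, and the finitary hypothesis on $\cG$ is what makes the analytic set $Z$ the correct object to test, since a single ground-model random real then already suffices to witness a finite $G$-edge inside $B_{\dot y}$.
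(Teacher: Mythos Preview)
Your $(1)\Rightarrow(2)$ is correct and is the paper's proof with the cited black box \cite[Proposition~3.2.11]{z:book2} unfolded.

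For $(2)\Rightarrow(1)$ there is a genuine gap at the last step. Granting the iteration swap from \cite[Section~3.2]{z:book2}, your contradiction argument does yield $\mu(Z)>0$. But the $\Sigma^1_1$-uniformized name $\dot e$ you then extract has vertices that are Borel images of the random real $\dot y$ and so need not lie in $V$, whereas Definition~\ref{perpdefinition} demands that the edge consist of \emph{ground model} elements of $B$. As the paper remarks immediately after that definition, for finitary $G$ this is equivalent to exhibiting a single $e\in V$ with $\mu\bigl(\bigcap_{x\in\rng(e)}h(x)\bigr)>0$, and the positivity of the uncountable union $Z=\bigcup_{e}\bigcap_{x\in\rng(e)}h(x)$ does not deliver such an $e$. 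The paper avoids the commutation machinery altogether with a short Lebesgue-density argument: thin $B$ so that one fixed ground-model point $y\in\cantor$ is a density-one point of every $h(x)$; then for any finite edge $e\subset B$ choose $n$ so large that $2^{-n}\mu\bigl(h(x)\cap[y\restriction n]\bigr)>1-1/|e|$ for each of the finitely many $x\in\rng(e)$, whence $[y\restriction n]\cap\bigcap_{x\in\rng(e)}h(x)$ has positive measure and serves as the required lower bound. This is exactly where finiteness of the edge does its work, and it produces both $e$ and $p$ inside $V$.
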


\begin{proof}
To see how (1) implies (2), first use Theorem~\ref{keytheorem} to show that (1) implies $I_{\cG}\not\perp J$ where $J$ is the $\gs$-ideal on $\cantor$ of sets of zero $\mu$-mass. Then, \cite[Proposition 3.2.11]{z:book2} implies that $P_{I_\cG}$ preserves the outer Lebesgue measure.

For the opposite direction, suppose that $\cG$ contains only finitary hypergraphs, and the poset $P_{I_\cG}$ preserves the outer Lebesgue measure. To prove (1), suppose that $B\subset X$ is a Borel $I_{\cG}$-positive set, $f\colon B\to P$ is a Borel function, and $G\in\cG$ is a hypergraph.
Let $C\subset B\times\cantor$ be the Borel set of all pairs $\langle x, y\rangle$ such that $\lim_n 2^{-n}\cdot \mu(f(x)\cap [y\restriction n])=1$ and use the Lebesgue density theorem to see that all vertical sections of the set $C$ have positive $\mu$-mass. Let $\Gamma$ be the rational Cantor group with its usual action on $\cantor$. Note that the $\Gamma$-saturation of any vertical section of the set $C$ is of $\mu$-mass $1$. Since the poset $P_{I_\cG}$ preserves the outer Lebesgue measure, it forces that there must be a ground model point of $\cantor$ in the $\Gamma$-saturation of the set $\dot C_{\dotxgen}$. Standard manipulations then show that thinning out the set $B$ if necessary one can find
a point $y\in\cantor$ such that for all $x\in B$, $\langle x, y\rangle\in C$. Let $e\in\cG$ be an edge with all vertices in the set $B$. Find a natural number $n\in\gw$ such that $2^{-n}\cdot \mu(f(x)\cap [y\restriction n])>1-1/|e|$ for all $x\in\rng(e)$. It follows immediately that
$[y\restriction n]\cap \bigcap_{x\in\rng(e)}f(x)$ is a set of positive $\mu$-mass, and therefore a condition in the poset $P$. This condition forces $f(x)$ into the generic filter for all $x\in\rng(e)$, confirming (1).
\end{proof}

\begin{theorem}
\label{measuretheorem}
If $\cG$ is a Fubini countable family of analytic hypergraphs on a Polish space $X$, then $\cG\not\perp P$ where $P$ is the random forcing.
\end{theorem}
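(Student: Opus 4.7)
The strategy is to prove that the analytic set
$$E=\{y\in\cantor:\exists e\in G,\ \rng(e)\subset B,\ \forall x\in\rng(e)\ y\in h(x)\}$$
has positive $\mu$-mass. Any closed $p\subset E$ with $\mu(p)>0$ will then serve as the desired random condition: since $p\Vdash\dot y\in E$ and the defining formula is $\bgS^1_1$, absoluteness between $V$ and the random extension gives $p\Vdash$ there is $e\in G$ with $\rng(e)\subset\check B$ and $\dot y\in h(x)$ for every $x\in\rng(e)$, which is exactly $h{''}\rng(e)\subset$ the generic filter.

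Fix $H\in\cG$ and $\eps>0$ witnessing the Fubini property for $G$. For each $s\in\bintree$ set $B_s=\{x\in B:\mu(h(x)\cap[s])>(1-\eps/2)\mu([s])\}$. By the Lebesgue density theorem applied to each $h(x)$ we have $B=\bigcup_s B_s$, so by the $\gs$-additivity of $I_\cG$ some $B_s$ is $I_\cG$-positive; replace $B$ by this $B_s$ and fix the cylinder $[s]$. The argument of Claim~\ref{prepclaim3} uses only that $I_\cG$ is generated by Borel anticliques and hence applies verbatim to any analytic hypergraph in $\cG$; applied to $H$ and the new $B$ it yields an $H$-edge $d$ with $\rng(d)\subset B$.

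Suppose towards contradiction that $\mu(E)=0$. Since $E$ is $\bgS^1_1$ and hence universally measurable, cover it by an open $U\subset\cantor$ with $\mu(U\cap[s])<(\eps/2)\mu([s])$. Let $\gf\colon\cantor\to[s]$ be the measure-multiplying bijection $\gf(z)=s^\frown z$ (with factor $2^{|s|}$) and consider the Borel set
$$D=\{\langle x,z\rangle\in\rng(d)\times\cantor:\gf(z)\in h(x)\setminus U\}.$$
The $\mu$-mass of the vertical section $D_x$ equals $2^{|s|}\mu((h(x)\cap[s])\setminus U)>2^{|s|}[(1-\eps/2)\mu([s])-(\eps/2)\mu([s])]=1-\eps$. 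Invoking the Fubini property on the pair $d,D$ produces $z\in\cantor$ and $e\in G$ with $\rng(e)\subset\rng(d)\subset B$ and $\gf(z)\in h(x)\setminus U$ for every $x\in\rng(e)$. Then $\gf(z)\in\bigcap_{x\in\rng(e)}h(x)$, so $\gf(z)\in E$; yet $\gf(z)\notin U\supset E$, a contradiction.

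The decisive conceptual point is that the Fubini property supplies only a single point $y$ per application, which is too weak to serve directly as a random condition. The plan sidesteps this by using the universal quantifier over $D$ in the Fubini property: varying $D$ across sets of the form $h(x)\setminus U$ as $U$ ranges over open covers of a hypothetical null $E$ produces points of $E$ outside every such cover, thereby ruling out $\mu(E)=0$. The only routine bookkeeping is the rescaling between $[s]$ and $\cantor$ needed to present the Fubini hypothesis with clean $\mu$-mass bound $>1-\eps$.
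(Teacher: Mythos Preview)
Your overall strategy matches the paper's: use Lebesgue density to pin down a cylinder $[s]$ on which every $h(x)$ has relative mass $>1-\eps/2$, find an $H$-edge $d$ inside the thinned $B$, and use the Fubini property of the pair $(G,H)$ to show that the set of points $y$ lying in $\bigcap_{x\in\rng(e)}h(x)$ for some $G$-edge $e$ has positive measure. Your contradiction argument via the open cover $U$ and the rescaled set $D$ is a careful unpacking of what the paper compresses into the single sentence ``the analytic set $C$ has $\mu$-mass at least $\eps/2$,'' and it is correct.

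There is, however, a genuine gap at the very end. You define $E$ by quantifying over edges $e$ with $\rng(e)\subset B$. When you pass to the random extension and invoke $\bgS^1_1$-absoluteness, the witness $e$ you obtain satisfies $\rng(e)\subset B^{V[\dot y]}$, i.e.\ the reinterpretation of the Borel set $B$, which in general contains new points. But Definition~\ref{perpdefinition} requires that $e$ consist of \emph{ground model} elements of $B$; otherwise $h(x)$ need not be a ground model condition and the phrase ``$h''\rng(e)$ is a subset of the generic filter'' loses its meaning. Writing $\check B$ does not help: the absoluteness argument does not pin the range of $e$ to $V$.

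The fix is exactly what the paper does: define the target set with the constraint $\rng(e)\subset\rng(d)$ rather than $\rng(e)\subset B$. Since $\rng(d)$ is a fixed countable set of ground model points, any edge with range contained in it automatically consists of ground model elements, and the absoluteness step then delivers what is needed. Your contradiction argument already produces $e$ with $\rng(e)\subset\rng(d)$, so it proves positive measure for this sharper set without modification; only the definition of $E$ and the order of presentation (find $d$ first, then define $E$) need adjusting.
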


\begin{proof}
For the purposes of this proof, I present the random forcing as the collection of all $\mu$-positive closed subsets of $\cantor$, ordered by inclusion. Suppose that $B\subset X$ is a Borel $I_{\cG}$-positive set , $f\colon B\to P$ is a Borel function,
and $G\in \cG$ is a hypergraph. Let $\eps>0$ and $H\in\cG$ witness the Fubini property for $G$. By the Lebesgue density theorem and countable additivity of the $\gs$-ideal $I_{\cG}$, it is possible to shrink the set $B$ to find a basic open set
$O\subset\cantor$ such that for all $x\in B$, $\mu(B\cap O)>(1-\eps/2)\mu(O)$. Let $d\in H$ be an edge consisting of points in the set $B$. Let $D\subset\rng(d)\times O$ be the Borel set of all pairs $\langle x, y\rangle$ such that $y\in f(x)$ holds.
The choice of $H$ and $\eps$ shows that the analytic set $C=\{y\in O\colon\exists e\in G\ \rng(e)\subset\rng(d)\land \rng(e)\times\{y\}\subset D\}$ has $\mu$-mass at least $\eps/2$. Let $p\in P$ be some condition with $p\subset C$. Since $p$ forces the random point to belong to $C$, it forces the existence of some edge $e\in G$ such that $\rng(e)\subset\rng(d)$ and for all $x\in\rng(e)$, $f(x)$ contains the random point, i.e. $f(x)$ belongs to the generic filter. This completes the proof.
\end{proof}

\begin{corollary}
\label{measurecorollary}
If $\cG$ is a Fubini countable family of analytic hypergraphs on a Polish space $X$, then the quotient forcing $P_{I_\cG}$, if proper, preserves the outer Lebesgue measure.
\end{corollary}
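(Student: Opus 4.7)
The plan is to deduce the corollary by chaining together three results already in the paper: Theorem~\ref{measuretheorem}, Theorem~\ref{keytheorem}, and the implication (1)$\Rightarrow$(2) in Theorem~\ref{randomoptimaltheorem}. Let $P$ denote random forcing, let $\dot r$ be the canonical $P$-name for the random real in $\cantor$, and let $J_{\dot r}$ denote, as in the setup preceding Theorem~\ref{keytheorem}, the $\gs$-ideal on $\cantor$ generated by analytic sets $A \subset \cantor$ with $P \Vdash \dot r \notin \dot A$; this is exactly the ideal of $\mu$-null sets.

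First, I would invoke Theorem~\ref{measuretheorem} to conclude from the Fubini hypothesis on $\cG$ that $\cG \not\perp P$ holds. Next, since $P_{I_\cG}$ is assumed to be proper and $P$ is Suslin c.c.c., Theorem~\ref{keytheorem} applies with the name $\dot r$, yielding the Fubini property $I_\cG \not\perp J_{\dot r}$ between the two $\gs$-ideals.

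Finally, the standard translation of this Fubini property into a forcing preservation statement, namely \cite[Proposition 3.2.11]{z:book2} (as used in the proof of Theorem~\ref{randomoptimaltheorem}), shows that $P_{I_\cG}$ preserves the outer Lebesgue measure: for every set $A \subset \cantor$ of positive outer measure in the ground model, $P_{I_\cG}$ forces that $A$ still has positive outer measure. Since all three links in the chain have already been established in the paper, there is essentially no obstacle: the only thing to check is that the random real's null ideal is the $J_{\dot r}$ appearing in Theorem~\ref{keytheorem}, which is immediate from the definitions and the characterization of null sets via random reals.
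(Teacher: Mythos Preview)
Your proposal is correct and is exactly the argument the paper intends: the corollary is an immediate consequence of Theorem~\ref{measuretheorem} together with the implication (1)$\Rightarrow$(2) of Theorem~\ref{randomoptimaltheorem}, whose proof in turn invokes Theorem~\ref{keytheorem} and \cite[Proposition 3.2.11]{z:book2} just as you describe. You have simply unpacked the (1)$\Rightarrow$(2) step rather than citing it as a black box, which is fine.
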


\begin{example}
\label{millerexample}
Let $X=\baire$ and let $G\subset X^\gw$ be the hypergraph of all infinite sets without any accumulation point. The hypergraph has the Fubini property, as any real number $0<\eps<1$ will witness. For if $a\subset X$ is an infinite set without any accumulation point
and $D\subset a\times\cantor$ is a Borel set whose vertical sections have $\mu$-mass at least $1-\eps$, the the limsup, the set $\{y\in\cantor\colon\exists^\infty x\in a\ \langle x, y\rangle\in D\}$ has $\mu$-mass at least $1-\eps$. Whenever $y\in\cantor$
is a point in the limsup, the horizontal section $D_y$ contains an infinite set and therefore a $G$-edge. The $\gs$-ideal $\gs$-generated by Borel $G$-anticliques coincides with the $\gs$-ideal generated by compact subsets of $X$. The quotient forcing is the Miller forcing.
\end{example}

\begin{example}
Let $X, Z$ be Polish spaces and $f\colon X\to Z$ be a Borel function. Let $G\subset X^\gw$ be the hypergraph given by $e\in G$ if $\lim_ie(i)=e(0)$ and $\lim_if(e(i))\neq f(e(0))$.  The hypergraph has the Fubini property as any number $0<\eps<1/2$ will witness.
The $\gs$-ideal $\gs$-generated by Borel $G$-anticliques coincides with the $\gs$-ideal generated by sets on which the function $f$ is continuous. The quotient forcing is the Pawlikowski forcing studied in \cite[Section 4.2.3]{z:book2}.
\end{example}

\begin{example}
Let $f\colon [0,1]\to [0,1]$ be a continuous function. Let $G\subset X^\gw$ be the hypergraph given by $e\in G$ if $\lim_ie(i)=e(0)$ and $\lim_i\frac{f(e(i))-f(e(0))}{e(i)-e(0)}$ does not exist.  The hypergraph has the Fubini property as any number $0<\eps<1/3$ will witness. 
To see this, let $e\in G$ be an edge and $D\subset\rng(e)\times\cantor$ be a Borel set whose vertical sections have a $\mu$-mass larger than $1-\eps$. Use the fact that $e$ is an edge to find a real number $\gd>0$ and numbers $i_n, j_n$ tending to infinity
such that $|\frac{f(e({i_n}))-f(e(0))}{e({i_n})-e(0)}-\frac{f(e({j_n}))-f(e(0))}{e({j_n})-e(0)}|>\gd$. For each $n\in\gw$ let $C_n=D_{e(i_n)}\cap B_{e(j_n)}$; so $\mu(C_n)>1-2\eps$. The limsup $\{y\in\cantor\colon\exists^\infty n\ y\in C_n\}$ has $\mu$-mass at least $1-2\eps$,
and its intersection with $D_{e(0)}$ has mass at least $1-3\eps$. If $y\in\cantor$ is any point in the intersection then the horizontal section $D^y$ contains the desired $G$-edge. The $\gs$-ideal $\gs$-generated by Borel $G$-anticliques coincides with the $\gs$-ideal generated by sets on which the function $f$ is differentiable. The quotient forcing is proper. 
\end{example}

\begin{example}
Let $H$ be an analytic Fubini hypergraph on a Polish space $X$. Let $I$ be the $\gs$-ideal on $X$ $\gs$-generated by closed $H$-anticliques. Then the poset $P_I$ preserves outer Lebesgue measure.
\end{example}

\begin{proof}
Note that the poset $P_I$ is proper since $I$ is $\gs$-generated by closed sets. The outer Lebesgue measure preservation property will be proved by identifying a hypergraph generating the $\gs$-ideal $I$ which has the Fubini property.

Let $G\subset X^\gw$ be the analytic set of all sequences $e$ such that one can compose an $H$-edge from the accumulation points of $\rng(e)$. I claim that the $\gs$-ideals $I$ and $I_G$ coincide. For the left-to-right inclusion note
that a closed $H$-anticlique is also a $G$-anticlique. For the right-to-left inclusion suppose that $B$ is a Borel $I_G$-anticlique. Throwing out countably many points if necessary I may assume that $B$ has no isolated points. The definition of the graph $G$ then shows
that the closure of the set $B$ must be an $H$-anticlique.

The Fubini property of the hypergraph $G$ is witnessed by any number $\eps>0$ smaller than the witness for the Fubini property of $H$. Given an edge $e\in G$ and a Borel set $D\subset\rng(e)\times\cantor$ with all vertical sections of mass $>1-\eps$, 
first find an edge $f\in H$ whose range consists of accumulation points of $\rng(e)$. Let $C\subset\rng(f)\times\cantor$ be the set consisting of all pairs $\langle x, y\rangle$ such that for every open neighborhood $O$ of $x$ there is a point $z\in\rng(e)\cap O$
with $\langle z, y\rangle\in D$. It is easy to see that the vertical sections of the set $C$ have $\mu$-mass $\geq 1-\eps$. Thus, there is a point $y\in\cantor$ and an edge $g\in H$ such that $\rng(g)\subset C^y$. The definitions immediately imply that
any enumeration of the countable set $D^y$ is the desired $G$-edge.
\end{proof}

\begin{example}
\label{packingexample}
Let $\langle X, d\rangle$ be a compact metric space and $h\colon \R^+_0\to\R^+_0$ a gauge function; $h$ is continuous, increasing, and $h(0)=0$. The $\gs$-ideal $I$ of sets of $\gs$-finite $h$-dimensional packing measure is generated by a countable Fubini family of hypergraphs, and so the quotient forcing
preserves outer Lebesgue measure.
\end{example}

\begin{proof}
For every $k, n\in\gw$ let $G_{nk}$ be the hypergraph on $X$ consisting of finite edges $e$ such that the range of $e$ consists of pairwise distinct points such that for each $x\in\rng(e)$ there is a real number $0<r_x<2^{-k}$ such that the balls with centers $x$ and radii $r_x$ are pairwise disjoint and $\gS_xh(r_x)>n$. A review of the definitions \cite[Section 4.1.6]{z:book2} shows that the packing premeasure of a set $A\subset X$ is $\leq n$ if there is $k\in\gw$ such that $A$ is a $G_{nk}$-anticlique. It is easy to check that the closure of a $G_{nk}$-anticlique is again a $G_{nk}$-anticlique. Thus, the $\gs$-ideal of sets of finite packing measure is $\gs$-generated by Borel subsets of $X$ which are $G_{nk}$-anticliques for at least one pair of values $n, k\in\gw$.

Now, let $\cG=\{G_{nk}\colon n, k\in\gw\}$; I will verify that $\cG$ is a Fubini family. Let $n, k\in\gw$ be numbers; I will show that for every edge $e\in G_{2n, k}$ and every Borel set $B\subset \rng(e)\times\cantor$ with vertical sections of $\mu$-mass $>1/2$, there is an
edge $f\in G_{nk}$ and a point $y\in\cantor$ such that for every $x\in\rng(f)$, $\langle x, y\rangle\in B$. This is a simple exercise in Fubini theorem. For each point $x\in\rng(e)$ choose a real number $r_x$ witnessing the fact that $e\in G_{2n, k}$,
and Let $\lambda$ be the measure on the finite set $\rng(e)$ assigning each singleton $x$ mass $h(r_x)$. Thus, $\lambda(\rng(e))>2n$. The set $B\subset\rng(e)\times\cantor$ has $\lambda\times\mu$-mass greater than $n$ and so it has to have a horizontal section
$B^y$ of $\lambda$-mass greater than $n$. Let $f$ be any sequence enumerating the set $B^y$ and observe that $f\in G_{nk}$ as required.
\end{proof}

\subsection{Category preservation}

The main forcing restriction on the class of hypergraphable posets is that they have to preserve the Baire category as long as they are proper.

\begin{theorem}
\label{categorytheorem}
Let $X$ be a Polish space and $\cG$ be a countable family of analytic hypergraphs on $X$. Then $\cG\not\perp P$ where $P$ is the Cohen forcing.
\end{theorem}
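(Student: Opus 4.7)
The plan is to exploit the countability of Cohen forcing, which reduces the entire argument to a one-step pigeonhole. Present $P$ as the countable poset $\{p_n\colon n\in\gw\}$ of finite binary sequences ordered by reverse extension; since $\cG\not\perp P$ concerns only membership in the generic filter, working with a countable dense subset is harmless. Given a Borel $I_\cG$-positive set $B\subset X$, a Borel function $h\colon B\to P$, and a hypergraph $G\in\cG$, I would partition $B$ into the Borel pieces $B_n=\{x\in B\colon p_n\leq h(x)\}$; if one prefers, one may first compose $h$ with a Borel refinement landing in the countable dense subset, since weakening a condition only helps it land in the generic filter.

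By the $\gs$-additivity of the ideal $I_\cG$, at least one $B_n$ must be $I_\cG$-positive. The crucial observation is that such a Borel $I_\cG$-positive set cannot itself be a $G$-anticlique: if it were, it would be among the Borel generators of $I_\cG$ and hence belong to $I_\cG$, contradicting positivity. Therefore $B_n$ contains a $G$-edge $e$. The Cohen condition $p_n$ trivially forces $p_n$ into the generic filter, hence forces $h(x)\geq p_n$ into the generic filter for every $x\in\rng(e)$. The edge $e$ is already found in the ground model, so Definition \ref{perpdefinition} is verified.

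The proof has no real obstacle to overcome; the entire argument rests on the fact that Cohen forcing is countable, together with the tautology that a Borel $I_\cG$-positive set is not a $G$-anticlique for any $G\in\cG$. The substantive payoff arrives in the attendant Corollary \ref{categorycorollary}: combined with Theorem \ref{keytheorem} and the identification of the meager ideal as $J_{\dot y}$ for the Cohen-generic real $\dot y$, the theorem yields $I_\cG\not\perp\meager$ and therefore Baire category preservation for every proper hypergraphable quotient forcing.
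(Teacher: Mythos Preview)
Your proof is correct and takes the same approach as the paper: pigeonhole on the countably many Cohen conditions to find one whose preimage contains a $G$-edge. The paper uses the exact fibers $\{x\in B\colon h(x)=p\}$, which form a genuine partition rather than the cover $\{x\in B\colon p_n\leq h(x)\}$ you describe (and your parenthetical about a countable dense subset is unnecessary since $P$ is already countable), but the argument is otherwise identical.
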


\begin{proof}
Let me present the Cohen forcing as the poset of finite binary strings ordered by inclusion. If $B\notin I_{\cG}$ is a Borel set, $f\colon B\to P$ is a Borel function and $G\in\cG$ is a hypergraph, then the sets $\{x\in B\colon f(x)=p\}$ form a partition of the set $B$ into countably many Borel sets
as $p$ varies over all the countably many conditions in $P$. By the positivity of the set $B$, one of these sets must contain all vertices of some edge $e\in G$, and then the corresponding condition $p\in P$ forces $f''\rng(e)=\{p\}$ to be a subset of the generic filter. This
completes the verification of $\cG\not\perp P$ and the proof of the theorem.
\end{proof}

\begin{corollary}
\label{categorycorollary}
Let $X$ be a Polish space and $\cG$ be a countable family of analytic hypergraphs on $X$. If the quotient poset $P_{I_{\cG}}$ is proper, then it preserves the Baire category.
\end{corollary}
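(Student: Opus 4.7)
The plan is to reduce the statement to the two immediately preceding results and a standard characterization of Baire category preservation via Fubini with the meager ideal. First I would set up the Suslin c.c.c.\ forcing side: let $P$ be the Cohen forcing, presented as in the proof of Theorem~\ref{categorytheorem}, and let $\dot c$ be the canonical $P$-name for its generic element of $\cantor$. The associated $\gs$-ideal $J_{\dot c}$, defined as in the discussion preceding Theorem~\ref{keytheorem}, coincides on analytic sets with the meager ideal on $\cantor$: an analytic set $A \subset \cantor$ is meager if and only if the Cohen generic real is forced to avoid it. This is folklore and already baked into \cite[Section 3.2]{z:book2}.

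Next I would simply chain together the two theorems. Theorem~\ref{categorytheorem} gives $\cG \not\perp P$ unconditionally. Since $P_{I_\cG}$ is assumed proper and $P$ is Suslin c.c.c., Theorem~\ref{keytheorem} applied to $Y = \cantor$ and the name $\dot c$ yields $I_\cG \not\perp J_{\dot c}$, i.e.\ the Fubini property of the ideal $I_\cG$ with the meager ideal on $\cantor$.

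Finally, I would invoke the standard equivalence between the Fubini property with the meager ideal and Baire category preservation for proper quotient forcings \cite[Theorem 3.4.1]{z:book2}: for a proper $P_I$, the forcing preserves Baire category if and only if $I \not\perp J_{\text{meager}}$. Applying this to the conclusion of the previous paragraph yields the corollary.

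There is essentially no obstacle here; the only non-trivial input is the external characterization of category preservation via Fubini, and all the hypergraph-specific content has already been packaged into Theorems~\ref{categorytheorem} and~\ref{keytheorem}. The proof will accordingly be a short three-line citation chain.
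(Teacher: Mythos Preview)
Your proposal is correct and follows essentially the same route as the paper: invoke Theorem~\ref{categorytheorem} to get $\cG\not\perp P$ for Cohen $P$, apply Theorem~\ref{keytheorem} to obtain $I_\cG\not\perp J$ where $J$ is the meager ideal, and then cite the standard equivalence between this Fubini property and Baire category preservation (the paper uses \cite[Proposition 3.2.2]{z:book2} rather than Theorem 3.4.1). The only difference is that you make the identification of $J_{\dot c}$ with the meager ideal explicit, while the paper leaves it implicit.
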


\begin{proof}
Let $Y=\cantor$ and let $J$ be the $\gs$-ideal of meager sets on $Y$. By \cite[Proposition 3.2.2]{z:book2}, it is enough to show that $I_{\cG}\not\perp J$. This follows from Theorems~\ref{categorytheorem} and~\ref{keytheorem}.
\end{proof}

\subsection{Anticliques in closed graphs}
\label{anticliquesubsection}

In this subsection, I will prove a theorem concerning the covering of the extension by ground model coded compact anticliques of closed graphs. There is a natural poset for adding large compact anticliques to closed graphs, identified previously by numerous authors:

\begin{definition}
\label{rhdefinition}
\textnormal{\cite{geschke:graphs}} Let $Z$ be a Polish space with a fixed countable basis, and let $H$ be a closed graph on $Z$.

\begin{enumerate}
\item $R_H^0$ is the poset of all pairs $r=\langle n_r, o_r, a_r\rangle$ where $a_r\subset Z$ is a finite $H$-anticlique and $o_r$ is a function with domain $a_r$, associating to each point in $a_r$ one of its basic open neighborhoods in such a way that
$z_0\neq z_1\in a_r$ implies $H\cap o_r(z_0)\times o_r(z_1)$. The ordering is defined by $s\leq r$ if $n_r\leq n_s$, $a_r\subseteq a_s$ and $\bigcup\rng(o_s)\subseteq \bigcup\rng(o_r)$.
\item $a_r$ is the \emph{working part} of a condition $r\in R_H^0$, while $n_r, \rng(o_r)$ is the \emph{side condition} of $r$.
\item the $R_H^0$-generic anticlique is the closure of the union of the second coordinates of the conditions in the $R_H^0$-generic filter.
\end{enumerate}

 In addition, I define

\begin{enumerate}
\item[4.] $R_H$ is the finite support product of countably many copies of the poset $R_H^0$, so the conditions in $R_H$ are just finite sequences of conditions in $R_H^0$ with coordinatewise ordering;
\item[5.] the side condition of $r\in R_H$ is the finite set of all pairs $\langle k, O\rangle$ where $O$ is an open set in the working part of $r(k)$;
\item[6.] for every $r\in R_H$ and a pair $\langle k, O\rangle$, write $r(k, O)\in Z$ for the unique element of the working part of $r(k)$ in the set $O$;
\item[7.] the $R_H$-name for the $\gw$-sequence of the generic compact $H$-anticliques at the various coordinates of the product is denoted by $\dotygen$;
\item[8.] let $Y=(K(Z))^\gw$ and let $J_H$ be the $\gs$-ideal generated by those analytic sets $A\subset Y$ such that $R_H\Vdash\dotygen\notin\dot A$.
\end{enumerate}
\end{definition}

An important attendant fact is provided by Todorcevic \cite[Theorem 19.6]{todorcevic:dichotomies}: the closed graph $H$ contains no perfect cliques iff $R_H$ is c.c.c. iff $R_H$ preserves $\aleph_1$. Thus, if the graph $H$ has no perfect anticliques then
$R_H$ is a Suslin c.c.c.\ poset. The poset $R_H$ is optimal for resolving the question whether in various forcing extensions, every point of $Z$ belongs to a ground model coded compact $H$-anticlique. This follows from the following theorem:

\begin{theorem}
\label{fubinitheorem}
Let $\cG$ be a countable family of analytic hypergraphs on a Polish space $X$ such that the poset $P_{I_\cG}$ is proper. Let $H$ be a closed graph on a Polish space $Z$ without perfect cliques. Then (1) implies (2) implies (3):

\begin{enumerate}
\item $\cG\not\perp R_H$;
\item $I_\cG\not\perp J_H$;
\item every element of $Z$ in the $P_{I_\cG}$-extension belongs to a ground model coded compact $H$-anticlique.
\end{enumerate}

\noindent If the hypergraphs generating the ideal $I$ have only finite edges, then (3) implies (1).
\end{theorem}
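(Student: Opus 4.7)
The implication (1) $\Rightarrow$ (2) is a direct application of Theorem~\ref{keytheorem} with $P = R_H$ (which is Suslin c.c.c.\ by the Todorcevic theorem cited after Definition~\ref{rhdefinition}, as $H$ has no perfect cliques), $Y = K(Z)^\gw$, and $\dot y = \dotygen$; by construction $J_{\dot y} = J_H$. For (2) $\Rightarrow$ (3), I would fix $B \in P_{I_\cG}$ and a name $\tau$ for a point of $Z$, use the Borel reading of names (available by properness of $P_{I_\cG}$) to thin $B$ and produce a Borel $f \colon B \to Z$ with $B \Vdash \tau = \dot f(\dotxgen)$, and look at the Borel set $C = \{\langle x, \vec K \rangle \in B \times Y \colon f(x) \notin \bigcup_n K_n\}$. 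Its vertical sections lie in $J_H$: for any $z \in Z$ and any $r \in R_H$, I would extend $r$ at a fresh coordinate $n$ beyond the support of $r$ by putting $\{z\}$ in the working part with any legal basic open neighborhood, so that $R_H \Vdash z \in \dotygen(n)$. Applying $I_\cG \not\perp J_H$ yields an $I_\cG$-positive horizontal section $B' \subseteq B$ on which $f$ maps into $\bigcup_n K_n$ for some specific $\vec K$; a countable-additivity thinning then picks out a single $K_n$ that captures $\tau$, producing a ground model compact $H$-anticlique containing the generic value of $\tau$.

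The heart of the proof is (3) $\Rightarrow$ (1) assuming $\cG$ consists of finitary hypergraphs. Given Borel $B$, Borel $h \colon B \to R_H$, and $G \in \cG$, the plan is to first normalize $h$ combinatorially. Since $R_H$ is the finite support product of countably many copies of $R_H^0$ and the basic open subsets of $Z$ form a countable family, by $\gs$-additivity of $I_\cG$ I may thin $B$ so that $h(x)$ has a fixed support $\{k_0, \dots, k_{N-1}\} \subset \gw$, fixed working part size $m_k$ at each coordinate $k$, constant integer parameters $n_{h(x)(k)}$, and constant basic open neighborhood assignments $O_{k,j}^*$; the working parts themselves are then given by Borel functions $W_{k,j} \colon B \to Z$ with $W_{k,j}(x) \in O_{k,j}^*$. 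Next, I apply (3) successively to each $W_{k,j}$: after finitely many thinnings I obtain an $I_\cG$-positive $B^* \subseteq B$ together with ground model compact $H$-anticliques $K_{k,j}$ such that $W_{k,j}{}''B^* \subseteq K_{k,j}$ for every $(k,j)$. Since $B^*$ is $I_\cG$-positive, it is not a $G$-anticlique and therefore contains a $G$-edge $e$ whose range is a finite subset of $B^*$.

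It remains to build a common lower bound $p \in R_H$ of $\{h(x) \colon x \in \rng(e)\}$. I take $p$ with the same support, and at each coordinate $k$ set $a_{p(k)} = \bigcup_{x \in \rng(e)} \{W_{k,j}(x) \colon j < m_k\}$. This is an $H$-anticlique: same-$j$ pairs lie in the anticlique $K_{k,j}$, while different-$j, j'$ pairs lie in $O_{k,j}^* \times O_{k,j'}^*$, which is disjoint from $H$ by the side condition already present in each $h(x)(k)$. The open neighborhoods $o_{p(k)}$ are then obtained by a standard compactness argument: since $H$ is closed and $a_{p(k)}$ is finite, I shrink $O_{k,j}^*$ around each $W_{k,j}(x)$ to a basic open $U_{k,j,x}$ small enough that the products of the $U$'s with all other neighborhoods of working part points are disjoint from $H$. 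Finally, taking $n_{p(k)}$ sufficiently large, one checks $p \leq h(x)$ in $R_H$ for each $x \in \rng(e)$, which is exactly $\cG \not\perp R_H$. The main obstacle is the bookkeeping in this last paragraph: orchestrating the normalization so that the side-condition data of $R_H$ become truly constant on $B^*$, and weaving together the application of (3) with the compactness argument so that the assembled $p$ simultaneously respects all working-part and neighborhood constraints. By contrast, the first two implications are essentially mechanical applications of the framework of Section~\ref{fubinisection}.
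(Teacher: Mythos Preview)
Your proposal is correct and follows essentially the same route as the paper. The paper's own proof is considerably terser: for (2)$\Rightarrow$(3) it simply cites the density fact that $R_H\Vdash\bigcup\rng(\dotygen)$ contains all ground model points of $Z$, leaving the translation through the Fubini property implicit; your Borel-reading-of-names argument is the standard unpacking of that implication. For (3)$\Rightarrow$(1) the paper does exactly what you do---normalize the side condition by $\gs$-additivity, use (3) to capture each working-part coordinate function inside a ground model compact $H$-anticlique, pick a finite $G$-edge, and assemble a lower bound---only without spelling out the neighborhood bookkeeping you describe in your last paragraph. One small point worth making explicit in your (2)$\Rightarrow$(3): the horizontal section yields some $\vec K\in K(Z)^\gw$, and you need the coordinates $K_n$ to actually be $H$-anticliques; this is arranged by taking $B_Y$ to be the (Borel, in fact open) $J_H$-positive set of sequences of compact $H$-anticliques before applying the Fubini property.
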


\begin{proof}
(1) implies (2) by Theorem~\ref{keytheorem}. (2) implies (3) by virtue of the fact that $R_H\Vdash\bigcup\dotygen$ contains all ground model elements of the space $Z$, which is proved by an elementary density argument.
The nontrivial part of the theorem is the last sentence. Suppose that (3) holds, $B\subset X$ is a Borel $I_{\cG}$-positive set, $f\colon B\to R_H$ is a Borel function, and $G\in\cG$ is a hypergraph.
By the $\gs$-additivity of the $\gs$-ideal $I_{\cG}$, it is possible to shrink the set $B$ if necessary to find some $o$ which is the side condition of every condition $f(x)$ for $x\in X$. For each pair $\langle k, O\rangle\in o$ the function $f_{kO}\colon B\to Z$ defined by
$f_{kO}(x)=f(x)(k, O)$ is Borel. By (3), one can shrink the set $B$ further to find compact $H$-anticliques $A_{kO}\subset Z$ such that $f_{kO}''B\subset A_{kO}$. Let $e\in G$ be an edge consisting of vertices in the set $B$.
Since the edge $e$ is finite and for each $\langle k, O\rangle\in o$ the points $f_{kO}(x)$ for $x\in\rng(e)$ are pairwise $H$-disconnected, the conditions $f(x)$ for $x\in\rng(e)$ have a lower bound $r\in R_H$. This condition $r$ witnesses the statement $\cG\not\perp R_H$.
\end{proof}

The conclusion of Theorem~\ref{fubinitheorem} gloriously fails for hypergraphable $\gs$-ideals which need infinite edges in their defining hypergraphs:

\begin{example}
Let $X=\cantor$ with the usual minimum difference metric $d$. Let $I$ be the $\gs$-ideal of $\gs$-porous sets on $X$, as defined in Example~\ref{porousexample}. Let $H$ be the Silver graph on $\cantor$. Then $I$ is hypergraphable,
the quotient forcing $P_I$ is proper, the poset $R$ is c.c.c., every point of $\cantor$ in the $P_I$-extension belongs to a ground model coded $H$-anticlique, and yet $I\perp J_H$.
\end{example}

\begin{proof}
There is a number of things to verify, but they are all either easy or previously known. The hypergraph generating the $\gs$-ideal $I$ is produced in Example~\ref{porousexample}. The graph $H$ is locally countable and so the associated poset $R_H$ it is c.c.c. The proof that every point in $\cantor$ in the $P_I$-extension belongs to a ground model coded compact $H$-anticlique is difficult, but known \cite[Theorem 6.33]{z:book3}. Now, to show that $I\perp J_H$ holds, use a trivial genericity argument to show that the poset $R_H^0$ forces its generic anticlique to be a compact porous subset of $\cantor$. Thus, writing $Y=(K(X))^\gw$ for the underlying space of the $\gs$-ideal $J_H$, there is a Borel $J_H$-positive set $B\subset Y$ consisting just of sequences of porous sets.
Lett $C\subset X\times B$ be the Borel set $C=\{\langle x, y\rangle\colon x\notin \bigcup\rng(y)\}$. The vertical sections of the set $C$ belong to the $\gs$-ideal $J_H$, since $R_H\Vdash\bigcup\rng(\dotygen)$ contains all ground model points of $X$. The vertical sections of the complement of the set $C$ are $\gs$-porous by the choice of the set $B$. The proof is complete!
\end{proof}

I will now prove a theorem which helps with making distinctions between various classes of graphs. The moral of the story is, if $P_I$ is a quotient forcing generated by a family of analytic graphs and $H$ is a closed graph on a Polish space $X$, then if the graphs
in the generating family are much denser than $H$, then in the $P_I$ extension, every point of $Z$ belongs to a ground model coded compact $H$-anticlique. The various ways of measuring the density of the graphs provide various preservation results.

\begin{theorem}
\label{closedanticliquetheorem}
Let $\cG$ be a countable family of analytic graphs on a Polish space such that the quotient poset $P_{I_\cG}$ is proper and adds a minimal real degree. Suppose one and the same of the following holds for every Borel $I_{\cG}$-positive set $B\subset X$ and every graph $G\in\cG$:

\begin{enumerate}
\item for every $n\in\gw$ there is a finite set $a\subset B$ such that $|G\cap [a]^2|>n\cdot |a|$;
\item there is a point $x\in B$ such that the set $\{y\in B\colon\langle x, y\rangle\in G\}$ has uncountable closure;
\item there is a countable set $a\subset B$ with uncountable closure such that $G$ is dense in the set $a^2$.
\end{enumerate}

\noindent Then $\cG\not\perp R_H$ holds for all closed graphs $H$ in the following corresponding classes:

\begin{enumerate}
\item[1*.] acyclic graphs;
\item[2*.] locally countable graphs;
\item[3*.] graphs without perfect cliques.
\end{enumerate}
\end{theorem}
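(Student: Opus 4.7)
The plan is to verify $\cG \not\perp R_H$ directly: given $G \in \cG$, a Borel $I_\cG$-positive set $B \subseteq X$, and a Borel $h\colon B \to R_H$, I seek a $G$-edge $\{x_0, x_1\} \subseteq B$ admitting a common lower bound $p$ of $h(x_0)$ and $h(x_1)$ in $R_H$; since $R_H$-conditions have finite support, such a $p$ is exactly what Definition~\ref{perpdefinition} demands. I first reduce $h$ to canonical form: by $\sigma$-additivity of $I_\cG$, shrink $B$ so that the support of $h(x)$ is a fixed finite set $\{k_1, \dots, k_L\}$ and the side condition $\langle n_l, \rng(o_l)\rangle$ at each $k_l$ is fixed, leaving as data finitely many Borel coordinate functions $f_{lO}\colon B \to Z$ with $f_{lO}(x) \in O$ for each $\langle l, O\rangle$, $O \in \rng(o_l)$. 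Using that $P_{I_\cG}$ adds a minimal real degree (so every Borel function on an $I_\cG$-positive Borel set is, on some $I_\cG$-positive Borel subset, either constant or one-to-one), I shrink finitely many times to an $I_\cG$-positive Borel $B^\star \subseteq B$ on which each $f_{lO}$ is either constant or injective. Unwinding the $R_H$-ordering, compatibility of $h(x_0)$ and $h(x_1)$ in $R_H$ reduces to: for each $\langle l, O\rangle$ with $f_{lO}$ injective, $\{f_{lO}(x_0), f_{lO}(x_1)\}$ is an $H$-anticlique.

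Before the main argument I would refine the topology of $X$ (without changing the Borel structure, so $I_\cG$ and the hypergraphs in $\cG$ are unchanged) so that $B^\star$ is compact and each injective $f_{lO}$ is continuous on $B^\star$. This is a standard manoeuvre (cf.~\cite{z:book2}) and its payoff is that closures in $X$ transport through the $f_{lO}$. Call a $G$-edge $\{x_0, x_1\}\subseteq B^\star$ \emph{bad} at $\langle l, O\rangle$ if $f_{lO}$ is injective and $\{f_{lO}(x_0), f_{lO}(x_1)\} \in H$. The goal is to find a $G$-edge in $B^\star$ bad at none of the finitely many $\langle l, O\rangle$.

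In Case 1, condition (1) applied to $B^\star$ with $n$ strictly larger than the number $N$ of injective coordinates gives a finite $a \subseteq B^\star$ with $|G \cap [a]^2| > n \cdot |a|$. Acyclicity of $H$ bounds its edges on any finite subset of $Z$ by one less than the subset's size, so by injectivity each coordinate contributes at most $|a|-1$ bad edges in $a$; summing over $N$ coordinates gives at most $N(|a|-1) < n|a|$ bad edges, leaving a good edge in $a$. In Case 2, condition (2) provides $x^\star \in B^\star$ whose $G$-neighbourhood $N := \{y \in B^\star\colon \langle x^\star, y\rangle \in G\}$ has uncountable closure in $X$. If every $y \in N$ were bad somewhere, $N$ would be covered by the finitely many sets $N_{lO} := f_{lO}^{-1}(N_H(f_{lO}(x^\star)))$, each countable (because $N_H(f_{lO}(x^\star))$ is countable by local countability of $H$, and $f_{lO}$ is injective) and closed in $X$ (because $N_H(f_{lO}(x^\star))$ is closed in $Z$, $f_{lO}$ is continuous on the closed set $B^\star$, and $B^\star$ is closed in $X$). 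Then $\overline N \subseteq \bigcup_{l,O} N_{lO}$ is countable, contradicting condition (2).

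Case 3 is the most delicate. Condition (3) gives a countable $a \subseteq B^\star$ with uncountable closure and with $G$ topologically dense in $a^2$. If every $G$-edge inside $a$ were bad, the finite partition of $G \cap a^2$ by the coordinate witnessing badness has union dense in $\overline a^2$, so by Baire category on the compact $\overline a^2$ the closure of one color class, at some injective $\langle l_\star, O_\star\rangle$, has non-empty interior $W \subseteq \overline a^2$. Since $f_{l_\star O_\star}$ is a continuous injection from the compact $\overline a$, hence a homeomorphism onto $\overline{f_{l_\star O_\star}(a)}$, the image $(f_{l_\star O_\star} \times f_{l_\star O_\star})(W)$ is a non-empty open subset of $\overline{f_{l_\star O_\star}(a)}^2$ in which $H$ is dense; closedness of $H$ then forces $H$ to contain this entire open subset. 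Exploiting symmetry of $G$ (hence of $W$) and working inside the uncountable perfect kernel of $\overline{f_{l_\star O_\star}(a)}$, a standard fusion produces a perfect $H$-clique, contradicting the hypothesis on $H$. The main obstacle I anticipate is exactly this last step: the Baire-category interior may be off-diagonal, so the ``local complete bipartite'' structure must be parlayed, via symmetry and the topology of the perfect kernel, into a full perfect clique. A subsidiary technical point common to all three cases is the topology refinement making $B^\star$ compact and the injective $f_{lO}$ continuous while preserving $I_\cG$ and $G$; the minimality of $P_{I_\cG}$ enters the proof only once, through the constant-or-injective dichotomy, and it is what lets the combinatorial density assumptions on $G$ be transferred to statements about $H$.
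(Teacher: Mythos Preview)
Your overall strategy matches the paper's: fix the side condition, use the minimal-real-degree hypothesis to make each coordinate function constant or injective, then split into the three cases. Case~1 is correct and essentially identical to the paper's argument.

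There is, however, a genuine gap in the step where you ``refine the topology of $X$ \ldots\ so that $B^\star$ is compact''. Passing to a finer Polish topology can only destroy compactness, never create it; and while one can place \emph{some} compact Polish topology on any uncountable standard Borel set, that topology bears no relation to the original one, so hypotheses~(2) and~(3)---which speak of closures and density in the \emph{given} topology on $X$---would no longer say anything useful about $B^\star$. The paper's route is different and essential: since the members of $\cG$ are graphs (hence finitary hypergraphs), $P_{I_\cG}$ is bounding by Corollary~\ref{finitarycorollary}, and the standard characterization of bounding \cite[Theorem~3.3.2]{z:book2} lets one \emph{thin} $B^\star$ within the original topology to an $I_\cG$-positive compact set on which every $f_{lO}$ is continuous. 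Hypotheses~(2) and~(3) are then invoked for this thinned set, and all closure and continuity arguments take place in one fixed topology. Your Cases~2 and~3 both lean on compactness and continuity; as written, they do not go through.

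For Case~3 the paper also takes a shorter path. Once continuity of the $f_{lO}$ and closedness of $H$ extend the badness relation from $G\cap a^2$ to all of $\overline a\times\overline a$, one has a finite closed colouring of the pairs from the uncountable closed set $C=\overline a$. Galvin's partition theorem \cite{galvin:real} then directly produces a perfect $D\subseteq C$ homogeneous for a single coordinate $\langle k,O\rangle$, and $f_{kO}''D$ is the desired perfect $H$-clique. This sidesteps your Baire-category-plus-fusion detour and the off-diagonal worry you flag.
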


\noindent I do not know if the minimal degree assumption is necessary for the conclusion of the theorem.

\begin{proof}
Let $B\subset X$ be a $I_{\cG}$-positive set, $G\in\cG$ be a graph, and $f\colon B\to R_H$ be a Borel function. By the countable additivity of the $\gs$-ideal $I_{\cG}$, I may assume that there is a side condition $o$ such that for all $x\in X$, the condition $f(x)$ has side condition $o$. For each pair $\langle k, O\rangle\in o$, write $f_{kO}\colon B\to Z$ for the Borel function given by $f_{kO}(x)=f(x)(k,O)$. By the minimal degree assumption, thinning out the set $B$ if necessary I may assume that each of of these functions is either constant
or injective. The quotient forcing is bounding by Corollary~\ref{boundingcorollary}, and so by thinning out the set $B$ if necessary I may assume that the set $B$ is compact and the functions $f_{kO}$ are all continuous on it. The treatment now divides according to the three cases.

Suppose that (1) holds and the graph $H$ is acyclic. Find a finite set $a\subset B$ such that $|G\cap [a]^2|>n\cdot |a|$ where $n=|o|$. Since the functions $f_{kO}$ are all injective or constant, the preimage $f_{kO}^{-1}H$ is either an acyclic graph or an empty graph on $a$.
Since acyclic graphs on $a$ have at most $|a|-1$ many edges, a counting argument shows that there is an edge $\langle x, y\rangle\in G$ consisting of vertices in $a$ which does not belong to $\bigcup_{k, O}f_{kO}^{-1}H$. It follows from the definitions
that the conditions $f(x)$ and $f(y)$ have a lower bound, which witnesses $\cG\not\perp R_H$.

Suppose that (2) holds and the graph $R_H$ is locally countable. Find a point $x\in B$ such that the closure $C\subset B$ of the set $\{y\in B\colon\langle x, y\rangle\in G\}$ is uncountable. If there is $y\in B$ such that $\langle x, y\rangle\in G$
yet for no $\langle k, O\rangle\in o$ $\langle f_{kO}(x),  f_{kO}(y)\rangle H$ holds, then the conditions $f(x), f(y)\in R_H$ are compatible, confirming $\cG\not\perp R_H$. Thus, assume that this fails and work towards a contradiction. Since the functions $f_{kO}$ are all continuous and the graph $H$ is closed, it would have to be the case that for every $y\in C$ distinct from $x$,
there is $\langle k, O\rangle\in o$ such that $\langle f_{kO}(x), \langle f_{kO}(y)\rangle H$ holds. One of the pairs $\langle k,O\rangle\in o$ works for uncountably many elements $y\in C$, which contradicts the assumption that the point $f_{kO}(x)$ has only countably
many $H$-neighbors.

Suppose that (3) holds and the graph $H$ has no perfect cliques. Find a set $a\subset B$ with uncountable closure $C\subset B$ such that $G$ is dense in the set $a^2$. If there are points $x\neq y\in a$ such that $\langle x, y\rangle\in G$ yet
for no $\langle k, O\rangle\in o$ $\langle f_{kO}(x), f_{kO}(y)\rangle\in H$ holds, then the conditions $f(x), f(y)$ are compatible, confirming $\cG\not\perp R_H$. Thus, assume that this fails and work towards contradiction.
Since the functions $f_{kO}$ are continuous and the graph $H$ is closed, for any two distinct points
$x\neq y\in C$ there is $\langle k, O\rangle\in o$ such that $\langle f_{kO}(x), f_{kO}(y)\rangle\in H$ holds. By a classical partition theorem due to Galvin \cite{galvin:real}, there is a perfect set $D\subset C$ and a pair $\langle k, O\rangle\in o$ such that $x\neq y\in D$ implies $\langle f_{kO}(x), f_{kO}(y)\rangle\in H$.
It follows that $f_{kO}''D\subset Z$ is a perfect $H$-clique, contradicting the assumptions on the graph $H$.
\end{proof}

I provide two applications of the theorem, which in no sense exhaust its potential.

\begin{corollary}
\label{silveracycliccorollary}
Let $H$ be an acyclic closed graph on a Polish space $Z$. In the Silver extension, every element of $Z$ belongs to some compact ground model coded compact $H$-anticlique. 
\end{corollary}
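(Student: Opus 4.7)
The plan is to derive this as a direct application of Theorems \ref{closedanticliquetheorem} and \ref{fubinitheorem} to the Silver forcing, presented as in Example \ref{silvergraphexample} by the quotient $P_{I_\cG}$, where $\cG$ consists of all rational shifts of the Silver graph $G$ on $\cantor$. Properness is supplied by Theorem \ref{maintheorem}, and the fact that Silver forcing adds a minimal real degree is a classical consequence of the usual Silver fusion argument. So it remains to verify hypothesis (1) of Theorem \ref{closedanticliquetheorem} for $\cG$; once this is done, that theorem produces $\cG\not\perp R_H$ for every closed acyclic graph $H$ on a Polish space $Z$, and the implication (1)$\Rightarrow$(3) of Theorem \ref{fubinitheorem} delivers exactly the stated covering property.

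To check (1), fix a Borel $I_\cG$-positive set $B\subset\cantor$, a graph $G'\in\cG$, and $n\in\gw$. By Claim \ref{prepclaim2} the ideal $I_\cG$ is invariant under the rational Cantor group action; translating $B$ by the group element taking $G$ to $G'$, we may assume $G'=G$. Invoke the standard fact recorded in Example \ref{silvergraphexample} (cf.\ \cite[Theorem 2.3.37]{z:book}) that the Silver cubes $[s]=\{x\in\cantor: s\subset x\}$ for partial functions $s\colon\gw\to 2$ with co-infinite domain form a dense subset of $P_{I_\cG}$ in the inclusion ordering; pick such an $s$ with $[s]\subset B$. Choose $k$ indices $n_1<\dots<n_k$ outside $\dom(s)$, extend $s$ to a total function $s^*$ by assigning $0$ to every coordinate not yet defined, and let $a\subset\cantor$ be the set of $2^k$ sequences obtained from $s^*$ by flipping an arbitrary subset of $\{n_1,\dots,n_k\}$. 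Then $a\subset[s]\subset B$ and two members of $a$ are $G$-adjacent precisely when they differ in exactly one of these $k$ free coordinates, so $|G\cap[a]^2|=k\cdot 2^{k-1}$ while $|a|=2^k$. Taking $k>2n$ yields $|G\cap[a]^2|>n\cdot|a|$, as required.

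With (1) verified, Theorem \ref{closedanticliquetheorem}, case $1\Rightarrow 1^*$, gives $\cG\not\perp R_H$, and Theorem \ref{fubinitheorem} then yields (3): every element of $Z$ in the $P_{I_\cG}$-extension, i.e.\ in the Silver extension, belongs to a ground model coded compact $H$-anticlique. The only nontrivial ingredient is the literal (not merely modulo $I_\cG$) inclusion $[s]\subset B$ used to produce the Hamming cubes inside $B$; this is the one point where the specific structure of the Silver presentation is used, and everything else is essentially plugging into the previously proved machinery.
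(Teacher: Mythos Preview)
Your proof is correct and follows essentially the same route as the paper's: verify hypothesis (1) of Theorem~\ref{closedanticliquetheorem} by producing Hamming cubes of size $2^k$ with $k\cdot 2^{k-1}$ Silver edges inside an arbitrary Silver-positive Borel set, and then conclude via Theorem~\ref{fubinitheorem}. The only cosmetic difference is that the paper phrases the embedding step as a continuous injection $h\colon\cantor\to B$ reducing the Silver graph to itself and transports the cubes through $h$, whereas you use the density of literal Silver cubes $[s]\subset B$; these amount to the same thing, and your translation step to reduce $G'$ to $G$ is harmless but vacuous since the Silver graph is already invariant.
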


\begin{proof}
I will use Theorem~\ref{closedanticliquetheorem}(1). It is well-known that the Silver forcing adds a minimal real degree. For every Silver-positive Borel set $B\subset\cantor$, there is a continuous injection $h\colon\cantor\to B$
reducing the Silver graph to itself. Now for every number $m\in\gw$, consider the set $a\subset\cantor$ consisting of all binary sequences which are zero at all entries $\geq m$. It is not difficult to see that $|a|=2^m$ and the number
of Silver edges between the various elements of $a$ is $2^{m-1}\cdot m=\frac{1}{2}|a|\cdot\log |a|$. Transporting these sets $a$ by the injection $h$ to the set $B$, I conclude that the assumption (1) of the theorem is satisfied. 
\end{proof}

\begin{corollary}
\label{neverclosedcorollary}
Let $H$ be a closed graph on a Polish space $Z$ with no perfect cliques. In the Vitali extension, every element of $Z$ belongs to some ground model coded compact $H$-anticlique.
\end{corollary}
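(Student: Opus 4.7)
The plan is to apply Theorem~\ref{closedanticliquetheorem}(3) to the Vitali forcing, so I need to verify its three hypotheses: that $P_{I_\cG}$ is proper, that it adds a minimal real degree, and that clause (3) holds, i.e.\ every Vitali-positive Borel set contains a countable subset with uncountable closure on which the Vitali graph is dense.

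Properness is immediate from Example~\ref{vitaligraphexample} and Theorem~\ref{maintheorem}, since the Vitali graph is invariant under the rational Cantor group action and therefore actionable. The minimal real degree property of Vitali (equivalently, $E_0$-)forcing is well-known and standard, so I can cite it.

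The main substance is the verification of clause (3). The key ingredient is the canonical fact that for every $I_G$-positive Borel set $B\subset\cantor$ there is a continuous injection $h\colon\cantor\to B$ which is a reduction of the Vitali graph $G$ to itself; this is a basic structural feature of $E_0$ and the $E_0$-forcing and is recorded in \cite[Section 4.7.1]{z:book2}. Now fix any single $E_0$-equivalence class $c\subset\cantor$. Since $E_0$-classes are dense $\Q$-translates of a point, $c$ is countable and dense in $\cantor$, and by definition of the Vitali graph every pair of distinct points in $c$ is $G$-adjacent, so $G$ is complete (and in particular dense) on $c^2$. Set $a=h''c\subset B$. Then $a$ is countable, its closure contains the perfect set $h''\cantor$ and is therefore uncountable, and because $h$ reduces $G$ to $G$, the graph $G$ is complete on $a^2$, hence certainly dense there. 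This verifies hypothesis (3).

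The mild obstacle is just assembling these ingredients correctly; once they are in place, Theorem~\ref{closedanticliquetheorem} delivers $\cG\not\perp R_H$ for every closed graph $H$ without perfect cliques, and then Theorem~\ref{fubinitheorem} (specifically (1)${}\Rightarrow{}$(3) in that theorem, which requires no finite-edge hypothesis) yields that every element of $Z$ in the Vitali extension lies in a ground model coded compact $H$-anticlique, as required.
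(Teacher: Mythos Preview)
Your proof is correct and follows essentially the same route as the paper: invoke Theorem~\ref{closedanticliquetheorem}(3), cite the minimal real degree of the Vitali forcing, use the canonical continuous reduction $h\colon\cantor\to B$ of the Vitali graph to itself, and take $a=h''c$ for a single $E_0$-class $c$. You spell out a bit more detail than the paper (properness, density of $c$, and the explicit appeal to Theorem~\ref{fubinitheorem} for the final implication), but the argument is the same.
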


\begin{proof}
I will use Theorem~\ref{closedanticliquetheorem}(3). It is well-known the Vitali forcing adds a minimal real degree. For every Vitali-positive Borel set $B\subset\cantor$, there is a continuous injection $h\colon\cantor\to B$
which is a reduction of the Vitali graph to itself. For any Vitali class $a\subset\cantor$, the set $h''a\subset B$ shows that the assumption (3) is satisfied.
\end{proof}

\noindent Other classes of closed graphs are discussed in Subsection~\ref{smallsubsection}.

\subsection{The small anticlique property}
\label{smallsubsection}

Reviewing the proofs in the previous sections, a question arises. Is it possible to drop the definability conditions on the c.c.c.\ poset and the partition and still get a theorem? The answer is yes, but one has to
pay for this option with the following interesting assumption on the hypergraphs in question.

\begin{definition}
A countable collection $\cG$ of analytic hypergraphs on a Polish space $X$ has the \emph{small anticlique property} if for every sequence $\langle B_n, G_n\colon n\in\gw\rangle$ such that $G_n\in\cG$ and $B_n\subset X$ is a $G_n$-anticlique, the union
$\bigcup_nB_n$ does not contain an analytic $I_{\cG}$-positive subset.
\end{definition}

\noindent Note that I do not require the anticliques $B_n$ to be Borel, and that is the whole point. The small anticlique property says that the two $\gs$-ideals, one generated by Borel anticliques and the other by arbitrary anticliques, coincide on the class of analytic sets.
Thus, no actionable family has the small anticlique property, since the whole space $X$ can be covered by countably many sets, each of which is a selector on the underlying orbit equivalence relation and so an anticlique with respect to any of the graphs on the
actionable family. On the other hand, the nearly open hypergraphs do have the small anticlique property:

\begin{theorem}
\label{coincidencetheorem}
Let $X$ be a Polish space and $\cG$ a countable family of analytic, nearly open hypergraphs on $X$. Then $\cG$ has the small anticlique property.
\end{theorem}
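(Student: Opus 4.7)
The argument will be by contradiction. Suppose an analytic set $A\subseteq\bigcup_n B_n$ is $I_{\cG}$-positive. First I would pass to a Borel $I_{\cG}$-positive subset of $A$ using Fact~\ref{millerfact}, so without loss of generality $A$ itself is Borel and $I_{\cG}$-positive. The plan is to derive a contradiction by exhibiting $A$ as a countable union of sets in $I_{\cG}$.

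The heart of the plan is a near-open upgrade of each $B_n$. For each $n$ define
$$
\tilde B_n\;:=\;\overline{B_n}\setminus\bigl\{x\in X:\exists e\in G_n\ (e(0)=x\ \wedge\ \rng(e)\subseteq\overline{B_n})\bigr\}.
$$
This $\tilde B_n$ is visibly coanalytic (closed minus analytic); it is a $G_n$-anticlique, since any $G_n$-edge with range in $\tilde B_n$ would put its $0$th vertex in the excluded analytic set. Crucially $B_n\subseteq\tilde B_n$, and this is precisely the step where near-openness enters: if some $x\in B_n$ failed to lie in $\tilde B_n$ there would be a $G_n$-edge $e$ with $e(0)=x$ and $\rng(e)\subseteq\overline{B_n}$, the near-open witnesses $O_i\ni e(i)$ for $i\in\dom(e)\setminus\{0\}$ would each meet $B_n$ (because $e(i)\in\overline{B_n}$), and picking $x_i\in O_i\cap B_n$ would produce a $G_n$-edge with all vertices in $B_n$, contradicting that $B_n$ is a $G_n$-anticlique.

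Hence $A=\bigcup_n(A\cap\tilde B_n)$ is a countable union of coanalytic $G_n$-anticliques, and the contradiction will follow once each $A\cap\tilde B_n$ is shown to lie in $I_{\cG}$. For this, every analytic subset $E\subseteq A\cap\tilde B_n$ inherits being a $G_n$-anticlique, and since "is a $G_n$-anticlique" is a downward-hereditary $\mathbf{\Pi}^1_1$-on-$\mathbf{\Sigma}^1_1$ property of sets, the first reflection theorem extends $E$ to a Borel $G_n$-anticlique, placing $E\in I_{\cG}$. Using that $I_{\cG}$ itself is $\mathbf{\Pi}^1_1$ on $\mathbf{\Sigma}^1_1$—a general feature of hypergraphable ideals observed in the introduction and verified in the paper's final section—one concludes that a coanalytic set all of whose analytic subsets are $I_{\cG}$-small is itself $I_{\cG}$-small.

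The hardest step will be this last reflection from analytic subsets to the coanalytic whole; if it does not follow directly from the bare descriptive complexity of $I_{\cG}$, an alternative route is to iterate the upgrade $B_n\mapsto\tilde B_n$ inside $\overline{B_n}$ and use analytic separation inside the Borel set $A$ to produce a Borel $G_n$-anticlique covering $A\cap\tilde B_n$ outright. In either case the indispensable ingredient is the near-open upgrade, which transforms an arbitrary anticlique into a definable one without destroying the anticlique condition—this is what actionable families lack, and what lets the nearly open case succeed.
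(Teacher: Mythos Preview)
Your upgrade $B_n\mapsto\tilde B_n$ is exactly the paper's Claim~\ref{nearlyopenclaim}: every anticlique for a nearly open analytic hypergraph sits inside a coanalytic one. So the indispensable ingredient is correctly identified and correctly proved.

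The gap is in the last step, and neither of your two routes closes it as written. The inference ``$I_{\cG}$ is $\mathbf{\Pi}^1_1$ on $\mathbf{\Sigma}^1_1$, hence a coanalytic set all of whose analytic subsets are $I_{\cG}$-small is itself $I_{\cG}$-small'' is not a valid principle: take $\cG=\{G\}$ with $G=\{(x,y):x\neq y\}$, which is nearly open and gives $I_{\cG}=$ the countable sets; in any model with a thin uncountable $\mathbf{\Pi}^1_1$ set you get a coanalytic set all of whose analytic subsets are countable while the set itself is not. So this is not a ZFC argument. Your alternative asks for a Borel $G_n$-anticlique \emph{covering} the coanalytic anticlique $A\cap\tilde B_n$, which is precisely extending a coanalytic anticlique to a Borel one, and that is not generally available either; iterating the tilde construction does not make the set more Borel.

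What does work, and very cleanly, is the Novikov (generalized) separation theorem applied to the whole family at once. The analytic sets $A\setminus\tilde B_n$ have empty intersection (since $A\subseteq\bigcup_n B_n\subseteq\bigcup_n\tilde B_n$), so there are Borel $E_n\supseteq A\setminus\tilde B_n$ with $\bigcap_n E_n=\emptyset$; then each $A\setminus E_n$ is a Borel subset of $\tilde B_n$, hence a Borel $G_n$-anticlique, and $A=\bigcup_n(A\setminus E_n)\in I_{\cG}$, the desired contradiction. With this fix your argument is actually more elementary than the paper's: the paper instead invokes the already-proved properness of $P_{I_{\cG}}$ for nearly open families. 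Its Claim~\ref{coanalyticclaim} shows that the set of $M$-generics in any condition, being Borel and $I_{\cG}$-positive, cannot be contained in a coanalytic $G$-anticlique (else it would itself be a Borel anticlique); one then picks a generic point in $A$ over a model containing all the $\tilde B_n$ and observes it avoids each of them.
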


\begin{proof}
The key claims:

\begin{claim}
\label{nearlyopenclaim}
Let $G$ be a nearly open analytic hypergraph. Every $G$-anticlique is a subset of a coanalytic $G$-anticlique.
\end{claim}

\begin{proof}
Let $A\subset X$ be a $G$-anticlique, let $C\subset X$ be its closure, and consider the set $B=\{x\in C\colon \forall y\in C^{\leq\gw}\ y(0)\neq x\lor y(0)\notin G\}$. The definition shows that $B\subset C$ is a coanalytic set, and it is a $G$-anticlique. I will be finished
if I show that $A\subset B$. Indeed, suppose that $x\notin B$ and argue that $x\notin A$ must hold. There must be an edge $y\in C^{\leq\gw}\cap G$ such that $y(0)=x$. For each $n\in\dom(y)$ with $y>0$ there must be an open set $O_n\subset X$ containing
$y(n)$ such that the product $\{x\}\times\prod_nO_n$ consists only of $G$-edges. Since each point $y(n)$ belongs to the closure of $A$, the intersection $O_n\cap A$ must be nonempty, containing some point $x_n\in A$. Then, $\langle x, x_n\colon n\in\dom(y), n>0\rangle$ is an edge of the hypergraph $G$ and so one of its points must be outside of the anticlique $A$. That point must be $x$.
\end{proof}

\begin{claim}
\label{coanalyticclaim}
Let $G$ be an analytic hypergraph on a Polish space $X$, and let $I$ be a $\gs$-ideal on $X$ containing all Borel $G$-anticliques, such that the quotient poset $P_I$ is proper. Whenever $A\subset X$ is a coanalytic $G$-anticlique then $P_I\Vdash\dotxgen\notin\dot A$.
\end{claim}

\begin{proof}
Suppose for contradiction that $B\in P_I$ is a condition forcing $\dotxgen\in\dot A$. Let $M$ be a countable elementary submodel of a large structure and et $C=\{x\in B\colon x$ is $P_I$-generic over the model $M\}$. By the properness criterion \cite[Proposition 2.2.2]{z:book2},
the set $C\subset B$ is Borel and $I$-positive. By the forcing theorem applied in the model $M$, for every $x\in C$ $M[x]\models x\in A$ holds. Since the model $M[x]$ is wellfounded, it is correct about membership in analytic sets, and so $x\in A$ holds in $V$.
In conclusion, $C\subset A$ and so $C$ is a $G$-anticlique. This contradicts the initial assumption that $I$ contains all Borel $G$-anticliques.
\end{proof}

Now, suppose that $A\subset X$ is an analytic set. Suppose that $A\notin I_{\cG}$, $G_n\in\cG$ are hypergraphs and $B_n\subset X$ are $G_n$-anticliques; I must produce a point in $A\setminus\bigcup_nB_n$. By Claim~\ref{nearlyopenclaim}, the sets $B_n$ may be assumed to be coanalytic; by Theorem~\ref{millerfact}, $A$ may be assumed to be Borel. Let $M$ be a countable elementary submodel of a large structure containing $\cG$, $A, B_n$ for $n\in\gw$, and let $x\in A$ be a point $P_{I_\cG}$-generic over the model $M$, meeting the condition $A$. By Claim~\ref{coanalyticclaim}, the point $x$ does not belong to any of the coanalytic anticliques $B_n$ for $n\in\gw$. The proof is complete!
\end{proof}

\noindent It is not all that easy to find other examples. The small anticlique property is preserved by the (wellfounded) countable support iterations, which provides a strategically placed source of hypergraphs with the small anticlique property.

There are several theorems that greatly improve the conclusions of Theorem~\ref{localizationtheorem} etc.\ if the families of hypergraphs in question have the small anticlique property. I will use the following definition, to be contrasted with Definition~\ref{perpdefinition}.

\begin{definition}
Suppose that $\cG$ is a countable family of analytic hypergraphs on a Polish space $X$. Suppose that $P$ is a partial ordering. The symbol $\cG\not\perp^* P$ denotes the following statement: for every Borel $I$-positive set $B\subset X$, every function $f\colon B\to P$ and every hypergraph $G\in \cG$ there is a condition $p\in P$ which forces that there is an edge $e$ consisting of vertices from the ground model elements of $B$ such that $f''\rng(e)$ is a subset of the generic filter.
\end{definition}

\noindent Note the lack of definability demands on the poset $P$ and the function $f$. Stated this way, it may seem that the defined property is not satisfied in any interesting cases, but alas! it is. The following theorem is proved in the same way as Theorem~\ref{keytheorem}.

\begin{theorem}
Suppose that $\cG$ is a countable family of analytic hypergraphs on a Polish space $X$. Suppose that $P$ is a partial ordering, $Y$ is a Polish space and $\dot y$ is a $P$-name for an element of $Y$. $\cG\not\perp^* P$ implies $I_{\cG}\not\perp J_{\dot y}$.
\end{theorem}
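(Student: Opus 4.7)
The plan is to imitate the proof of Theorem~\ref{keytheorem} line by line, substituting only an application of the axiom of choice for the invocation of the Borel uniformization theorem (Theorem~\ref{uniformizationtheorem}); this substitution is exactly what the starred notation permits, since $\cG\not\perp^* P$ places no definability demand on the witnessing function.

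Assume for contradiction that $I_\cG\perp J_{\dot y}$ fails. Extract Borel $I_\cG$-positive $B_X\subset X$, Borel $J_{\dot y}$-positive $B_Y\subset Y$, and a Borel $C\subset B_X\times B_Y$ whose vertical sections $C_x=\{y\colon\langle x,y\rangle\in C\}$ all lie in $J_{\dot y}$ and whose complement $D=(B_X\times B_Y)\setminus C$ has no $I_\cG$-positive horizontal section; apply Theorem~\ref{millerfact}(3) to decompose $D=\bigcup_n D_n$ with each $D_n$ Borel and whose horizontal sections $(D_n)^y$ are $G_n$-anticliques for certain $G_n\in\cG$. Since $B_Y\notin J_{\dot y}$ there is $p_0\in P$ with $p_0\Vdash\dot y\in\dot B_Y$, and since $C_x\in J_{\dot y}$ for every $x\in B_X$, this $p_0$ forces $\langle\check x,\dot y\rangle\in\dot D=\bigcup_n\dot D_n$; use the axiom of choice to fix, for every $x\in B_X$, a pair $(p_x,n_x)$ with $p_x\leq p_0$ and $p_x\Vdash\langle\check x,\dot y\rangle\in\dot D_{n_x}$.

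I expect the principal obstacle to be the analogue of the Borel positive set delivered by Theorem~\ref{uniformizationtheorem}: one needs a single index $n$ and a Borel $I_\cG$-positive $B\subset B_X$ on which the AC-selection can be made uniformly with $n_x\equiv n$, and absent a Suslinness hypothesis on $P$ neither the fibers $\{x\colon n_x=n\}$ nor their envelopes $\tilde A_n=\{x\in B_X\colon\exists p\in P\ p\Vdash\langle\check x,\dot y\rangle\in\dot D_n\}$ need carry any definability. I plan to resolve this by exhaustion, observing that $B_X\subset\bigcup_n\tilde A_n$ while $B_X$ is not coverable by countably many Borel anticliques from $\cG$; a regularity argument of the kind afforded by the preceding subsection on the small anticlique property then locates an $n$ and a Borel $I_\cG$-positive $B\subset\tilde A_n$, on which I define $h(x)\in P$ via the axiom of choice with $h(x)\Vdash\langle\check x,\dot y\rangle\in\dot D_n$. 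Once $(B,n,h)$ is in hand, $\cG\not\perp^* P$ supplies a condition $p\in P$ and a $P$-name $\dot e$ with $p\Vdash\dot e\in\dot G_n$, $\rng(\dot e)$ consisting of ground-model elements of $\dot B$, and $h''\rng(\dot e)$ contained in the generic filter. The closing move copies Theorem~\ref{keytheorem} verbatim: pass to a countable elementary submodel $M$ holding all the objects named so far, take an $M$-generic filter $H\subset P\cap M$ containing $p$, set $y=\dot y/H$ and $e=\dot e/H$, and invoke the forcing theorem inside $M$ together with Mostowski absoluteness to conclude that $e$ is a $G_n$-edge in $V$ with $\rng(e)\subset(D_n)^y$, contradicting the $G_n$-anticlique property of $(D_n)^y$.
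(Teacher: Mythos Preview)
Your overall strategy matches what the paper intends: it says the result is ``proved in the same way as Theorem~\ref{keytheorem}'', and you correctly trace that proof, replacing the Borel uniformization step (Theorem~\ref{uniformizationtheorem}) by an axiom-of-choice selection, which is precisely what the starred hypothesis $\cG\not\perp^*P$ allows.

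There is, however, a genuine gap exactly where you flag the ``principal obstacle''. You need a single index $n$ and a Borel $I_{\cG}$-positive $B\subset\tilde A_n$ in order to invoke $\cG\not\perp^*P$ for the specific hypergraph $G_n$; your proposed fix --- ``a regularity argument of the kind afforded by the preceding subsection on the small anticlique property'' --- is not an argument. First, the small anticlique property is not among the hypotheses of the theorem as stated. Second, and more to the point, even granting it, the sets $\tilde A_n=\{x\in B_X:\exists p\leq p_0\ p\Vdash\langle\check x,\dot y\rangle\in\dot D_n\}$ are \emph{not} $G_n$-anticliques, so the small anticlique property says nothing about them. In Theorem~\ref{keytheorem} the single $n$ is produced by Theorem~\ref{uniformizationtheorem}, whose mechanism is to pass to a Cohen-type extension for the topology of Corollary~\ref{meagercorollary} and use absoluteness of the $P$-forcing relation (Claim~\ref{oclaim}) to decide $n$ on a $\tau$-open, hence Borel $I_{\cG}$-positive, set. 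Dropping Suslinness of $P$ kills exactly this absoluteness, and you have not supplied a replacement. The bare covering $B_X=\bigcup_n\tilde A_n$ by arbitrary (not even analytic) sets does not, in ZFC, force any one of them to contain a Borel $I_{\cG}$-positive subset.

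The endgame (elementary submodel, $M$-generic filter, forcing theorem in $M$, Mostowski absoluteness) does carry over verbatim from Theorem~\ref{keytheorem} once $(B,n,h)$ is in hand; the difficulty is squarely at the reduction to a single $n$. The paper's own one-line ``proof'' does not address this step either, so you have not missed a trick that the paper supplies --- but the sentence you wrote does not fill the gap.
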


\noindent I will now state several interesting theorems and corollaries which do not fall into the context of the previous subsections.

\begin{theorem}
\label{smalltheorem}
Suppose that $\cG$ is a countable family of analytic hypergraphs on a Polish space $X$ with the small anticlique property, all edges in all hypergraphs finite. Suppose that $P$ is a $\gs$-centered forcing. Then ${\cG}\not\perp^* P$ holds.
\end{theorem}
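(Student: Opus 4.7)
The plan is to imitate the proof of Theorem~\ref{weaksackstheorem}, but exploit the small anticlique property to avoid the first reflection step. The first reflection step in the earlier theorem was needed precisely because the preimages $h^{-1}P_m$ of analytic centered pieces were only analytic; the small anticlique property removes the need to extend such preimages to Borel anticliques, since it already equates the $\gs$-ideal of Borel anticliques with the one of arbitrary anticliques on the class of analytic (hence Borel) sets.

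In detail, I would fix a Borel $I_{\cG}$-positive set $B\subset X$, a function $f\colon B\to P$ (not assumed to have any definability), and a hypergraph $G\in\cG$. Using $\gs$-centeredness, write $P=\bigcup_n P_n$ where each $P_n$ is a centered subset. For each $n\in\gw$ let $A_n=f^{-1}(P_n)\subset B$; these sets are not necessarily Borel, but $B=\bigcup_nA_n$. Now suppose for contradiction that each $A_n$ is a $G$-anticlique. Taking $G_n=G$ and $B_n=A_n$ in the small anticlique property, the union $\bigcup_nA_n=B$ cannot contain an analytic $I_\cG$-positive subset. But $B$ itself is Borel and $I_\cG$-positive, a contradiction.

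Therefore some $A_n$ contains an edge $e\in G$. The hypergraphs of $\cG$ have only finite edges, so $f''\rng(e)$ is a finite subset of the centered set $P_n$, and hence has a common lower bound $p\in P$. This condition $p$ forces $f''\rng(e)$ into the generic filter, and the vertices of $e$ are ground model elements of $B$. This witnesses $\cG\not\perp^* P$. The only subtle point, and the one where the argument genuinely differs from that of Theorem~\ref{weaksackstheorem}, is that we are not assuming the function $f$ or the pieces $P_n$ to be analytic; the small anticlique property is precisely what licenses decomposing the positive Borel set $B$ into non-Borel anticlique-candidates and deriving a contradiction.
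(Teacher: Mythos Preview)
Your proof is correct and is essentially identical to the paper's own argument: decompose $P$ into countably many centered pieces, observe that the preimages cover $B$, invoke the small anticlique property to find an edge of $G$ inside one preimage, and take a lower bound of the finitely many images. Your explicit contrast with Theorem~\ref{weaksackstheorem}---that the small anticlique property replaces the first reflection step needed when $f$ and the pieces $P_n$ were assumed analytic---is exactly the point.
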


\begin{proof}
Suppose that $B\subset X$ is an $I_{\cG}$-positive Borel set, $f\colon B\to P$ is a function, and $G\in\cG$ is a hypergraph. Fix a cover $P=\bigcup_mP_m$ of $P$ by countably many centered pieces. The preimages $f^{-1}P_m$ for $m\in\gw$ cover the set $B$, and by the
small anticlique property of $\cG$, there is an edge $e\in G$ such that $\rng(e)$ is a subset of one of them, say of $f^{-1}P_m$. The set $P_m$ is centered, the set $f''\rng(e)\subset P_m$ is finite, and therefore it has a lower bound $p$. This condition
witnesses the validity of ${\cG}\not\perp^* P$.
\end{proof}

\begin{corollary}
\label{boocorollary}
The quotient posets in the Theorem~\ref{smalltheorem} category, if proper, do not add an independent real.
\end{corollary}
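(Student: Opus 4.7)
The plan is to reduce the question to a Fubini-type computation against a carefully chosen $\sigma$-centered forcing. In the ground model, fix a non-principal ultrafilter $\mathcal{U}$ on $\omega$, and let $P=M(\mathcal{U})$ be the associated Mathias forcing: conditions are pairs $(s,A)$ with $s\in[\omega]^{<\omega}$, $A\in\mathcal{U}$, $\max s<\min A$, with the usual ordering. Conditions sharing a stem have a common lower bound via intersection of second coordinates, so $P$ is $\sigma$-centered. Let $Y=[\omega]^\omega$, let $\dot y$ be the $P$-name for the Mathias generic (an infinite pseudo-intersection of $\mathcal{U}$), and let $J=J_{\dot y}$ be the $\sigma$-ideal on $Y$ generated by analytic sets $A$ with $P\Vdash\dot y\notin\dot A$. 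For every $c\in\mathcal{P}(\omega)\cap V$, the ultrafilter dichotomy gives $c\in\mathcal{U}$ or $\omega\setminus c\in\mathcal{U}$, so $P$ forces $\dot y\subseteq^* c$ or $\dot y\cap c$ is finite. Consequently the Borel set $\{y\in Y:y\not\subseteq^* c\text{ and }y\cap c\text{ is infinite}\}$ is forced to miss $\dot y$ and therefore belongs to $J$.

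By Theorem~\ref{smalltheorem}, the hypothesis $\cG\not\perp^* P$ holds, and the Fubini-type theorem for $\not\perp^*$ stated immediately before Theorem~\ref{smalltheorem} upgrades this to $I_{\cG}\not\perp J$. Now let $B\in P_{I_{\cG}}$ be an arbitrary condition and $\dot a$ a $P_{I_{\cG}}$-name for a subset of $\omega$; the task is to produce a strengthening of $B$ that decides an infinite ground model subset of $\dot a$ or of its complement. Using the Borel reading of names in proper idealized forcing, shrink $B$ to secure a Borel function $g\colon B\to\mathcal{P}(\omega)$ with $B\Vdash\dot a=\dot g(\dotxgen)$. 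Define the Borel set $C=\{(x,y)\in B\times Y:y\not\subseteq^* g(x)\text{ and }y\cap g(x)\text{ is infinite}\}$; by the previous paragraph every vertical section $C_x$ lies in $J$. The Fubini property therefore delivers a point $y_0\in[\omega]^\omega$ and an $I_{\cG}$-positive Borel set $B'\subseteq B$ such that for every $x\in B'$, either $y_0\subseteq^* g(x)$ or $y_0\cap g(x)$ is finite.

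Splitting $B'$ according to which alternative holds and invoking $\sigma$-additivity of $I_{\cG}$ to pin down a common finite exception set $F\subseteq\omega$, I arrive at an $I_{\cG}$-positive Borel set $B''\subseteq B'$ on which either $y_0\setminus F\subseteq g(x)$ holds uniformly, or $(y_0\setminus F)\cap g(x)=\emptyset$ holds uniformly. Setting $b=y_0\setminus F$, an infinite ground model subset of $\omega$, the condition $B''$ forces $b\subseteq\dot a$ or $b\cap\dot a=\emptyset$; either way, $\dot a$ fails to be independent below $B''$, and since $B,\dot a$ were arbitrary, the corollary follows. The only delicate point is that $M(\mathcal{U})$ is not Suslin, so one must use the $\not\perp^*$ formalism rather than $\not\perp$; but this is precisely the reason the small anticlique property and $\not\perp^*$ were introduced, and no additional work is needed beyond Theorem~\ref{smalltheorem} and its preceding theorem.
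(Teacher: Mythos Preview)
Your proof is correct and follows essentially the same route as the paper: both fix a nonprincipal ultrafilter, use the associated $\sigma$-centered Mathias-type diagonalizing poset, invoke Theorem~\ref{smalltheorem} together with the $\not\perp^*$ Fubini theorem preceding it to obtain $I_{\cG}\not\perp J_{\dot y}$, and then run the Fubini argument against the Borel set of pairs where the generic pseudo-intersection fails to be almost contained in or almost disjoint from the name's value. Your write-up is in fact slightly more careful than the paper's in two respects: you make explicit why the non-Suslin nature of $M(\mathcal{U})$ forces the use of $\not\perp^*$ rather than $\not\perp$, and you spell out the final $\sigma$-additivity step to pin down the finite exception set.
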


\begin{proof}
Let $U$ be a nonprincipal ultrafilter on $\gw$, and let $P$ be the standard poset diagonalizing the ultrafilter $U$, i.e.\ adding an infinite set $\dot y\subset\gw$ which is modulo finite included in every element of $U$. Let $Y=[\gw]^{\aleph_0}$ and let $J$ be the $\gs$-ideal of all analytic sets $A\subset Y$ such that $P\Vdash\dot y\notin \dot A$. Theorem~\ref{smalltheorem} says that $I_{\cG}\not\perp J$. 

Now suppose that $B\in P_{I_{\cG}}$ is a condition forcing $\tau\in\cantor$; I have to find an infinite set $y\in Y$ such that some stronger condition
forces $\tau$ to be constant on $y$ with perhaps finitely many exceptions. Thinning out the set $B$ if necessary I may assume that there is a Borel function $f\colon B\to\cantor$ such that $B\Vdash\tau=\dot f(\dotxgen)$. Let $C\subset B\times Y$ be the Borel
set of all pairs $\langle x, y\rangle$ such that $f(x)$ is not constant on $y$ modulo finite. A simple density argument with $P$ shows that the vertical sections of the set $C$ belong to the $\gs$-ideal $J$. Since $I_{\cG}\not\perp J$ holds,
there must be $y\in Y$ such that the set $\{x\in B\colon f(x)$ is constant modulo finite on $y\}$ is $I_{\cG}$-positive. This concludes the proof.
\end{proof}

\begin{corollary}
Let $H$ be a closed graph on a Polish space $Z$, with countable chromatic number. In the extension by the quotient posets in the Theorem~\ref{smalltheorem} category, if proper, every point in $Z$ belongs to a ground model coded compact $H$-anticlique.
\end{corollary}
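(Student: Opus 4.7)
The plan is to verify $\cG\not\perp^* R_H$ and then to follow the (1)$\Rightarrow$(2)$\Rightarrow$(3) chain of Theorem~\ref{fubinitheorem}, with the $\not\perp^*$ analogue of Theorem~\ref{keytheorem} stated just before Theorem~\ref{smalltheorem} playing the role of Theorem~\ref{keytheorem}. The countable chromatic number of $H$ supplies a decomposition $Z=\bigcup_{m\in\gw}D_m$ into (not necessarily Borel) $H$-anticliques; this already precludes perfect $H$-cliques, so $R_H$ is c.c.c.\ by the Todorcevic fact attendant to Definition~\ref{rhdefinition}, though c.c.c.\ plays no role below.

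For the main step, let $B\in P_{I_\cG}$, let $f\colon B\to R_H$ be an arbitrary function, and let $G\in\cG$. For each $x\in B$ attach the following countable datum: the side condition of $f(x)$ together with the numbers $n_{f(x)(k)}$ for $k$ in the support of $f(x)$, and, for every pair $\langle k,O\rangle$ in the side condition of $f(x)$, an index $m_{k,O}\in\gw$ with $f(x)(k,O)\in D_{m_{k,O}}$. Only countably many such data exist, so $B$ falls into countably many pieces. By the small anticlique property, some piece $B^*$ is not a $G$-anticlique and hence contains a $G$-edge $e$. For $x\in\rng(e)$ the conditions $f(x)$ share a common side condition and common values of the $n$'s; for each pair $\langle k,O\rangle$ in the common side condition, $\{f(x)(k,O):x\in\rng(e)\}\subseteq D_{m_{k,O}}$ is an $H$-anticlique inside $O$, and across distinct $O_0,O_1$ at a common coordinate $k$, the $H$-disjointness $H\cap O_0\times O_1=\emptyset$ is part of the definition of side condition. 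Since $\rng(e)$ is finite, the coordinatewise union of the working parts of the $f(x)$ is a finite $H$-anticlique; equipped with the common side condition data this is a condition in $R_H$ below every $f(x)$, witnessing $\cG\not\perp^* R_H$.

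To finish, the $\not\perp^*$ theorem applied to $P=R_H$ with $\dot y=\dotygen$ gives $I_\cG\not\perp J_H$. For any $P_{I_\cG}$-name $\tau$ for a point of $Z$ and any condition $B\in P_{I_\cG}$, Borel reading of names in proper forcing provides a Borel function $g\colon B\to Z$ with $B\Vdash\tau=\dot g(\dotxgen)$. The Borel set $C=\{\langle x,\vec K\rangle\in B\times Y:g(x)\notin\bigcup_n K_n\}$ has all vertical sections in $J_H$, by the elementary density argument that $R_H\Vdash\bigcup\dotygen$ contains every ground model point of $Z$. The Fubini property then returns a ground model $\vec K=\langle K_n:n\in\gw\rangle$ and an $I_\cG$-positive $B'\subset B$ forcing $\tau\in\bigcup_n\check K_n$; a final countable refinement over $n\in\gw$ yields a single ground model compact $H$-anticlique containing the generic value of $\tau$.

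The main obstacle is the verification of $\cG\not\perp^* R_H$. The countable chromatic hypothesis is used precisely to make the combinatorial partition of $B$ countable, so that the small anticlique property supplies a $G$-edge $e$ whose $f$-image is automatically pairwise compatible in $R_H$.
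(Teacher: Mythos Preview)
Your argument is correct and is in essence the paper's approach, only unfolded. The paper's one-line proof cites the fact that $R_H$ is $\gs$-centered precisely when $H$ has countable chromatic number (from \cite[Theorem 3.9(2)]{z:graphs}) and then invokes Theorem~\ref{smalltheorem} directly. Your partition of $B$ by the datum ``side condition $+$ values of $n_{f(x)(k)}$ $+$ color indices $m_{k,O}$'' is exactly the pullback along $f$ of the standard $\gs$-centered decomposition of $R_H$; your verification that the conditions $f(x)$ for $x\in\rng(e)$ have a common lower bound is the proof that each such piece of $R_H$ is centered. So you have effectively reproved the relevant direction of the cited fact inline rather than quoting it. One small point: when you say the union of the working parts ``equipped with the common side condition data'' is a condition, you still need to specify the function $o_q$ on the enlarged working part; this is routine (the union is a finite $H$-anticlique, $H$ is closed, and all points lie in the common $\bigcup\rng(o_{f(x)(k)})$, so suitable basic neighborhoods exist), but it is not literally ``the common side condition''.
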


\begin{proof}
This uses the poset $R_H$ isolated in Definition~\ref{rhdefinition}, adding countably many compact $H$-anticliques covering the ground model elements of $Z$. It is not difficult to see that the poset $R_H$ is $\gs$-centered just in case $H$ has countable chromatic number \cite[Theorem 3.9(2)]{z:graphs}.
\end{proof}

The following theorem uses a concept which is often useful for proving that various posets do not add dominating reals. Let $P$ be a poset. A set $Q\subset P$ is \emph{liminf-centered} if for every infinite set $a\subset Q$ there is a condition $p\in P$ forcing that infinitely many elements of $a$ belong to the generic filter. The poset $P$ is $\gs$-liminf-centered if it is the union of countably many liminf-centered pieces \cite[Definition 3.8]{z:graphs}.

\begin{theorem}
\label{millertheorem}
Suppose that $\cG$ is a countable family of analytic hypergraphs on a Polish space $X$ with the small anticlique property, such that every edge is infinite and an infinite subset of an edge is again an edge. Suppose that $P$ is a $\gs$-liminf-centered forcing. Then ${\cG}\not\perp^* P$ holds.
\end{theorem}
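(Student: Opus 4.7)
The plan is to follow the template of the proof of Theorem~\ref{smalltheorem}, replacing the centeredness extraction by a liminf-centeredness extraction and using the closure of edges under passing to infinite subsets to compensate for the weaker centeredness property. Let $B\subset X$ be a Borel $I_\cG$-positive set, $f\colon B\to P$ an arbitrary function, and $G\in\cG$ a hypergraph. Write $P=\bigcup_m Q_m$ as a countable union of liminf-centered pieces.

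First, exactly as in the proof of Theorem~\ref{smalltheorem}, I would argue that some preimage $f^{-1}(Q_m)$ fails to be a $G$-anticlique. If each $f^{-1}(Q_m)$ were a $G$-anticlique, then $B=\bigcup_m f^{-1}(Q_m)$ would be covered by countably many $G$-anticliques (with the same hypergraph $G$ throughout), and the small anticlique property would forbid $B$ from containing any analytic $I_\cG$-positive subset, contradicting $B\in P_{I_\cG}$. Fix such an $m$ and an edge $e_0\in G$ with $\rng(e_0)\subseteq f^{-1}(Q_m)$; note that $\rng(e_0)$ is infinite by the assumption on the edges of $G$.

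I would then split into two cases according to whether $f$ is constant on some infinite subset of $\rng(e_0)$. In the degenerate case where there is an infinite $a\subseteq\rng(e_0)$ on which $f$ takes a single value $q\in Q_m$, the set $a$ is itself the range of a $G$-edge by the closure-of-edges-under-infinite-subsets hypothesis, and the condition $p=q$ trivially forces $f''a=\{q\}$ into the generic filter. Otherwise every fiber of $f\restriction\rng(e_0)$ is finite, so by pigeonhole I can thin $\rng(e_0)$ to an infinite subset $a'$ on which $f$ is injective. Then $f''a'$ is an infinite subset of the liminf-centered piece $Q_m$, so by the defining property of liminf-centeredness there is a condition $p\in P$ forcing infinitely many elements of $f''a'$ to belong to the generic filter. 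Pulling this back through the injection $f\restriction a'$, the condition $p$ forces the set of those $x\in a'$ whose $f$-value lies in the generic filter to be infinite; by the closure assumption on edges, this forced infinite subset of $\rng(e_0)$ is again the range of a $G$-edge, and its $f$-image lies in the generic filter by construction. This witnesses $\cG\not\perp^* P$.

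The main subtlety, which distinguishes this argument from the $\gs$-centered case of Theorem~\ref{smalltheorem}, is that one cannot hope to combine the conditions $f(x)$ for $x\in\rng(e_0)$ into a single lower bound inside $Q_m$, since liminf-centered pieces need not even be $2$-linked. This is precisely what the hypothesis that an infinite subset of an edge is again an edge is designed to circumvent: the edge $e$ whose existence is asserted by $\cG\not\perp^* P$ is not produced in the ground model but is selected in the $P$-extension as the subedge of $e_0$ indexed by those ground-model vertices whose $f$-values happen to fall in the generic filter, which fits squarely within the general format of the $\not\perp^*$ relation.
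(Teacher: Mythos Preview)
Your proof is correct and follows essentially the same route as the paper's: find an edge inside one liminf-centered piece via the small anticlique property, then split into the case of a constant subfiber versus the case where one can feed an infinite image into the liminf-centeredness clause. The paper phrases the dichotomy as ``$f''\rng(e)$ finite versus infinite'' rather than ``some fiber infinite versus all fibers finite'', and consequently skips your thinning-to-an-injective-subset step, but these are cosmetic differences.
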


\begin{proof}
Suppose that $B\subset X$ is an $I_{\cG}$-positive Borel set, $f\colon B\to P$ is a function, and $G\in\cG$ is a hypergraph. Fix a cover $P=\bigcup_mP_m$ of $P$ by countably many liminf-centered pieces. The preimages $f^{-1}P_m$ for $m\in\gw$ cover the set $B$, and by the small anticlique property of $\cG$, there is an edge $e\in G$ such that the infinite set $\rng(e)$ is a subset of one of them, say of $f^{-1}P_m$. Now, if the set $f''\rng(e)\subset P_m$ is finite, then there is $p\in P_m$ such that for infinitely many $x\in\rng(e)$
$f(x)=p$ holds, and then the condition $p$ witnesses the validity of $\cG\not\perp^*P$. If the set $f''\rng(e)$ is infinite, then as the set $P_m$ is liminf-centered, there is a condition $p\in P$ forcing that for infinitely many $x\in\rng(e)$, $f(x)$ belongs to the generic filter. This condition
witnesses the validity of ${\cG}\not\perp^* P$ in this case as well.
\end{proof}

My main source of $\gs$-liminf-centered posets are closed graphs on compact metrizable spaces with countable loose number. This is a concept developed in \cite{z:graphs}. Let $H$ be a graph on a topological space $Z$. A subset $A\subset Z$ is \emph{loose} if every point $z\in Z$
has a neighborhood $O$ such that no points of $A\cap O$ are in $H$-relation with $z$. The loose number of a graph is the smallest cardinality of a collection of loose sets covering the whole space $Z$. It is proved in \cite{z:graphs} that graphs with countable loose number include acyclic graphs and locally countable graphs among others. Moreover, if $H$ is a closed graph on a compact metrizable space $Z$ with countable loose number, then the forcing $R_H$ of Definition~\ref{rhdefinition} is $\gs$-liminf-centered \cite[Theorem 3.9(3)]{z:closed}.

\begin{corollary}
\label{loosecorollary}
Let $H$ be a closed graph on a compact metrizable space $Z$, with countable loose number.  In the extension by the quotient posets in the Theorem~\ref{millertheorem} category, if proper, every point in $Z$ belongs to a ground model coded compact $H$-anticlique.
\end{corollary}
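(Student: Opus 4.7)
The plan is to run the Fubini machinery of Theorem~\ref{fubinitheorem} with the poset $R_H$ of Definition~\ref{rhdefinition}, replacing its clause (1) by the small-anticlique variant coming from Theorem~\ref{millertheorem}. The corollary will follow by pasting together three already-established facts: (a) $R_H$ is a $\gs$-liminf-centered Suslin c.c.c.\ forcing; (b) Theorem~\ref{millertheorem} then yields $\cG\not\perp^* R_H$; (c) the unnamed theorem stated just before Theorem~\ref{smalltheorem} converts this to $I_{\cG}\not\perp J_H$, after which the $(2)\Rightarrow(3)$ part of Theorem~\ref{fubinitheorem} finishes the argument.

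First I would verify (a): since $H$ is closed on a compact metrizable space of countable loose number, \cite[Theorem 3.9(3)]{z:closed}, cited in the paragraph just preceding the corollary, gives $\gs$-liminf-centeredness of $R_H$. A loose cover in particular prevents a perfect $H$-clique, so by the Todorcevic dichotomy quoted right after Definition~\ref{rhdefinition}, $R_H$ is c.c.c.\ and Suslin. Then I would apply Theorem~\ref{millertheorem} to $P=R_H$: by assumption $\cG$ lies in the Theorem~\ref{millertheorem} category (small anticlique property, edges infinite, closed under taking infinite subedges), so the conclusion is $\cG\not\perp^* R_H$. Feeding $Y=(K(Z))^\gw$ and $\dot y=\dotygen$ into the theorem stated immediately before Theorem~\ref{smalltheorem} upgrades this to $I_{\cG}\not\perp J_H$.

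For the final step I would replay the $(2)\Rightarrow(3)$ direction of Theorem~\ref{fubinitheorem}. Let $B\in P_{I_{\cG}}$ force that $\tau$ is an element of $Z$; by the Borel reading of names in proper idealized forcing, I may thin $B$ so that $B\Vdash\tau=\dot f(\dotxgen)$ for some Borel $f\colon B\to Z$. Consider the Borel set
\[
C=\bigl\{\langle x,\vec A\rangle\in B\times Y : f(x)\notin\bigcup_k A_k\bigr\}.
\]
The density argument recorded in the proof of Theorem~\ref{fubinitheorem} shows that $R_H\Vdash\bigcup_k\dotygen(k)$ covers every ground-model point of $Z$, so every vertical section of $C$ belongs to $J_H$. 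The Fubini relation $I_{\cG}\not\perp J_H$ then produces $\vec A\in Y$ with $\{x\in B : f(x)\in\bigcup_k A_k\}$ being $I_{\cG}$-positive; a further $\gs$-additivity thinning to a single coordinate $k$ yields a condition forcing $\tau\in\check A_k$, i.e.\ $\tau$ lands in a ground-model coded compact $H$-anticlique.

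The main obstacle is purely the first step, namely converting the countable loose number hypothesis into $\gs$-liminf-centeredness of $R_H$; but this is a direct citation of \cite[Theorem 3.9(3)]{z:closed}, and the rest of the argument is already present in the paper.
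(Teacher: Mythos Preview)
Your approach is correct and is exactly what the paper intends: the corollary carries no explicit proof, the paragraph preceding it records that $R_H$ is $\gs$-liminf-centered, and Theorem~\ref{millertheorem} together with the Fubini machinery is meant to do the rest. There is one small slip in your spelled-out Fubini step. You apply $I_{\cG}\not\perp J_H$ with $B_Y=Y=(K(Z))^\gw$, so the $\vec A$ you extract is an arbitrary sequence of compact subsets of $Z$, not necessarily $H$-anticliques; your closing sentence ``$\tau$ lands in a ground-model coded compact $H$-anticlique'' therefore does not follow from what you have written. The fix is immediate: run the argument inside the closed $J_H$-positive set $\{\vec A\in Y:\text{each }A_k\text{ is an }H\text{-anticlique}\}$ (closed since $H$ is closed; $J_H$-positive since $R_H\Vdash\dotygen$ lies in it). Alternatively, and more in the spirit of the paper, note that $\cG\not\perp^* R_H$ trivially implies $\cG\not\perp R_H$, which is item~(1) of Theorem~\ref{fubinitheorem}; since you have already checked that countable loose number rules out perfect cliques, the implication $(1)\Rightarrow(3)$ there yields the conclusion without having to unwind the Fubini step yourself.
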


\begin{example}
In the Miller extension, every point in $\cantor$ belongs to some ground model coded compact KST-anticlique. This happens since the KST graph is locally countable and so of countable loose number. Moreover, the Miller forcing can be hypergraphed so that any infinite subset of an edge is again an edge; this is shown in Example~\ref{millerexample}.
\end{example}

\subsection{Adding an independent real}

I close this enormous section with an anti-preservation theorem, showing that certain property of a hypergraph implies a lack of a preservation property of the quotient forcing. While I hope for more results of this type, so far there is only one. Recall that a function $x\in\cantor$ is an \emph{independent real} over some model of ZFC if it has no infinite subfunction in that model.

\begin{theorem}
\label{independenttheorem}
Let $\cG$
be a family of analytic hypergraphs on a Polish space $X$ such that for some $G\in\cG$, $G$ can be written as an increasing countable union of hypergraphs, each with Borel chromatic number two.
Then the quotient forcing $P_{I_{\cG}}$ adds an independent real.
\end{theorem}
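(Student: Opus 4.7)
The plan is to use the given $G\in\cG$ and its increasing decomposition $G=\bigcup_k G^k$, where each $G^k$ has Borel chromatic number two, to cook up a $P_{I_\cG}$-name $\tau$ for a point of $\cantor$ that is forced to be independent. Fix, for each $k\in\gw$, a Borel two-coloring $c_k\colon X\to 2$ witnessing that $G^k$ has Borel chromatic number two, i.e.\ no edge of $G^k$ is monochromatic under $c_k$. Let $\tau$ be the $P_{I_\cG}$-name defined by $\tau(k)=c_k(\dotxgen)$ for every $k\in\gw$.

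The claim is that every condition forces $\tau$ to have no infinite subfunction in the ground model. Suppose for contradiction that some condition $B\in P_{I_\cG}$ and some infinite partial function $y\colon A\to 2$ in the ground model (with $A\subseteq\gw$ infinite) satisfy $B\Vdash\check y\subseteq\tau$, i.e.\ $B\Vdash \forall k\in\check A\ c_k(\dotxgen)=\check y(k)$. For each $k\in A$, the set $\{x\in B\colon c_k(x)\neq y(k)\}$ is a Borel set in $I_\cG$ (being forced by $B$ to not contain $\dotxgen$), so by $\gs$-additivity the Borel set
\[
B'=\{x\in B\colon\forall k\in A\ c_k(x)=y(k)\}
\]
is $I_\cG$-positive.

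Now since $G\in\cG$ and $B'\notin I_\cG$, the set $B'$ cannot be a $G$-anticlique, so there is an edge $e\in G$ with $\rng(e)\subseteq B'$. Pick $k_0\in\gw$ with $e\in G^{k_0}$; since the decomposition is increasing, $e\in G^k$ for every $k\geq k_0$. Because $A$ is infinite, some $k\in A$ satisfies $k\geq k_0$, and for this $k$ the edge $e$ lies in $G^k$ while, by the definition of $B'$, the coloring $c_k$ is constantly $y(k)$ on $\rng(e)$. This contradicts the assumption that $c_k$ is a proper two-coloring of $G^k$.

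There is no serious obstacle to this argument; the only substantive move is to exploit the monotonicity of the decomposition together with the infinitude of $\dom(y)$ to land an index $k$ that simultaneously lies in $\dom(y)$ and is large enough that $e\in G^k$. It is worth noting that the argument does not require the quotient forcing to be proper, nor does it require the coloring to be definable uniformly in $k$: the construction uses only $\gs$-additivity of $I_\cG$ and the definition of $I_\cG$-positivity in terms of Borel $G$-anticliques.
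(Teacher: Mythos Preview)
Your proof is correct and follows essentially the same approach as the paper: define the candidate independent real via the Borel two-colorings of the pieces $G^k$, thin the condition so that the colorings agree with the putative ground-model subfunction, find a $G$-edge inside, and use the increasing decomposition together with the infinitude of $\dom(y)$ to locate an index where the edge is monochromatic, contradicting the coloring. Your write-up is in fact slightly more explicit than the paper's about why an index $k\in A$ with $e\in G^k$ exists.
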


\begin{proof}
Let $G=\bigcup_nH_n$ be an increasing union and for each $n\in\gw$, let $X=B_n^0\cup B_n^1$ be a partition into Borel sets neither of which contains an $H_n$-edge.
Let $f\colon X\to\cantor$ be the Borel function defined by $f(x)(n)=b$ if $x\in B_n^b$. I claim that $\dot f(\dotxgen)$ is a name for an independent real.

Suppose that this fails, and find a condition $B\subset X$ and an infinite partial function $g\colon\gw\to 2$ such that $B\Vdash\check g\subset\dot f(\dotxgen)$. Thinning out the condition
$B$ if necessary, I may assume that for all $x\in B$, $g\subset f(x)$. Let $e\in G$ be an edge consisting of points in $B$, and let $n\in\dom(g)$ be a number such that $e\in H_n$.
Then, there must be a point $x\in\rng(e)$ such that $x\in B_n^{1-g(n)}$, and for such a point $x\in B$, it is clearly not the case that $g\subset f(x)$. A contradiction.
\end{proof}

\begin{example}
Let $G$ be the Silver graph of  Example~\ref{silvergraphexample} above, connecting two elements of $\cantor$ if they differ in exactly one entry. For each $n\in\gw$ let $H_n\subset G$ be the graph connecting two elements in $\cantor$ if they differ on a single entry smaller than $n$. It is clear that $G$ is the increasing union of $H_n$'s, and each graph $H_n$ has Borel chromatic number two--just let $B_n^0=\{x\in \cantor\colon |\{m\in n\colon x(m)=1\}|$ is even$\}$ and $B_n^1=\{x\in \cantor\colon |\{m\in n\colon x(m)=1\}|$ is odd$\}$.
 The quotient forcing $P_{I_G}$ is well-known to add an independent real.
\end{example}

\begin{example}
Let $X=\baire$ and let $G\subset X^\gw$ be the hypergraph such that $e\in G$ if for some $m=m_e$, all functions in $\rng(e)$ agree on all inputs different from $m$, but they attain infinitely many values at input $m$. Writing $I$ for the $\gs$-ideal generated by Borel $G$-anticliques, it turns out that $P_I$ is proper (since $G$ is actionable) and preserves outer Lebesgue measure (Theorem~\ref{measuretheorem}). It also adds an independent real. For each $n\in\gw$ let $H_n$ be the set of all edges $e\in G$ such that $m_e<n$ and for some $x\in\rng(e)$, $x(m_e)<n$. It is not difficult to see that the hypergraphs $H_n$ increase with $n$ and exhaust all of $G$. The Borel chromatic number of $H_n$ is equal to two: just let $B_n^0=\{x\in\baire\colon |\{m\in n\colon x(m)<n\}|$ is even$\}$ and $B_n^1=X\setminus B_n^0$.
\end{example}

\begin{example}
Let $I$ be the ideal of $\gs$-porous sets on the interval $X=[0,1]$ and the usual Euclidean metric $d$, with the generating hypergraph $G$ described in Example~\ref{porousexample}. For each $n\in\gw$ let $H_n$ be the hypergraph of all edges $e\in G$ such that there is no point $x\in X$ with $x\neq e(0)$, $d(x, e(0))<2^{-n}$ and $d(x, e(0))\leq 4d(x, \rng(e))$. It is immediate that the hypergraphs $H_n$ increase with $n$ and exhaust all of $G$. To show that the Borel chromatic number of $H_n$ is two, cover $X$ by successive intervals of length $2^{-n-1}$;
the treatment of endpoints is irrelevant. It is not difficult to observe that for each edge $e\in H_n$ there must be $m\in\gw$ such that $e(0), e(m)$ belong to two successive intervals. Therefore, the partition $B_n^0=$the union of the intervals coming at even position in the successive order, and $B_n^1=X\setminus B_n^0$, shows that the Borel chromatic number of $H_n$ is two.
\end{example}

\section{Dichotomy results}
\label{invariantsection}

It is natural to hope that the quotient posets arising from especially restrictive classes of hypergraphs can be classified. In this section, I prove a couple of dichotomies for actionable families of graphs which do exactly that. The proofs proceed through the dreaded
fusion arguments, for which I first establish a common notation.

\begin{definition}
\label{fusiondefinition}
Let $I$ be a $\gs$-ideal on a Polish space $X$, let $\Gamma$ be a countable group acting continuously on $X$ in a way that preserves the $\gs$-ideal $I$, and let $A\subset X$ be an analytic $I$-positive set. An \emph{fusion sequence} below $A$ is a sequence $\langle B_n, \gamma_n\colon n\in\gw\rangle$ such that

\begin{enumerate}
\item $A\supset B_0\supset B_1\supset\dots$ and all $B_n$'s are compact $I$-positive sets;
\item $\gamma_n\in\Gamma$. For a binary string $t\in 2^n$ I will always write $\gamma_t$ for the product of all $\gamma_m$ where $m\in n$ is such that $t(m)=1$, and the product is taken in the increasing order;
\item $B_{n+1}$ and $\gamma_n\cdot B_{n+1}$ are disjoint subsets of $B_n$;
\item For each $t\in 2^n$, the diameter of each set $\gamma_t\cdot B_n$ in some fixed complete metric for the space $X$ is smaller than $2^{-n}$.
\end{enumerate}
\end{definition}

Note that in all situations in this section, the quotient poset $P_I$ will be proper and bounding. Thus, every every analytic $I$-positive set will have a compact $I$-positive subset by virtue of Corollary~\ref{finitarycorollary} and the standard characterization of bounding quotient forcings. The following is easy to prove by induction on $n$, using the third item of the
fusion definition:

\begin{proposition}
If $m\in n$ are numbers and $s\in 2^m$, $t\in 2^n$ are binary strings such that $s\subset t$, then $\gamma_t\cdot B_n\subset\gamma_s\cdot B_m$.
\end{proposition}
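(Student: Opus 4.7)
The plan is to prove this by induction on $n-m$, exploiting the fact that by the increasing-order convention in Definition~\ref{fusiondefinition}(2) the element $\gamma_t$ for $t\in 2^n$ with $s=t\restriction m$ factors as $\gamma_t=\gamma_s\cdot\delta$, where $\delta$ is a product of $\gamma_k$ for $k\in n\setminus m$ with $t(k)=1$, taken in increasing order. This factorization is the reason for the increasing-order convention and will let the induction go through cleanly.

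For the base case $n=m+1$ one has $t=s^\frown\langle 0\rangle$ or $t=s^\frown\langle 1\rangle$. In the first case $\gamma_t=\gamma_s$ and item (1) of the fusion definition yields $B_{m+1}\subset B_m$, so $\gamma_t\cdot B_{m+1}=\gamma_s\cdot B_{m+1}\subset\gamma_s\cdot B_m$ since the action of $\gamma_s$ is a bijection. In the second case $\gamma_t=\gamma_s\cdot\gamma_m$, and item (3) gives $\gamma_m\cdot B_{m+1}\subset B_m$, so applying $\gamma_s$ to both sides yields the required inclusion.

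For the inductive step, assume the claim holds whenever the length difference is $k$, and take $s\in 2^m$, $t\in 2^{m+k+1}$ with $s\subset t$. Let $t'=t\restriction(m+k)\in 2^{m+k}$, so that $s\subset t'\subset t$. The base case applied to $t'\subset t$ gives $\gamma_t\cdot B_{m+k+1}\subset\gamma_{t'}\cdot B_{m+k}$, and the inductive hypothesis applied to $s\subset t'$ gives $\gamma_{t'}\cdot B_{m+k}\subset\gamma_s\cdot B_m$. Chaining these two inclusions finishes the step.

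There is no real obstacle: the statement is a bookkeeping consequence of items (1) and (3) of Definition~\ref{fusiondefinition} together with the increasing-order convention. The only thing one has to be slightly careful about is to verify the factorization $\gamma_t=\gamma_{t'}$ (when $t(m+k)=0$) or $\gamma_t=\gamma_{t'}\cdot\gamma_{m+k}$ (when $t(m+k)=1$), which is immediate once one remembers that the product is taken in increasing order of the index.
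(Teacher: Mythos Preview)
Your proof is correct and follows essentially the same approach as the paper, which merely indicates that the proposition ``is easy to prove by induction on $n$, using the third item of the fusion definition''; your induction on $n-m$ is an equivalent way to organize the same argument, and you have simply supplied the details the paper omits.
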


If $\langle B_n, \gamma_n\colon n\in\gw\rangle$ is a fusion sequence, then the intersection $\bigcap_nB_n$ is a singleton containing a unique point $x\in X$, called the \emph{resulting point} of the fusion. I will be always interested in the map $h\colon\cantor\to X$ defined by $h(z)=\lim_n\gamma_{z\restriction n}\cdot x$. It is not difficult to argue, using the proposition, that $h$ is a well-defined continuous injection from $\cantor$ to $X$. The map $h$ will be referred to as the \emph{resulting map} of the fusion.

\subsection{Invariant graphs}

In this section, I provide a dichotomy which shows that the actionable forcings generated by a single invariant graph inherit all the preservation features of the Silver forcing.

\begin{theorem}
\label{firstdichotomytheorem}
Let $X$ be a Polish space, $\Gamma$ a countable group acting on it, and let $G$ be an analytic $\Gamma$-invariant graph which is a subset of the orbit equivalence relation. Let $I$ be the $\gs$-ideal generated by Borel $G$-anticliques. Let $A\subset X$ be an analytic set. Exactly one of the following occurs:

\begin{enumerate}
\item $A\in I$;
\item there is a continuous injective function $h\colon \cantor\to A$ which is a homomorphism of the Silver graph to $G$.
\end{enumerate}
\end{theorem}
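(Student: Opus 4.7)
The plan is to prove the dichotomy by the standard fusion argument tailored to $\Gamma$-invariant analytic graphs.

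The two alternatives are mutually exclusive: a continuous injection $h\colon\cantor\to A$ which is a homomorphism of the Silver graph to $G$ pulls every Borel $G$-anticlique back to a Borel Silver-anticlique, and since the Silver $\gs$-ideal on $\cantor$ is proper, $h''\cantor\subseteq A$ cannot be covered by countably many Borel $G$-anticliques, forcing $A\notin I$.

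For the nontrivial direction, assume $A\notin I$. The singleton family $\{G\}$ is actionable, so by Theorem~\ref{maintheorem} the poset $P_I$ is proper, and by Corollary~\ref{finitarycorollary} it is bounding; in particular $A$ contains a compact $I$-positive subset $B_0$. I will build a fusion sequence $\langle B_n,\gamma_n\colon n\in\gw\rangle$ in the sense of Definition~\ref{fusiondefinition} with the extra property that $\langle u,\gamma_n\cdot u\rangle\in G$ for every $u\in B_{n+1}$. At stage $n$, Claim~\ref{prepclaim3} supplies an analytic $I$-positive set of vertices of $G$-edges inside $B_n$. Because $G$ lies in the orbit equivalence relation, this set equals $\bigcup_{\delta\in\Gamma}C_\delta$, where $C_\delta=\{u\in B_n:\delta\cdot u\in B_n\text{ and }\langle u,\delta\cdot u\rangle\in G\}$, so by $\gs$-additivity of $I$ some $\gamma_n\in\Gamma$ (necessarily not the identity, by irreflexivity of $G$) has $C_{\gamma_n}$ analytic $I$-positive. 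Irreflexivity of $G$ also forces $C_{\gamma_n}$ to avoid the fixed-point set of $\gamma_n$, so every point of $C_{\gamma_n}$ has a basic open neighborhood whose $\gamma_n$-translate is disjoint from it. Using $\gs$-additivity once more to pick such a neighborhood on which $C_{\gamma_n}$ remains $I$-positive, together with continuity of the finitely many group elements $\gamma_t$ for $t\in 2^{n+1}$ to arrange the diameter constraints, and the bounding property to pass to a compact $I$-positive subset, I obtain the required $B_{n+1}\subseteq C_{\gamma_n}$.

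Let $h\colon\cantor\to A$ be the resulting continuous injection. For the Silver-to-$G$ homomorphism, suppose $z,z'\in\cantor$ differ only at position $n$, with $z(n)=0$ and $z'(n)=1$; set $t=z\restriction n$ and $a=\gamma_t^{-1}\cdot h(z)\in B_{n+1}$. The definition of $\gamma_s$ immediately yields $\gamma_{z'\restriction k}=\gamma_t\gamma_n\gamma_t^{-1}\cdot\gamma_{z\restriction k}$ for all $k\geq n+1$, so by continuity of the action $h(z')=\gamma_t\gamma_n\gamma_t^{-1}\cdot h(z)$. $\Gamma$-invariance of $G$ then reduces the desired edge $\langle h(z),h(z')\rangle\in G$ to $\langle a,\gamma_n\cdot a\rangle\in G$, which holds by the fusion's extra property.

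The main technical obstacle is simultaneously securing at stage $n$ the $I$-positivity of $B_{n+1}$, the uniform edge relation $\langle u,\gamma_n\cdot u\rangle\in G$ across all of $B_{n+1}$, the disjointness and diameter requirements of Definition~\ref{fusiondefinition}, and the compactness needed to continue the induction. The observation that dissolves this into manageable bookkeeping is the countable partition of the edge-bearing set according to the group element connecting the two endpoints of an edge, which via $\gs$-additivity forces a single uniform $\gamma_n$ that realizes $G$-edges along the whole next level of the fusion tree.
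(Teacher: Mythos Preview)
Your proof is correct and follows essentially the same approach as the paper: build a fusion sequence where a single group element $\gamma_n$ realizes $G$-edges uniformly across $B_{n+1}$ (obtained via the countable partition $\bigcup_\delta C_\delta$ and $\sigma$-additivity), then verify the Silver-to-$G$ homomorphism using $\Gamma$-invariance. Your verification via the conjugation identity $\gamma_{z'\restriction k}=\gamma_t\gamma_n\gamma_t^{-1}\gamma_{z\restriction k}$ is a clean alternative to the paper's device of passing to an auxiliary sequence with zeros in the first $n{+}1$ entries, and your explicit treatment of the disjointness requirement $B_{n+1}\cap\gamma_n\cdot B_{n+1}=\emptyset$ (via irreflexivity and basic open sets) fills in a step the paper leaves implicit; these are presentational differences only.
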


Note that the theorem does \emph{not} say that the quotient poset $P_I$ is densely isomorphic to the Silver forcing. For that conclusion, one would need to get a \emph{reduction} of the Silver graph to $G$ as opposed to just a homomorphism.

\begin{proof}
It is clear that (2) implies the negation of (1), since the $h$-preimages of Borel $G$-anticliques are Borel Silver anticliques, which are meager in $\cantor$, and so countably many of them do not cover the whole space $\cantor$ by the Baire category theorem.

To see why the negation of (1) implies (2), let $A\subset X$ be an $I$-positive analytic set. 
Construct a fusion sequence $\langle B_n, \gamma_n\colon n\in\gw\rangle$ below $A$ such that for each $n\in\gw$ and each $x\in B_{n+1}$, $\langle x, \gamma_n\cdot x\rangle\in G$. To perform the induction step, suppose that the set $B_n$ has been found.
The set $C=\{x\in B_n\colon \exists \gamma\in \Gamma\colon \gamma\cdot x\in B_n\land \langle x, \gamma\cdot x\rangle\in G\}$ is Borel and $I$-positive, since its complement in $B$ is a $G$-anticlique and therefore in the ideal $I$.
Using the $\gs$-additivity of the $\gs$-ideal $I$, it is possible to thin out the set $C$ so that for some specific $\gamma=\gamma_n$ it is the case that for all $x\in C$, $\langle x, \gamma\cdot x\rangle\in G$ and $\gamma\cdot x\in B_n$ hold.
Now use the $\gs$-additivity of the ideal $I$ again to find a compact $I$-positive subset $B_{n+1}$ satisfying the small diameter demands of Definition~\ref{fusiondefinition}.

It is now easy to show that the map $h:\cantor\to A$ resulting from the fusion sequence is a homomorphism of the Silver graph to the graph $G$. Suppose that $y_0, y_1\in\cantor$ are Silver connected points, and let $n\in\gw$ be the unique number such that $y_0(n)=0$ and $y_1(n)=1$. Let $t=y_0\restriction n$, and  let  $z\in\cantor$ be such that $z(m)=0$ for all $m\leq n$ and $z(m)=y_0(m)$ otherwise. It is immediate that $z\in B_{m+1}$. Thus, $\langle h(z), \gamma_n\cdot h(z)\rangle\in G$ and by the invariance of the graph $G$, $\langle \gamma_t\cdot h(z) , \gamma_t\cdot\gamma_n\cdot h(z)\rangle\in G$. However, these are exactly the points $f(y_0), f(y_1)$ in the bracket. Thus, the function $f$ is a homomorphism of the Silver graph to $G$ as desired.
\end{proof}

\begin{corollary}
\label{invariantgraphcorollary}
Let $X$ be a Polish space, $\Gamma$ a countable group acting on it, and let $G$ be an analytic $\Gamma$-invariant graph which is a subset of the orbit equivalence relation. Let $I$ be the $\gs$-ideal generated by Borel $G$-anticliques. In the $P_I$-extension:

\begin{enumerate}
\item there is exactly one real degree over the ground model;
\item if $H$ is an acyclic closed graph on a Polish space $Z$ in the ground model, every point of $Z$ belongs to a ground model coded compact $Z$-anticlique.
\end{enumerate}. 
\end{corollary}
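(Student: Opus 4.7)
For part (1), my plan is a fusion argument paralleling the classical proof of minimal real degree for the Silver forcing. Given $B \in P_I$ and a $P_I$-name $\tau$ for a point of $\cantor$, I would use Corollary~\ref{boundingcorollary} to refine $B$ to a compact $I$-positive subset carrying a continuous $f \colon B \to \cantor$ with $B \Vdash \tau = \dot f(\dotxgen)$. Then I would build a fusion sequence $\langle B_n, \gamma_n : n \in \gw\rangle$ below $B$ in the sense of Definition~\ref{fusiondefinition}, enriching the construction of Theorem~\ref{firstdichotomytheorem} with one additional alternative at each stage $n$: either the \emph{decisive} alternative, demanding that $f(\gamma_t \cdot x)(n) = f(\gamma_t \cdot \gamma_n \cdot x)(n)$ hold for every $x \in B_{n+1}$ and every $t \in 2^n$ simultaneously, or the \emph{informative} alternative, demanding the reverse inequality for every such $x$ and $t$. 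Whichever side of this dichotomy is $I$-positive at stage $n$ will be enforced using the $\gs$-additivity of $I$, the edge-production claim from the proof of Theorem~\ref{firstdichotomytheorem}, and a pigeonhole over the finitely many indices $t \in 2^n$.

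Let $h \colon \cantor \to B$ denote the continuous injective Silver-to-$G$ homomorphism produced by the fusion. If the decisive alternative is chosen at cofinitely many stages, then $f \circ h$ will agree on every Silver edge of $\cantor$, hence will be $E_0$-invariant as a Borel function, and the standard $E_0$-ergodicity argument together with a further Silver-like refinement of $h$ will yield a condition forcing $\tau$ to equal a specific element of $V$, so $\tau \in V$. If the informative alternative is chosen at infinitely many stages $n_0 < n_1 < \cdots$, then the bit $y(n_k)$ of the Silver code $y = h^{-1}(\dotxgen)$ is Borel-recoverable from $f(h(y))(n_k)$ together with the bookkeeping of earlier stages; since $h$ is coded in $V$, this gives $\dotxgen = h(y) \in V[\tau]$ and hence $V[\tau] = V[\dotxgen]$.

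Part (2) will then follow as a direct application of Theorem~\ref{closedanticliquetheorem}, clause~(1). Its three hypotheses are properness of $P_I$ (supplied by Theorem~\ref{maintheorem} applied to the actionable family $\{\gamma \cdot G : \gamma \in \Gamma\}$), minimal real degree (supplied by part~(1)), and the edge-count condition: every Borel $I$-positive set $B$ contains, for each $n$, a finite set $a$ with $|G \cap [a]^2| > n \cdot |a|$. For the last item, given such a $B$, I would apply Theorem~\ref{firstdichotomytheorem} to obtain a continuous injective Silver-to-$G$ homomorphism $h \colon \cantor \to B$, and set $a_k := h(\{z \in \cantor : z(m) = 0 \text{ for all } m \geq k\})$. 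Then $|a_k| = 2^k$, and the domain set carries exactly $k \cdot 2^{k-1}$ Silver edges, which $h$ sends injectively to $G$-edges inside $a_k$; for any $k > 2n$ this yields $|G \cap [a_k]^2| \geq \tfrac{1}{2}|a_k|\log_2|a_k| > n \cdot |a_k|$, as required.

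The main obstacle will be the bookkeeping at each stage of the fusion in part~(1), specifically enforcing the decisive or informative alternative uniformly across all $2^n$ translates $\gamma_t$ at stage $n$; without this uniformity the decisive agreement fails to propagate from the base copy of $B_{n+1}$ to the $\gamma_t$-translate that actually contains $h(y)$. I expect this to be a routine but tedious adaptation of the Silver-forcing fusion argument. Once part~(1) is in place, part~(2) is a clean application of the dichotomy toolkit assembled earlier in the paper.
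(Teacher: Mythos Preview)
Your plan for part~(2) is sound and essentially matches the paper: Corollary~\ref{silveracycliccorollary} itself is proved via Theorem~\ref{closedanticliquetheorem}(1) with precisely the edge-count computation you give, so whether one applies Theorem~\ref{closedanticliquetheorem} directly to $G$ (using part~(1) for the minimal-degree hypothesis) or transfers Silver's result through the homomorphism as the paper does, the content is the same.

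Part~(1), however, has a genuine gap: your decisive/informative dichotomy is too weak for the conclusion you draw from it. The decisive alternative at stage $n$ only asserts that the single output bit $f(\cdot)(n)$ is unchanged when the input is flipped at position $n$; it says nothing about how bit $m$ of $f$ behaves under a flip at position $n$ for $m\neq n$. Consequently, even if the decisive alternative holds at every stage, $f\circ h$ need not agree on Silver edges in full, so $E_0$-invariance does not follow. Dually, in the informative case you recover $y(n_k)$ only at the informative stages $n_k$; the bits of $y$ at the decisive stages remain undetermined by $\tau$, so you cannot reconstruct $\dotxgen=h(y)$ from $\tau$. The dichotomy you actually need is global---either $g$ is constant on some $I$-positive set, or it can be made injective on one---not bit-by-bit, and the obstacle you flag is not merely bookkeeping.

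The paper avoids this by the observation you already exploit in part~(2): the homomorphism $h\colon\cantor\to B$ from Theorem~\ref{firstdichotomytheorem} pulls Borel $G$-anticliques back to Borel Silver anticliques, so $h$-images of Silver-positive sets are $I$-positive. Hence one may apply the \emph{known} Silver minimal-degree theorem to the composition $g\circ h\colon\cantor\to\cantor$ (where $g$ realizes $\tau$), obtaining a Silver-positive compact $C$ on which $g\circ h$ is constant or injective; then $h''C\subset B$ is an $I$-positive condition on which $g$ is constant or injective, and the two cases follow immediately. This black-box reduction is both shorter and free of the uniformity difficulty.
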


\noindent The conclusion cannot be generalized much farther. If the $\gs$-ideal is generated by more than one invariant graphs, then there may be more real degrees than one, even though generation by finitely many graphs results in finitely many degrees. A good example is provided by the product of two copies of the Silver forcing, which is generated by two invariant graphs by Theorem~\ref{producttheorem}. If hypergraphs of higher arity than two are used, there again may be more real degrees. Even when the assumptions of the theorem are satisfied, the quotient poset will not add a minimal forcing extension, as an intermediate $\gs$-closed extension typically exists by Theorem~\ref{intermediatetheorem}.

\begin{proof}
I will start with (1). Suppose that $B\in P_I$ is a condition and $\tau$ is a $P_I$-name for an element of $\cantor$. Strengthening the condition $B$ if necessary, one can find a Borel (even continuous, since the poset $P_I$ is bounding by Theorem~\ref{boundingtheorem}) function $g\colon B\to\cantor$ such that $B\Vdash\tau=\dot g(\dotxgen)$. Let $f\colon\cantor\to B$ be a continuous injective homomorphism of the Silver graph to $G$ posited by Theorem~\ref{firstdichotomytheorem}. It is well-known that the Silver poset adds a minimal real degree, and so there is a Silver-positive compact set $C\subset\cantor$ such that the function $g\circ f$ is either one-to-one or constant on $C$. The set $f''C\subset B$ is compact and $I$-positive since the $f$-preimages of $G$-anticliques are Silver anticliques.
The function $g$ is then either one-to-one or constant on the set $f''C$. In the former case, the condition $f''C\in P_I$ forces $\tau$ to be equal to the unique point in the range of $g$.
In the latter case, the condition $f''C$ forces that $\dotxgen$ can be recovered from $\tau$ as the unique $g$-preimage of $\tau$ in $f''C$, and therefore the model $V[\tau]$ is equal to the whole generic extension.

The proof of (2) is similar, using Corollary~\ref{silveracycliccorollary}.
\end{proof}

\subsection{Closed graphs}
\label{closedsubsection}

It turns out that among the forcings generated by a single invariant graph, the Silver forcing is the only one which adds a point that falls out of all compact ground model anticliques for some locally countable closed graph. In particular, if $G$ is an invariant locally countable closed graph, then the quotient poset is densely naturally isomorphic to the Silver forcing. This is an immediate conclusion of the following dichotomy:

\begin{theorem}
\label{seconddichotomytheorem}
Let $X$ be a Polish space, $\Gamma$ a countable group with a Borel action on $X$ and $G$ an analytic graph on $X$ invariant under the action such that every $G$-edge consists of a pair of orbit equivalent points. Let $I$ be the $\gs$-ideal on the space $X$ generated by Borel $G$-anticliques. For every analytic set $A\subset X$, exactly one of the following occurs:

\begin{enumerate}
\item $A\in I$;
\item there is a continuous injection $h\colon\cantor\to A$ which is a reduction of the Silver graph to $G$;
\item the condition $A$ (as a nonzero element of the completion of the poset $P_{I_G}$) forces that for every locally countable closed graph $H$ on a Polish space $Z$, every element of $Z$ belongs to a ground model coded compact $Z$-anticlique.
\end{enumerate}
\end{theorem}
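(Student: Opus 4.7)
The plan is to dichotomize on how topologically dense $G$ is below $A$: in the dense case I invoke Theorem~\ref{closedanticliquetheorem} to obtain~(3), and in the sparse case I run a fusion construction to produce the required reduction for~(2). Write $N(x) = \{y \in X : \langle x, y\rangle \in G\}$; since $G$ is contained in the orbit equivalence of a countable-group action, $N(x) \subseteq \Gamma \cdot x$ is countable. The incompatibility of (1) and (2) is immediate, since a continuous injective reduction $h : \cantor \to A$ pulls Borel $G$-anticliques back to Borel Silver anticliques, which are meager, and countably many meager sets cannot cover $\cantor$. Assuming $A \notin I$, consider the condition
\[
(\star) \qquad \text{for every Borel $I$-positive $B \subseteq A$ some $x \in B$ has $\overline{N(x) \cap B}$ uncountable.}
\]

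If $(\star)$ holds, then properness of $P_I$ (Theorem~\ref{maintheorem}) together with minimal real degree (Corollary~\ref{invariantgraphcorollary}(1)) and $(\star)$ itself provide precisely the input of Theorem~\ref{closedanticliquetheorem}(2), applied below $A$ to the one-element family $\cG = \{G\}$. Its conclusion is $\cG \not\perp R_H$ below $A$ for every locally countable closed graph $H$ on a Polish space $Z$; Theorem~\ref{fubinitheorem}(1)$\Rightarrow$(3), applied below $A$, then delivers clause~(3) of the present theorem.

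If $(\star)$ fails, fix a Borel $I$-positive witness $B \subseteq A$ on which $\overline{N(x) \cap B}$ is countable for every $x \in B$. Since $P_I$ is bounding (Corollary~\ref{finitarycorollary}), I may shrink $B$ to be compact while preserving the countable-closure property; singletons are $G$-anticliques and therefore in $I$, so I may further discard isolated points and assume $B$ is perfect. I then construct a fusion sequence $\langle B_n, \gamma_n : n \in \gw\rangle$ below $B$ in the sense of Definition~\ref{fusiondefinition} that, in addition to its four clauses, enforces
\begin{enumerate}
\item[(5)] $\langle x, \gamma_n \cdot x\rangle \in G$ for every $x \in B_{n+1}$;
\item[(6)] for every $s, t \in 2^{n+1}$ with $|s \triangle t| \geq 2$, the sets $\gamma_s \cdot B_{n+1}$ and $\gamma_t \cdot B_{n+1}$ are $G$-disconnected.
\end{enumerate}
Clause~(5) is arranged exactly as in Theorem~\ref{firstdichotomytheorem}. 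By $\Gamma$-invariance of $G$, clause~(6) amounts to making $B_{n+1}$ independent in each of the finitely many symmetric analytic relations $U_\delta = \{(x,y) \in B_n^2 : \langle x, \delta \cdot y\rangle \in G \text{ or } \langle y, \delta \cdot x\rangle \in G\}$ for $\delta$ ranging over the finite list $\{\gamma_s^{-1}\gamma_t\}$ of ``bad'' elements. The hypothesis on $B$ guarantees that each vertex-closure $\overline{(U_\delta)_x}$ in $B_n$ is countable. I then apply the following coloring lemma: every analytic graph $U$ on a perfect Polish space $Y$ all of whose vertex-closures are countable has countable Borel chromatic number. For the proof, fix a countable basis $\{V_k\}$ of $Y$; each $\overline{U_x}$ is nowhere dense in $Y$, and because $V_k$ is open one has $V_k \cap \overline{U_x} = \emptyset$ iff $V_k \cap U_x = \emptyset$, a Borel condition on $x$ by Luzin--Novikov uniformization of $U$; hence $c(x) = \min\{k : x \in V_k \text{ and } V_k \cap U_x = \emptyset\}$ is a Borel proper coloring. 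Applied to $U_{\delta_1} \cup \cdots \cup U_{\delta_m}$, the lemma and $\gs$-additivity of $I$ produce an $I$-positive Borel $U$-independent subset of $B_n$; standard shrinking (to compact, small diameter, disjoint from the $\gamma_n$-translate) then yields $B_{n+1}$. The resulting continuous injection $h : \cantor \to B$ is a reduction: on Silver-adjacent pairs, clause~(5) and the argument from Theorem~\ref{firstdichotomytheorem} force a $G$-edge; on Silver-disconnected distinct pairs, at the first level $n+1$ where the initial segments disagree at $\geq 2$ coordinates, clause~(6) forbids a $G$-edge between the corresponding translates.

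The main obstacle is the coloring lemma together with verifying that the countable-closure assumption on $N(x) \cap B$ transfers to the vertex-closures of the auxiliary relations $U_\delta$. It is precisely this control over topological closures of $G$-neighborhoods---a strengthening beyond the mere set-theoretic countability of $N(x)$ already available in Theorem~\ref{firstdichotomytheorem}---that keeps the Borel chromatic number of each $U_\delta$ countable and thereby permits the fusion to be refined from a homomorphism into a genuine reduction.
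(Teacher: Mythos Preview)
Your overall strategy---dividing on $(\star)$ and invoking Theorem~\ref{closedanticliquetheorem}(2) in the dense case---matches the paper's. The gap is in the sparse case: your ``coloring lemma'' is false. The KST graph on $\cantor$ is a counterexample: it is locally countable with $\overline{U_x}\subseteq U_x\cup\{x\}$ countable for every $x$, yet has uncountable Borel chromatic number. Your proposed coloring $c(x)=\min\{k: x\in V_k,\ V_k\cap U_x=\emptyset\}$ is undefined precisely when $x\in\overline{U_x}$, and nothing in your hypothesis rules this out---indeed for comeagerly many $x$ in the KST example one has $x\in\overline{U_x}$. There is also a secondary issue: the claim that $\overline{(U_\delta)_x}$ is countable requires control over $\overline{N(\delta^{\pm 1}x)\cap B_n}$, but $\delta^{\pm 1}x$ need not lie in $B$, so the failure of $(\star)$ does not obviously transfer to the auxiliary graphs.

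The paper avoids this by imposing a sharper and more local inductive requirement: rather than your clause~(6), it asks only that $G\cap(B_{n+1}\times\gamma_n B_{n+1})=\{\langle x,\gamma_n x\rangle:x\in B_{n+1}\}$, i.e., the sole $G$-edges between $B_{n+1}$ and its $\gamma_n$-translate are the diagonal ones. Crucially, this is achieved not by fixing $\gamma_n$ first and then refining, but by a Cantor--Bendixson rank argument: bound the ranks of the countable closed sets $\overline{N_x}$ by some $\alpha<\gw_1$ via $\mathbf{\Sigma}^1_1$-boundedness, stratify $B_n$ into Borel pieces $C_{\delta,\beta,O}$ according to which $\delta$ yields a $G$-neighbor in $B_n$ and at which CB-rank $\beta$ that neighbor sits inside $\overline{N_x}$, then select an $I$-positive piece with \emph{minimal} $\beta$ and set $\gamma_n=\delta$. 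The minimality of $\beta$ together with the isolating open set $O$ is what forces the diagonal-only condition. Once this single local condition holds at each stage, invariance of $G$ propagates it through the translates, and the reduction verification goes through using only the first coordinate of disagreement---so the global disconnection you aimed for in~(6) is never needed.
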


\begin{proof}
It is immediate that the three options are exclusive, since if $h$ is an injection as in (2), then $h$ transports the Silver ideal to the ideal generated by Borel $G$-anticliques, and so it generates a natural isomorphism of the Silver forcing with the poset $P_I$ below $\rng(h)$. Thus, assume that (1) fails, and argue that (2) or (3) has to occur; this will complete the proof of the theorem. The dividing line revolves around the following question: is it true that for every Borel $I$-positive set $B\subset A$ there is a point $x\in B$ such that the set $\{y\in B\colon \langle x, y\rangle\in G\}$ has uncountable closure?

If the answer to the question is affirmative, then (3) occurs as proved in Theorem~\ref{closedanticliquetheorem}(2). Note that the poset $P_I$ adds a minimal degree by Corollary~\ref{invariantgraphcorollary}. Thus, suppose that the answer is negative, as witnessed by some Borel $I$-positive set $B\subset A$, and work to get a continuous injection $h\colon\cantor\to B$ which is a reduction of the Silver graph to $G$. By induction on $n\in\gw$ build a fusion sequence $\langle B_n, \gamma_n\colon n\in\gw\rangle$ for the ideal $I$ below the set $B$ such that for every $n$, the intersection of $G$ with $B_{n+1}\times\gamma_n B_{n+1}$ is exactly the set
$\{\langle x, \gamma_n\cdot x\rangle\colon x\in B_{n+1}\}$.

Once the induction is performed, the resulting function $h\colon \cantor\to X$ will be the desired reduction of the Silver graph to $G$. To see this,  suppose that $y_0, y_1\in\cantor$ are distinct points and let $n$ be the smallest number where they differ; say $y_0(n)=0$ and $y_1(n)=1$. Let $t=y_0\restriction n$ and let $z\in\cantor$ be a point such that $z(m)=0$ for $m\leq n$ and $z(m)=y_0(m)$ for $m>n$. Then $h(z)\in B_{n+1}$ and by the induction demand, $\gamma_n\cdot h(z)$ is the only point in $\gamma_n\cdot B_{n+1}$ which is $G$-connected with $h(z)$. Observe that $\gamma_t\cdot h(z)=h(y_0)$, and by invariance of the graph $G$, $\gamma_t\cdot \gamma_n\cdot h(z)$ is the only point in $\gamma_t\cdot\gamma_n\cdot B_{n+1}$ which is $G$-connected to $\gamma_t\cdot h(z)=h(y_0)$.
Thus, if $y_1(m)=y_0(m)$ for all $m>n$, then $h(y_1)=\gamma_t\cdot\gamma_n\cdot h(z)$ is $G$-connected to $h(y_0)$. On the other hand, if there is a number $m>n$ such that $y_1(m)\neq y_0(m)$, then $h(y_1)$ is an element of $\gamma_t\cdot\gamma_n\cdot B_{n+1}$ which is distinct from $\gamma_t\cdot\gamma_n\cdot h(z)$ and therefore is not $G$-connected to $h(y_0)$.

To perform the induction, start with an arbitrary compact $I$-positive set $B_0\subset B$ of small diameter. Suppose that the set $B_n$ has been found and work to produce $\gamma_n\in \Gamma$ and $B_{n+1}\subset B$. By the case assumption, for every point $x\in B$, the closure of the set $N_x=\{y\in B_n\colon x=y\lor y\mathrel G x\}\subset X$ is countable. The boundedness theorem says that there is a countable ordinal $\ga\in\gw_1$ such that all  of these countable closed sets have Cantor--Bendixson rank $<\ga$. Now, for every ordinal $\gb\in\ga$, every group element $\gd\in\Gamma$ and every basic open set $O\subset X$, let $C_{\gd,\gb, O}=\{x\in B_n\colon\gd\cdot x\in B_n$ and $\gd\cdot x\mathrel G x$ and the rank of $\gd\cdot x\in O$ in the closure of the set $N_x$ is equal to $\gb$, and there are no other points of $N_x$ of rank $\geq\gb$ in the set $O\}$. The sets $C_{\gd, \gb, O}\subset B$ are Borel, and the set $B\setminus C_{\gd, \gb}\setminus\bigcup_{\gd, \gb, O}C_{\gd, \gb}$ is a $G$-anticlique. It follows that one of the sets $C_{\gd, \gb, O}$ must be $I$-positive; select one with minimal ordinal $\gb$. Let $C=C_{\gd, \gb, O}\setminus\bigcup\{C_{\gd', \gb', O'}\colon \gd'\in\gd, \gb'\in\Gamma, O'\subset X$ basic open$\}$ and let $\gamma_n=\gd$. A review of the definitions shows that the set $C$ works as required. To complete the induction step, it is not difficult to refine the set $C$ to $B_{n+1}$ to satisfy the diameter demand of Definition~\ref{fusiondefinition}. This completes the induction step and the proof.
\end{proof}

\subsection{Open graphs}
\label{opensubsection}

 \begin{theorem}
\label{thirddichotomytheorem}
Let $\Gamma$ be a countable group acting on a Polish space $X$ in a continuous free way. Let $H$ be an invariant open graph on $X$, and let $G$ be the intersection of $H$ with the orbit equivalence relation. Let $I$ be the $\gs$-ideal generated by Borel $G$-anticliques. For every analytic set $A\subset X$, exactly one of the following occurs:

\begin{enumerate}
\item $A\in I$;
\item there is a continuous injection $f\colon\cantor\to A$ reducing the Vitali equivalence relation to $G$.
\end{enumerate}
\end{theorem}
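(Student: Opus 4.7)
The two alternatives are exclusive: if $f\colon\cantor\to A$ is a continuous injective reduction of the Vitali graph to $G$, then the $f$-preimage of each Borel $G$-anticlique is a Borel partial $E_0$-transversal on $\cantor$, hence meager by a classical result; countably many of these cannot cover $\cantor$, so $A\notin I$.

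For the forward direction, assume $A\notin I$. By Corollary~\ref{bounding1corollary}, I may replace $A$ by a compact $I$-positive subset $B_0$; fix an enumeration $\Gamma\setminus\{1\}=\{\delta_k\colon k\in\gw\}$. The plan is to build a fusion sequence $\langle B_n,\gamma_n\colon n\in\gw\rangle$ in the sense of Definition~\ref{fusiondefinition}, enforcing two additional conditions at each stage: (P1) for every distinct $t_1, t_2\in 2^n$, $\gamma_{t_1}B_n\times\gamma_{t_2}B_n\subseteq H$; and (P2) for every $k\leq n$ and every distinct $t_1, t_2\in 2^n$ with $\delta_k\ne\gamma_{t_1}\gamma_{t_2}^{-1}$, the sets $\gamma_{t_1}B_n$ and $\delta_k\gamma_{t_2}B_n$ are disjoint. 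A preparatory observation analogous to Claim~\ref{prepclaim3}, using $G=H\cap E$, asserts that the set $\{x\in B_n\colon\exists\gamma\in\Gamma\setminus\{1\},\ \gamma x\in B_n\text{ and }(x,\gamma x)\in H\}$ is $I$-positive, since its complement in $B_n$ is a Borel $G$-anticlique. Iterating this claim lets me choose $\gamma_n$ and simultaneously refine $B_n$ to an $I$-positive piece contained in $\bigcap_{\delta\in S_{n+1}} D_\delta$, where $S_{n+1}=\{\gamma_{t_1}^{-1}\gamma_{t_2}\colon t_1\ne t_2\in 2^{n+1}\}$ and $D_\delta=\{x\colon(x,\delta x)\in H\}$ is open; openness of $H$ then supplies a product neighborhood around a point $x_0$ of this piece in which (P1) at level $n+1$ holds, while freeness of the action together with diameter shrinkage secures (P2) and the disjointness and diameter demands of Definition~\ref{fusiondefinition}.

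It remains to check that the resulting continuous injection $h\colon\cantor\to A$ reduces the Vitali graph to $G$. If $z\ne z'$ are $E_0$-equivalent and agree on $[N,\gw)$, then for $n\geq N$ the ratio $\gamma_{z\restriction n}\gamma_{z'\restriction n}^{-1}$ is constantly equal to $\delta:=\gamma_{z\restriction N}\gamma_{z'\restriction N}^{-1}\ne 1$, so by continuity of the action $h(z)=\delta\cdot h(z')$; the two points lie in a common orbit, and (P1) at level $N$ with $t_1=z\restriction N$ and $t_2=z'\restriction N$ yields $(h(z),h(z'))\in H\cap E=G$. Conversely, if $z, z'$ are not $E_0$-equivalent, then at every position where $z, z'$ differ the ratio gets multiplied by a nontrivial conjugate of $\gamma_n^{\pm 1}$ and so strictly changes, so for any fixed $\delta_k$ there are infinitely many $n$ with $\gamma_{z\restriction n}\gamma_{z'\restriction n}^{-1}\ne\delta_k$; choosing such $n\ge k$, (P2) forces the sets $\gamma_{z\restriction n}B_n$ and $\delta_k\gamma_{z'\restriction n}B_n$ disjoint, hence $h(z)\ne\delta_k\cdot h(z')$. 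Thus $h(z)$ and $h(z')$ lie in different $\Gamma$-orbits and so are not $G$-related. The main obstacle is the induction step: finding, at stage $n$, a choice of $\gamma_n$ and an $I$-positive refinement of $B_n$ simultaneously contained in every $D_\delta$ for $\delta\in S_{n+1}\setminus S_n$, given that these $\delta$'s themselves depend on $\gamma_n$; this calls for a careful bookkeeping that alternates the preparatory claim with the selection of $\gamma_n$ and the progressive shrinking of the working $I$-positive set, using openness of $H$, countability of $\Gamma$, and freeness of the action to keep all the open conditions compatible.
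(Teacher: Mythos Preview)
Your approach is essentially the same as the paper's, and it is correct, but you have made the induction step look harder than it is. The invariant (P1) that you carry---$\gamma_{t_1}B_n\times\gamma_{t_2}B_n\subset H$ for all distinct $t_1,t_2\in 2^n$---appears stronger than the paper's single requirement $B_{n+1}\times\gamma_n\cdot B_{n+1}\subset H$, but in fact it follows from it inductively. The point is that every box at level $n+1$ sits inside a box at level $n$: both $B_{n+1}$ and $\gamma_n\cdot B_{n+1}$ are subsets of $B_n$, so $\gamma_{s^\smallfrown b}\cdot B_{n+1}\subset\gamma_s\cdot B_n$ for every $s\in 2^n$ and $b\in 2$. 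For distinct $t_1,t_2\in 2^{n+1}$ with $t_1\restriction n\neq t_2\restriction n$, the inclusion $\gamma_{t_1}B_{n+1}\times\gamma_{t_2}B_{n+1}\subset\gamma_{t_1\restriction n}B_n\times\gamma_{t_2\restriction n}B_n\subset H$ comes for free from (P1) at level $n$; for $t_1\restriction n=t_2\restriction n$, invariance of $H$ reduces to $B_{n+1}\times\gamma_n B_{n+1}\subset H$, which is exactly the one new condition. So there is no need to land in $\bigcap_{\delta\in S_{n+1}}D_\delta$ by direct argument, and the ``main obstacle'' you describe dissolves: choose $\gamma_n$ via your preparatory claim, then use openness of $H$ and $\sigma$-additivity of $I$ (over a countable basis) to shrink to an $I$-positive compact set satisfying the single product inclusion and the diameter and disjointness demands. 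This is precisely what the paper does.

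One small slip: your (P2) is stated only for \emph{distinct} $t_1,t_2$, but in the verification for the non-$E_0$ case you may need $t_1=z\restriction n=z'\restriction n=t_2$. Either extend (P2) to allow $t_1=t_2$ (this is just $B_n\cap\delta_k B_n=\emptyset$ for $\delta_k\neq 1$, secured by freeness and small diameter), or observe that since $z\neq z'$ you may always choose $n$ beyond the first point of difference, so that $z\restriction n\neq z'\restriction n$.
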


\noindent A mildly interesting case of such graphs arises when $\Gamma$ acts on the space $X$ by isometries, $O\subset\mathbb{R}$ is an open set, and the graph $H$ connects points $x, y$ if their distance belongs to the set $O$. Of course, the theorem exactly says that the resulting quotient poset is either trivial or below a dense set of conditions, it is naturally isomorphic to the Vitali forcing.

\begin{proof}
As before, (2) implies the negation of (1) since the $f$-preimages of Borel $G$-anticliques are Borel Vitali anticliques and as such meager in $\cantor$. To show that the negation of (1) implies (2), let $M$ be a countable elementary submodel of a large enough structure containing all objects named in the assumptions of the theorem.  By induction on $n\in\gw$ build a fusion sequence $\langle B_n, \gamma_n\colon n\in\gw\rangle$ so that 

\begin{itemize}
\item[(*)] $B_{n+1}\times\gamma_n\cdot B_{n+1}\subset H$;
\item[(**)] whenever $t, s\in 2^n$ and $\gb\neq 1$ is among the first $n$ elements of $\Gamma$ in some fixed enumeration of $\Gamma$, then $\gamma_s\cdot \gamma_n B_{n+1}\cap\gamma_t\cdot B_{n+1}=0$.
\end{itemize}

To show how the induction step is performed, suppose that the set $B_n$ and the group elements $\gamma_i\in\Gamma$ for $i<n$ have been found. The first order of business is to find a Borel $I$-positive set $C\subset B_n$ such that whenever $t, s\in 2^n$ and $\gb\neq 1$ is among the first $n$ many elements of $\Gamma$, then $\gamma_t\gb\gamma_s^{-1}\cdot C\cap C=0$ or else $\gamma_t\gb\gamma_s=1$. This is possible since the $\gs$-ideal $I$ is countably additive. Having produced $C$, find an $I$-positive Borel set $D\subset C$ and a group element $\gamma_n$ such that for all $x\in D$, $\gamma_n\dot x\in C$ and moreover $x\mathrel G\gamma_n\cdot x$ holds. By the countable additivity of the ideal $I$ and the fact that the graph $G$ is open, it is possible to thin the set $D$ further so that there are open sets $O, P\subset X$ such that $O\times P\subset H$ and $D\subset O$ and $\gamma_n\cdot D\subset P$. Finally, thin out the set $D$ to $B_{n+1}$ so that the diameter demand of Definition~\ref{fusiondefinition} are satisfied.

Suppose that the induction has been carried out; I claim that the associated function $h\colon \cantor\to B$ is a reduction of the Vitali equivalence to the graph $G$. To see this, suppose that $y_0, y_1\in\cantor$ are Vitali-related but not equal. Then let $n\in\gw$ be some number such that $\forall i\geq n\ y_0(i)=y_1(i)$ holds and let $z\in\cantor$ be such that $z(i)=0$ for $i<n$, and $z(i)=y_0(i)$ for $i\geq n$. Then, $h(y_0)=\gamma_{y_0\restriction n}\cdot h(z)$ and $h(y_1)=\gamma_{y_0\restriction n}\cdot\gamma_n\cdot h(z)$ and so the points $h(y_0), h(y_1)\in X$ belong to the same $\Gamma$-orbit. To see that the are $H$-related, let $n\in\gw$ be the smallest number such that $y_0(n)\neq y_1(n)$, say $y_0(n)=0$ and $y_1(n)=1$. Let $t=y_0\restriction n$. Then by the definitions, $h(y_0)\in \gamma_t\cdot B_{m+1}$ and $h(y_1)\in \gamma_t\cdot\gamma_n\cdot B_{m+1}$. The product $B_{m+1}\times \gamma_m\cdot B_{m+1}$ is a subset of the graph $H$ by (*), the graph $H$ is invariant under the group action, and so $h(y_0)\mathrel Hh(y_1)$ as requested.

Suppose on the other hand that $y_0, y_1\in\cantor$ are not Vitali related; I will show that $f(y_0)$ and $f(y_1)$ are not in the same $\Gamma$-orbit. Suppose towards a contradiction that they are, and let $\gb\in\Gamma$ be such that $\gb\cdot f(y_0)=f(y_1)$. Let $n\in\gw$ be a number such that both $\gb, \gb^{-1}$ are among the first $n$ many elements of $\Gamma$ and $y_0(n), y_1(n)$ differ, say $y_0(n)=0$ and $y_1(n)=1$. Let $t=y_0\restriction n$ and $s=y_1\restriction n$. Note that $h(y_0)\in \gamma_t\cdot B_{n+1}$ and $h(y_1)\in \gamma_s\gamma_n\cdot B_{n+1}$, and so $h(y_1)=\gb\cdot h(y_0)\in\gb\gamma_t\cdot B_{n+1}\cap\gamma_s\gamma_n\cdot B_{n+1}$ and
$\gamma_s^{-1}\gb\gamma_t\cdot B_{n+1}\cap\gamma_n\cdot B_{n+1}\neq 0$, contradicting (**).
\end{proof}

\subsection{Group-related graphs}
\label{groupsubsection}

All of the previous examples are unsophisticated in the sense that they do not use the combinatorics of the acting group in any way. At the same time, there is a whole universe of actionable forcings which seem to reflect various group features in an intriguing way. In this section, I introduce a rich class of examples and show that a small subclass trivializes.

\begin{definition}
Let $X$ be a Polish space, $\Gamma$ a countable group acting on $X$ in a Borel way, and let $\Delta\subset\Gamma$ be a set closed under conjugation by elements of the group $\Gamma$, and under inverse. The graph $G_{\Delta}$ connects points $x, y$ if there is $\gd\in\Delta$ such that $\gd\cdot x=y$.
\end{definition}

The graph $G_{\Delta}$ is invariant under the group action and as such generates an actionable ideal and an actionable forcing. In most cases, the associated $\gs$-ideal is nontrivial: for example, if $\Gamma$ acts naturally on $2^\Gamma$ and $\Delta\subset\Gamma$ is any infinite set, the Borel anticliques of the graph $G_{\Delta}$ are meager. The basic examples below use the group $\Gamma$ of all finite subsets of $\gw$ with the symmetric difference operation, action on $X=\power(\gw)$ by the symmetric difference.

\begin{example}
The Silver forcing is obtained from $\Delta=$the set of all singletons.
\end{example}

\begin{example}
The Vitali forcing is obtained from $\Delta=\Gamma$.
\end{example}

\begin{example}
\label{vitalioddexample}
The Vitali-odd forcing forcing is obtained from $\Delta=$the set of all finite sets of odd size. The Vitali-odd forcing is distinct from both Silver forcing and Vitali forcing. In the contradistinction to the Vitali forcing, it adds an independent real. Similarly to the Vitali forcing, in the Vitali-odd extension the ground model coded compact anticliques of any closed graph without perfect cliques still cover their domain space, see the proof of Corollary~\ref{neverclosedcorollary}. This shows that the Vitali-odd forcing is distinct from the Silver forcing.
\end{example}

In general, it seems that the analysis of the quotient forcing depends on deep combinatorial properties of the set $\Delta$ in question.  I will prove a theorem which shows that if the set $\Delta$ belongs to a certain natural ideal, the quotient poset is the Silver forcing.

\begin{definition}
Let $\Gamma$ be a countable group.

\begin{enumerate}
\item A set $\Delta\subset\Gamma$ is \emph{secluded} if for every finite set $a\subset\Gamma$, for all but finitely many $\gg\in\Gamma$ it is the case that $|(a\cdot\gg\cdot a)\cap\Delta|\leq 1$;
\item the secluded ideal on $\Gamma$ is the ideal generated by secluded sets.
\end{enumerate}
\end{definition}

\noindent For example, an infinite subset of the integers is secluded if the size of the holes in it tends to infinity. The set of singletons is secluded in the group of finite subsets of $\gw$ with the symmetric difference. It is not difficult to see that no infinite group $\Gamma$ belongs to its secluded ideal. Everyone's favorite question asks whether the set of prime numbers belongs to the secluded ideal on $\gw$; recent results in number theory show that it is not one of the secluded generators.
The quotient posets resulting from sets in the secluded ideal trivialize in the sense of the following dichotomy:

\begin{theorem}
\label{fourthdichotomytheorem}
Let $X$ be a Polish space, $\Gamma$ be a countable abelian group acting on $X$ in a Borel and free way, and let $\Delta\subset\Gamma$ be a set in the secluded ideal, closed under inverse. Let $G_{\Delta}$ be the associated graph and let $I$ be the $\gs$-ideal on $X$ generated by the Borel $G_{\Delta}$ anticliques. If $A\subset X$ is an analytic set, exactly one of the following occurs:

\begin{enumerate}
\item $A\in I$;
\item there is a continuous function $f\colon \cantor\to A$ which is a reduction of the Silver graph to $G_{\Delta}$.
\end{enumerate}
\end{theorem}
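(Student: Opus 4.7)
The easy direction is immediate: if (2) holds via a continuous injective reduction $f\colon\cantor\to A$, then preimages under $f$ of Borel $G_\Delta$-anticliques are Borel Silver-anticliques, hence meager in $\cantor$, so countably many of them cannot cover $\cantor$, ruling out (1). For the main direction I would assume $A\notin I$ and construct a fusion sequence in the sense of Definition~\ref{fusiondefinition} whose resulting continuous injection $h\colon\cantor\to A$ will be the desired reduction of the Silver graph to $G_\Delta$. Fix a decomposition $\Delta=\bigcup_{k<K}\Delta_k$ with each $\Delta_k$ secluded; replacing each by $\Delta_k\cup\Delta_k^{-1}$ I may assume every $\Delta_k$ is also closed under inverses.

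The inductive construction of $\langle B_n,\gamma_n\rangle$ follows the pattern of Theorem~\ref{firstdichotomytheorem}, except that the choice of $\gamma_n\in\Delta$ at stage $n$ is further constrained. Writing $a_n=\{\prod_{m<n}\gamma_m^{\epsilon(m)}:\epsilon\colon n\to\{-1,0,1\}\}$, a finite symmetric subset of $\Gamma$, I demand $\gamma_n\notin a_n$ (an independence condition) and that for each $k<K$ the set $(a_n\gamma_n^{\pm 1}a_n)\cap\Delta_k$ contain at most one element. Secludedness of each $\Delta_k$ tells me that these demands exclude only finitely many $\gamma\in\Gamma$. Using Claim~\ref{prepclaim3} together with the countable additivity of the ideal $I$, I would then pick $\gamma_n$ from $\Delta$ meeting the demands above for which the set $\{x\in B_n:\gamma_n\cdot x\in B_n\}$ is $I$-positive, and thin down to a compact $B_{n+1}\subset B_n$ so that $B_{n+1}$ and $\gamma_n\cdot B_{n+1}$ are disjoint $I$-positive subsets of $B_n$ of diameter at most $2^{-n-1}$.

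Once the fusion is complete, the reduction property of the resulting $h\colon\cantor\to A$ is verified case by case. For $z_0,z_1\in\cantor$ differing on exactly one coordinate $n_0$, the abelian assumption gives $h(z_1)=\gamma_{n_0}^{\pm 1}\cdot h(z_0)$, a $G_\Delta$-edge. For $z_0,z_1$ differing on a finite set $S$ with $|S|\geq 2$ and $n_*=\max S$, the relating element $\gamma_S=\prod_{m\in S}\gamma_m^{\epsilon(m)}$ lies in $a_{n_*}\gamma_{n_*}^{\pm 1}a_{n_*}$ with a nontrivial $a_{n_*}$-cofactor; the secludedness condition imposed in the fusion forces $\gamma_S=\gamma_{n_*}^{\pm 1}$ whenever $\gamma_{n_*}\in\Delta_k$, contradicting the nontriviality of the cofactor and so ruling out $\gamma_S\in\Delta_k$. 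For $z_0,z_1$ differing on an infinite set, the partial ratios $\gamma_{z_1\restriction n}\gamma_{z_0\restriction n}^{-1}$ keep changing as $n$ grows (since every $\gamma_m^{\pm 1}$ is nontrivial), so by freeness of the action $h(z_0)$ and $h(z_1)$ lie in different $\Gamma$-orbits and cannot be $G_\Delta$-connected.

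The main obstacle is that $\Delta$ is only assumed to lie in the secluded ideal rather than being itself secluded. When $K>1$ the secludedness of each individual $\Delta_k$ yields only $|(a_n\gamma_n^{\pm 1}a_n)\cap\Delta|\leq K$, so beside $\gamma_n$ itself (which lies in whichever $\Delta_{k_0}$ it was picked from) the box may contain up to $K-1$ ``extras'' from the other pieces $\Delta_k$, and these extras correspond to specific bit patterns of $z_0\triangle z_1$ that would create spurious $G_\Delta$-edges in the reduction analysis. Absorbing these finitely many bad patterns is the technical heart of the proof: either by a preliminary pruning of the fusion to a perfect subtree of $\bintree$ that sidesteps them, or by arranging the $\gamma_n$ to lie in $\bigcap_k\Delta_k$-style intersections whenever feasible, leveraging the combinatorics of the secluded ideal on $\Gamma$ in a more delicate way.
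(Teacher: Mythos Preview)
Your outline is correct for the case where $\Delta$ is itself secluded ($K=1$), and you correctly identify that the general case is the real difficulty. But you have not actually closed the gap: the two suggestions in your last paragraph do not work. The pieces $\Delta_k$ may be pairwise disjoint, so there is no hope of choosing $\gamma_n$ in their intersection; and a single ``extra'' at stage $n$ corresponds to a fixed finite bit-pattern on $\{0,\dots,n\}$, which infects a clopen set of pairs $(z_0,z_1)$, so pruning to a perfect subtree does not obviously remove all of them simultaneously across all $n$.

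The missing idea is a Hales--Jewett argument applied \emph{after} the fusion rather than during it. The paper first runs the plain fusion of Theorem~\ref{firstdichotomytheorem} to get $\gamma_n\in\Delta$, then thins to an infinite $b\subset\omega$ on which seclusion of each piece guarantees: whenever a finite $\{\pm1\}$-word $q$ in the $\gamma_n$ has $\gamma(q)\in\Delta$, any proper end-extension $q'$ of $q$ with $\gamma(q')\in\Delta$ must land in a different piece $\Delta_k$. This bounds the length of such chains by $K$. A Hales--Jewett argument (Claim~\ref{halesjewettclaim}) then produces a block sequence $\langle q_n:n\in\omega\rangle$ of words with $\delta_n=\gamma(q_n)\in\Delta$ and with the clean property that a $\{\pm1\}$-word in the $\delta_n$ lies in $\Delta$ if and only if it has length exactly one. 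The reduction is then built from the $\delta_n$, not the original $\gamma_n$. Your attempt to impose the constraints stage by stage during the fusion cannot achieve this, because at stage $n$ you cannot yet see which longer words will later land in $\Delta$; the point of Hales--Jewett is precisely to organize this finitary Ramsey-type information after the whole sequence is in hand.
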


\begin{proof}
The difficulty here resides in handling sets in the secluded ideal as opposed to just secluded sets. For this, I need a corollary of the Hales--Jewett theorem. To state it, let $U$ be an arbitrary set and write $Q$ for the collection of nonempty finite partial functions from $\gw$ to $U$, and for elements $q, r\in Q$ write $r<q$ if $r\neq q$ and for some $n\in\gw$, $q=r\restriction [n, \gw)$.

\begin{claim}
\label{halesjewettclaim}
Suppose that $C\subset Q$ is a set such that for every $n\in\gw$ $\{\langle n, 1\rangle\}\in C$, and the length of $<$-chains in $C$ is bounded. Then there is a sequence $\langle q_n\colon n\in\gw\rangle$ of elements of $C$ such that $\max\dom(q_n)<\min\dom(q_{n+1})$
and for each $n\in\gw$ and each $r<q_n$ with $\dom(r)\subset\bigcup_{m\leq n} q_m$, $r\notin C$ holds.
\end{claim}

\begin{proof}
Let $k\in\gw$ be a number such that every $<$-chain in $C$ is shorter than $k$. For every nonempty finite set $a\subset\gw$, let $\pi(a)\in k$ be the maximal length of a $<$-chain of elements of $C$ whose domain is a subset of $a$, and let $q(a)$ be the $<$-smallest
element of one such chain of maximal possible length. Note that $q(a)$ is well-defined by the initial assumption on the set $A$. Use the Hales--Jewett theorem to find a number $j\in k$ and a sequence of nonempty finite sets $\langle a_n\colon n\in\gw\rangle$ such that 
$\max(a_n)<\min(a_{n+1})$ and for each nonempty finite union $a$ of sets on the sequence, $\pi(a)=j$. Let $q_n=q(a_n)$ and note that the conclusion of the theorem is satisfied. If $r<q_n$ were an element of $C$ such that  $\dom(r)\subset\bigcup_{m\leq n} q_m$, then
the $<$-chain of length $j$ of elements of $C$ whose domain is a subset of $a_n$ and whose $<$-smallest element is $q_n$ could be extended by $r$, showing that $\pi(\bigcup_{m\leq n}a_n)>j$, contradicting the $\pi$-homogeneity of the sequence $\langle a_n\colon n\in\gw\rangle$.
\end{proof}

To prove the theorem, it is only necessary to show that negation of (1) implies (2). Fix a positive analytic set $A\subset X$ and similarly to Theorem~\ref{firstdichotomytheorem} find a fusion sequence $\langle B_n, \gamma_n\colon n\in\gw\rangle$ below it such that for each $n\in\gw$, $\gamma_n\in\Delta$. It is not necessarily true that the associated fusion map is a reduction of the Silver graph to $G$; I have to thin the fusion sequence out  in a quite sophisticated way.

Let $U=\{1, -1\}$ and let $Q$ be the set of all partial functions from $\gw$ to $U$ with finite domain. For each finite function $q\in Q$ write $\gamma(q)=\prod_n\gamma_{n}^{q(n)}$
where $n$'s  list the elements of $\dom(q)$ in increasing order.
Write $\Delta=\bigcup_{j\in k}\Delta_j$ for some partition of $\Delta$ into secluded sets. Use the seclusion to find an infinite set $b\subset\gw$ such that 

\begin{itemize}
\item[(*)] for every $n\in b$ and all functions $q_0, q_1\in Q$ whose domain is a subset of $b\cap n$, if $\gamma(q_0)\cdot\gamma_n$ and $\gamma(q_1)\cdot\gamma(q_0)\cdot\gamma_n$ both belong to $\Delta$, then they belong to different secluded pieces of $\Delta$, and the same for $\gamma_n^{-1}$. 
\end{itemize}

To simplify the notation, assume that $b=\gw$. Let $C\subset Q$ be the set of all $q\in Q$ such that $\gamma(q)\in\Delta$. Note that if $c$ is a $<$-chain in the set $C$ then the elements $\gamma(q)$ for $q\in C$ must come from different secluded pieces of $\Delta$ by (*) and so the length of $c$ is not greater than $k$. By Claim~\ref{halesjewettclaim}, there is a sequence $\langle q_n\colon n\in\gw\rangle$ of elements of $C$ such that $\max\dom(q_n)<\min\dom(q_{n+1})$
and for each $n\in\gw$ and each $r<q_n$ with $\dom(r)\subset\bigcup_{m\leq n} q_m$, $r\notin C$ holds. For each $n\in\gw$, write $\gd_n=\gamma(q_n)$; thus $\gd_n\in\Delta$. The following is the key property of this sequence:

\begin{itemize}
\item[(**)] For a finite function $q\in Q$, the product $\prod_n\gd_n^{q(n)}$, where $n$'s list the domain of $q$ in the increasing order,
belongs to the set $\Delta$ if and only if $|q|=1$.
\end{itemize}

Now, I am ready to find the continuous embedding of the Silver graph into $G_\Delta$. Let $x\in X$ be the resulting point of the fusion sequence. Let $c=\{m\colon\exists n\ m\in\dom(q_n)\land q_n(m)={-1}\}$. Let $y\in X$ be the point $\prod_{m\in c}\gamma_m\cdot x$. Now, for every point $z\in\cantor$ let $f(z)\in X$ be the point $\prod_{z(n)=1}\gd_n\cdot y$. This is the desired embedding; I just need to verify its properties.

The commutativity of the group $\Gamma$ implies that for each $z\in\cantor$, $f(z)=\prod\{\gamma_m\colon m\in\bigcup_nb_n\}\cdot x$ where $b_n=\{m\in\gw\colon q_n(m)=-1\}$ if $z(n)=0$ and $b_n=\{m\in\gw\colon q_n(m)=1\}$ if $z(n)=1$.
Thus, if $z_0, z_1$ are not Vitali-equivalent, the points $f(z_0)$ and $f(z_1)$ are not in the same $\Gamma$-orbit and so not $G_\Delta$-connected. If $z_0, z_1$ are Vitali connected, then write $\gamma=\prod_n\gd_n^{q(n)}$ where $n$'s list the domain of $q$ in the increasing order, and $q\in Q$ is the function with $\dom(q)=\{n\in\gw\colon z_0(n)\neq z_1(n)\}$ and $q(n)=1$ iff $z_0(n)=0$. It is immediate that $\gamma\cdot f(z_0)=f(z_1)$ holds, and so (**) implies that $f(z_0), f(z_1)$ are $G_{\Delta}$-connected just in case the points $z_0, z_1$ are Silver-connected as desired.
\end{proof}

\begin{corollary}
Under the assumptions of Theorem~\ref{fourthdichotomytheorem}, the quotient poset $P_I$, if nontrivial, is densely isomorphic to the Silver forcing.
\end{corollary}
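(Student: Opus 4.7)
The proof is an immediate consequence of Theorem~\ref{fourthdichotomytheorem}. The plan is to observe that the dichotomy already supplies, locally, the desired isomorphism with Silver forcing, and then to globalize via a routine density argument.

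First I would note that nontriviality of $P_I$ means $X\notin I$, so for every Borel $I$-positive set $B\subset X$, alternative~(1) of the dichotomy fails on $B$ and hence alternative~(2) yields a continuous injection $f\colon\cantor\to B$ reducing the Silver graph to $G_\Delta$. The range $\mathrm{rng}(f)$ is Borel by Lusin--Suslin, and it is $I$-positive because the $f$-preimage of any Borel $G_\Delta$-anticlique is a Borel Silver anticlique, which is meager in $\cantor$, and countably many meager sets do not cover $\cantor$ by the Baire category theorem.

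Second, I would verify that each such $f$ induces a poset isomorphism between Silver forcing and $P_I\restriction\mathrm{rng}(f)$. The crucial point is that $f$ is a reduction, not merely a homomorphism: for any $C\subset\cantor$, $C$ is a Silver anticlique iff $f(C)$ is a $G_\Delta$-anticlique. Consequently $C\mapsto f(C)$ sends Borel Silver-positive subsets of $\cantor$ to Borel $I$-positive subsets of $\mathrm{rng}(f)$, the inverse $D\mapsto f^{-1}(D)$ sends Borel $I$-positive subsets of $\mathrm{rng}(f)$ back to Borel Silver-positive subsets of $\cantor$, and set inclusion is preserved in both directions.

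Finally, the family $\mathcal{D}=\{\mathrm{rng}(f):f\text{ as above}\}$ is dense in $P_I$ by the first step, and below each condition in $\mathcal{D}$ the forcing is naturally isomorphic to Silver forcing by the second step. Since Silver forcing is densely isomorphic to itself below any condition, this yields that $P_I$ is densely isomorphic to Silver forcing as claimed. No substantive obstacle arises; all the real work was packaged into Theorem~\ref{fourthdichotomytheorem}, and the only subtlety worth flagging is the use of the reduction (rather than mere homomorphism) property to ensure that the ideal is transported faithfully in both directions.
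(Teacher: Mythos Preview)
Your argument is correct and follows the paper's implicit reasoning: the corollary is stated without proof, and the mechanism you invoke---a continuous injective reduction transporting the Silver ideal to $I\restriction\rng(f)$, hence inducing a natural isomorphism of Silver forcing with $P_I\restriction\rng(f)$---is exactly what the paper spells out in the analogous passage of Theorem~\ref{seconddichotomytheorem}. One small caveat: Theorem~\ref{fourthdichotomytheorem} as stated promises only a ``continuous function'', not a continuous injection, so your appeal to Lusin--Suslin and the bijection of Borel algebras is not literally licensed by the statement; however, the fusion construction in its proof visibly produces an injection (in line with Theorems~\ref{firstdichotomytheorem}--\ref{thirddichotomytheorem}), so this is a wording issue in the theorem rather than a gap in your argument.
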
 

\subsection{The KST forcing}
\label{kstsubsection}

If $\Gamma$ is a countable group continuously acting on a Polish space $X$ and $G$ is an analytic hypergraph on $X$ such that all edges of $G$ consist of pairwise orbit equivalent points, one can form the actionable family $\cG=\{\gamma\cdot G\colon\gamma\in\Gamma\}$
and consider the resulting proper quotient poset. My understanding of posets of this kind is limited at this point. In this subsection, I will prove a dichotomy showing that the poset derived from the KST graph in this way has a central position. 

\begin{definition}
Let $\Gamma$ be a countable group acting freely and continuously on a Polish space $X$ and let $\cG$ be a countable family of hypergraphs such that each edge in each of them consists of pairwise orbit equivalent points. Say that the family
$\cG$ is \emph{free} over the action if the action naturally extends to a free action of the group on $\cG$, and for any two distinct hypergraphs $G_0, G_1\in\cG$, the Borel chromatic number of $G_0\cap G_1$ is countable.
\end{definition}

\begin{example}
\label{acyclicexample}
Let $\Gamma$ be a countable group acting freely and continuously on a Polish space $X$. Let $G$ be an acyclic analytic graph spanning the orbit equivalence relation, with uncountable Borel chromatic number. The family $\cG=\{\gamma\cdot G\colon \gamma\in\Gamma\}$ is free over the action.
\end{example}

\begin{proof}
It is enough to show that for every $\gamma\in\Gamma$, the graph $G\cap\gamma\cdot G$ has countable Borel chromatic number. To see this, for each $n\in\gw$, let $B_n=\{x\in X\colon$the $G$-distance between $x$ and $\gamma^{-1}\cdot x$ is equal to $n\}$. Note that as the graph $G$ spans the orbit equivalence relation, $X=\bigcup_nB_n$ holds. It is not difficult to verify that the sets $B_n$ are Borel. Thus, it will be enough to show that each element of $B_n$ has at most two $G\cap\gamma\cdot G$-neighbors in the set $B_n$. 

Suppose for contradiction that this is not the case and let $x, y_0, y_1, y_2\in B_n$ be distinct points such that $x$ is $G\cap\gamma\cdot G$-connected to all $y_0, y_1, y_2$. Look at the unique injective $G$-path $p$ from $x$ to $\gamma^{-1}\cdot x$. It can use at most one of the points $y_0, y_1, y_2$, since they are all $G$-connected to  $x$, and at most one of the points $\gamma^{-1}\cdot y_0, \gamma^{-1}\cdot y_1, \gamma^{-1}\cdot y_2$ since they are all $G$-connected to $\gamma^{-1}\cdot x$. Suppose for definiteness that $y_2$, $\gamma^{-1}\cdot y_2$ are not used in the path $p$. Then the unique injective $G$-path connecting $y_2$ to $\gamma^{-1}\cdot y_2$ contains $p$ as a proper subset and so is longer than $p$. This contradicts the assumption that both $x, y_2$ are in the set $B_n$.
\end{proof}

I will prove a dichotomy which shows that the quotient forcings associated with families of analytic graphs which are free over some free action of the group $\Delta=$finite subsets of $\gw$ with the symmetric difference operation are all equivalent from the forcing point of view.
It may be that changing the acting group yields different forcings. In order to state the dichotomy theorem, I need to define a suitable canonical object.

\begin{definition}
Let $\gw=\bigcup_na_n$ be a partition into infinite sets. For each $n\in\gw$, choose a dense set $b_n\subset\bintree$ such that for each $t\in b_n$, $|t|\in a_n$ holds, and for each $s\neq t\in b_n$, $|s|\neq |t|$ holds. Write $H_n$ for the graph on $\cantor$ connecting
$x, y$ if there is exactly one $m$ such that $x(m)\neq y(m)$ and in addition, $x\restriction m\in b_n$. Then $\cH$ is the $\Delta$-closure of the set $\{H_n\colon n\in\gw\}$.
\end{definition}

\noindent It is not hard to see that for each $n\in\gw$, Borel $H_n$-anticliques must be meager. Thus, the $\gs$-ideal $\cH$ on $\cantor$ consists only of meager sets. The following key dichotomy inserts the quotient forcing $P_{I_{\cH}}$ into many similar quotients.

\begin{theorem}
Let $\Delta$ act freely and continuously on a Polish space $X$ and let $\cG$ be a countable family of closed graphs, free over the action. If $A\subset X$ is an analytic set,
exactly one of the following occurs:

\begin{enumerate}
\item $A\in I_{\cG}$;
\item there is a continuous, injective near reduction of $\cH$ to $\cG$ with the range included in $A$.
\end{enumerate}
\end{theorem}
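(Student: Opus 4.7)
The plan is to adapt the fusion-type dichotomies of Theorems~\ref{firstdichotomytheorem}--\ref{fourthdichotomytheorem} to the setting of a countable family of graphs. The easy direction is immediate: if $h$ is a continuous injective near reduction of $\cH$ to $\cG$ with $\rng(h)\subset A$, then $h$ transports $I_{\cH}$ to $I_{\cG}\!\restriction\!\rng(h)$, so $A\in I_{\cG}$ would give $\rng(h)\in I_{\cG}$ and hence $\cantor\in I_{\cH}$. However, the density of each $b_n\subset\bintree$ combined with a Baire category argument, and the fact that $\Delta$ acts on $\cantor$ by category-preserving homeomorphisms, shows that $I_{\cH}$ consists only of meager sets, contradicting $\cantor\in I_{\cH}$.

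For the hard direction, fix $A\notin I_{\cG}$. Since $\cG$ is free over the $\Delta$-action, choose orbit representatives so that $\cG=\{\delta\cdot G_n\colon\delta\in\Delta,\ n\in\gw\}$, and let $\pi(\delta\cdot H_n)=\delta\cdot G_n$. I then construct a fusion sequence $\langle B_m,\gamma_m\colon m\in\gw\rangle$ below $A$ as in Definition~\ref{fusiondefinition}, producing the continuous injection $h(z)=\lim_m\gamma_{z\restriction m}\cdot x$ where $x$ is the resulting point of the fusion. At stage $m$, let $n=n(m)$ be the unique index with $m\in a_n$; if the unique $s_m\in b_n\cap 2^m$ exists, choose $\gamma_m\in\Delta$ and refine $B_m$ to a compact $I_{\cG}$-positive $B_{m+1}$ satisfying $(y,\gamma_m\cdot y)\in\gamma_{s_m}\cdot G_n$ for every $y\in B_{m+1}$. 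This is possible by the $\gs$-additivity of $I_{\cG}$, since otherwise $B_m$ would be a $(\gamma_{s_m}\cdot G_n)$-anticlique and hence in $I_{\cG}$, exactly as in the inductive step of Theorem~\ref{firstdichotomytheorem}. When $s_m$ does not exist, $\gamma_m$ is chosen only to meet the disjointness and diameter constraints of Definition~\ref{fusiondefinition}. These choices guarantee that every $H_n$-edge $(s_m\frown 0\frown z,\,s_m\frown 1\frown z)$ maps under $h$ to a $(\gamma_{s_m}\cdot G_n)$-edge, which after accounting for the branch prefix is a $\pi(\delta\cdot H_n)$-edge.

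The main obstacle is ruling out \emph{extraneous} $\cG$-edges in $\rng(h)$, and this is where the freeness hypothesis enters: for any two distinct $G',G''\in\cG$ the intersection $G'\cap G''$ has countable Borel chromatic number, so $\gs$-additivity allows further thinning of $B_{m+1}$ to a Borel chromatic class that separates $\gamma_{s_m}\cdot G_n$ from every other translate $\delta\cdot G_k$ capable of producing a spurious edge at the current level. Carried out in parallel with the disjointness thinnings keeping $\gamma_s\cdot B_{m+1}$ and $\delta\cdot\gamma_t\cdot B_{m+1}$ apart for distinct $s,t\in 2^{m+1}$ and $\delta$ among the first $m+1$ enumerated group elements, this coheres into a valid fusion by countable additivity. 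To verify that the resulting $h$ is a near reduction, I produce a countable Borel partition of $\cantor$ from the chromatic class labels accumulated during the fusion, on each piece of which the biconditional $(h(z_0),h(z_1))\in\delta\cdot G_n\iff(z_0,z_1)\in\delta\cdot H_n$ holds for every $\delta\in\Delta$ and $n\in\gw$. The hardest step is the simultaneous bookkeeping of these countably many refinements at every fusion stage---more elaborate than in Theorem~\ref{fourthdichotomytheorem}, but conceptually the same diagonalization.
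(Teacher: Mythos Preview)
Your proposal has the right overall shape---a fusion sequence below $A$ with group elements $\gamma_m$ chosen so that $(y,\gamma_m\cdot y)$ lies in the designated graph---but there is a genuine gap in the step where you rule out extraneous edges, and it stems from not using the \emph{closedness} of the graphs in $\cG$.

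Your thinning to chromatic classes of $G'\cap G''$ only ensures that no pair of points in the refined set is \emph{simultaneously} a $G'$-edge and a $G''$-edge. Combined with the homomorphism edge $(y,\gamma_m\cdot y)\in\gamma_{s_m}\cdot G_n$, this does tell you that $(y,\gamma_m\cdot y)\notin G'$ for other $G'$. But it says nothing about \emph{off-diagonal} pairs: given $y\in B_{m+1}$ and $y'\in\gamma_m\cdot B_{m+1}$ with $y'\neq\gamma_m\cdot y$, you have no control over whether $(y,y')\in G'$ for any $G'\in\cG$. Such off-diagonal edges are exactly what must be excluded to make $h$ a reduction (even a near one) rather than merely a homomorphism: if $z_0,z_1\in\cantor$ first differ at position $m$ and also differ later, then $h(z_0),h(z_1)$ land in $\gamma_t\cdot B_{m+1}$ and $\gamma_t\cdot\gamma_m\cdot B_{m+1}$ at off-diagonal positions, and you need $(h(z_0),h(z_1))\notin\gamma_t\cdot G_n$.

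The paper achieves this much stronger control---that $(B_{m+1}\times\gamma_m\cdot B_{m+1})\cap G'$ is either \emph{exactly} the diagonal $\{(x,\gamma_m\cdot x):x\in B_{m+1}\}$ or empty, for every $G'$ in the current finite orbit---by invoking closedness twice. First, for the designated graph $\gamma_t\cdot G_m$ it uses the Cantor--Bendixson argument from Theorem~\ref{seconddichotomytheorem} (which needs the graph closed) to arrange the intersection equal to the diagonal. Second, for every other relevant graph $G'$, once the diagonal pair $(x,\gamma_m\cdot x)$ is known not to lie in $G'$ (this is where the freeness/anticlique-in-intersections step is actually used), closedness of $G'$ lets one shrink to open neighborhoods so the \emph{entire} box $B_{m+1}\times\gamma_m\cdot B_{m+1}$ misses $G'$. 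Neither of these moves appears in your proposal. Your ``partition from accumulated chromatic class labels'' is also misdirected: the near-reduction partition in the paper is simply the clopen partition of $\cantor$ into cylinders $[s]$ for $s\in 2^n$ large enough that the target graph has entered the bookkeeping, not a partition inherited from chromatic labels on $X$.
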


\begin{proof}
To prove the dichotomy, note that (2) implies the negation of (1), since the near reduction in (2) transports the ideal $I_{\cH}$ to $I_{\cG}$ and $\cantor\notin I_{\cH}$--see the reasoning in Corollary~\ref{uniquenesscorollary} below. The hard part is showing that the negation of (1) implies (2). Before I start the construction, observe that every analytic $I_{\cG}$-positive set has a compact $I_{\cG}$-positive subset. To see this, use Theorem~\ref{localizationtheorem} to show that the quotient poset has the $2$-localization property, in particular it is bounding, and then use the standard characterization of the bounding property \cite[Theorem 3.3.2]{z:book2}.

Let $A\subset X$ be an analytic $I$-positive set. To produce the injection $h$, I will in fact produce a fusion sequence $\langle B_n, \gamma_n\colon n\in\gw\rangle$ below the set $A$ for the ideal $I$ such that the injection $h$ is the associated fusion map. 
The fusion sequence will be constructed by induction on $n\in\gw$. I will write $\Delta_n$ for the subgroup of $\Delta$ generated by the elements $\{\gamma_m\colon m\in n\}$.  Also, in the course of the induction, I will select some graphs $G_n\in\cG$ and write $\cG_n$ for the $\Delta_n$-orbit of the set $\{G_m\colon m\in n\}$.
The induction hypothesis:

\begin{itemize}
\item with the natural action of $\Delta$ on $\cG$, the set $\{G_m\colon m\in n\}$ consists of $\Delta_n$-unrelated graphs; 
\item whenever $G\in\cG$ is $n$-th  element of $\cG$ in some fixed enumeration, then $G\in\cG_{n+1}$; 
\item whenever $m\in n$ is a number and $t\in 2^n$ is a string, then the intersection of $B_{n+1}\times \gamma_n\cdot B_{n+1}$ with the graph $\gamma_t\cdot G_m$ is either empty, or is equal to the set $\{\langle x, \gamma_n\cdot x\rangle\colon x\in B_{n+1}\}$. The latter case occurs precisely when $t\in b_m$.
\end{itemize}

The induction is not difficult to perform. To begin, let $B_0\subset B$ be any compact $I$-positive subset. To perform the induction step, suppose that $B_n$ and $\gamma_m, G_m$ for $m\in n$ have been found. First, find the graph $G_n$. If the $n$-th graph of $\cG$ in some fixed enumeration is of the $\Delta_n$-orbit of $\{G_m\colon m\in n\}$, then just let $G_n\in\cG$ be some graph which is not in this orbit; otherwise, let  $G_n$ be the $n$-th graph of $\cG$. Now, I must find $B_{n+1}$ and $\gamma_n$. The treatment now divides into two cases.

\noindent\textbf{Case 1.} There is $m\in n$ such that the set $2^n\cap b_m$ is nonempty, containing some string $t\in 2^n$. Note that this $m$ must be unique and so is $t$. Now, I let $C=B_n$ and go through a process of repeatedly shrinking the set $C$ to smaller Borel $I$-positive sets in the following steps:

\begin{itemize}
\item use the freeness of the action on $X$ to shrink the set $C$ so that for all the finitely many $\gd\in\Delta$ such that $G_n\in \gd\cdot\cG_n$, it is the case that $C\cap \gd\cdot C=0$;
\item use the freeness of the family $\cG$ to shrink $C$ so that it is an anticlique in the intersection of any two distinct graphs in $\cG_n$;
\item  use the assumption that the graphs are closed and the argument from Theorem~\ref{seconddichotomytheorem} to shrink the set $C$ and find a group element $\gamma_n\in\Delta$ such that $\gamma_n\cdot C$ is still a subset of the version of $C$ obtained in the previous step,
 and the intersection of $C\times \gamma_n\cdot C$ with the graph $\gamma_t\cdot G_m$ is exactly equal to the set $\{\langle x, \gamma_n\cdot x\rangle\colon x\in C\}$; 
\item since for all graphs $G\in\cG_n$ other than $\gamma_t\cdot G_m$ and all points $x\in C$ it is the case that $\langle x, \gamma_n\cdot x\rangle\notin G$, and all these graphs $G$ are closed, one can shrink the set
$C$ further so that the intersection of $C\times \gamma_n\cdot C$ with each of the graphs $G\in\cG_n$ distinct from $\gamma_t\cdot G_m$ is empty. 
\end{itemize}

\noindent Finally, shrink the set $C$ further to some $I$-positive compact set $B_{n+1}$ of small diameter. This completes the induction step in this case.

\noindent\textbf{Case 2.} If Case 1 fails, pick a graph $G\in\cG$ not in $\cG_n$ and proceed as in Case 1, replacing $\gamma_t\cdot G_m$ with $G$.

Suppose that the induction has been performed, let $x\in B$ be the resulting point, and let $h\colon \cantor\to A$ be the resulting continuous injective map. The map $\{n\}\mapsto \gamma_n$ from $\Delta$ to $\Delta$ induces an injective homomorphism $\xi\colon\Delta\to\Delta$. The map $H_n\mapsto G_n$ together with the homomorphism $\xi$ induce a map $\pi\colon\cH\to\cG$ given by $\pi(a\cdot H_n)=\xi(a)\cdot G_n$; this is an injection by the first item of the induction hypothesis and a surjection by the second item of the
induction hypothesis. The third item of the induction hypothesis then shows that the map $h$ is a near-reduction of $a\cdot H_n$ to $\xi(a)\cdot G_n$. This completes the proof of the theorem.
\end{proof}

\begin{corollary}
\label{uniquenesscorollary}
The quotient forcing does not depend on the choice of the action of $\Delta$ and the free family.
\end{corollary}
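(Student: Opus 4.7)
The plan is to deduce the corollary from the preceding dichotomy by showing that every quotient $P_{I_\cG}$ meeting the hypotheses is densely isomorphic to the canonical poset $P_{I_\cH}$. Since $\cH$ is constructed from a fixed partition of $\gw$ and fixed dense sets $b_n\subset\bintree$, with no reference to the action of $\Delta$ or to the particular free family $\cG$, the independence claim will follow immediately.

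The first step is to verify that $\cantor\notin I_\cH$, so that $\cantor$ is itself a condition in $P_{I_\cH}$. The key observation is that every $H_n$ has only meager Borel anticliques: if a Borel anticlique $B\subset\cantor$ were nonmeager, then $B$ would be comeager in some basic open set $[t]$; extending $t$ to some $s\in b_n$, both $B\cap[s^\frown 0]$ and $B\cap[s^\frown 1]$ would be comeager in $[s^\frown 0]$ and $[s^\frown 1]$ respectively, and composing with the coordinate-flip homeomorphism between these two clopen cylinders produces two points of $B$ differing exactly on coordinate $|s|$, i.e.\ an $H_n$-edge in $B$, a contradiction. By $\gs$-additivity, no countable union of generators of $I_\cH$ covers $\cantor$.

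Next, let $\cG$ be a free family on $X$ with $X\notin I_\cG$ and let $A\in P_{I_\cG}$ be an arbitrary condition. The dichotomy supplies a continuous injective near reduction $h_A\colon\cantor\to A$ of $\cH$ to $\cG$. By the transport property of near reductions recorded in the introduction, $h_A$ carries $I_\cH$ to $I_\cG$ restricted to $\rng(h_A)$; together with $\cantor\notin I_\cH$ this implies that $\rng(h_A)$ is an $I_\cG$-positive Borel subset of $A$ and that the map $B\mapsto h_A(B)$ is an order-isomorphism of $P_{I_\cH}$ onto the restriction of $P_{I_\cG}$ to the conditions contained in $\rng(h_A)$. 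Hence below every condition of $P_{I_\cG}$ there is a strengthening below which $P_{I_\cG}$ is literally isomorphic to $P_{I_\cH}$. The same statement applied to a second free family $\cG'$ on $X'$ yields that both $P_{I_\cG}$ and $P_{I_{\cG'}}$ are densely isomorphic to the same canonical poset $P_{I_\cH}$, proving the corollary.

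The only delicate step is the meagerness check of the second paragraph; everything else is a direct invocation of the preceding theorem together with the elementary ideal-transport property of near reductions. Thus I expect no serious obstacle beyond verifying this one topological fact about the graphs $H_n$.
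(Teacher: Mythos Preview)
Your proof is correct and follows essentially the same route as the paper's: apply the dichotomy below an arbitrary condition, then argue that the resulting near reduction transports $I_{\cH}$ to $I_{\cG}\restriction\rng(h)$, yielding an isomorphism of $P_{I_\cH}$ with $P_{I_\cG}$ below $\rng(h)$. The only presentational difference is that you invoke the general near-reduction transport fact recorded in the introduction, whereas the paper unpacks this explicitly by observing that on each cylinder $[t]$ with $t\in 2^{\chi(H)}$ the map $h$ is an honest reduction of $H$ to $\pi(H)$, so Borel anticliques decompose into finitely many pieces in either direction; your meagerness argument for the $H_n$-anticliques is also stated in the paper just before the dichotomy theorem rather than inside the corollary's proof.
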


\begin{proof}
It is enough to show that under any condition $B\in P_{I_{\cG}}$ there is a condition $C\subset B$ such that the poset below $C$ is naturally isomorphic to $P_{I_{\cH}}$. Let $h\colon\cantor\to X$ be a map from the dichotomy theorem, with the attendant maps $\pi, \chi$.
It is enough to show that the map $h$ transports the ideal $I_{\cH}$ to the ideal $I_{\cG}$.

To see this, choose a graph $H\in\cH$. Note that if $t\in 2^{\chi(H)}$ is a binary string and $B\subset [t]$ is a set, then $B$ is an $H$-anticlique if and only if $h''B$ is a $\pi(H)$-anticlique. Therefore, if $B\subset\cantor$ is a Borel $H$-anticlique then $h''B$ is a finite union of $\pi(H)$-anticliques and vice versa, if $B\subset X$ is a Borel $\pi(H)$-anticlique then $h^{-1}B$ is a union of finitely many Borel $H$-anticliques. It follows that the function $h$ transports the ideal $I_{\cH}$ to $I_{\cG}$.
\end{proof}

\begin{corollary}
The quotient forcing is homogeneous.
\end{corollary}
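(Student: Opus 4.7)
The plan is to show that for every condition $B\in P_{I_{\cH}}$ there is a subcondition $C\subseteq B$ such that $P_{I_{\cH}}\restriction C$ is naturally isomorphic to the whole poset $P_{I_{\cH}}$. From this, weak homogeneity of $P_{I_{\cH}}$ is immediate: given two conditions $B_0,B_1$, pick $C_i\subseteq B_i$ together with isomorphisms $\phi_i\colon P_{I_{\cH}}\to P_{I_{\cH}}\restriction C_i$ for $i=0,1$, and compose them to identify the two cones below $C_0$ and $C_1$. Via the natural isomorphism $P_{I_{\cG}}\cong P_{I_{\cH}}$ supplied by Corollary~\ref{uniquenesscorollary}, the conclusion transfers to the original poset $P_{I_{\cG}}$.

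To produce $C$, I would apply the preceding dichotomy in the special case where $X=\cantor$, the action is the canonical action of $\Delta$ on $\cantor$, and the free family is $\cH$ itself; the family $\cH$ is set up in the preceding subsection precisely to be free over this action. Since $B$ is an $I_{\cH}$-positive analytic set, clause (1) of the dichotomy fails, so clause (2) produces a continuous injective near reduction $h\colon\cantor\to B$ of $\cH$ to $\cH$. Set $C=h''\cantor\subseteq B$.

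As already observed in the proof of Corollary~\ref{uniquenesscorollary}, a near reduction of $\cH$ to itself transports the $\sigma$-ideal $I_{\cH}$ on $\cantor$ to $I_{\cH}$ restricted to $C$: the $h$-image of any Borel $\cH$-anticlique in $\cantor$ decomposes into countably many Borel $\cH$-anticliques in $C$, and the $h$-preimage of a Borel $\cH$-anticlique in $C$ decomposes into countably many Borel $\cH$-anticliques in $\cantor$. Consequently the map $D\mapsto h''D$, defined on Borel $I_{\cH}$-positive subsets of $\cantor$, is an order isomorphism from $P_{I_{\cH}}$ onto $P_{I_{\cH}}\restriction C$, as required.

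There is no real obstacle; the argument is a formal consequence of the dichotomy combined with the transport-of-ideal observation already used in Corollary~\ref{uniquenesscorollary}. The only point requiring a moment of care is the verification that $\cH$ itself is a legitimate input for the dichotomy (i.e.\ that it is free over the canonical action of $\Delta$), but this is built into the construction of $\cH$ and was implicitly used in the preceding corollary.
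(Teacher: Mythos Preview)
Your proof is correct and follows the approach the paper intends: the proof of Corollary~\ref{uniquenesscorollary} already establishes that below every condition of $P_{I_{\cG}}$ there is a sub-cone naturally isomorphic to $P_{I_{\cH}}$, from which weak homogeneity is immediate. Your version specializes this to $\cG=\cH$ and then transfers back via the uniqueness corollary---a harmless detour; the one point you leave implicit, that $\cH$ itself satisfies the hypotheses of the dichotomy, is true and routine to verify.
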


\begin{corollary}
The quotient forcing is isomorphic to the countable support product of countably many copies of itself.
\end{corollary}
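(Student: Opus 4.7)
The plan is to realize the countable support product of $\gw$ copies of $P_{I_\cH}$ as a quotient $P_{I_{\cG^*}}$ for a family $\cG^*$ on $(\cantor)^\gw$ that is free over a free action of $\Delta$, and then invoke Corollary~\ref{uniquenesscorollary} to identify it with $P_{I_\cH}$.

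Concretely, let $Y=(\cantor)^\gw$. For each $n\in\gw$ and each $H\in\cH$, let $H^{(n)}\subset Y^{\leq\gw}$ be the hypergraph whose edges are sequences of elements of $Y$ that are constant on every coordinate $m\neq n$ and whose projection to coordinate $n$ is an edge of $H$. Put $\cG^*=\{H^{(n)}:n\in\gw,\ H\in\cH\}$ and let $\Delta^*=\bigoplus_n\Delta_n$, a direct sum of copies of $\Delta$, act on $Y$ coordinatewise. Since $\Delta$ is the free $\Z/2$-vector space on $\aleph_0$ generators, any bijection between bases yields $\Delta^*\cong\Delta$. I claim $\cG^*$ is free over this action: the action is free and continuous by coordinatewise inheritance, each edge in each $H^{(n)}$ sits in a single $\Delta^*$-orbit by construction, the assignment $\delta\cdot H^{(n)}=(\delta_n\cdot H)^{(n)}$ extends the action freely to $\cG^*$, and for distinct $H_1^{(n_1)},H_2^{(n_2)}\in\cG^*$ the intersection is empty (when $n_1\neq n_2$) or the pullback along the $n$-th projection of $H_1\cap H_2$, which carries a countable Borel coloring by the freeness of $\cH$.

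Corollary~\ref{uniquenesscorollary} applied to $\cG^*$ then yields $P_{I_{\cG^*}}\cong P_{I_\cH}$. What remains is to identify $P_{I_{\cG^*}}$ with the countable support product of $\gw$ copies of $P_{I_\cH}$. The ``rectangular'' Borel sets $B=\prod_n B_n\subset Y$ with each $B_n\in P_{I_\cH}$ are $I_{\cG^*}$-positive: if such a product were covered by countably many Borel $\cG^*$-anticliques, then for any fixed $n$ and any $y^0\in\prod_{m\neq n}B_m$, sectioning at coordinate $n$ would express $B_n$ as a countable union of Borel $\cH$-anticliques, forcing $B_n\in I_\cH$. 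Conversely, rectangular conditions are dense in $P_{I_{\cG^*}}$ by a coordinatewise fusion refining the dichotomy argument of this subsection. The ordering on rectangular conditions manifestly coincides with the countable support product ordering, completing the identification.

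The main obstacle is the density of rectangular conditions in $P_{I_{\cG^*}}$: the single-coordinate fusion used in the proof of the dichotomy produces one $I_\cH$-positive slice, but for the product one must run $\gw$ many fusions simultaneously while preserving $I_\cH$-positivity on each coordinate. This is exactly the content of the product analysis in Section~\ref{productsubsection}, which gives the requisite rectangular refinement.
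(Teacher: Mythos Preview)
Your overall route is the paper's: form the template product family $\cG^*$ on $(\cantor)^\gw$, check it is free over the action of $\bigoplus_n\Delta\cong\Delta$, invoke Corollary~\ref{uniquenesscorollary}, and use the product analysis of Section~\ref{productsubsection} to identify $P_{I_{\cG^*}}$ with the countable support product. The paper just cites Corollary~\ref{productgraphcorollary} for that last identification and is done.

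There is, however, a genuine error in your argument that rectangles $\prod_n B_n$ are $I_{\cG^*}$-positive. The sectioning claim fails for anticliques coming from a coordinate $m\neq n$: take $C=\{y\in Y: y(m)\in A\}$ with $A\subset\cantor$ a Borel $H$-anticlique. Then $C$ is a Borel $H^{(m)}$-anticlique, but for any $y^0$ with $y^0(m)\in A$ the section $C_{y^0}$ at coordinate $n$ is all of $\cantor$, which is not an anticlique for any graph in $\cH$. So the cover of $\prod_n B_n$ by $\cG^*$-anticliques does \emph{not} section down to a cover of $B_n$ by $\cH$-anticliques, and your ``for any fixed $n$ and any $y^0$'' argument breaks. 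The paper proves this direction (inside Theorem~\ref{producttheorem}) by the topology device of Corollary~\ref{meagercorollary}: pick Polish topologies $\tau_n$ making each $B_n$ comeager and each $I_\cH$-set meager, and observe that every generator of $I_{\cG^*}$ is meager in the product topology $\prod_n\tau_n$. Since you already appeal to Section~\ref{productsubsection} for the density of rectangles, the clean fix is to cite Theorem~\ref{producttheorem}/Corollary~\ref{productgraphcorollary} for both halves of the identification, exactly as the paper does.
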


\begin{proof}
Use the computation of the product in Corollary~\ref{productgraphcorollary} to conclude that the product ideal is generated by a family of closed hypergraphs which is free with respect to some action of the finite support product of countably many copies of $\Delta$, which is isomorphic to $\Delta$. A reference to Corollary~\ref{uniquenesscorollary} then concludes the proof.
\end{proof}

As one consequence of the dichotomy, the following definition makes sense:

\begin{definition}
\label{kstposetdefinition}
The \emph{KST forcing} is the unique poset $P_{I_\cG}$ where $\cG$ is a free family of closed graphs over some action of $\Delta$.
\end{definition}

Using Example~\ref{acyclicexample}, one could for example use the shifts of the KST graph to generate the free family.
As for the forcing properties of the KST poset, it has the 2-localization property by Theorem~\ref{localizationtheorem} and it adds an independent real by Theorem~\ref{independenttheorem}. The main difference between the KST forcing and Silver forcing is that in the Silver extension, every point of $\cantor$ belongs to
a ground model coded compact KST-anticlique by Corollary~\ref{silveracycliccorollary} since the KST graph is closed and acyclic. At the same time, the generic point of the KST forcing is designed to fall out of all ground model coded Borel KST-anticliques. Another key property of the KST forcing
is the following. By \cite{kechris:chromatic}, for every analytic graph $H$ on a Polish space $Z$ of uncountable Borel chromatic number there is a continuous homomorphism $h\colon\cantor\to Z$
of the KST graph to $H$. It follows that for each such a graph $H$, the KST forcing adds a point of $Z$ (namely, the image of the generic point under the function $h$) which falls out of all ground model coded Borel $H$-anticliques. Clearly, for analytic graphs with countable Borel chromatic number no forcing can perform such a job, so the KST poset is in some sense extreme. 

\section{Operations}
\label{othersection}

The purpose of this section is to consider natural operations on hypergraphs and investigate whether they yield natural operations on the quotient posets. The answer turns out to be a resounding yes.

\subsection{Template product}

 In order to state the results in the broadest and most ambitious context, I will need the following definitions. They are designed to capture products, iterations along well-founded, illfounded or non-linear orders and probably many other unspeakable crimes. 

\begin{definition}
A \emph{template} is a pair $\langle J, R\rangle$ where $R$ is a two-place relation on $J$ such that no element $j\in J$ is $R$-related to itself.
\end{definition}

\begin{definition}
\label{templatedefinition}
Let $\langle J, R\rangle$ be a template. Suppose that $\langle X_j, \cG_j\colon j\in J\rangle$ is a collection of Polish spaces and countable families of analytic hypergraphs on each. For each countable set $K\subset J$, define the template products:

\begin{enumerate}
\item $X_K=\prod_{j\in K}X_j$;
\item $\prod_{j\in K}^R\cG_j=\cG_K$ is the countable family of hypergraphs on $X_K$ defined by $\cG_K=\{\hat G\colon G\in \cG_j, j\in K\}$ where $\hat G$ is the set of those edges $x\in X_K^{\leq\gw}$ all of whose vertices have the same restriction to the set $\{i\in K\colon i\mathrel R j\}$ and
the set $\langle x(n)(j)\colon n\in\dom(x)\rangle\in X_j^{\leq\gw}$ is a $G$-edge;
\item $I_K$ is the $\gs$-ideal on $X_K$ $\gs$-generated by Borel sets which are anticliques in at least one of the hypergraphs in $\cG_K$ and $P_K$ is the quotient poset of Borel $I_K$-positive sets ordered by inclusion.
\end{enumerate}
\end{definition}

\noindent It should be noted that the template product $\gs$-ideals $I_K$ do not depend on the choice of the hypergraphs $\cG_j$ but only on the $\gs$-ideals that the hypergraphs $\cG_j$ generate. This can be proved using Theorem~\ref{millerfact}(3).
A definition can be stated for arbitrary $\gs$-ideals instead of just the hypergraphable ideals. However, the class of hypergraphable ideals has a major advantage: the result of the template product is again a hypergraphable ideal and therefore to some extent
tractable. The tractability is reflected in a seemingly trivial fact, which is nevertheless entirely central for the treatment of the uncountable operations:

\begin{theorem}
\label{systemtheorem}
In the set-up of Definition~\ref{templatedefinition},  if $K\subset L$ are countable subsets of $J$ and $A\subset X_K$ is an analytic set, then $A\in I_K$ if and only if $A\times X_{L\setminus K}\in I_L$.
\end{theorem}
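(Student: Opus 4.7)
The $(\Rightarrow)$ direction I would handle by direct cylindrification. Given a Borel cover $A \subset \bigcup_n B_n$ of $A$ by $\hat G_n^K$-anticliques (with $G_n \in \cG_{j_n}$ and $j_n \in K$), each cylinder $B_n \times X_{L\setminus K}$ is a $\hat G_n^L$-anticlique: a purported $\hat G_n^L$-edge with vertices in the cylinder would have $K$-coordinates agreeing on $\{i \in L : i\,R\,j_n\} \supseteq \{i \in K : i\,R\,j_n\}$ and $j_n$-coordinates forming a $G_n$-edge, so its $K$-projection would be a $\hat G_n^K$-edge in $B_n$, contradicting the hypothesis. Hence $A \times X_{L\setminus K} \subset \bigcup_n (B_n \times X_{L\setminus K})$ witnesses $A \times X_{L\setminus K} \in I_L$.

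For $(\Leftarrow)$ I would fix a Borel cover $A \times X_{L\setminus K} \subset \bigcup_n B_n$ by $\hat G_n^L$-anticliques and partition $\omega = M \sqcup M'$ according to $j_n \in K$ or $j_n \in L\setminus K$. Two symmetric section-lemmas then hold, both proved by appending the fixed slice-coordinate to a would-be slice edge and observing that the resulting $\hat G_n^L$-edge already lies in $B_n$, contradicting the anticlique property: for $n \in M$ and any $y \in X_{L\setminus K}$, the horizontal slice $(B_n)^y := \{x : (x,y) \in B_n\}$ is a Borel $\hat G_n^K$-anticlique in $X_K$; for $n \in M'$ and any $x \in X_K$, the vertical slice $(B_n)_x := \{y : (x,y) \in B_n\}$ is a Borel $\hat G_n^{L\setminus K}$-anticlique in $X_{L\setminus K}$. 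The second fact gives $\bigcup_{n \in M'}(B_n)_x \in I_{L\setminus K}$ for every $x \in A$, so in the non-degenerate case $X_{L\setminus K} \notin I_{L\setminus K}$, each $x \in A$ admits at least one $y$ with $(x,y) \in B_n$ for some $n \in M$.

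The plan is to produce, from the cover, a countable refinement of the $M'$-contribution into cylinder-like anticliques, reducing to the case $M' = \emptyset$; once there, fixing any $y \in X_{L\setminus K}$ gives $A \subset \bigcup_{n \in M}(B_n)^y$, a countable cover by Borel $\hat G_n^K$-anticliques, so $A \in I_K$. The refinement I would obtain by induction on $|L\setminus K|$. In the single-step case $L = K \cup \{j^*\}$, it amounts to picking $y \in X_{j^*}$ outside the set $\bigcup_{n \in M'}\mathrm{proj}_{X_{j^*}}(B_n)$, which exists because each such projection is—after slicing along the finite set $\{i \in K : i\,R\,j_n\}$ of coupling coordinates and invoking Theorem~\ref{millerfact}(3) to extract from the per-$x$-section anticlique data a Borel countable subcover by genuine $\hat G_n^{\{j^*\}}$-anticliques—an element of $I_{j^*}$. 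The finite case follows by iterating single-steps, and the passage to countable $L\setminus K$ is delivered by one further application of Theorem~\ref{millerfact}(3), exploiting that the template ideal depends only on the $\sigma$-ideals $I_j$ and is therefore stable under re-indexing the coordinates.

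The hardest step will be the uniformization: converting, for each $n \in M'$, the per-$x$ fact that $(B_n)_x$ is a $\hat G_n^{L\setminus K}$-anticlique into a Borel countable cover of the analytic projection $\mathrm{proj}_{X_{L\setminus K}}(B_n \cap (A \times X_{L\setminus K}))$ by genuine $\hat G^{L\setminus K}$-anticliques, so that the $y$-avoidance argument produces a single $y$ working for all of $A$ simultaneously. This is precisely the role of Theorem~\ref{millerfact}(3), and the delicate point is that the coupling relation $R$ between $K$ and $L\setminus K$ means one cannot simply take fiberwise projections; one must first decompose along the coupling coordinates before Miller's uniformization applies.
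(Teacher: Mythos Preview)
Your $(\Rightarrow)$ direction is correct and matches the paper's argument.

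For the $(\Leftarrow)$ direction, your plan has a genuine gap. The central claim---that for $n\in M'$ the projection $\mathrm{proj}_{X_{j^*}}(B_n)$ lies in $I_{j^*}$---is false in general. Take the product template ($i\mathrel R j$ iff $i\neq j$), $K$ a singleton, $L=K\cup\{j^*\}$ with $X_K=X_{j^*}$, and $G\in\cG_{j^*}$ any graph with $X_{j^*}\notin I_{j^*}$. The diagonal $B=\{(x,y)\in X_K\times X_{j^*}:x=y\}$ is a $\hat G^L$-anticlique, since each vertical section is a singleton; yet its $X_{j^*}$-projection is all of $X_{j^*}$. So there is no $y$ avoiding the projection, and your reduction to the case $M'=\emptyset$ breaks down. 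Relatedly, the set $\{i\in K:i\mathrel R j_n\}$ of ``coupling coordinates'' is not finite in general (for the product template it is all of $K$), so the slicing step you describe is not available either. Invoking Theorem~\ref{millerfact}(3) does not rescue this: that theorem refines a Borel set with $I$-small sections into a countable union of sets with anticlique sections, but it does not convert those into cylinders or control the projection onto $X_{j^*}$.

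The paper's proof of $(\Leftarrow)$ is entirely different and much shorter: it argues the contrapositive via Baire category. If $A\notin I_K$, Corollary~\ref{meagercorollary} supplies Polish topologies $\tau_K$ on $X_K$ and $\tau_M$ on $X_M$ (where $M=L\setminus K$) making $A$ comeager and all sets in $I_K$, respectively $I_M$, meager. Every Borel generating anticlique of $I_L$ then has all horizontal or all vertical sections meager (by your own correct section lemmas), hence is $\tau_K\times\tau_M$-meager by Kuratowski--Ulam, so $A\times X_M$ cannot lie in $I_L$. This sidesteps entirely the projection problem that defeats your direct approach.
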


\begin{proof}
The left-to-right direction is immediate from the definitions, since for each index $j\in K$ and each hypergraph $G\in\cG_j$, the projection map $\pi\colon X_L\to X_K$ is a homomorphism of the hypergraph $\hat G$ (as computed on the space $X_L$) to the hypergraph $\hat G$ (as computed on the space $X_K$). For the key right-to-left direction, write $M=L\setminus K$ and suppose that $A\notin I_K$ holds. Corollary~\ref{meagercorollary} then shows that there are Polish topologies $\tau_K$ on $X_K$ and $\tau_M$ on $X_M$
such that $A$ is $\tau_K$-comeager, every set in $I_K$ is $\tau_K$-meager and every set in $I_M$ is $\tau_M$-meager. It is not difficult to see then that the generating sets of the $\gs$-ideal $I_L$ are $\tau_K\times\tau_M$-meager. Thus, all
sets in the $\gs$-ideal $I_L$ are $\tau_K\times\tau_M$-meager and  cannot contain the whole set $A\times X_M$.
\end{proof}

\noindent Theorem~\ref{systemtheorem} says that in the sense of \cite[Definition 5.5.5]{z:book2}, if the set $J$ is uncountable, the collection $\{I_K\colon K\in [J]^{\aleph_0}\}$ is a centered system of ideals. It then makes sense to define the template product $P_J$ to be the limit poset: $P_J$ is the poset of those $B$ for which there is a $K=\dom(B)\in [J]^{\aleph_0}$ such that $B$ is a Borel $I_K$-positive subset of $X_K$. The ordering is defined by $C\leq B$ if $\dom(B)\subset\dom(C)$ and the projection of $C$ to $X_\dom(B)$ is a subset of $B$. The poset $P_J$ adds an element of the product $\prod_jX_j$. A priori, it is not clear what the properties of posets of this type may be or how they depend on the template $R$. In particular, the question of properness is wide open. 

The following apparent triviality is the mother of all iteration and product preservation theorems in the class of hypergraphable forcings. To appreciate its staggering strength, it is necessary to consult the applications in Subsections~\ref{productsubsection} and~\ref{iterationsubsection}. In this class of partial orders, it implies most of the countable support iteration theorems of \cite{bartoszynski:set} and generates many new ones without a single fusion argument.  It also applies in the case of illfounded iterations where the usual fusion arguments are unavailable.

\begin{theorem}
\label{preservationtheorem}
Let $P$ be a partial order.

\begin{enumerate}
\item the property $\cG\not\perp^*P$ is preserved under countable template products;
\item if $P$ is Suslin, then the property $\cG\not\perp P$ is preserved under countable template products.
\end{enumerate}
\end{theorem}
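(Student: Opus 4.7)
The plan is to reduce each single-coordinate instance of the template product to the coordinate-wise hypothesis via a Fubini-style sectioning, imitating the Kuratowski--Ulam argument in the proof of Theorem~\ref{systemtheorem}. Fix a countable $K \subset J$, a Borel $I_K$-positive set $B \subset X_K$, a function $f \colon B \to P$ (Borel in case (2)), and a hypergraph $G \in \cG_K$. Unpacking the definition, $G = \hat{G_0}$ for some $j \in K$ and $G_0 \in \cG_j$; write $K' = K \setminus \{j\}$ and identify $X_K = X_{K'} \times X_j$. Since templates are irreflexive, the set $K^* = \{i \in K \colon i \mathrel R j\}$ is contained in $K'$.

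First, I would extract a good section. Apply Corollary~\ref{meagercorollary} with the decomposition $K = K' \cup \{j\}$ to obtain Polish topologies $\tau_{K'}$ on $X_{K'}$ and $\tau_j$ on $X_j$ such that $B$ is $\tau_{K'} \times \tau_j$-comeager, every set in $I_{K'}$ is $\tau_{K'}$-meager, and every set in $I_{\cG_j}$ is $\tau_j$-meager. The Kuratowski--Ulam theorem then furnishes some $a \in X_{K'}$ for which the Borel section $B_a = \{y \in X_j \colon (a,y) \in B\}$ is $\tau_j$-comeager, hence $I_{\cG_j}$-positive.

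Next, apply the hypothesis $\cG_j \not\perp^* P$ (resp.\ $\not\perp P$, using the Suslinness of $P$ to accommodate the Borel function $f_a$) to $B_a$, the function $f_a(y) = f(a,y)$, and the hypergraph $G_0$. This yields a condition $p \in P$ which forces the existence of a $G_0$-edge $e'$ with vertices in the ground-model set $B_a$ such that $f_a''\rng(e')$ is included in the generic filter. Define $e(n) = (a, e'(n))$ in the $P$-extension. Every vertex of $e$ lies in $B$; all vertices of $e$ share the same $K'$-restriction $a$ and therefore the same $K^*$-restriction $a \restriction K^*$; and the $j$-projection of $e$ is the $G_0$-edge $e'$. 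Hence $e \in \hat{G_0} = G$, and $f \circ e = f_a \circ e'$ is forced into the generic filter, so $p$ witnesses $\cG_K \not\perp^* P$ (resp.\ $\cG_K \not\perp P$).

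The only substantive step is the sectioning: converting $I_K$-positivity of $B$ into $I_{\cG_j}$-positivity of some section $B_a$. The topological reformulation from Corollary~\ref{meagercorollary} together with Kuratowski--Ulam handle it cleanly. Everything afterwards is mere bookkeeping, relying on the fact that the coordinates in $K \setminus (K^* \cup \{j\})$ are unconstrained in the definition of a $\hat{G_0}$-edge, so holding them constant at the appropriate restriction of $a$ produces a legitimate $G$-edge.
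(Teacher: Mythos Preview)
Your sectioning step is too aggressive, and the claimed application of Corollary~\ref{meagercorollary} does not go through. You fix \emph{all} coordinates in $K' = K \setminus \{j\}$, but the definition of a $\hat G_0$-edge only constrains the coordinates in $K^* = \{i \in K : i \mathrel R j\}$, which in general is a proper subset of $K'$. Fixing the extra coordinates in $K' \setminus K^*$ can force every section $B_a$ into $I_{\cG_j}$ even though $B$ is $I_K$-positive. Concretely, take $K = \{0,1\}$ with the empty relation $R = \emptyset$ and let $\cG_0 = \cG_1 = \{G\}$ for $G$ the Silver graph; then with $j = 1$ one has $K^* = \emptyset$ and $K' = \{0\}$. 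A Borel $\hat G$-anticlique (for either coordinate) is exactly a set whose projection to that coordinate is a $G$-anticlique, so the diagonal $\Delta = \{(x,x)\}$ is $I_K$-positive, yet every section $\Delta_a = \{a\}$ is a singleton and hence in $I_{\cG_1}$. No product topology $\tau_{K'}\times\tau_j$ with $I_{\cG_j}$-sets $\tau_j$-meager can make $\Delta$ nonmeager, since by Kuratowski--Ulam all its vertical sections are meager. Corollary~\ref{meagercorollary} only produces a single topology on $X_K$; the product-topology argument in Theorem~\ref{systemtheorem} works precisely because the set there is already a product $A \times X_{L\setminus K}$.

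The paper instead fixes only the $K^*$-coordinates and takes a \emph{projection} onto $X_j$, not a section: $B_y = \{x \in X_j : \exists z \in B,\ y \cup \{\langle j, x\rangle\} \subset z\}$. Positivity of some such $B_y$ follows directly from Theorem~\ref{millerfact}(3), since if every $B_y$ were $I_{\cG_j}$-small one could cover $B$ by countably many Borel $\hat G_n$-anticliques. The price is that one must then \emph{choose} a witness $z_x \in B$ for each $x \in B_y$ and set $\hat f(x)=f(z_x)$; this choice is unconstrained in part~(1), while in part~(2) it must be made Borel, which is exactly where the paper invokes Corollary~\ref{meagercorollary} together with a uniformization theorem. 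Your attempt to sidestep the selection step by over-sectioning is what breaks the argument.
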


\begin{proof}
The argument depends on a general claim. Suppose that $\langle J, R\rangle$ is a template, the set $J$ is countable, and for each $j\in J$, the countable family $\cG_j$ of analytic hypergraphs on a Polish space $X_j$.
Now, fix $j\in J$, write $I_j$ be the $\gs$-ideal on $X_j$ $\gs$-generated by Borel sets which are anticliques in at least one of the hypergraphs in $\cG_j$, and write $K=\{i\in J\colon i\mathrel R j\}$.

\begin{claim}
Whenever $B\subset X$ is a Borel $I$-positive set, there is a point $y\in \prod_{i\in K}X_i$ such that the set $B_y=\{x\in X_j\colon\exists z\in B\ y, \{\langle j, x\rangle\}\subset z\}\subset X$ does not belong to the ideal $I_j$.
\end{claim}

\begin{proof}
This follows from Theorem~\ref{millerfact}(3): if for each $y\in\prod_{i\in K}X_i$ the set $B_y$ were $I_j$-small, one could find Borel sets $C_n\subset B$ and hypergraphs $G_n\in\cG_j$ for $n\in\gw$ such that $B=\bigcup_nC_n$ and for each $y\in\prod_{i\in K}X_i$,
the set $C_y$ is a $G_n$-anticlique. But then, each set $C_n$ is a $\hat G_n$-anticlique, showing that the set $B$ belongs to $I$.
\end{proof}

For (1), suppose that for every $j\in J$, $\cG_j\not\perp^*P$ holds. Let $\cG=\prod_j^R\cG_j$ be the product family
of hypergraphs on the Polish space $X=\prod_jX_j$. I need to show that $\cG\not\perp^*P$ holds. To this end, suppose that $B\subset X$ is an $I_J$-positive Borel subset of $X$, $f\colon B\to P$ is a function, $j\in J$ is an index and $G\in\cG_j$ is a hypergraph.
I must find a condition $p\in P$ which forces that there is an edge $e\in\hat G$ consisting of ground model elements of $B$ such that $f''\rng(e)$ is a subset of the generic filter. Use the claim to find a  a point $y\in \prod_{i\in K}X_i$ such that the set $B_y\subset X$
is $I_j$-positive. For each $x\in B_y$ choose $z_x\in B$ such that $y, \{\langle j, x\rangle\}\subset z_x$ and define a map $\hat f\colon B_y\to P$ by $\hat f(x)=f(z_x)$. Apply the assumption $\cG_j\not\perp^*P$ to find a condition $p\in P$ which forces that there is an edge $e\in\hat G$ consisting of ground model elements of $B$ such that $\hat f''\rng(e)$ is a subset of the generic filter. Then $\langle z_x\colon x\in\rng(e)\rangle$ is a $\hat G$-edge whose $f$-image belongs to the generic filter as desired.

The proof of (2) is similar, using Corollary~\ref{meagercorollary} and a uniformization theorem such as \cite[Proposition 2.3.4]{z:book2} to pick the points $z_x\in B$ in a Borel way.
\end{proof}

\subsection{Product of actionable ideals}
\label{productsubsection}

A good reason to love actionable ideals is their behavior under the countable support products. It turns out that the countable support product of finitary actionable ideals is obtained from a natural template product. Let $\langle \gw, R\rangle$ be the template
defined by $n\mathrel R m$ just in case $n\neq m$; this is clearly the largest template on a countable domain.

\begin{theorem}
\label{producttheorem}
Let $X_n, \cG_n$ be Polish spaces and countable actionable families of finitary analytic hypergraphs on each, for $n\in\gw$. Let $\cG=\prod_n^R\cG_n$ be the template product, a countable family of hypergraphs on $X=\prod_nX_n$, with an associated $\gs$-ideal $I$.
For every analytic set $A\subset X$, exactly one of the following occurs:

\begin{enumerate}
\item $A\in I$;
\item there are Borel $I_n$-positive sets $B_n\subset X_n$ for each $n\in\gw$ such that $\prod_nB_n\subset A$.
\end{enumerate}
\end{theorem}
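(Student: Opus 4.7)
I plan to prove the dichotomy in two separate stages, with only one of them requiring real work.

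The easy direction $(2)\Rightarrow\neg(1)$ is a Kuratowski--Ulam computation. For each $n$, applying Corollary~\ref{meagercorollary} to the Borel $I_n$-positive set $B_n$ produces a finer Polish topology $\tau_n$ on $X_n$ with the same Borel sets, in which $B_n$ is $\tau_n$-comeager and every set in $I_n$ is $\tau_n$-meager. In the product topology on $X$, the rectangle $\prod_n B_n$ is then comeager. On the other hand, every generator of $I$ is a Borel $\hat G$-anticlique for some $G\in\cG_k$; by the very definition of $\hat G$, all its sections fixing coordinates different from $k$ are $G$-anticliques, hence $\tau_k$-meager. Kuratowski--Ulam then renders the generator meager in the product topology, and a countable union of meager sets cannot cover the comeager rectangle. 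So $\prod_n B_n\notin I$ and therefore $A\notin I$. The direction $(1)\Rightarrow\neg(2)$ is just the contrapositive of this argument.

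The substantive direction is $\neg(1)\Rightarrow (2)$. Here I first reduce to the compact case: Theorem~\ref{millerfact} lets me assume $A$ is Borel, and since each $\cG_n$ is actionable with finitary edges, so is $\cG$ (under the direct-sum group $\bigoplus_n\Gamma_n$ acting coordinatewise); Corollary~\ref{finitarycorollary} then gives that $P_I$ is bounding, and the standard characterization of bounding idealized forcings in \cite[Theorem 3.3.2]{z:book2} lets me replace $A$ by a compact $I$-positive subset. Theorem~\ref{systemtheorem} then shows that each coordinate projection $\pi_n(A)\subseteq X_n$ is compact $I_n$-positive.

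I now run a fusion. Fix an enumeration $\{D_k\colon k\in\gw\}$ of Borel generators of $I$ and an enumeration of the countable family $\cG$. By induction on $k$, I construct a decreasing chain of compact $I$-positive sets $A=A_0\supseteq A_1\supseteq\dots$ together with compact $I_n$-positive sets $B_n^k\subseteq X_n$ for $n\leq k$, so that $A_k\subseteq \prod_{n\leq k}B_n^k\times \prod_{n>k}X_n$, $B_n^{k+1}\subseteq B_n^k$, $A_k\cap D_{k-1}=\emptyset$, and each coordinate fusion in $X_n$ matches the template of Definition~\ref{fusiondefinition} with respect to group elements drawn from $\Gamma_n$. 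The inductive step has two ingredients: first, a Fubini-type slicing argument (extracted from Theorem~\ref{systemtheorem} together with Theorem~\ref{millerfact}(3)) shows that the set of $x_{k+1}\in \pi_{k+1}(A_k)$ whose section $(A_k)_{x_{k+1}}$ remains positive in the product ideal over the remaining coordinates is $I_{k+1}$-positive, yielding a Borel (after shrinking, compact) $B_{k+1}^{k+1}$; second, within coordinate $k+1$, I perform one step of an actionable fusion, in the style of the construction in Theorem~\ref{firstdichotomytheorem}, to ensure that the eventual intersection $B_{k+1}=\bigcap_k B_{k+1}^k$ is compact perfect and $I_{k+1}$-positive rather than collapsing.

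Setting $B_n=\bigcap_k B_n^k$, the actionable-fusion design guarantees each $B_n$ is compact $I_n$-positive (the resulting map from $\cantor$ into $X_n$ is a continuous injection with $I_n$-positive range), and the rectangular layout at every stage gives $\prod_n B_n\subseteq\bigcap_kA_k\subseteq A$. The main obstacle is the coordination across infinitely many coordinates: a single inductive stage must simultaneously shrink the partial rectangle, absorb the next generator $D_k$, and execute one more fusion move in the newly-active coordinate, all while preserving positivity of every earlier $B_n^k$. The finitary hypothesis keeps each stage a finite combinatorial problem (edges of $\hat G_k$ touch only finitely many coordinate values), and the actionable hypothesis supplies the countable group of moves that power the per-coordinate fusion, so these two features together are exactly what is needed to make the coordination succeed.
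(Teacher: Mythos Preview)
Your easy direction is fine and matches the paper's argument exactly.

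Your hard direction has a genuine gap: the containments go the wrong way. You build $A_k\subseteq \prod_{n\leq k}B_n^k\times\prod_{n>k}X_n$, which in the limit gives $\bigcap_kA_k\subseteq\prod_nB_n$, not the reverse. Nothing in your construction forces an arbitrary point of $\prod_nB_n$ to lie in $A$. The slicing step (``find $x_{k+1}$ with positive section'') pins down one point in coordinate $k+1$ whose section is large, but to get a rectangle inside $A$ you would need that for \emph{every} point in $B_{k+1}$ the section contains the same rectangle in the remaining coordinates---and sections of an arbitrary compact $I$-positive set simply do not cohere that way. Listing generators $D_k$ and forcing $A_k\cap D_{k-1}=\emptyset$ only controls $\bigcap_kA_k$, not the rectangle.

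The paper resolves this by running a \emph{single} fusion in the product space $X$, not separate per-coordinate fusions. Fix a partition $\gw=\bigcup_nb_n$ into infinite sets; at stage $i\in b_n$ the finite set $a_i$ of group elements is drawn from $\Gamma_n$ alone, so the action at stage $i$ moves only coordinate $n$. Because the fusion sequence $\langle a_i,B_i\rangle$ lives below $A$ in the product, the resulting continuous map $h\colon\prod_ia_i\to X$ automatically has range contained in $A$. The point is that the commuting coordinatewise actions make $h$ split: $h(z)=\langle h_n(z\restriction b_n)\colon n\in\gw\rangle$ for maps $h_n\colon\prod_{i\in b_n}a_i\to X_n$, so $\rng(h)=\prod_n\rng(h_n)$ is literally a rectangle. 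A bookkeeping device then ensures each $\rng(h_n)$ is $I_n$-positive (at infinitely many stages $i\in b_n$ one arranges that $\{\gamma\cdot x(n):\gamma\in a_i\}$ contains a $G$-edge for each $G\in\cG_n$ and each finite history). The missing idea in your plan is exactly this: organize one fusion in $X$ so that its range is a rectangle by design, rather than trying to approximate $A$ from outside by rectangles.
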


\noindent The reader should consult Definition~\ref{templatedefinition} to see that the template product hypergraph family in this case is again actionable, and if $\Gamma_n$ are countable groups acting on the respective spaces $X_n$ to witness the actionability of
the families $\cG_n$, then the finite support product group $\Gamma=\prod_n\Gamma_n$ with its natural action on $\prod_nX_n$ witnesses the actionability of $\cG$.

It is instructive to note that the theorem fails very badly for hypergraphable ideals which are not actionable. Consider the ideal $J$ of countable subsets of $\cantor$. This is an ideal $\gs$-generated by all Borel anticliques of the full graph $G=[\cantor]^2$.
The quotient is the Sacks forcing, adding a minimal forcing extension, and therefore it is impossible to present the ideal in an actionable way due to Theorem~\ref{intermediatetheorem}. The ideal $K$ on $\cantor\times\cantor$ associated with the product of two copies of Sacks forcing
consists of those Borel sets which do not contain a perfect rectangle by \cite[Theorem 5.2.6]{z:book2}. This ideal is strictly larger than the $\gs$-ideal $L$ generated by the template product of two copies of the graph $G$.
This is not entirely easy to see. A good example is a Borel set $B\subset\cantor\times\cantor$ which contains an $\aleph_2\times\aleph_2$-rectangle but no perfect rectangle, produced by Shelah in \cite{Sh:522}. The set $B$ cannot be covered by any
two sets $C, D\subset\cantor\times\cantor$ such that all vertical sections of $C$ and all horizontal sections of $D$ are countable, simply because every map from $\gw_2$ to $[\gw_2]^{\aleph_0}$ contains a free pair. Thus, the set $B$ belongs to the ideal $K$ but not to $L$.
The $\gs$-ideal $L$ belongs to the heap of ideals for which I do not know if the quotient poset is proper.

The theorem has numerous consequences in the form of product preservation theorems, absorbing many repetitive fusion arguments. Here, I give a small sample.

\begin{corollary}
\label{productgraphcorollary}
In the notation of the theorem, the poset $P_I$ is the countable support product of the posets $P_{I_n}$.
\end{corollary}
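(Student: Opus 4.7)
The plan is to exhibit a dense embedding
\[
\iota\colon \prod_{n}P_{I_n}\to P_I,\qquad \iota(\langle B_n\colon n\in\gw\rangle)=\prod_{n}B_n,
\]
of the countable support product into $P_I$. Since the index set $\gw$ is countable, countable support coincides with full support, so a condition in $\prod_n P_{I_n}$ is just a sequence $\langle B_n\colon n\in\gw\rangle$ with each $B_n\in P_{I_n}$.

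First I would verify that $\iota$ takes values in $P_I$. The set $\prod_n B_n\subset X$ is Borel, and applying Theorem~\ref{producttheorem} to the analytic set $A=\prod_n B_n$, alternative~(2) of the dichotomy holds trivially with the witnessing sequence $\langle B_n\rangle$ itself, so alternative~(1) must fail and $\prod_n B_n\notin I$. The map $\iota$ is evidently order preserving.

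The heart of the argument is to show that $\iota$ preserves and reflects incompatibility. This reduces to the following auxiliary fact: for Borel sets $D_n\subset X_n$, one has $\prod_n D_n\in I$ if and only if some $D_n\in I_n$. The ``only if'' direction is just the previous paragraph applied to $\langle D_n\rangle$. The ``if'' direction uses Theorem~\ref{systemtheorem} with $K=\{n\}$ and $L=\gw$: if some $D_n\in I_n$, then the cylinder $D_n\times\prod_{m\neq n}X_m$ lies in $I$, and the smaller set $\prod_n D_n$ is therefore in $I$ as well. Applying the auxiliary fact with $D_n=B_n\cap C_n$ yields that $\langle B_n\rangle$ and $\langle C_n\rangle$ are incompatible in the product precisely when some $B_n\cap C_n\in I_n$, which is precisely when $(\prod_n B_n)\cap(\prod_n C_n)=\prod_n(B_n\cap C_n)\in I$, i.e.\ when $\iota(\langle B_n\rangle)$ and $\iota(\langle C_n\rangle)$ are incompatible in $P_I$.

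Density of the image of $\iota$ is exactly the content of the nontrivial alternative of Theorem~\ref{producttheorem}: every Borel $I$-positive set $A\subset X$ contains a product $\prod_n B_n$ with each $B_n\in P_{I_n}$, which is in the image of $\iota$ and stronger than $A$ in $P_I$. Combining these three properties, $\iota$ is a dense embedding, giving the desired forcing equivalence. The main combinatorial work has already been absorbed into Theorems~\ref{producttheorem} and~\ref{systemtheorem}, so no further obstacle is expected.
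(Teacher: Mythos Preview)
Your argument is correct and is exactly the natural way to extract the corollary from Theorem~\ref{producttheorem}; the paper states this corollary without proof, so you have simply supplied the details the author left implicit. The three ingredients you identify---boxes are $I$-positive (the easy direction of the dichotomy), density of boxes (the hard direction), and the incompatibility computation via the auxiliary fact---are precisely what is needed, and your appeal to Theorem~\ref{systemtheorem} for the ``if'' direction of the auxiliary fact is a clean way to handle it.
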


\begin{corollary}
\label{aacorollary}
Suppose that $H$ is a closed graph  on a Polish space $Z$ without a perfect clique. The property $\phi(P, H)=$``every point of $Z$ in the $P$-extension belongs to a ground model coded compact $H$-anticlique'' is preserved under the countable support product of finitary actionable forcings.
\end{corollary}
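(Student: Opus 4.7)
The plan is to chain three results already established in the paper: Theorem~\ref{fubinitheorem} (which, for finitary hypergraphs, equates $\phi(P_{I_\cG}, H)$ with $\cG\not\perp R_H$), Theorem~\ref{preservationtheorem}(2) (preservation of $\cG\not\perp P$ for Suslin $P$ under countable template products), and Corollary~\ref{productgraphcorollary} (identification of the countable support product of finitary actionable quotients with a template product quotient). Let $\langle P_{I_{\cG_n}}\colon n\in\gw\rangle$ be a countable sequence of finitary actionable forcings, each satisfying $\phi(P_{I_{\cG_n}}, H)$, and let $R$ denote the template on $\gw$ given by $n\mathrel R m$ iff $n\neq m$. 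By Corollary~\ref{productgraphcorollary}, the countable support product of these posets is densely isomorphic to the quotient $P_I$ arising from the template product family $\cG=\prod_n^R\cG_n$ on $\prod_n X_n$; inspecting Definition~\ref{templatedefinition}, the arity of each $\hat G$-edge equals the arity of the corresponding $G$-edge, so $\cG$ is itself a countable family of finitary analytic hypergraphs.

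The plan then executes as follows. Since $H$ is a closed graph on $Z$ with no perfect clique, Todorcevic's theorem cited after Definition~\ref{rhdefinition} tells us that $R_H$ is a Suslin c.c.c.\ poset. For each $n\in\gw$, the hypothesis $\phi(P_{I_{\cG_n}}, H)$, combined with the finitary converse direction of Theorem~\ref{fubinitheorem} (the implication (3) implies (1), which uses exactly the finiteness of edges), gives $\cG_n\not\perp R_H$. Now apply Theorem~\ref{preservationtheorem}(2) to the Suslin poset $R_H$ to conclude $\cG\not\perp R_H$. Finally invoke the forward direction of Theorem~\ref{fubinitheorem} (the implication (1) implies (3), which does not require finiteness of edges) to obtain $\phi(P_I, H)$, which translates back into the desired statement for the countable support product.

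The main obstacle is ensuring the clean identification of the product poset with the template product quotient as supplied by Corollary~\ref{productgraphcorollary}: one needs that any $P_I$-name for a point of $Z$ really captures an arbitrary name in the countable support product, and conversely that a compact $H$-anticlique absorbing the $P_I$-generic point absorbs the product-generic point as well. Once that bookkeeping is accepted, the preservation content is concentrated entirely in Theorem~\ref{preservationtheorem}(2), and no new fusion or combinatorial argument is required.
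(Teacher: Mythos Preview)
Your proof is correct and follows essentially the same approach as the paper: translate $\phi(P_{I_{\cG}}, H)$ to $\cG\not\perp R_H$ via Theorem~\ref{fubinitheorem} (using finitariness for the converse direction), apply Theorem~\ref{preservationtheorem}(2) to the Suslin c.c.c.\ poset $R_H$, and translate back. You are more explicit than the paper about invoking Corollary~\ref{productgraphcorollary} to identify the countable support product with the template product quotient and about checking that the template product family remains finitary, but the underlying argument is identical.
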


\begin{proof}
Let $R_H$ be the poset associated with the hypergraph $H$ in Subsection~\ref{anticliquesubsection}. Note that the poset $R_H$ is c.c.c. by the assumption on the graph $H$, therefore it is a Suslin c.c.c.\ forcing.  
Theorem~\ref{fubinitheorem} shows that for all countable families $\cG$ of analytic hypergraphs with only finite edges and proper quotient, $\phi(P_{I_{\cG}}, H)$ is equivalent to $\cG\not\perp R_H$. A reference to Theorem~\ref{preservationtheorem} now concludes the argument.
\end{proof}

\begin{corollary}
Let $H$ be a closed acyclic graph on a Polish space $X$. In the product Silver extension, the space $Z$ is covered by the ground model coded compact $H$-anticliques.
\end{corollary}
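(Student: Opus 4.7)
The plan is simply to combine two earlier results: Corollary~\ref{silveracycliccorollary} supplies the covering property in the single Silver extension, while Corollary~\ref{aacorollary} propagates it to the product. No new fusion argument is needed.

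First I would check that the hypotheses of Corollary~\ref{aacorollary} are met for $H$. An acyclic graph contains no triangle and hence no clique of size three, so in particular it has no perfect clique. Next, the Silver forcing itself falls under the category addressed by Corollary~\ref{aacorollary}: by Example~\ref{silvergraphexample} it is (below a dense set) the quotient $P_{I_{\cG}}$ where $\cG$ consists of the single Silver graph on $\cantor$. That graph is invariant under the action of the countable group of rational points of $\cantor$, so $\cG$ is actionable, and every edge has arity two, so $\cG$ is finitary. Thus the Silver forcing is finitary actionable in the sense needed for the preservation theorem.

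Now apply Corollary~\ref{silveracycliccorollary} to $H$: in the single Silver extension every point of $X$ belongs to some ground model coded compact $H$-anticlique, i.e.\ the property $\phi(\text{Silver}, H)$ of Corollary~\ref{aacorollary} holds. Since each factor of the countable support product of Silver forcings is itself finitary actionable and satisfies $\phi(\cdot, H)$, Corollary~\ref{aacorollary} yields $\phi(\text{Silver}^{\mathrm{cs}\,\gw}, H)$, which is precisely the statement to be proved.

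The main obstacle is simply bookkeeping: one must be careful that the countable support product of Silver forcings can actually be identified with the quotient $P_I$ coming from the template product of countably many copies of the Silver graph family, so that Corollary~\ref{aacorollary} (which is phrased for hypergraphable quotient forcings via the template-product construction of Corollary~\ref{productgraphcorollary}) genuinely applies. This identification is immediate from Corollary~\ref{productgraphcorollary}, so no genuine difficulty arises.
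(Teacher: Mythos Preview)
Your proposal is correct and follows essentially the same approach as the paper's own proof, which simply cites Corollaries~\ref{silveracycliccorollary} and~\ref{aacorollary} together. Your additional remarks verifying that acyclic implies no perfect clique, that the Silver family is finitary actionable, and that Corollary~\ref{productgraphcorollary} identifies the product forcing with the template-product quotient are all accurate elaborations of what the paper leaves implicit.
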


\begin{proof}
This is an immediate conclusion of the conjunction of Corollary~\ref{silveracycliccorollary} and Corollary~\ref{aacorollary}.
\end{proof}

\begin{corollary}
The preservation of outer Lebesgue measure is preserved under the countable support product of actionable finitary ideals.
\end{corollary}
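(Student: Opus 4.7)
The plan is to reduce the statement to the Fubini-type criterion $\cG\not\perp P$ for random forcing $P$, apply the general preservation theorem for template products, and then translate back.

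First, fix actionable finitary families $\cG_n$ witnessing that each $I_n$ is actionable and finitary, and observe that by Theorem~\ref{maintheorem} each quotient $P_{I_n}$ is proper. The standing hypothesis is that each $P_{I_n}$ preserves outer Lebesgue measure, so Theorem~\ref{randomoptimaltheorem} (the direction (2)$\Rightarrow$(1), which is available because the $\cG_n$ are finitary) yields $\cG_n\not\perp P$, where $P$ denotes the random forcing.

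Next, take the template product family $\cG=\prod_n^R\cG_n$ of Definition~\ref{templatedefinition} on $X=\prod_nX_n$, where $R$ is the template used in Theorem~\ref{producttheorem} (so that $\cG$ governs the countable support product). Since $P$ is Suslin c.c.c., Theorem~\ref{preservationtheorem}(2) applies and gives $\cG\not\perp P$ directly from the corresponding statement for each factor. This is the step that avoids any fusion and carries all the weight.

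To close the loop I need the product quotient to be proper. The family $\cG$ is finitary (each $\hat G$ inherits the finite arity of its underlying $G\in\cG_n$) and is again actionable: if $\Gamma_n$ is a countable group witnessing the actionability of $\cG_n$, then the finite support product $\Gamma=\prod_n\Gamma_n$ with its coordinatewise action on $X$ witnesses the actionability of $\cG$. Hence $P_I$ is proper by Theorem~\ref{maintheorem}, and by Corollary~\ref{productgraphcorollary} it is naturally isomorphic to the countable support product of the $P_{I_n}$. Now $\cG$ is finitary with proper quotient and satisfies $\cG\not\perp P$, so Theorem~\ref{randomoptimaltheorem} in the direction (1)$\Rightarrow$(2) yields that $P_I$ preserves outer Lebesgue measure, as required.

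The only genuine obstacle is verifying that the Fubini-style criterion $\cG\not\perp P$ passes through the template product, and this is precisely what Theorem~\ref{preservationtheorem}(2) provides; everything else is bookkeeping about actionability, finiteness of arity, and properness.
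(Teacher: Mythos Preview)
Your proof is correct and follows essentially the same route as the paper: translate outer measure preservation into $\cG\not\perp P$ for random $P$ via Theorem~\ref{randomoptimaltheorem}, push this through the template product using Theorem~\ref{preservationtheorem}(2), and translate back. You spell out more carefully than the paper does that the product family $\cG$ is again actionable and finitary (hence has proper quotient), and that Corollary~\ref{productgraphcorollary} identifies $P_I$ with the countable support product; the paper leaves these points implicit or refers to the remark following Theorem~\ref{producttheorem}.
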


\begin{proof}
Consider the random forcing $P$. By Theorem~\ref{randomoptimaltheorem}, the preservation of outer Lebesgue measure is equivalent to $\cG\not\perp P$ for countable families of analytic finitary hypergraphs. Now, $\cG\not\perp P$ is preserved by the template products by Theorem~\ref{preservationtheorem}; this completes the proof.
\end{proof}

\begin{corollary}
In the product Silver extension, for every infinite subset $b\subset\cantor$ there is a Borel ground model coded set $D\subset\cantor$ such that both sets $b\setminus D, b\cap D$ are infinite.
\end{corollary}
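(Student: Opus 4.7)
The plan is to mimic the template that the previous two product corollaries (Corollary~\ref{aacorollary} and its Silver consequence) use: identify a Suslin poset whose Fubini non-orthogonality captures the splitting property, verify the property for each Silver factor, and then invoke Theorem~\ref{preservationtheorem} to propagate it through the product.

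By Theorem~\ref{producttheorem} and Corollary~\ref{productgraphcorollary}, the countable support product of Silver forcings is (densely) the quotient $P_{I_\cG}$ where $\cG$ is the template product (indexed by the pairwise--distinct template on $\gw$) of copies of the Silver hypergraph family of Example~\ref{silvergraphexample}. Every edge of every hypergraph in $\cG$ has arity $2$, and $\cG$ is actionable with respect to the finite--support product of the rational Cantor groups, so the quotient is proper by Theorem~\ref{maintheorem}.

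Let $P$ be the Suslin $\gs$-centered coding poset constructed in the proof of Corollary~\ref{splittingcorollary} (with $Z=\cantor$ and a fixed Borel almost disjoint map $h\colon\cantor\to\cP(\gw)$), adding a generic $\dot y\subset\gw$ such that for every infinite $b\subset\cantor$ the set $\{z\in b\colon \dot y\cap h(z)$ is infinite$\}$ is forced to be both infinite and co-infinite in $b$. Since each Silver factor $\cG_n$ consists of finite-edge hypergraphs, Theorem~\ref{weaksackstheorem} gives $\cG_n\not\perp P$. By Theorem~\ref{preservationtheorem}(2), this is preserved under countable template products, so $\cG\not\perp P$ holds. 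Theorem~\ref{keytheorem} then yields $I_\cG\not\perp J_{\dot y}$, and the Fubini argument in the last paragraph of the proof of Corollary~\ref{splittingcorollary} transfers verbatim: given any $P_{I_\cG}$-name $\tau$ for an infinite subset of $\cantor$, the Borel reading of names produces a continuous $g$ with $B\Vdash\tau=\rng(\dot g(\dotxgen))$, the set $C$ of pairs $\langle x,y\rangle$ for which $\{n:y\cap h(g(x)(n))$ infinite$\}$ is finite or cofinite has vertical sections in $J_{\dot y}$, and the Fubini property delivers a ground-model $y\in\cantor$ together with an $I_\cG$-positive $B'\subset B$ forcing the Borel set $D=\{z\in\cantor\colon y\cap h(z)$ is infinite$\}$ to split $\tau$ into two infinite pieces.

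For the uncountable countable-support product $P_J$, invoke the centered-system-of-ideals framework (Theorem~\ref{systemtheorem} and its surrounding discussion): every condition of $P_J$ has a countable domain, and since an infinite subset of $\cantor$ may be coded by an element of the Polish space $(\cantor)^\gw$, the properness of each countable sub-product (with the Sacks property of Corollary~\ref{sackscorollary}) lets one absorb any $P_J$-name $\dot b$ for an infinite $b\subset\cantor$ into a $P_K$-name below some further condition, for a countable $K\subset J$. Applying the countable case to this $K$ produces a ground-model Borel set $D\subset\cantor$ that splits $b$, as required. The main delicate point I expect is justifying the last reduction cleanly inside the centered-system machinery; this is routine but requires carefully tracking which Polish space a given name lives in and invoking the appropriate nice-name argument, rather than any new fusion.
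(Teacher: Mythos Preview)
Your proposal is correct and follows essentially the same route as the paper's one-line proof, which simply cites the computation of the product ideal (Theorem~\ref{producttheorem}), Theorem~\ref{preservationtheorem}, and Corollary~\ref{splittingcorollary}; you have faithfully unpacked exactly these three ingredients. The extra paragraph on uncountable $J$ is not needed here---the corollary sits in the countable-product context of Theorem~\ref{producttheorem}---and note that an even shorter route is available: the template product $\cG$ visibly retains only finite edges, so Corollary~\ref{splittingcorollary} applies to it directly without invoking Theorem~\ref{preservationtheorem} at all.
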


\begin{proof}
This is an immediate consequence of the computation of the product ideal, Theorem~\ref{preservationtheorem}, and Corollary~\ref{splittingcorollary}.
\end{proof}

The proof of Theorem~\ref{producttheorem} is a rather straightforward, albeit tedious, fusion argument. I will need a definition of a fusion sequence which can deal with finitary hypergraphs as opposed to just graphs.

\begin{definition}
Let $\Gamma$ be a group, and $z$ be a finite function from $\gw$ to $\Gamma$. Then $\prod z\in\Gamma$ is the product $\prod_{i\in\dom(z)}z(i)$ taken in the increasing order. I also put $\prod 0=1$.
\end{definition}

\begin{definition}
Let $X$ be a Polish space with a continuous action of a group $\Gamma$ and a $\gs$-ideal $I$ on $X$. Let $A\subset X$ be a Borel $I$-positive set. A \emph{fusion sequence} below $A$
is a sequence $\langle a_i, B_i\colon i\in\gw\rangle$ such that

\begin{enumerate}
\item $B_0\supset B_1\supset\dots$ are compact $I$-positive subsets of $A$;
\item $a_i\subset\Gamma$ are finite subsets of $\Gamma$, each including the unit element;
\item for each $i\in\gw$, the sets $\{\gamma\cdot B_{i+1}\colon \gamma\in a_i\}$ form a pairwise disjoint family of subsets of $B_i$;
\item whenever $i\in\gw$ and $z\in\prod_{j\in i}a_j$, then the diameter of the set $\prod z\cdot B_i$ in some fixed complete compatible metric is $<2^{-i}$.
\end{enumerate}
\end{definition}

The intersection $\bigcap_iB_i$ is clearly a singleton $\{x\}$ for a point $x$ which I will call the \emph{resulting point} of the fusion sequence. It is also possible to form the continuous injective map $h\colon \prod_ia_i\to A$ by $h(z)=\lim_i \prod (z\restriction i)\cdot x$. I will refer to this map as the \emph{resulting map}.

\begin{proof}[Proof of Theorem~\ref{producttheorem}]
It is rather easy to see that (2) implies the negation of (1). Suppose that $B_n\subset X_n$ are Borel $I_n$-positive sets for each $n\in\gw$ such that $\prod_nB_n\subset A$. Use Corollary~\ref{meagercorollary} to find Polish topologies $\tau_n$ on each space $X_n$ such that $B_n$ is $\tau_n$-comeager while all sets in the ideal $I_n$ are $\tau_n$-meager. Consider the product topology $\tau=\prod_n\tau_n$ on $\prod_nX_n$. A review of definitions together with the Kuratowski theorem shows that the generating sets of the $\gs$-ideal $I$ are $\tau$-meager, and so all sets in $I$ are $\tau$-meager. On the other hand, the set $\prod_nB_n$ is $\tau$-comeager, and so neither it nor its superset $A$ belong to the ideal $I$. 

To show that the negation of (1) implies (2) is more tedious. Let $A\subset X$ be an analytic $I$-positive set. Find continuous actions of countable groups $\Gamma_n$ on the spaces $X_n$ witnessing the fact that the families $\cG_n$ is actionable.
Fix also a compatible metric on each of the spaces. Let $\Gamma$ be the finite support product $\prod_n\Gamma_n$, and naturally identify each group $\Gamma_n$ with a subset of $\Gamma$. Note that the hypergraph family
$\prod_n\cG_n$ is actionable and consists of hypergraphs with finite edges. Thus, the quotient poset $P_I$ is proper and bounding by Corollary~\ref{finitarycorollary}, and as a result, every analytic $I$-positive set has a compact $I$-positive subset by the standard criterion for the bounding property \cite[Theorem 3.2.2]{z:book2}

Let $\gw=\bigcup_nb_n$ be a partition of $\gw$ into infinite sets. Let $f$ be a bookkeeping tool: a function from $\gw$ to the set of all pairs $\langle u, G\rangle$ such that  $u$ is a partial function from $\gw$ to $\Gamma$ with finite domain and $G\in\bigcup_n\cG_n$, such that on every set $b_n$ the function $f$ takes every possible value infinitely many times. For each number $n\in\gw$ and every $I$-positive compact set $B\subset X$ write $p_n(B)\subset X_n$ for the projection of the set $B$ into the $n$-th coordinate.
Observe that the set $p_n(B)$ is compact and also $I_n$-positive. By induction on $i\in\gw$ construct a fusion sequence $F=\langle a_i, B_i\colon i\in\gw\rangle$ below the set $A$ such that

\begin{enumerate}
\item for each $n\in\gw$, the sequence $F_n\langle a_i, p_n(B_i)\colon i\in b_n\rangle$ is a fusion sequence for the ideal $I_n$; in particular, $a_i\subset\Gamma_n$;
\item whenever $i\in b_n$ is such that $f(i)=\langle u, G\rangle$ where for some $k\leq i$ $u\in\prod_{j\in b_n\cap k}a_j$ and $G\in\cG_n$ then for some function $v\in\prod_{j\in b_n\cap i}a_j$ extending $u$, for every $x\in B_{j+1}$, the set $\{\gamma\cdot x(n)\colon \gamma\in a_i\}\subset X_n$ contains all vertices of some edge in the hypergraph $(\prod v)^{-1}\cdot G$.
\end{enumerate}

\noindent The induction is easy to perform. Write $B_{-1}=A$. Suppose that sets $a_j, B_j$ have been constructed for all $j\in i$, and $n\in\gw$ is such that $i\in b_n$. If the assumptions of the second item of the induction hypothesis are not satisfied, then let $a_i=\{1\}$ and shrink the set $B_{i-1}$ to an $I$-positive compact set such that the diameter demands for the fusion sequence $F$ and $F_n$ are satisfied. This completes the induction step in this case. Suppose that the assumptions of the second item of the induction hypothesis are satisfied and let $f(i)=\langle u, G\rangle$. Let $v\in\prod_{j\in b_n\cap i}a_j$ be any extension of $u$. Consider the set $C$ of all points $x\in B_{i-1}$ such that for some finite set $a\subset \Gamma_n$ containing the unit, all points $\gamma\cdot x$ for $\gamma\in a$ belong to the set $B_{i-1}$ and there exists an edge of the graph $(\prod v)^{-1}G$ consisting of points in the set $\{\gamma\cdot x(n)\colon \gamma\in a\}$. The set $C\subset B_{i-1}$ is analytic and its complement in $B_{i-1}$ is a $(\prod v)^{-1}\cdot \hat G$-anticlique, so the set $C$ is $I$-positive. By the countable additivity of the $\gs$-ideal $I$, there is a fixed finite set $a\subset\Gamma_n$ containing the unit such that $D\notin I$ where $D$ contains exactly all  points $x\in B_{i-1}$ all points $\gamma\cdot x$ for $\gamma\in a$ belong to the set $B_{i-1}$ and there exists an edge of the graph $(\prod v)^{-1}G$ consisting of points in the set $\{\gamma\cdot x(n)\colon \gamma\in a\}$. The set $D$ is analytic, and so it contains a compact $I$-positive set $B_i\subset X$. Shrink the set $B_i$ if necessary to satisfy the diameter demands on the fusion sequences $F$ and $F_n$; this completes the induction step in this case as well.

Let $x_r\in X$ be the resulting point of the fusion sequence $F$ and let $h\colon\prod_ja_j\to X$ be the associated map. For each $n\in\gw$ the point $x_r(n)\in X_n$ is the resulting point of the fusion sequence $F_n$. Let $h_n\colon \prod_{j\in b_n}a_j\to X_n$ be the resulting map of the fusion sequence $F_n$.

\begin{claim}
The compact set $\rng(h_n)\subset X_n$ is $I_n$-positive.
\end{claim}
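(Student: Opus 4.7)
The plan is to suppose for contradiction that $\rng(h_n) \in I_n$ and to manufacture a forbidden edge inside one of the generating anticliques via a Baire category argument on the domain $Y_n = \prod_{j \in b_n} a_j$, driven by the bookkeeping in item (2). Concretely, if $\rng(h_n)$ is $I_n$-small, there are Borel sets $C_k \subset X_n$ and hypergraphs $G_k \in \cG_n$ such that each $C_k$ is a $G_k$-anticlique and $\rng(h_n) \subset \bigcup_k C_k$. The preimages $h_n^{-1}(C_k)$ are Borel subsets of the compact Polish space $Y_n$, so they have the Baire property and cover $Y_n$; by the Baire category theorem one of them, say $h_n^{-1}(C_{k_0})$, is comeager in some basic open set $[u]$ where $u \in \prod_{j \in b_n \cap k} a_j$ for some $k \in \gw$. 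Now invoke the bookkeeping function $f$ to pick $i \in b_n$ with $i \geq k$ and $f(i) = \langle u, G_{k_0}\rangle$, and apply item (2) of the inductive construction to obtain an extension $v \in \prod_{j \in b_n \cap i} a_j$ of $u$ with the following property: for every $x \in B_{i+1}$, the set $\{\gamma \cdot x(n) : \gamma \in a_i\}$ contains the vertex set of some edge of $(\prod v)^{-1} \cdot G_{k_0}$.

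Next I will produce a single tail that makes all of $\gamma \in a_i$ work simultaneously. Since $h_n^{-1}(C_{k_0})$ is comeager in the open subset $[v] \subset [u]$, and $[v]$ factors as $a_i \times \prod_{j \in b_n, j > i} a_j$ with $a_i$ a finite discrete space, Kuratowski-Ulam gives a comeager (hence nonempty) set of tails $t \in \prod_{j \in b_n, j > i} a_j$ such that for \emph{every} $\gamma \in a_i$ one has $v^\frown \gamma^\frown t \in h_n^{-1}(C_{k_0})$, i.e. $h_n(v^\frown \gamma^\frown t) \in C_{k_0}$. Fix such a tail $t$. Using the fusion condition (3) stepwise, the partial products $\pi_{k'} = \prod_{j \in b_n, i < j \leq k'} t(j)$ satisfy $\pi_{k'} \cdot x_r(n) \in p_n(B_{i+1})$, and by the diameter condition (4) they converge to a point $y \in p_n(B_{i+1})$. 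A direct unwinding of the definition of $h_n$ then yields $h_n(v^\frown \gamma^\frown t) = \prod v \cdot \gamma \cdot y$ for every $\gamma \in a_i$.

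Pick any $x \in B_{i+1}$ with $x(n) = y$, which exists because $y \in p_n(B_{i+1})$. Property (2) applied to this $x$ supplies an edge of $(\prod v)^{-1} G_{k_0}$ whose vertices lie in $\{\gamma \cdot y : \gamma \in a_i\}$; equivalently, the set $\{\prod v \cdot \gamma \cdot y : \gamma \in a_i\}$ contains all vertices of some $G_{k_0}$-edge. But this set is exactly $\{h_n(v^\frown \gamma^\frown t) : \gamma \in a_i\} \subset C_{k_0}$, contradicting the assumption that $C_{k_0}$ is a $G_{k_0}$-anticlique. The main obstacle I expect is purely bookkeeping-algebraic: one must carefully check that $y$ indeed belongs to $p_n(B_{i+1})$ (so that property (2) is applicable to \emph{some} $x$ with $x(n) = y$) and that the group-theoretic formula $h_n(v^\frown \gamma^\frown t) = \prod v \cdot \gamma \cdot y$ survives passage to the limit, both of which come down to using the fusion conditions (3) and (4) together with the fact that the factors $t(j)$ with $j \in b_n$ act only on the $n$-th coordinate and commute in the relevant sense with the truncation to a single $\gamma \in a_i$.
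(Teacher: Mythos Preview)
Your argument is correct and follows essentially the same route as the paper: both proofs use Baire category on $\prod_{j\in b_n}a_j$ to localize a single anticlique $C$ to a basic open set $[u]$, invoke the bookkeeping at some $i\in b_n$ with $f(i)=\langle u,G\rangle$ to obtain the extension $v$, and then use Kuratowski--Ulam to pick a tail so that all the $\gamma$-variants for $\gamma\in a_i$ land in $C$, producing a $G$-edge inside $C$. The only cosmetic difference is that the paper manufactures the required point of $B_{i+1}$ directly as $h(y)$ for a suitable $y\in\prod_{j\in\gw}a_j$ (with unit entries on $j\leq i$ and $j\notin b_n$), whereas you pass through the projection $p_n(B_{i+1})$ and then lift; the algebra verifying $h_n(v^\frown\gamma^\frown t)=\prod v\cdot\gamma\cdot y$ is identical in both presentations.
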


\begin{proof}
It will be enough to show that whenever $G\in\cG$ is a hypergraph and $C\subset X_n$ is a Borel $G$-anticlique, then $h_n^{-1}C\subset\prod_ja_j$ is meager. Suppose for contradiction that it is nonmeager, and use the Baire category theorem to find
a number $k$ and a sequence $u\in\prod_{j\in b_n\cap k}a_j$ such that the set $h_n^{-1}C$ is comeager in $[u]$. Find $i\in b_n$ bigger than $k$ such that the bookkeeping tool captured $u, G$ at $i$, that is $f(i)=\langle u, G\rangle$. By the induction hypothesis,
there must be some $v\in\prod_{j\in b_n\cap i}a_j$ extending $u$ such that for every $x\in B_{i}$, the set $\{\gamma\cdot x(n)\colon \gamma\in a_i\}\subset X_n$ contains all vertices of some edge in the hypergraph $(\prod v)^{-1}\cdot G$. Since the set $h_n^{-1}C$ is comeager in $[v]$, there must be a point $z\in\prod_{j\in b_n}a_j$ extending $u$ such that $z(i)=1$ and for every $\gamma\in a_i$, the point $z_\gamma$ obtained from $z$ by rewriting the $i$-th entry to $\gamma$ belongs to $h_n^{-1}C$.
Let $y\in\prod_ja_j$ be any point such that $y\restriction i$ returns only unit values and for all $j\in b_n\setminus i$, $y(j)=z(j)$. Then $h(y)$ is a point in $B_i$, and so by the second item of the induction hypothesis the set $\{\gamma\cdot h(y)(n)\colon \gamma\in a_i\}$
contains an edge $e\in (\prod v)^{-1}G$. Then $\prod v\cdot e$ is an edge in $G$. However, $\prod v\cdot\gamma\cdot h(y)(n)=h_n(z_\gamma)$ holds for each $\gamma\in a_i$ and so these points must contain some $G$-edge. This contradicts the assumption that
the set $C$ was a $G$-anticlique to begin with.
\end{proof}

\begin{claim}
The set $\prod_n\rng(h_n)$ is a subset of the set $A$.
\end{claim}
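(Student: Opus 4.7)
The plan is to exhibit, for any target point $\langle y_n : n \in \gw \rangle \in \prod_n \rng(h_n)$, a point $z \in \prod_j a_j$ such that $h(z) = \langle y_n : n \in \gw \rangle$. Since the fusion sequence $F$ was built below $A$ with $B_0 \subset A$ and $B_0$ compact, the resulting map $h$ sends $\prod_j a_j$ into $B_0 \subset A$, so producing such $z$ suffices.

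First I would use the factorization to define $z$. For each $n \in \gw$, since $y_n \in \rng(h_n)$, fix $z_n \in \prod_{j \in b_n} a_j$ with $h_n(z_n) = y_n$. Since $\gw = \bigcup_n b_n$ is a partition, I can glue these into a single $z \in \prod_j a_j$ by setting $z(j) = z_n(j)$ whenever $j \in b_n$. The key point is that in the induction, for each $i \in b_n$ the set $a_i$ was chosen to be a subset of $\Gamma_n \subset \Gamma$, so the product $\prod(z \restriction i)$ acts on $x_r$ componentwise: for every coordinate $n \in \gw$ and every $i \in \gw$,
\[
\bigl(\prod(z \restriction i) \cdot x_r\bigr)(n) \;=\; \prod(z_n \restriction (b_n \cap i)) \cdot x_r(n).
\]
Taking the limit as $i \to \infty$ on both sides and using the coordinate projections, the left side converges to $h(z)(n)$ while the right side converges to $h_n(z_n) = y_n$, so $h(z) = \langle y_n : n \in \gw \rangle$.

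Second, I would verify that $h(z) \in B_0$. By the fusion property (3) in the definition, for each $i$ and each $w \in \prod_{j < i} a_j$ one has $\prod(w) \cdot B_i \subset B_0$; in particular $\prod(z \restriction i) \cdot x_r \in B_0$ for every $i$. The diameter condition (4) ensures $h(z) = \lim_i \prod(z \restriction i) \cdot x_r$ exists, and since $B_0$ is compact (hence closed), $h(z) \in B_0 \subset A$. This gives $\langle y_n \rangle \in A$ and completes the proof.

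The main obstacle is justifying the componentwise computation $\bigl(\prod(z \restriction i) \cdot x_r\bigr)(n) = \prod(z_n \restriction (b_n \cap i)) \cdot x_r(n)$: one needs to observe that group elements from different $\Gamma_n$'s commute inside the finite support product $\Gamma = \prod_n \Gamma_n$ and act trivially on coordinates outside their factor, so reordering the product by first collecting factors from $\Gamma_n$ (in the order dictated by $b_n$) and then applying the result coordinatewise gives exactly the fusion $h_n(z_n)$ on the $n$-th coordinate. Once this bookkeeping is made precise, everything else is immediate from the definitions of the fusion maps.
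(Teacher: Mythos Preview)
Your proof is correct and follows exactly the same approach as the paper: given $y_n = h_n(z_n)$, glue the $z_n$'s into a single $z \in \prod_j a_j$ and observe that $h(z) = \langle h_n(z_n) : n \in \gw\rangle \in A$. The paper compresses this into one line, calling the factorization $h(z) = \langle h_n(z_n)\rangle$ ``immediate'', whereas you unpack the componentwise action and the closedness argument explicitly; both are the same argument.
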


\begin{proof}
If $z_n\in\prod_{j\in b_n}a_j$ are any points for each $n\in\gw$, let $z=\bigcup_nz_n$. Thus $z\in\prod_{j\in\gw}a_j$. It is immediate that $h(z)=\langle h_n(z_n)\colon n\in\gw\rangle$ and so this point is in the initial set $A\subset X$.
\end{proof}

\noindent This completes the proof of the theorem.
\end{proof}

\subsection{Linear iterations}
\label{iterationsubsection}

It turns out that linear iterations of hypergraphable forcings translate into template products. In order to get neat ZFC theorems, I will need a suitable notion of non-elementary properness. The machinery maintains plenty of its content even without it, and under suitable large cardinal assumptions, the non-elementary properness can be replaced by properness with no harm to the validity of the theorems.

\begin{definition}
Let $X$ be a Polish space and $I$ a $\gs$-ideal on it.

\begin{enumerate}
\item Whenever $a\subset\baire$ is a countable set, $P_I^a$ is the partial order of sets $A\subset X$ which are $I$-positive and $\gS^1_1(z)$ for some finite sequence $z\in a^{<\gw}$ of parameters;
\item The poset $P_I$ is \emph{1-non-elementary proper} with a parameter $z\in\baire$ if for every countable set $a\subset\baire$ containing $z$ and every analytic set $A\in I$, the poset $P_I^a$ forces its generic point not to belong to $\dot A$.
\end{enumerate}
\end{definition}

\begin{theorem}
\label{absotheorem}
Let $X$ be a Polish space and $I$ a $\gs$-ideal on it.

\begin{enumerate}
\item If $I$ is \pioneoneonsigmaoneone\ then the statement ``$P_I$ is 1-non-elementary proper with parameter $z$'' is $\mathbf{\Pi}^1_2$;
\item if $P_I$ is 1-non-elementary proper then $P_I$ is proper and preserves Baire category;
\item the $\gs$-ideals generated by actionable families of hypergraphs or countable families of hypergraphs are 1-non-elementary proper.
\end{enumerate}
\end{theorem}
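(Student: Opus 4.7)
For part (1), I would count quantifiers directly. A countable set $a\subset\baire$ is coded by a single real, and the requirement $z\in a$ is arithmetic in these codes. Over a code for $a$, the poset $P_I^a$ is countable and its forcing relation on statements about the generic point is arithmetic in the $\Sigma^1_1(a^{<\gw})$ membership predicate; unfolding ``$P_I^a\Vdash\dotxgen\notin\dot A$'' as ``for every $B\in P_I^a$ there is $B'\in P_I^a$ with $B'\subset B\cap(X\setminus A)$'' makes this $\Pi^1_1$ over the parameters. Combining with the \pioneoneonsigmaoneone\ hypothesis, which makes ``analytic and in $I$'' into $\Pi^1_1$ in the code of $A$, the full statement parses as $\forall^\R\,(a\text{ codes countable set}\land z\in a)\to\forall^\R\,(A\in I\to P_I^a\Vdash\dotxgen\notin\dot A)$, which is $\mathbf\Pi^1_2$.

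For part (2), I would reduce to the criterion \cite[Proposition 2.2.2]{z:book2}: I must show that for every countable elementary submodel $M$ of a large structure and every condition $B\in P_I\cap M$, the set $G_M=\{x\in B\colon x$ is $P_I$-generic over $M\}$ is Borel and $I$-positive. Let $a=M\cap\baire$. Any Borel $I$-positive subset of $G_M^c$ would have to be covered by countably many Borel sets in $I\cap M$; so toward a contradiction, fix a sequence $\{C_n\colon n\in\gw\}$ of Borel sets in $I$ (coded inside $a$ after enlarging $a$ slightly) and apply 1-non-elementary properness to the analytic $I$-small set $\bigcup_nC_n$: the countable poset $P_I^a$ forces its generic to meet $B\setminus\bigcup_nC_n$. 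Since $P_I^a$ is countable in $V$, one can build in $V$ a filter $H\subset P_I^a$ meeting $B$ together with countably many dense sets corresponding to the $P_I^a$-forcing statement; the associated point then belongs to $B\setminus\bigcup_nC_n$ and, being $P_I^a$-generic, is also $P_I\cap M$-generic. This simultaneously proves properness and Baire category preservation, since category preservation follows from the same $M$-genericity argument applied to meagerness coded in $a$.

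For part (3), I would re-run the proofs of Theorem~\ref{maintheorem} and the nearly open properness theorem with the countable elementary submodel $M$ replaced by an arbitrary countable parameter set $a\subset\baire$, and the poset $P_I\cap M$ replaced by $P_I^a$. Enlarge $a$ once and for all so that it is closed under the action of the countable group $\Gamma$ (for the actionable case) and under taking rational basic open neighborhoods in the fixed topology (for the nearly open case); this is harmless since these operations are countable. The key claims (Claims~\ref{keyclaim} and~\ref{lesskeyclaim} in the actionable case, and their analogs in the nearly open case) then transplant verbatim: they relied only on the ability to apply Fact~\ref{millerfact} to pass from an analytic $I$-positive set definable from $a$ to a Borel $I$-positive subset definable from $a$, on the invariance of the generator collection under the group action (or on the openness reservoir), and on Mostowski absoluteness between a generic extension by $P_I^a$ and $V$. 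The conclusion ``$P_I^a\Vdash\dotxgen\notin\dot C$'' for any analytic $C\in I$ then follows just as before.

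The main obstacle is ensuring that $P_I^a$ really is closed under the constructions used in the original properness proofs. Fact~\ref{millerfact} must be invoked in the form that a $\Sigma^1_1(a^{<\gw})$ $I$-positive set contains a Borel $I$-positive subset coded from parameters in $a^{<\gw}$; this is not automatic from a black-box citation and may require inspecting the uniform version of the fact. Similarly, in the nearly open case, the ``open regularization'' step that rewrote the edge $e$ into a product of basic open neighborhoods must be realized with neighborhoods coded in $a$. Once these uniformity issues are handled, the rest of the argument is a routine rerun of the proofs in Section~\ref{featuresection}.
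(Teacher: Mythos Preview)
Your treatment of parts (2) and (3) follows the same path as the paper, which is extremely terse: for (2) it observes that $P_I\cap M$ is dense in $P_I^a$ where $a=M\cap\baire$, deduces that $P_I\cap M$ forces its generic point out of every ground model coded analytic set in $I$, and cites \cite[Corollary 3.5.4]{z:book2}; for (3) it simply says ``trivial review of the proofs in Section~\ref{featuresection}''. Your elaboration of (3), including the warning about needing a uniform (parameter-preserving) form of Fact~\ref{millerfact}, is accurate.

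Two points in your (2) need tightening. First, the parenthetical ``coded inside $a$ after enlarging $a$ slightly'' should be deleted: the definition of 1-non-elementary properness already gives $P_I^a\Vdash\dotxgen\notin\dot A$ for \emph{every} analytic $A\in I$, whether or not $A$ is coded in $a$, so you can apply it directly to $\bigcup_nC_n$. Enlarging $a$ would enlarge $P_I^a$, and then a generic for the larger poset need not be $P_I\cap M$-generic. Second, you assert without proof that a $P_I^a$-generic is $P_I\cap M$-generic; this requires the observation (which the paper makes explicit) that $P_I\cap M$ is dense in $P_I^a$, using elementarity of $M$ together with Fact~\ref{millerfact}(2).

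For part (1) your approach is genuinely different and has a gap. The paper does not count quantifiers directly; instead it argues that $\phi(I,z)$ is equivalent to $\psi(I,z)=$``every countable transitive model containing the relevant codes satisfies $\phi(I,z)$'', and $\psi$ is visibly $\mathbf\Pi^1_2$ because satisfaction inside a coded countable model is arithmetic. Your direct route asserts that ``$P_I^a\Vdash\dotxgen\notin\dot A$'' unfolds as ``for every $B\in P_I^a$ there is $B'\in P_I^a$ with $B'\subset B\cap(X\setminus A)$'', but this is only the easy implication: density of conditions disjoint from $A$ certainly forces $\dotxgen\notin A$, yet the converse can fail, since the generic of a countable poset may avoid an analytic set $A$ even though every condition in $P_I^a$ meets $A$ (remember $A$ need not be coded in $a$, so $B\setminus A$ is typically not available as a condition). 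Even granting the equivalence, the inner existential mixes the $\Sigma^1_1$ clause ``$B'\notin I$'' with the $\Pi^1_1$ inclusion clauses, so the claimed $\Pi^1_1$ bound on the matrix is not justified. The countable-transitive-model reformulation sidesteps all of this bookkeeping.
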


\begin{proof}
To see (1), let $\phi(I, z)$ be the statement ``$P_I$ is 1-non-elementary proper with parameter $z$''. Argue that $\phi(I, z)$ is equivalent to the statement $\psi(I, z)=$``for every countable transitive model $M$ containing the code for $I$ as well as $z$, $M\models\phi(I, z)$''.
This is a routine proof. Clearly, the statement $\psi$ is $\mathbf{\Pi}^1_2$.

For (2), suppose that $P_I$ is 1-non-elementary proper.  Let $M$ be a countable elementary submodel and let $a=M\cap\baire$. Then $P_I\cap M$ is dense in $P_I^a$ and so $P_I\cap M$ forces its generic point not to belong to any ground model coded analytic
set in $I$. The conclusion now follows from \cite[Corollary 3.5.4]{z:book2}.

(3) follows from a trivial review of the proofs in Section~\ref{featuresection}.
\end{proof}

\noindent I do not know an example of a \pioneoneonsigmaoneone\ $\gs$-ideal whose quotient forcing is proper and preserves Baire category and at the same time the quotient forcing is not 1-non-elementary proper. However, it is not excluded that such forcings abound,
since I do not know how to check the status of 1-non-elementary properness for such basic posets as the Matet forcing or the product of two Sacks forcings. What is important though is that the hypergraphable posets discussed in this paper are 1-non-elementary proper.

The theorems on linear iterations of hypergraphable forcings need the following notation. Let $\langle J, R\rangle$ be a set with a strict linear ordering on it, serving as a template. The following theorem uses the notation established in Definition~\ref{templatedefinition}.

\begin{theorem}
\label{iterationtheorem}
Let $\langle J, R\rangle$ be a countable set with a strict linear order on it. Suppose that for each $j\in J$, $\cG_j$ is a countable family of analytic hypergraphs on some Polish space $X_j$ such that the quotient poset is 1-nonelementary proper. Then

\begin{enumerate}
\item the poset $P_J$ is 1-non-elementary proper;
\item whenever $K\subset J$ is an initial segment, then $P_J$ is naturally isomorphic to the iteration $P_K*P_{J\setminus K}$;
\item if $J$ is a well-ordering, then the poset $P_J$ is naturally isomorphic to the usual countable support iteration of the posets $P_{\cG_j}$ along $J$.
\end{enumerate}
\end{theorem}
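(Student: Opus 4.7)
The plan proceeds in three steps corresponding to the three parts of the statement, with (2) providing the algebraic content that drives (3).

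For (1), I would first reduce to the case where $J$ is countable. In that case $P_J$ coincides with the quotient $P_{I_J}$ of the hypergraphable $\sigma$-ideal generated by the countable family $\prod^R_{j\in J}\cG_j$ of analytic hypergraphs on $X_J$, so by Theorem~\ref{absotheorem}(3) it suffices to check that this quotient is proper. Properness I would establish by a generic-real argument: given a countable elementary submodel $M$ and $B\in P_{I_J}\cap M$, I would build coordinate-by-coordinate a sequence $\langle x_j\colon j\in J\rangle$ in $B$, using the 1-non-elementary properness of each $P_{\cG_j}$ at stage $j$ (to guarantee the $j$-th coordinate avoids every $I_{\cG_j}$-small analytic set coded in $M$) and Theorem~\ref{systemtheorem} to preserve positivity under restriction. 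For uncountable $J$, any analytic $I_J$-positive set coded in a countable parameter set $a\subset\baire$ references only countably many coordinates $K\subset J$, and Theorem~\ref{systemtheorem} embeds the relevant fragment of $P_J^a$ densely into $P_K^a$, reducing to the countable case.

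For (2), I would define the natural map $\Phi\colon P_J\to P_K*\dot P_{J\setminus K}$ by sending $B\in P_J$ with $\dom(B)=L$ to $(\pi_0(B),\dot B_1)$, where $L_0=L\cap K$, $L_1=L\setminus K$, $\pi_0(B)\subset X_{L_0}$ is the projection, and $\dot B_1$ is the canonical $P_K$-name for the fiber $\{y\in X_{L_1}\colon(x_K{\restriction}L_0,y)\in B\}$ at the generic $x_K$. The critical observation is that the generating hypergraphs of $I_J$ split according to the location of the carrying index $j$: because $K$ is an $R$-initial segment, whenever $j\in K$ the set $\{i\in J\colon i\mathrel R j\}$ is contained in $K$, so $\hat G$ for $G\in\cG_j$ is a cylinder over $X_K$ and contributes directly to $I_K$; whereas for $j\in J\setminus K$ the index set $\{i\in J\colon i\mathrel R j\}$ contains all of $K$, so after the $P_K$-generic pins down the $K$-coordinates, the remaining slice of $\hat G$ is precisely the corresponding hypergraph of $\cG_{J\setminus K}$ in the intermediate extension. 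Combined with Theorem~\ref{systemtheorem} applied in both $V$ and $V[G_K]$, this yields $\pi_0(B)\Vdash\dot B_1\in P_{J\setminus K}$; order-preservation, density of the image, and reconstruction of $B$ from an arbitrary pair $(C,\dot D)$ are then routine.

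For (3), I would induct transfinitely on the order type of $J$. At successor stages, (2) applied with $K$ the predecessor initial segment matches one step of the standard countable support iteration. At limit stages, the countable-support description of $P_J$ (each condition sees only countably many coordinates, by definition) coincides with the inverse limit under countable supports of the previously constructed $P_\alpha$. The main obstacle will be (2), specifically verifying that $\pi_0(B)$ forces $\dot B_1$ to be a Borel, $I_{J\setminus K}$-positive condition in $V[G_K]$: the positivity rests on the hypergraph split just described, but carrying the relevant instance of Theorem~\ref{systemtheorem} into the intermediate extension requires the absoluteness supplied by 1-non-elementary properness via the $\mathbf\Pi^1_2$ formulation of Theorem~\ref{absotheorem}(1), so the order in which (1) and (2) are proved matters and must be interleaved carefully.
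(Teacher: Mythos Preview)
Your plan for (1) has a genuine gap. The statement assumes $J$ is countable already, so the reduction is unnecessary; more importantly, your ``coordinate-by-coordinate'' construction of a point $\langle x_j\colon j\in J\rangle$ does not make sense when $\langle J,R\rangle$ is ill-founded. There is no transfinite recursion along a copy of $\mathbb{Q}$, and enumerating $J$ as $\{j_0,j_1,\dots\}$ does not help, since at stage $n$ the coordinate $j_n$ may have both $R$-predecessors and $R$-successors among the already-treated and yet-untreated indices. Moreover, your invocation of Theorem~\ref{absotheorem}(3) is off: that item covers the specific actionable and nearly open families from Section~\ref{featuresection}, not arbitrary template products, and it does not say that properness of a hypergraphable quotient implies 1-non-elementary properness.

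The paper's approach avoids any recursion along $J$. The key observation is that to force $\dotxgen\notin B_n$ for a \emph{single} anticlique $B_n$ coming from some $G_n\in\cG_{j_n}$, one only needs the two-step decomposition at the initial segment $K=\{i\in J\colon i\mathrel R j_n\}$: one shows that $P_J^a$ factors through $P_K^a$ (Claim~\ref{rrclaim}), that the remainder is the analogous poset over $X_{J\setminus K}$ (Claim~\ref{ssclaim}), and then applies 1-non-elementary properness of $P_{I_{j_n}}$ in the model $V[x\restriction K]$ using the $\mathbf{\Pi}^1_2$ absoluteness of Theorem~\ref{absotheorem}(1). This handles each $B_n$ separately with no global coordinate walk, which is exactly what makes the argument go through for ill-founded $J$. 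Your plan for (2) and (3) is broadly compatible with the paper's, but the heart of the matter---the projection claim $A\notin I_J\iff p_K(A)\notin I_K$ (Claim~\ref{projectionclaim})---is what drives all three parts, and it is proved directly from Theorem~\ref{millerfact}(3) without needing to interleave with (1) as you suggest.
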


\begin{proof}
The argument begins with a definition. For a set $A\subset X_J$ and a set $K\subset J$ write $p_K(A)=\{x\in X_K\colon \{y\in X_{J\setminus K}\colon x\cup y\in A\}\notin I_{J\setminus K}\}$. In the case of general templates, this operation behaves quite poorly.
In the present case of a linear order, this operation serves as a forcing projection:

\begin{claim}
\label{projectionclaim}
Suppose that $K\subset J$ is an initial segment and $A\subset X_J$ is an analytic set. Then $p_K(A)\subset X_K$ is an analytic set  and the following are equivalent:

\begin{enumerate}
\item $A\notin I_J$;
\item $p_K(A)\notin I_K$.
\end{enumerate}
\end{claim}

\begin{proof}
The analyticity of the set $p_K(A)$ follows from Theorem~\ref{millerfact}(1). Now, to prove (2)$\to$(1), suppose that $p_K(A)$ does not belong to $I_K$. Pick indices $j_n\in J$, hypergraphs $G_n\in \cG_j$ and Borel $\hat G_n$-anticliques $B_n\subset X_J$; I must produce a point in $A\setminus\bigcup_nB_n$. Write $c=\{n\in\gw\colon j_n\in K\}$. For each $n\in c$, the projection $p(B_n)$ of $B_n$ into the space $X_K$ is an analytic $\hat G_n$ anticlique, and so can be extended to a Borel $\hat G_n$-anticlique. Since the set $p_K(A)$ is $I_K$-positive,
there must be a point $x\in p_K(A)\setminus \bigcup_{n\in c}  p(B_n)$. Now, for each $n\notin c$, the set $q(B_n)=\{y\in X_{J\setminus K}\colon x\cup y\in B_n\}$ is a Borel $\hat G_n$-anticlique in the space $X_{J\setminus K}$, and as the set $A_x=\{y\in X_{L\setminus K}\colon x\cup y\in A\}$ is $I_{J\setminus K}$-positive, it follows that there must be a point $y\in A_x\setminus\bigcup_{n\notin c}q(B_n)$. The point $x\cup y\in X_L$ is the desired element of $A\setminus\bigcup_nB_n$.

To prove (1)$\to$(2), suppose that $p_K(A)$ belongs to $I_K$. 
Find indices $k_n\in K$, hypergraphs $H_n\in\cG_n$ and Borel $\hat H_n$-anticliques $C_n\subset X_K$ for $n\in\gw$ such that $p_K(A)\subset\bigcup_n C_n$. Write $B=X\setminus\bigcup_nC_n$. Now, the ideal $I_{J\setminus K}$ is hypergraphable, and for every point $x\in B$, the set $A_x=\{y\in X_{L\setminus K}\colon x^\cup y\in A\}$ is in it. Use Theorem~\ref{millerfact}(3) to find indices $j_n\in J\setminus K$, hypergraphs $G_n\in\cG_{j_n}$ and Borel sets $B_n\subset B\times X_{J\setminus K}$ such that each vertical section of $B_n$ is a $\hat G_n$-anticlique and for all $x\in B$, $A_x\subset \bigcup_n (B_n)_x$. Use the definition of the iteration to conclude that the Borel sets $\hat B_n=\{x^\cup y\colon x\in B, y\in (B_n)_x\}\subset X_J$ and $\hat C_n=\{x\in X^L\colon x\restriction K\in C_n\}\subset X^J$ are Borel $\hat G_n$- and $\hat H_n$-anticliques, respectively. It is clear that $A=\bigcup_n\hat B_n\cup\bigcup_n\hat C_n$ and so $A\in I_J$ as required.
\end{proof}

To prove (1), let $z\in\baire$ be a parameter coding all the objects named so far, among others the ordering $R$, the Polish spaces $X_j$ and the hypergraphs in $\cG_j$ for all $j\in J$. I will show that the parameter $z$ witnesses the fact that the poset $P_J$ is 1-non-elementary proper. To see this, let $a\subset\baire$ be a countable set containing $z$ and let $P^a_J$ be the poset of all sets which happen to be $\gS^1_1$ in parameters in the set $a$ and $I_J$-positive; the ordering is that of inclusion. Suppose that $K\subset J$ is an initial segment of $J$ which is recursive in $z$; write $L=J\setminus K$. Let $P^a_K$ be the poset of all sets which happen to be $\gS^1_1$ in parameters in the set $a$ and $I_K$-positive; the ordering is that of inclusion. Let $\pi\colon P^a_K\to P^a_J$ be the map
defined by $\pi(A)=A\times X_L$.

\begin{claim}
\label{rrclaim}
The map $\pi$ is a regular embedding of $P^a_K$ to $P^a_J$.
\end{claim}

\begin{proof}
The projection from $P^a_J$ to $P^a_K$ is just the function $p_K$ as Claim~\ref{projectionclaim} shows.
\end{proof}

\noindent Now, suppose that $x\in X_K$ is a $P^a_K$-generic point. I want to evaluate the remainder poset $P^a_J/x$.
In the model $V[x]$, let $Q$ be the poset of all subsets of $X_L$ which happen to be $\gS^1_1$ in parameters in $a\cup\{x\}$ and $I_L$-positive.
For every set $B\in Q$ find a set $\chi(B)\subset X_J$ which is $\gS^1_1$ in parameters in the set $a$ such that $B=\{y\in X_L\colon x\cup y\in\chi(B)\}$.

\begin{claim}
\label{ssclaim}
The map $\chi$ induces an isomorphism between the separative quotients of $Q$ and $P^a_J/x$.
\end{claim}

\begin{proof}
By definitions, the quotient poset $P^a_J/x$ consists of all sets $A\subset X_J$ which happen to be $\gS^1_1$ in parameters in the set $a$ and such that $x\in p_K(A)$ holds; the ordering is that of inclusion. 
Thus, the map $\chi$ is a map from $Q$ to $P^a_J/x$. The required properties of the map $\chi$ are not difficult to check.
\end{proof}

To conclude the proof of (1), suppose that $A\subset X_J$ is an analytic set in the ideal $I_J$. Find indices $j_n$, hypergraphs $G_n\in\cG_{j_n}$ and Borel $\hat G_n$-anticliques $B_n\subset X_J$ such that $A\subset \bigcup_nB_n$. Let $x\in X_K$ be a $P^a_K$-generic point; I must show that for each $n\in\gw$, $x\notin B_n$. Fix a number $n\in\gw$ and write $j=j_n$. By Claim~\ref{rrclaim} applied to $K=\{i\in J\colon i\mathrel R j\}$, it is the case that $x\restriction K\in X_K$ is $P^a_K$-generic over $V$. In the model $V[x\restriction K]$, write $Q$ for the poset of all subsets of $X_j$ which happen to be $\gS^1_1$ in parameters in $a\cup\{x\}$ and $I_j$-positive. By Claim~\ref{ssclaim}, the point $x(j)\in X_j$
is $Q$-generic over $V[x\restriction K]$. Let $C_n\subset X_j$ be the set $\{y\in X_j\colon\exists v\in B_n\ x\restriction K\subset v\land v(j)=y\}$. This is an analytic set in the model $V[x\restriction K]$ which is a $G_n$-anticlique, therefore it belongs to the ideal $I_j$.
The poset $P_{I_j}$ is 1-non-elementary proper in $V$ and by Theorem~\ref{absotheorem}(1) it is also 1-non-elementary proper in the model $V[x\restriction K]$. Thus, in the model $V[x\restriction K]$ the poset $Q$ forces its generic point to lie outside of $C_n$, in particular $x(j)\notin C_n$. It follows that $x\notin B_n$ as required. The proof of (1) is complete.

To argue for (2), note that the poset $P_K$ is naturally regularly embedded in the poset $P_J$. To see this, deal with the posets $Q_K, Q_J$ of analytic $I_K$ and $I_J$-positive sets instead (in which $P_K, P_J$ are dense by Theorem~\ref{millerfact}(2)) and use Claim~\ref{projectionclaim} to argue that the map $p_K\colon Q_J\to Q_K$ is a projection. To compute the remainder poset, write $L=J\setminus K$ and observe that whenever $B\subset X_K$ is a Borel $I_J$-positive set and $\dot C$ is a $P_K$-name
for a $I_L$-positive set, then thinning out the condition $B$ if necessary, by the Borel reading of names one can find a Borel set $D\subset B\times X_L$ so that $B\Vdash\dot C=\dot D_{\dotxgen}$. Then $D$, viewed as a subset of the space $X_J$, is a condition in the remainder poset.

To argue for (3), suppose that $\langle J, R\rangle$ is a wellordering. Let $P$ be the usual combinatorial countable support iteration of the posets $P_{\cG_j}$ for $j\in J$ and let $\dotxgen$ be the $P$-name for the generic element of $X$. Let $I^*$ be the $\gs$-ideal of analytic sets $A\subset X$ such that $P\Vdash\dotxgen\notin\dot A$. The properness assumptions imply that the poset $P$ is in the forcing sense equivalent to the poset $P_J$ \cite[Theorem 5.1.6]{z:book2}. It is now only necessary to verify that the ideals $I, I^*$ contain the same analytic sets.

Suppose first that $A\subset X$ is an analytic set in the ideal $I$. Find indices $j_n\in J$, hypergraphs $G_n\in\cG_{j_n}$ and Borel $\hat G_n$-anticliques $B_n\subset X$ such that $A\subset\bigcup_nB_n$. To conclude that $A\in J$,
suppose towards contradiction that this fails and there is some condition $p\in P$ which forces $\dotxgen\in \dot A$. Strengthening the condition $p$ if necessary, I may find a definite number $n\in\gw$  such that $p\Vdash\dotxgen\in\dot B_n$. Write $j=j_n$ and $K=\{i\in J\colon i\mathrel R j\}$. Thus $p\Vdash\dotxgen(j)\in\dot C$, where $\dot C=\{y\in X_{j}\colon\exists z\in X_{J\setminus K\cup\{j\} }\ \dotxgen\restriction K\cup \{\langle j, y\rangle\}\cup z\in\dot B_n\}$. Since the set $B_n$ is a $\hat G_n$-anticlique, the set $C$ is forced to be an analytic $G_n$-anticlique, and it is in the model $V[\dotxgen\restriction K]$. Every analytic $G_n$-anticlique is covered by a Borel $G_n$-anticlique, and the poset $P_{\cG_{j}}$ forces its generic point to avoid all such anticliques.
This is a contradiction.

Suppose now that $A\subset X$ is an analytic $I$-positive set and argue that there is a condition $p\in P$ such that $p\Vdash\dotxgen\in\dot A$. This is proved by transfinite induction on the length of the well-ordering $J$. I will treat the case $\langle J, R\rangle=\langle\gw, \in\rangle$, the general case is a routine adaptation. Let $f\colon \baire\to X$ be a continuous function whose range is $A$. For each $n\in\gw$, let $P_n$ be the iteration up to, and not including, $n$.
 By induction on $n\in\gw$, find $P_n$-names $\dot p_n$ and $\dot m_n$ for a condition in $P_{\cG_n}$ and a natural number respectively so that
$\langle p_i\colon i\leq n\rangle\in P_{n+1}$ forces in $P_{n+1}$ that $\dotxgen\restriction n+1\in p_{n+1}A_n$, where $A_n\subset X$ is the analytic set $f''[\dot m_j\colon j\in n+1]$.
The induction step is easily performed using Claim~\ref{projectionclaim} and the $\gs$-additivity of the ideal $I$. In the end, the condition $\langle p_i\colon i\in\gw\rangle\in P$ forces that $\dotxgen=\dot f(\langle m_i\colon i\in\gw\rangle)\in\dot A$.
\end{proof}

There are some immediate attractive corollaries:

\begin{corollary}
\label{boundingpcorollary}
The bounding property is preserved under linear iterations of 1-non-elementary proper hypergraphable forcings.
\end{corollary}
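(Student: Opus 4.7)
The plan is to combine three ingredients already in place: the iteration theorem (Theorem~\ref{iterationtheorem}), the template-product preservation of the relation $\cG\not\perp P$ for Suslin posets $P$ (Theorem~\ref{preservationtheorem}(2)), and the characterization of bounding for proper hypergraphable forcings via the Hechler forcing $P_H$ (Theorem~\ref{hechleroptimaltheorem}). The argument is then a matter of stacking these three theorems together, with one piece of standard properness bookkeeping at the end.

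Let $\langle J,R\rangle$ be a strict linear order and, for each $j\in J$, let $\cG_j$ be a countable family of analytic hypergraphs on a Polish space $X_j$ such that $P_{I_{\cG_j}}$ is 1-non-elementary proper and bounding. Since $P_H$ is a Suslin c.c.c.\ forcing, Theorem~\ref{hechleroptimaltheorem} converts the bounding hypothesis on each $P_{I_{\cG_j}}$ into $\cG_j\not\perp P_H$. By Theorem~\ref{preservationtheorem}(2) this relation is preserved under countable template products, so $\cG_K\not\perp P_H$ holds for every countable $K\subseteq J$. Each such $P_K$ is 1-non-elementary proper, hence proper, by Theorem~\ref{iterationtheorem}(1), so the converse direction of Theorem~\ref{hechleroptimaltheorem} yields that $P_K$ is bounding for every countable $K\subseteq J$.

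To pass from countable $K$ to the full iteration $P_J$, I would invoke properness of $P_J$, which again follows from Theorem~\ref{iterationtheorem}(1) and Theorem~\ref{absotheorem}(2). Given $B\in P_J$ and a $P_J$-name $\tau$ for an element of $\baire$, conditions in $P_J$ have countable domain by construction, and the Borel reading of names for proper idealized forcing \cite[Proposition 3.2.5]{z:book2} provides a strengthening $C\leq B$ with countable $L=\dom(C)$ and a Borel function $f\colon C\to\baire$ such that $C\Vdash\tau=\dot f(\dotxgen\restriction L)$. Viewing $C$ as a condition in the countable template product $P_L$ and using bounding of $P_L$, a further strengthening $D\leq_{P_L} C$ and a $g\in\baire$ are obtained with $D\Vdash\dot f(\dotxgen\restriction L)$ pointwise dominated by $g$; the same $D$ is then a $P_J$-condition witnessing that $\tau$ is dominated by $g$.

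The main point of delicacy is the final reduction: one must check that a $P_L$-extension of a condition $C$ with $\dom(C)\subseteq L$ is automatically a $P_J$-extension, and that the Borel reading of names in $P_J$ indeed lands in such a countable-domain representation. This is the content of the limit-poset definition of $P_J$ together with the projection/regular-embedding machinery in the proof of Theorem~\ref{iterationtheorem}, in particular Claim~\ref{projectionclaim} applied to a countable initial segment of $J$ containing $L$.
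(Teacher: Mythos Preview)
Your proposal is correct and follows essentially the same approach as the paper: convert the bounding property to $\cG\not\perp P_H$ via Theorem~\ref{hechleroptimaltheorem}, push this through the template product via Theorem~\ref{preservationtheorem}(2), and convert back. The paper's own proof is a two-line version of exactly this argument and does not spell out the passage to uncountable $J$ that you add in your last two paragraphs; your extra care there is reasonable, though the remark about ``a countable initial segment of $J$ containing $L$'' is slightly off, since such an initial segment need not exist in general---the reduction goes through directly via the limit-poset definition of $P_J$ without needing an initial segment.
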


\begin{proof}
The bounding property is equivalent to $\cG\not\perp P$ where $P$ is the Hechler forcing by Theorem~\ref{hechleroptimaltheorem} and this is preserved under the template products by Theorem~\ref{preservationtheorem}.
\end{proof}

\noindent Note that the diagonalizability used in Theorem~\ref{boundingtheorem} to imply the bounding property is itself preserved by the template products.

\begin{corollary}
\label{fubinipcorollary}
The linear iterations of 1-non-elementary proper posets generated by families of analytic hypergraphs with the Fubini property preserve outer Lebesgue measure.
\end{corollary}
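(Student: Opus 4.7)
The plan is to assemble this from ingredients already on the table without any new combinatorics. First, by Theorem~\ref{measuretheorem}, the Fubini hypothesis on each factor family $\cG_j$ gives $\cG_j\not\perp P$ where $P$ is the random forcing, which is a Suslin c.c.c.\ poset. By Theorem~\ref{preservationtheorem}(2), the relation $\cG\not\perp P$ is preserved under countable template products, so the template product family $\cG_J$ on $X_J$ (for a countable linear template $\langle J,R\rangle$) also satisfies $\cG_J\not\perp P$.

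Next I would invoke Theorem~\ref{iterationtheorem}: item~(1) says the limit poset $P_J$ is 1-non-elementary proper (hence proper by Theorem~\ref{absotheorem}(2)), and item~(3) identifies $P_J$ with the usual countable support iteration of the factor posets $P_{\cG_j}$ along the well-order $J$. With properness of $P_J$ in hand and $\cG_J\not\perp P$ already established, Theorem~\ref{randomoptimaltheorem}, implication (1)$\Rightarrow$(2), applies to the countable family $\cG_J$ on the Polish space $X_J$ and yields that $P_{I_{\cG_J}}=P_J$ preserves outer Lebesgue measure. For non-wellfounded countable linear templates one skips clause~(3) and just uses (1) together with Theorem~\ref{randomoptimaltheorem} verbatim, since the conclusion is a property of the poset $P_J$ itself and does not depend on presenting it as an iteration.

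The proof is essentially a routing exercise, so there is no real obstacle once Section~\ref{fubinisection} and Section~\ref{iterationsubsection} are at hand; the only point where one has to be slightly attentive is to notice that ``$\cG\not\perp P$ for Suslin $P$'' is exactly the abstract shadow of the Fubini property that is designed to survive both the template product and the reduction to the single random forcing parameter, so the same scheme that produced Corollary~\ref{boundingpcorollary} from Theorem~\ref{hechleroptimaltheorem} produces Corollary~\ref{fubinipcorollary} from Theorem~\ref{randomoptimaltheorem}. If one were to try to extend this to uncountable linear iterations, the hard part would be the open problem of properness for $P_J$ with uncountable $J$; but at the level stated, the corollary is immediate.
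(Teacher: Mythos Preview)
Your argument is correct and follows essentially the same route as the paper: pass from the Fubini property to $\cG\not\perp P$ for random $P$ via Theorem~\ref{measuretheorem}, preserve this under the template product via Theorem~\ref{preservationtheorem}(2), and conclude with Theorem~\ref{randomoptimaltheorem}. The paper is terser and also mentions an alternative approach (checking directly that the Fubini property itself is preserved under template products), but your version is more explicit about invoking properness of $P_J$ from Theorem~\ref{iterationtheorem}(1), which is indeed needed to apply Theorem~\ref{randomoptimaltheorem}.
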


\begin{proof}
Let $P$ be the random forcing. The Fubini property of the hypergraph families implies $\cG\not\perp P$ by Theorem~\ref{fubinitheorem}, which is preserved under the template products by Theorem~\ref{preservationtheorem}, and in turn it implies the preservation of outer Lebesgue measure by Theorem~\ref{randomoptimaltheorem}. An alternative approach would be to observe that the Fubini property itself is preserved under the template products.
\end{proof}

\begin{corollary}
\label{independentpcorollary}
The linear iterations of posets generated by countable families of finitary open hypergraphs do not add independent reals.
\end{corollary}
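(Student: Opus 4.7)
The plan is to combine three earlier tools: the small anticlique property for nearly open families (Theorem~\ref{coincidencetheorem}), the $\gs$-centered lifting for families with the small anticlique property and finite edges (Theorem~\ref{smalltheorem}), and the preservation of $\not\perp^* Q$ under template products (Theorem~\ref{preservationtheorem}(1)), together with the identification of linear iterations with linear template products (Theorem~\ref{iterationtheorem}). The end of the argument mimics the proof of Corollary~\ref{boocorollary}, applied to the iteration poset rather than to a single quotient.

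Fix a linear template $\langle J,R\rangle$ and for each $j\in J$ a countable family $\cG_j$ of analytic finitary open hypergraphs on a Polish space $X_j$. First I would observe that every open hypergraph is nearly open, so by Theorem~\ref{coincidencetheorem} each $\cG_j$ has the small anticlique property. Since all edges in all hypergraphs in $\cG_j$ are finite, Theorem~\ref{smalltheorem} then gives $\cG_j\not\perp^* Q$ for every $\gs$-centered partial order $Q$. I would specialize $Q$ to the standard poset diagonalizing a fixed nonprincipal ultrafilter $U$ on $\gw$: $Q$ adds an infinite $\dot y\subset\gw$ modulo-finite contained in every element of $U$, and $Q$ is $\gs$-centered. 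Writing $Y=[\gw]^{\anought}$ and letting $J_{\dot y}$ be the $\gs$-ideal of analytic $A\subset Y$ with $Q\Vdash\dot y\notin\dot A$, this poset is forcing equivalent to the standard one used in Corollary~\ref{boocorollary}.

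Next, for any countable $K\subset J$, Theorem~\ref{preservationtheorem}(1) upgrades the pointwise statement $\cG_j\not\perp^*Q$ to $\cG_K\not\perp^*Q$, where $\cG_K=\prod_{j\in K}^R\cG_j$ is the template product family. Since the individual posets $P_{I_{\cG_j}}$ are 1-non-elementary proper (by Theorem~\ref{absotheorem}(3) applied to nearly open hypergraphable ideals), Theorem~\ref{iterationtheorem}(1) shows that the iteration poset $P_J$ is itself 1-non-elementary proper, hence proper with Borel reading of names. In the countably supported well-ordered case Theorem~\ref{iterationtheorem}(3) further identifies $P_J$ with the usual countable support iteration; for general (possibly uncountable or illfounded) linear iterations one works with conditions of countable domain $K\subset J$ and argues inside $\cG_K$, using that $P_K$ regularly embeds into $P_J$ by Claim~\ref{projectionclaim} in the proof of Theorem~\ref{iterationtheorem}.

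To finish, suppose $B\in P_J$ forces $\tau\in\cantor$; I must produce an infinite $y\subset\gw$ and a stronger condition forcing $\tau\restriction y$ to be constant off a finite set. By properness and the Borel reading of names, shrink $B$ (working over its countable support $K$) to obtain a Borel function $f\colon B\to\cantor$ with $B\Vdash\tau=\dot f(\dotxgen)$. Let $C\subset B\times Y$ consist of pairs $\langle x,y\rangle$ such that $f(x)$ is not almost constant on $y$; a standard density argument against $Q$ shows the vertical sections of $C$ lie in $J_{\dot y}$. The statement $\cG_K\not\perp^* Q$ translates, via the analogue of Theorem~\ref{keytheorem} for $\not\perp^*$ stated just before Theorem~\ref{smalltheorem}, into $I_{\cG_K}\not\perp J_{\dot y}$; hence there is some $y\in Y$ such that the horizontal section $\{x\in B:f(x)\text{ is almost constant on }y\}$ is $I_{\cG_K}$-positive, providing the required stronger condition and infinite subfunction.

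The main obstacle is not the chain of forcing lemmas themselves but making sure that the Borel reading of names is legitimately available inside the (possibly uncountable or illfounded) iteration and that the passage from $\cG_K\not\perp^* Q$ to a Fubini statement $I_{\cG_K}\not\perp J_{\dot y}$ holds for a non-Suslin poset $Q$; the relevant implication is precisely the $\not\perp^*$ version of Theorem~\ref{keytheorem} noted in the small anticlique subsection, which does not need definability of $Q$, so this works. Everything else is an application of results already proved in Sections~\ref{fubinisection}, \ref{smallsubsection}, and~\ref{iterationsubsection}.
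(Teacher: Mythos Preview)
Your argument is correct and follows essentially the same route as the paper: invoke the small anticlique property for open (hence nearly open) finitary hypergraphs via Theorem~\ref{coincidencetheorem}, apply Theorem~\ref{smalltheorem} with the $\gs$-centered ultrafilter-diagonalizing poset, lift to the template product by Theorem~\ref{preservationtheorem}(1), and conclude as in Corollary~\ref{boocorollary}. The paper's proof is terser---it leaves the appeal to Theorem~\ref{coincidencetheorem}, the 1-non-elementary properness, and the final Fubini step implicit---but the skeleton is identical.
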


\begin{proof}
Note that the quotient posets given by countable families of finitary open hypergraphs are 1-non-elementary proper. Let $U$ be an ultrafilter and $P$ the standard poset diagonalizing $U$. Recall that for every countable family $\cG$ of finitary open hypergraphs, $\cG\not\perp^* P$ holds by Theorem~\ref{smalltheorem}. Thus, Theorem~\ref{preservationtheorem}
guarantees that for the template product families $\mathcal{H}$ obtained, $\cH\not\perp^*P$ holds as well. This in turn implies that independent reals are not added in the template product either.
\end{proof}

\begin{corollary}
\label{uucorollary}
Suppose that $H$ is a closed graph  on a Polish space $Z$ without a perfect clique. The property $\phi(H)=$``every point of $Z$ in the extension belongs to a ground model coded compact $H$-anticlique'' is preserved under the linear iterations of finitary hypergraphable 1-non-elementary proper forcings.
\end{corollary}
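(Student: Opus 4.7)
The plan is to present the iteration as a template product and then invoke the $\not\perp$-preservation machinery already set up for Suslin partners. Fix a countable well-order $\langle J,R\rangle$ and, for each $j\in J$, a countable finitary analytic family $\cG_j$ on a Polish space $X_j$ such that $P_{\cG_j}$ is 1-non-elementary proper and the property $\phi(H)$ holds in the $P_{\cG_j}$-extension. By Theorem~\ref{iterationtheorem}(3) the usual countable support iteration of the $P_{\cG_j}$ is naturally isomorphic to the template-product poset $P_J$ associated with the family $\cG_J=\prod_j^R\cG_j$ on $X_J=\prod_jX_j$, and Theorem~\ref{iterationtheorem}(1) together with Theorem~\ref{absotheorem}(2) guarantees that $P_J$ is again proper. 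A quick glance at Definition~\ref{templatedefinition} shows that an edge of $\hat G\in\cG_J$ is in natural bijection with an edge of the underlying $G\in\cG_j$, so $\cG_J$ is still a countable family of finitary hypergraphs.

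Next I would use the two halves of Theorem~\ref{fubinitheorem} to convert $\phi(H)$ into the perpendicularity statement $\cG\not\perp R_H$. Since $H$ is a closed graph on $Z$ without perfect cliques, the attendant poset $R_H$ of Definition~\ref{rhdefinition} is Suslin c.c.c., by the Todorcevic theorem cited immediately after that definition. The finite-edge clause in the last sentence of Theorem~\ref{fubinitheorem} gives the implication (3)$\Rightarrow$(1): the assumption $\phi(H)$ in the $P_{\cG_j}$-extension translates into $\cG_j\not\perp R_H$ for every $j\in J$. Theorem~\ref{preservationtheorem}(2), applied to the Suslin poset $R_H$, then upgrades these coordinatewise statements into the single statement $\cG_J\not\perp R_H$ about the whole template product.

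Finally I would feed this back into Theorem~\ref{fubinitheorem} in the easy direction (1)$\Rightarrow$(3), applied to the proper quotient $P_J$ and the finitary family $\cG_J$: the conclusion is exactly that in the $P_J$-extension every point of $Z$ belongs to a ground-model-coded compact $H$-anticlique, i.e.\ $\phi(H)$ passes up to the iteration. I do not expect a serious obstacle; the one mildly delicate step is the bookkeeping verification that ``finitary'' survives the template product, which is immediate from the fact that the domains of the edges of $\hat G$ and $G$ coincide. Everything else is the clean concatenation of the $\not\perp$-preservation theorem with the Fubini-style equivalence already proved.
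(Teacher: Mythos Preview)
Your argument is correct and follows the same route as the paper: translate $\phi(H)$ into $\cG\not\perp R_H$ via Theorem~\ref{fubinitheorem}, push this through the template product using Theorem~\ref{preservationtheorem}(2), and translate back. The only minor point is that you fix a well-order and invoke Theorem~\ref{iterationtheorem}(3), whereas the corollary speaks of linear iterations in general; the same argument goes through for an arbitrary countable linear order by appealing to Theorem~\ref{iterationtheorem}(1) instead.
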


\begin{proof}
Let $R_H$ be the poset associated with the graph $H$ in Definition~\ref{rhdefinition}. The property $\phi(H)$ is equivalent to $\cG\not\perp R_H$ for the hypergraphable forcings in this class by Theorem~\ref{fubinitheorem}, which is in turn preserved by template products by Theorem~\ref{preservationtheorem}.
\end{proof}

\begin{corollary}
\label{bbcorollary}
\textnormal{(\cite{chan:g0} for $H=$the KST graph)}
Let $H$ be a closed acyclic graph on a Polish space $Z$. In the iterated Silver extension, the space $Z$ is covered by the ground model coded compact $H$-anticliques.
\end{corollary}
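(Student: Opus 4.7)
The proof is essentially a two-line application of the preservation machinery already developed. First I would recall that Silver forcing is presented as a hypergraphable forcing via the Silver graph of Example~\ref{silvergraphexample}: this is an actionable graph (with respect to the rational Cantor group action) whose edges all have arity two, so the generating family is finitary. By Theorem~\ref{absotheorem}(3), every actionable-hypergraphable forcing is 1-non-elementary proper, hence Silver forcing lies in the class to which Corollary~\ref{uucorollary} applies.

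Next I would observe that a closed acyclic graph $H$ on $Z$ satisfies the hypothesis of Corollary~\ref{uucorollary}: acyclicity forbids even a triangle, so $H$ certainly contains no perfect clique. Corollary~\ref{silveracycliccorollary} tells us that the statement
\[
\phi(H)\;:=\;\text{``every element of $Z$ belongs to a ground model coded compact $H$-anticlique''}
\]
holds after a single step of Silver forcing. By Corollary~\ref{uucorollary}, $\phi(H)$ is preserved under linear iterations of 1-non-elementary proper, finitary hypergraphable forcings; a transfinite induction along the length of the iteration, applied at each successor step to the posets $P_{I_{\cG_j}}$ (all isomorphic to Silver forcing) and with limit steps absorbed by Theorem~\ref{iterationtheorem}(3), then yields $\phi(H)$ in the iterated Silver extension.

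The only potential obstacle is a bookkeeping subtlety: Corollary~\ref{uucorollary} is formulated as a preservation theorem whose single iteration step requires that $\phi(H)$ hold at \emph{each stage} over the intermediate model. This is guaranteed because being a closed acyclic graph is absolute and the relevant instance of Corollary~\ref{silveracycliccorollary} can be applied in every intermediate model to the code of $H$ read there. With this observation, the conclusion follows, and no further fusion argument of the kind traditionally required for such iteration theorems is needed.
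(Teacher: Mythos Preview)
Your core argument is correct and matches the paper's (implicit) proof: invoke Corollary~\ref{silveracycliccorollary} for the single Silver step and Corollary~\ref{uucorollary} for the iteration. The paper places Corollary~\ref{bbcorollary} immediately after Corollary~\ref{uucorollary} precisely because it is an instant consequence of those two statements together.

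However, your second and third paragraphs misread the mechanism of Corollary~\ref{uucorollary}. That corollary is \emph{not} proved by a transfinite induction along the iteration, nor does it require verifying $\phi(H)$ over each intermediate model. Its proof converts $\phi(H)$ into the combinatorial statement $\cG\not\perp R_H$ via Theorem~\ref{fubinitheorem}, and then applies Theorem~\ref{preservationtheorem}(2) once to the template product family; the conclusion for the whole iteration drops out in one shot. Your proposed stage-by-stage induction would in fact run into trouble: applying Corollary~\ref{silveracycliccorollary} inside $V[G_\alpha]$ only yields $V[G_\alpha]$-coded compact anticliques, not $V$-coded ones, so the induction hypothesis does not propagate in the way you suggest. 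The ``bookkeeping subtlety'' you flag is therefore a non-issue, but for a different reason than you give---it simply never arises, because the argument is not inductive. You should drop everything after the first sentence invoking Corollary~\ref{uucorollary}.
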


\begin{corollary}
Let $H$ be a closed graph on a compact metrizable space $Z$ with countable loose number. In the linearly iterated Miller extension, the space $Z$ is covered by the ground model coded compact $H$-anticliques.
\end{corollary}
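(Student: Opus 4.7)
The plan is to combine the small--anticlique machinery of Subsection~\ref{smallsubsection} with the template product preservation theorem and the iteration correspondence of Section~\ref{othersection}. First I hypergraph the Miller forcing as in Example~\ref{millerexample}: put $X=\baire$ and let $\cG=\{G\}$, where $G\subset X^\gw$ consists of all infinite sequences enumerating sets without an accumulation point in $X$. Then $I_\cG$ is the $\gs$-ideal of $K_\gs$ subsets of $X$ and $P_{I_\cG}$ is naturally isomorphic to Miller forcing. The hypergraph $G$ is box--open, hence nearly open, so by Theorem~\ref{coincidencetheorem} the family $\cG$ has the small anticlique property; moreover every $G$-edge is infinite, and every infinite subset of a $G$-edge is again a $G$-edge. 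Since $H$ is a closed graph on a compact metrizable space of countable loose number, the poset $R_H$ of Definition~\ref{rhdefinition} is $\gs$-liminf-centered by the result cited before Corollary~\ref{loosecorollary}. An application of Theorem~\ref{millertheorem} yields $\cG\not\perp^* R_H$.

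Next I pass to the iteration. Let $\langle J,R\rangle$ be the wellorder along which the Miller iteration is performed, and for each $j\in J$ set $(X_j,\cG_j)=(X,\cG)$. Form the template product family $\cH=\prod^R_j\cG_j$ on $X_J$. Miller forcing is 1-non-elementary proper by Theorem~\ref{absotheorem}(3), so Theorem~\ref{iterationtheorem}(3) identifies $P_J$ with the usual countable support iteration of Miller forcings along $J$; in particular the iterated Miller extension coincides with the $P_J$-extension. Theorem~\ref{preservationtheorem}(1) guarantees that $\cH\not\perp^* R_H$ continues to hold for the template product family.

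Finally, apply the $\not\perp^*$-analogue of Theorem~\ref{keytheorem}, stated immediately before Theorem~\ref{smalltheorem}, to the canonical name $\dotygen$ from Definition~\ref{rhdefinition}, to conclude $I_\cH\not\perp J_H$. Theorem~\ref{fubinitheorem}, implication $(2)\Rightarrow(3)$, then shows that every element of $Z$ in the $P_J$-extension belongs to some ground model coded compact $H$-anticlique. When the length of the iteration is uncountable, one reduces to the countable case via properness: any $z\in Z$ added by the full iteration appears in an intermediate extension by a countable piece of the iteration, where the template product argument above applies verbatim.

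The principal obstacle is bookkeeping through the $\not\perp^*$ variant rather than $\not\perp$: Miller forcing is generated by a hypergraph with only infinite edges, so the finitary framework of Corollary~\ref{uucorollary} is unavailable, and the poset $R_H$ need not be Suslin either. The small anticlique property, inherited from the nearly open (in fact box open) presentation, is precisely the feature that permits substituting the non-Suslin poset $R_H$ into the template product preservation theorem. Once this is in hand, the passage from a single Miller step to the full iterated extension is essentially automatic.
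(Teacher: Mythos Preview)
Your proof is correct and follows essentially the same route as the paper's: invoke Theorem~\ref{millertheorem} to obtain $\cG\not\perp^* R_H$ for the Miller hypergraph, transport this through the template product via Theorem~\ref{preservationtheorem}(1), and read off the conclusion. You have supplied considerably more detail than the paper's four-line argument---in particular the explicit verification of the small anticlique property via Theorem~\ref{coincidencetheorem}, the invocation of Theorem~\ref{iterationtheorem} to identify the template product with the iteration, and the reduction of the uncountable-length case to countable subiterations---all of which the paper leaves implicit.
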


\begin{proof}
Let $R_H$ be the poset of Definition~\ref{rhdefinition}, and let $\cG$ be the standard hypergraph generating the Miller forcing as in Example~\ref{millerexample}. Theorem~\ref{millertheorem} shows that $\cG\not\perp R_H$.
This is preserved by the template products by Theorem~\ref{preservationtheorem}. The corollary follows.
\end{proof}

\noindent This answers a question of William Chan (personal communication) about the specific case $H=$the KST graph, which is locally countable and therefore has countable loose number.

\section{Appendix: descriptive set theoretic computations}

In various places in the paper, complexity computations are needed which do not really have much to do with the forcing context of the paper. I gather them in this section.

\subsection{Definability of the hypergraphable ideals}

The general descriptive set theoretic properties of hypergraphable ideals are encapsulated  in the following theorem. The theorem is of folkloric nature. It can be derived from slight, but unpublished, generalizations of hypergraph dichotomies of Lecomte and Miller.
I provide a straighforward proof using effective descriptive set theory and the Gandy--Harrington forcing.

\begin{theorem}
\label{millerfact}
Suppose that $\cG$ is a countable collection of analytic hypergraphs on a Polish space $X$ and write $I=I_{\cG}$ for the associated $\gs$-ideal on $X$. Then

\begin{enumerate}
\item the $\gs$-ideal $I$ is \pioneoneonsigmaoneone;
\item every analytic $I$-positive set contains a Borel $I$-positive subset;
\item if $Y$ is a Polish space and $B\subset Y\times X$ is a Borel set such that each of its vertical sections belongs to $I$, then there are Borel sets $B_n\subset Y\times X$ for $n\in\gw$ such that $B\subset\bigcup_nB_n$ and for each $n$ there is $G\in\cG$ such that every vertical section of $B_n$ is a $G$-anticlique.
\end{enumerate}
\end{theorem}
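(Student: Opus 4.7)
My plan is to handle all three parts through a single effective derivative analysis. Relativizing to a real coding $\cG$, I may assume the hypergraphs $\cG = \{G_n : n < \gw\}$ are all lightface $\gS^1_1$. The engine is the derivative operation already used in Claim~\ref{prepclaim3}: for a $\gS^1_1$ set $B \subseteq X$, set
$$d_n(B) = \{x \in B : \exists e \in G_n,\ e(0) = x,\ \rng(e) \subseteq B\}, \qquad d(B) = \bigcap_n d_n(B).$$
Each $d_n(B)$ is $\gS^1_1$, and $B \setminus d_n(B)$ is itself a $\gS^1_1$ $G_n$-anticlique; applying first reflection to the $\Pi^1_1$-on-$\gS^1_1$ predicate ``is a $G_n$-anticlique'' extends $B \setminus d_n(B)$ to a Borel $G_n$-anticlique. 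Consequently $B \setminus d(B)$ lies in $I$, being covered by countably many Borel anticliques.

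Iterating transfinitely --- $A_0 = A$, $A_{\gb+1} = d(A_\gb)$, $A_\lambda = \bigcap_{\gb < \lambda} A_\gb$ --- each $A_\gb$ stays $\gS^1_1$ and the decreasing sequence stabilizes at some $A^\infty = A_{\gb^*}$ fixed under $d$; effective boundedness keeps $\gb^*$ countable. The decomposition $A = A^\infty \cup \bigcup_{\gb < \gb^*}(A_\gb \setminus A_{\gb+1})$ writes $A \setminus A^\infty$ as a countable union of $I$-small pieces. The key claim is that $A^\infty \neq \emptyset$ forces $A^\infty \notin I$: any putative Borel-anticlique cover of $A^\infty$, combined with the $d$-closure of $A^\infty$ and the Gandy--Harrington topology, yields via Gandy basis and a Baire-category argument inside the $\gS^1_1$ sets a point of $A^\infty$ outside the cover, a contradiction. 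Thus $A \in I$ iff $A^\infty = \emptyset$.

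Parts \textbf{(1)} and \textbf{(2)} follow. For (1): ``$A^\infty = \emptyset$'' is the wellfoundedness of the derivative tree on $A$, a $\Pi^1_1$ statement in the code of $A$ (since the $A_\gb$ are uniformly $\gS^1_1$ in $\gb$), delivering the $\Pi^1_1$-on-$\gS^1_1$ property. For (2): given $A$ analytic $I$-positive, $A^\infty$ is a nonempty $d$-closed $\gS^1_1$ set and $A \setminus A^\infty \in I$; applying first reflection to the $\Pi^1_1$-on-$\gS^1_1$ predicate ``is in $I$'' (now available from (1)) extends $A \setminus A^\infty$ to a Borel set $D \in I$, and a Borel uniformization of the $\gS^1_1$ $I$-positive set $A \setminus D \subseteq A^\infty$ produces the sought Borel $I$-positive subset of $A$.

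For \textbf{(3)}, execute the entire derivative analysis uniformly in the parameter $y \in Y$. Given Borel $B \subseteq Y \times X$ with $B_y \in I$ for every $y$, the fibered derivative sequence $(B_y)_\gb$ vanishes for every $y$; by $\Pi^1_1$-rank reflection the stabilization ordinal is bounded by a single countable $\gb^*$ uniformly in $y$. At each successor stage $\gb \to \gb+1$ and each $n$, the debris $(B_y)_\gb \setminus d_n((B_y)_\gb)$ is a $\gS^1_1$ $G_n$-anticlique in the $y$-section, and parametrized first reflection yields Borel sets $B_{\gb,n} \subseteq Y \times X$ whose vertical sections are $G_n$-anticliques and which together cover the debris. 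Collecting over the countably many pairs $\langle \gb, n \rangle$ with $\gb < \gb^*$ produces the required countable Borel cover. The main obstacle I anticipate is the parametrized bookkeeping in (3) --- executing the transfinite derivative uniformly in the Borel parameter $y$ and invoking first reflection Borel-uniformly to extend each stage's $\gS^1_1$ anticlique debris to Borel anticliques. Both are standard moves in effective descriptive set theory, but tracking Borel codes along the transfinite iteration is where the argument is most delicate.
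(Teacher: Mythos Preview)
Your derivative approach is natural and appears in many related dichotomy proofs, but as written it has two genuine gaps that prevent it from going through.

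\textbf{First gap.} You assert that $B\setminus d_n(B)$ is a $\Sigma^1_1$ $G_n$-anticlique and then invoke first reflection. It is certainly a $G_n$-anticlique, but it is only the \emph{difference} of two $\Sigma^1_1$ sets, not itself $\Sigma^1_1$: $d_n(B)$ is $\Sigma^1_1$, so $B\setminus d_n(B)$ sits in $\Sigma^1_1\cap\Pi^1_1$-style complexity. First reflection applies only to $\Sigma^1_1$ sets, so you cannot directly extend the debris to a Borel anticlique this way. The same issue recurs in your argument for (2), where you apply first reflection to $A\setminus A^\infty$.

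\textbf{Second, more serious gap.} Your key claim is that $A^\infty\neq\emptyset$ implies $A^\infty\notin I$, and you sketch a Gandy--Harrington/Baire-category argument. The property you have is $d$-closure: every point $x\in A^\infty$ starts a $G_n$-edge lying inside $A^\infty$. But to run the Gandy--Harrington argument you need the stronger property that every nonempty $\Sigma^1_1$ \emph{subset} $p\subseteq A^\infty$ contains a $G_n$-edge. These are not the same: a condition $p\leq A^\infty$ forcing $\dot x_{gen}\in B_m$ could perfectly well be a $G_{n_m}$-anticlique, since the edges witnessing $d$-closure of $A^\infty$ at points of $p$ may leave $p$ while staying in $A^\infty$. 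So you cannot produce the required edge inside $p$, and the contradiction does not materialize. Concretely: for the Silver graph on $2^\omega$ with $A=2^\omega$, the derivative stabilizes immediately at $A^\infty=2^\omega$, yet $A^\infty$ is full of $\Sigma^1_1$ anticliques.

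The paper avoids both problems by working instead with the set $C_z=\bigcup\{\,D: D\text{ is }\Sigma^1_1(z)\text{ and a }G\text{-anticlique for some }G\in\cG\,\}$, showing $C_z$ is $\Pi^1_1(z)$ via effective reflection, and proving that $A\in I$ iff $A\subseteq C_z$. The point is that a condition $p$ disjoint from $C_z$ \emph{automatically} contains $G$-edges for every $G\in\cG$ --- otherwise $p$ itself would be a $\Sigma^1_1(z)$ anticlique, hence inside $C_z$. This is precisely the strong property your $A^\infty$ lacks, and it is what makes the Gandy--Harrington argument on $X^\omega$ go through.
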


\noindent I will need two well-known basic facts about effective arguments which are quite independent of the concerns of this paper.

\begin{definition}
Let $X$ be a recursively presented Polish space. Let $z\in\baire$ be a parameter. The \emph{Gandy--Harrington forcing} $P_z$ is the poset of nonempty $\gS^1_1(z)$ subsets of $X$ ordered by inclusion.
\end{definition}

\begin{proposition}
There is a $P_z$-name $\dotxgen$ for an element of $X$ which is forced to belong to precisely those $\gS^1_1(z)$ subsets of $X$ which belong to the generic filter.
\end{proposition}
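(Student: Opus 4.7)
My plan is to exhibit $\dotxgen$ explicitly from a fixed recursive basis and then verify the two halves of the claim by density arguments. Fix a recursive basis $\{U_n:n\in\gw\}$ of open sets for $X$. Given a generic filter $G\subset P_z$, set $\dotxgen$ to be the unique $x\in X$ with the property that for each $n$, $x\in U_n$ precisely when some $A\in G$ satisfies $A\subset U_n$.

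Before checking that the resulting name works, I need three density facts about $P_z$. First, the set $D_n=\{A\in P_z:A\subset U_n\lor A\cap U_n=\emptyset\}$ is dense: given $A\in P_z$, either $A\cap U_n=\emptyset$, in which case $A\in D_n$, or $A\cap U_n$ is a nonempty $\gS^1_1(z)$ set contained in $U_n$, hence a condition below $A$ inside $D_n$. Second, for each $k$ the set of $A\in P_z$ of diameter less than $2^{-k}$ is dense: pick any $y\in A$, a basic open $U_n\ni y$ of diameter $<2^{-k}$, and refine $A$ to $A\cap U_n$. Third, for any $\gS^1_1(z)$ set $B\subset X$, the set $\{A\in P_z:A\subset B\lor A\cap B=\emptyset\}$ is dense, because $A\cap B$ is $\gS^1_1(z)$ and either nonempty (giving a refinement into $B$) or empty.

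The harder point is that the name so defined actually picks out an element of $X$. Using the second density fact, the generic filter $G$ contains a decreasing sequence $A_0\supset A_1\supset\dots$ of nonempty $\gS^1_1(z)$ sets with $\mathrm{diam}(A_k)<2^{-k}$. Here I invoke the strong Choquet property of the Gandy--Harrington topology: a decreasing sequence of nonempty $\gS^1_1(z)$ sets chosen generically has nonempty intersection. Concretely, one fixes $\gS^1_1(z)$-trees whose projections are the $A_k$ and uses leftmost-branch coherence to produce a point of $\bigcap_kA_k$; this is the one point of contact with effective descriptive set theory, and it is the step I expect to require the most care. Combined with the first density fact, which forces $\{n:n\text{ is ``in''}\}$ to be the neighborhood filter of a unique point, this yields a well-defined $\dotxgen$.

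Finally I verify the two implications for $\gS^1_1(z)$ sets $A\subset X$. If $A\in G$, the first density fact applied to a basic refinement shows $\dotxgen\in A$, since conditions in $G$ below $A$ witness membership in $A$ via the neighborhood filter description. Conversely, if $\dotxgen\in A$, invoke the third density fact: either the generic filter contains a condition refining into $A$, so $A\in G$, or it contains one disjoint from $A$, which by the already-established first implication forces $\dotxgen\notin A$, a contradiction. This closes the proof and shows that $G$ is recovered as $\{A\in P_z:\dotxgen\in A\}$ as required.
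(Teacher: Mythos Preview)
Your density facts and the backward implication are fine, and your definition of $\dotxgen$ through the neighborhood filter matches the paper's. The gap is the forward implication $A\in G\Rightarrow\dotxgen\in A$, which is also where the existence of $\dotxgen$ actually lives. Your one-line justification does not work: the neighborhood filter only tells you which basic open sets contain $\dotxgen$, and since $\gS^1_1(z)$ sets are not closed this only places $\dotxgen$ in the closure of each condition in $G$, not in the condition itself. A descending sequence $A_0\supset A_1\supset\cdots$ of nonempty $\gS^1_1(z)$ sets with diameters tending to zero may have empty intersection, the unique point of $\bigcap_k\overline{A_k}$ lying outside every $A_k$. Your proposed remedy, ``leftmost-branch coherence'' across trees for the various $A_k$, is not a sound mechanism: those trees are unrelated, and no canonical branch of one says anything about the others.

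The paper's argument works one condition at a time and uses genericity rather than any canonical branch. Given $p\in P_z$, fix a single tree $T\subset(2\times\gw)^{<\gw}$ recursive in $z$ projecting to $p$, and look at the subtree $U=\{t\in T:$ the projection of $T\restriction t$ belongs to $G\}$. Since the projection of $T\restriction t$ is the countable union of the projections of $T\restriction t'$ over the immediate successors $t'$ of $t$ in $T$, a density argument shows that $p$ forces $U$ to have no terminal nodes. Any infinite branch of $U$ then simultaneously pins down $\dotxgen$ (its first-coordinate projections are basic open sets in $G$ shrinking to a point) and, being a branch through $T$, witnesses $\dotxgen\in p$. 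This single-tree genericity step is exactly what your sketch gestures at but does not carry out.
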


\begin{proof}
For simplicity assume that $X=\cantor$. Just let $\dotxgen$ be the name of the unique element of $X$ which belongs to all the basic open sets in the generic filter. I need to show that for every condition $p\in P_z$, $p\Vdash\dotxgen\in\dot p$.
To see this, pick a tree $T\subset (2\times\gw)^{<\gw}$ which is recursive in $z$ and projects into $p$. A genericity argument then shows that the tree $U\subset T$ consisting of those nodes $t\in T$ such that the projection of $T\restriction t$ belongs to the
generic filter, is forced to have no terminal nodes. Thus, the tree $U$ is forced to have an infinite branch $b\in (2\times\gw)^\gw$, which projects to $\dotxgen$ and witnesses that $\dotxgen$ belongs to the projection of $T$, which is $p$.
\end{proof}

\begin{proposition}
\label{projectionproposition}
Let $Q_z$ be the Gandy--Harrington forcing on $X^{\gw}$ with parameter $z$. Let $y\in X^\gw$ be a $Q$-generic point. For every natural number $n$, the point $y(n)\in X$ is $P_z$-generic.
\end{proposition}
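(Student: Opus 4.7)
The plan is to exploit the projection map $\pi_n\colon X^\gw\to X$ sending a sequence to its $n$-th entry. The intended strategy is to show that $\pi_n$ lifts to a forcing projection from $Q_z$ to $P_z$, and then use the characterization of the $Q_z$-generic point given in the previous proposition to transfer genericity down to the coordinate.

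First I would verify that $\pi_n$ induces a projection. Given $p\in Q_z$, i.e.\ a nonempty $\gS^1_1(z)$ subset of $X^\gw$, the image $\pi_n(p)\subset X$ is a nonempty $\gS^1_1(z)$ set and hence a condition in $P_z$; furthermore $\pi_n$ is order-preserving. The key point is that whenever $p\in Q_z$ and $B\subset\pi_n(p)$ is a $P_z$-refinement, the set $q=p\cap\pi_n^{-1}(B)$ is nonempty (for each $b\in B$ there is $x\in p$ with $x(n)=b$, and this $x$ lies in $q$), is $\gS^1_1(z)$, and satisfies $\pi_n(q)\subset B$. This is the standard projection condition.

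Next, I would verify genericity. Let $D\subset P_z$ be an open dense set in $V$, and set $D^*=\{p\in Q_z\colon\pi_n(p)\in D\}$, a set that exists in $V$. To see $D^*$ is dense in $Q_z$, take any $p\in Q_z$, use the density of $D$ to find $B\leq\pi_n(p)$ with $B\in D$, and set $q=p\cap\pi_n^{-1}(B)$; then $q\leq p$ in $Q_z$ and $\pi_n(q)\subset B$, so $\pi_n(q)\in D$ by openness, whence $q\in D^*$. Let $G\subset Q_z$ be the generic filter associated with $y$; by genericity pick $q\in G\cap D^*$. Since $q\subset\pi_n^{-1}(\pi_n(q))$, the set $\pi_n^{-1}(\pi_n(q))$ is also in $G$, and by the preceding proposition applied in $X^\gw$, the generic point $y$ belongs to $\pi_n^{-1}(\pi_n(q))$, i.e.\ $y(n)\in\pi_n(q)\in D$. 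Thus the filter $G_n=\{B\in P_z\colon y(n)\in B\}$ meets every open dense subset of $P_z$ coded in $V$, so $y(n)$ is $P_z$-generic.

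There is no real obstacle here; the only point to watch is that $\pi_n(p)$ is genuinely a $\gS^1_1(z)$ subset of $X$ (it is, since continuous images of analytic sets are analytic with the same parameter) and that the auxiliary set $D^*$ is defined in $V$ so that the genericity of $G$ can be applied to it.
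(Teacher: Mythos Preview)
Your proof is correct and follows the same approach as the paper: both define the coordinate projection $\pi_n\colon Q_z\to P_z$, verify it is a forcing projection via the pullback $q\cap\pi_n^{-1}(B)$, and conclude that genericity transfers. The paper simply invokes the general fact that the image of a generic filter under a projection map is generic, whereas you spell out the dense-set argument explicitly; the content is the same.
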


\begin{proof}
Let $\pi\colon Q_z\to P_z$ be the map assigning to a condition $q\in Q_z$ its projection $\pi(q)\in P_z$ into the $n$-th coordinate. It is clear that the map $\pi$ preserves the ordering. Moreover, for every $q\in Q_z$ and every $p\leq\pi(q)$ there is $q'\leq q$ such that $\pi(q')=p$: just let $q'=\{y\in q\colon y(n)\in p\}$. Thus, the map $\pi$ is a projection map of partial orders, the image of the generic filter on $Q_z$ is forced to be a generic filter on $P_z$, and the proposition follows.
\end{proof}

\begin{proof}[Proof of Theorem~\ref{millerfact}]
For simplicity assume that $X=\cantor$, the hypergraphs in the family $\cG$ are lightface $\Sigma^1_1$, and have arity $\gw$. Let $z\in\baire$ be an arbitrary parameter, and let $C_z=\{A\subset X\colon A$ is $\Sigma^1_1(z)$ and for some $G\in\cG$, $A$ is an $G$-anticlique$\}$.

\begin{claim}
\label{v1claim}
The set $C_z\subset X$ is $\Pi^1_1(z)$, uniformly in $z$.
\end{claim}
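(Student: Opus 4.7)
The approach is to unravel the definition of $C_z$ by representing analytic subsets of $X$ via indices in a universal $\Sigma^1_1(z)$ set, and then estimate the resulting complexity using standard closure properties of $\Pi^1_1(z)$.

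First, I would fix a universal $\Sigma^1_1$ set $U \subseteq \gw \times \baire \times X$ so that, writing $U_{e,z} = \{x \in X : (e, z, x) \in U\}$, the family $\{U_{e,z} : e \in \gw\}$ enumerates all $\Sigma^1_1(z)$ subsets of $X$, uniformly in $z$. The statement ``$A \in C_z$'' is then equivalent to the existence of an index $e$ with $A = U_{e,z}$ together with the corresponding anticlique condition, so it suffices to show that $D_z = \{e \in \gw : U_{e,z} \text{ is a } G\text{-anticlique for some } G\in\cG\}$ is $\Pi^1_1(z)$, uniformly in $z$; then $C_z$ inherits the same complexity through the parametrization.

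Second, enumerate $\cG = \{G_n : n \in \gw\}$ so that ``$y \in G_n$'' is $\Sigma^1_1$ uniformly in $n$ (possible under the paragraph's standing assumption that $\cG$ is a countable family of lightface $\Sigma^1_1$ hypergraphs; any extra parameters needed to describe the $G_n$'s can be absorbed into $z$). For each fixed $n$ and $e$, the condition ``$U_{e,z}$ is a $G_n$-anticlique'' unfolds to
$$(\forall y \in X^\gw)\bigl[\,y \notin G_n \;\lor\; (\exists m \in \gw)\, (e,z,y(m)) \notin U\,\bigr].$$
The matrix is a disjunction of two $\Pi^1_1(z)$ statements: the first because $G_n$ is $\Sigma^1_1$ uniformly in $n$, the second because $U$ is $\Sigma^1_1$ and a number quantifier $\exists m$ preserves $\Pi^1_1$. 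The outer universal real quantifier over $y$ keeps us in $\Pi^1_1(z)$, uniformly in the parameters $(n, e, z)$.

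Third, $D_z = \bigcup_n \{e : U_{e,z} \text{ is a } G_n\text{-anticlique}\}$ is a countable union of uniformly $\Pi^1_1(z)$ sets, indexed by a number quantifier, hence is itself $\Pi^1_1(z)$, uniformly in $z$; this is the desired conclusion. The main obstacle is not conceptual but clerical: one must verify that the universal $\Sigma^1_1$ parametrization and the enumeration of $\cG$ are chosen so that the complexity bounds are genuinely uniform in $z$ (i.e. $\Pi^1_1(z)$ in the lightface sense relative to $z$), rather than merely boldface $\mathbf{\Pi}^1_1$ at each individual $z$. This is standard once one appeals to the effective version of the universal set theorem and the closure of $\Pi^1_1$ under countable unions, countable intersections, number quantifiers, and universal real quantifiers.
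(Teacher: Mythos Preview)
Your computation that $D_z=\{e\in\gw: U_{e,z}\text{ is a }G\text{-anticlique for some }G\in\cG\}$ is $\Pi^1_1(z)$ is correct, but the passage from $D_z$ to $C_z$ is where the argument breaks. Recall that $C_z$ is a subset of $X$, namely the union of all $\gS^1_1(z)$ anticliques, so
\[
x\in C_z \iff \exists e\,\bigl(e\in D_z\ \wedge\ x\in U_{e,z}\bigr).
\]
The first conjunct is $\Pi^1_1(z)$ as you showed, but the second conjunct ``$x\in U_{e,z}$'' is genuinely $\gS^1_1(z)$ and not in general $\Pi^1_1(z)$. A conjunction of a $\Pi^1_1$ and a $\gS^1_1$ statement need not be $\Pi^1_1$, and a number quantifier in front does not repair this. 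So your argument only yields $C_z\in\Delta^1_2(z)$, not $\Pi^1_1(z)$. The sentence ``then $C_z$ inherits the same complexity through the parametrization'' glosses over exactly this point.

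The paper closes this gap with an extra step you are missing: by the effective first reflection theorem, every $\gS^1_1(z)$ $G$-anticlique is contained in a $\Delta^1_1(z)$ $G$-anticlique, so $C_z$ is in fact the union of all $\Delta^1_1(z)$ anticliques. One then uses the standard $\Pi^1_1(z)$ coding of $\Delta^1_1(z)$ sets, for which both ``$e$ is a code'' and ``$x$ belongs to the set coded by $e$'' are $\Pi^1_1(z)$ uniformly. With $\Delta^1_1$ codes in place of $\gS^1_1$ indices, the membership conjunct becomes $\Pi^1_1(z)$ rather than $\gS^1_1(z)$, and the rest of your complexity calculation (including the anticlique clause, which now uses the $\gS^1_1$ side of the $\Delta^1_1$ code for ``$y(m)\notin A$'') goes through to give $C_z\in\Pi^1_1(z)$.
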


\begin{proof}
First, use the effective reflection theorem \cite[Theorem 2.7.1]{kanovei:book} to show that every $\gS^1_1(z)$ set which is a $G$-anticlique for some $G\in\cG$ is covered by a $\Delta^1_1(z)$ anticlique. Thus, the set $C_z$ is in fact the union of all $\Delta^1_1(z)$ anticliques. Then, use the coding of $\Delta^1_1(z)$ sets \cite[Theorem 2.8.1]{kanovei:book} to prove the claim. Namely, $x\in C_z$ if there is a number $e\in\gw$ and $G\in\cG$ such that $e$ is a code for a $\Delta^1_1(z)$ set such that $x$ is in the set and the set is a $G$-anticlique. This is easily decoded as a $\Pi^1_1(z)$ statement.
\end{proof}

\begin{claim}
\label{v2claim}
Whenever $G\in\cG$ and $B\subset X$ is a Borel set  which is a $G$-anticlique, then in the poset $P_z$, the condition $X\setminus C_z$ forces $\dotxgen\notin\dot B$.
\end{claim}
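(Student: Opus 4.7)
The plan is to argue by contradiction. Suppose some condition $p\in P_z$ with $p\subseteq X\setminus C_z$ forces $\dotxgen\in\dot B$. The goal is to manufacture an edge of $G$ whose entire range lies in $B$, contradicting the assumption that $B$ is a $G$-anticlique.

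First I would exploit the disjointness $p\cap C_z=\emptyset$ to show that $p$ is not itself a $G$-anticlique. Indeed, $p$ is a nonempty $\Sigma^1_1(z)$ set, and by the very definition of $C_z$, every $\Sigma^1_1(z)$ set that is an anticlique for some hypergraph in $\cG$ is contained in $C_z$. Hence $p$ must carry a $G$-edge, so the set $q=G\cap p^\gw\subseteq X^\gw$ is a nonempty $\Sigma^1_1(z)$ set and therefore a legitimate condition in the Gandy--Harrington forcing $Q_z$ on $X^\gw$.

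Next I would work below $q$ in $Q_z$ and invoke Proposition~\ref{projectionproposition}: the generic point $\dotygen$ satisfies $\dotygen\in G$ by choice of $q$, each coordinate $\dotygen(n)\in X$ is $P_z$-generic, and, because the $n$-th projection of $q$ is a subset of $p$, the $P_z$-generic filter associated to $\dotygen(n)$ contains $p$ by upward closure. Combining the hypothesis $p\Vdash_{P_z}\dotxgen\in\dot B$ with the absoluteness of membership in a Borel set, $q$ forces each $\dotygen(n)$ to lie in $B$, i.e., the existence of a $G$-edge entirely inside $B$.

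The last step is to pull this conclusion back to $V$ via Shoenfield absoluteness: the statement ``$\exists e\in G\ \rng(e)\subseteq B$'' is $\Sigma^1_1$ in parameters coding $G$ and $B$, hence persists from the $Q_z$-extension to $V$, contradicting the anticlique hypothesis on $B$. The most delicate point is the combined use of Proposition~\ref{projectionproposition} with the inclusion $\pi_n(q)\subseteq p$ to guarantee that $p$ actually belongs to the $P_z$-generic filter recovered at each coordinate of $\dotygen$; once this is in hand, the hypothesis on $p$ transfers coordinatewise and the contradiction is immediate.
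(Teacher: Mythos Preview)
Your proposal is correct and follows essentially the same approach as the paper's own proof: argue by contradiction, observe that a condition $p$ disjoint from $C_z$ cannot be a $G$-anticlique, pass to the Gandy--Harrington forcing on $X^\gw$ below the condition $q=G\cap p^\gw$, and use Proposition~\ref{projectionproposition} together with absoluteness to obtain a $G$-edge inside $B$. If anything, you are slightly more explicit than the paper about why $p$ lies in the coordinate generic filters (via $\pi_n(q)\subseteq p$ and upward closure), which is a helpful clarification.
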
 

\begin{proof}
Suppose for contradiction that $p\in P_z$ is a condition disjoint from $C_z$ and forcing $\dotxgen\in\dot B$. The set $p\subset\cantor$ does contain some $G$-edges, since otherwise it would be a subset of $C_z$ by the definition of $C_z$ and could not belong to the poset $P_z$. Let $Q_z$ be the poset
of nonempty $\gS^1_1(z)$ subsets of $X^\gw$ adding a point $\dotygen\in X^\gw$. Note that $q=G\cap A^\gw$ is a nonempty $\gS^1_1(z)$ set and therefore a condition in the poset $Q_z$. 
By the forcing theorem applied to the poset $P_z$ and Proposition~\ref{projectionproposition}, $q$ forces $\dotygen$ to be a $G$-edge consisting of points in the set $B$. However, since the set $B$ is a $G$-anticlique, it is forced to be an anticlique in all generic extensions
by the Mostowski absoluteness. This is a contradiction.
\end{proof}

\begin{claim}
\label{v3claim}
A $\gS^1_1(z)$ set $A\subset\cantor$ belongs to the ideal $I$ if and only if $A\subset C_z$.
\end{claim}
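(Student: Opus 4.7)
My plan is to prove the two directions separately, leveraging the two preceding claims.

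For the easy direction $A\subset C_z \Rightarrow A\in I$, the first step will be to show that $C_z$ itself belongs to $I$. The key observation is the one used in the proof of Claim~\ref{v1claim}: by effective reflection, every $\Sigma^1_1(z)$ set which is a $G$-anticlique for some $G\in\cG$ is contained in a $\Delta^1_1(z)$ set which is again a $G$-anticlique. Thus $C_z$ is the union of all $\Delta^1_1(z)$ sets which are $G$-anticliques for some $G\in\cG$. Since there are only countably many $\Delta^1_1(z)$ codes, $C_z$ is a countable union of Borel anticliques from hypergraphs in $\cG$, so $C_z\in I$. Any $\Sigma^1_1(z)$ subset of $C_z$ then lies in $I$ by $\sigma$-additivity.

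For the reverse direction $A\in I \Rightarrow A\subset C_z$, I will argue by contradiction via the Gandy--Harrington forcing. Suppose $A$ is $\Sigma^1_1(z)$, $A\in I$, but some point of $A$ lies outside $C_z$. Since $C_z$ is $\Pi^1_1(z)$ by Claim~\ref{v1claim}, the set $p=A\setminus C_z$ is a nonempty $\Sigma^1_1(z)$ subset of $X$ and hence a condition in $P_z$. As $p$ is disjoint from $C_z$ and stronger than $X\setminus C_z$, Claim~\ref{v2claim} implies that $p\Vdash \dotxgen\notin\dot B$ for every Borel set $B\subset X$ which is a $G$-anticlique for some $G\in\cG$. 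On the other hand, since $A\in I$ we may fix Borel sets $B_n\subset X$ with $A\subset\bigcup_n B_n$, each $B_n$ being a $G_n$-anticlique for some $G_n\in\cG$. Since $p\subset A$, the forcing theorem applied to $P_z$ gives $p\Vdash \dotxgen\in \dot A\subset\bigcup_n \dot B_n$, so $p$ forces $\dotxgen\in \dot B_n$ for some $n$, contradicting the previous sentence.

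I do not anticipate a serious obstacle here; the heavy lifting was already done in Claims~\ref{v1claim} and~\ref{v2claim}. The only mildly delicate point is ensuring that a $\Sigma^1_1(z)$ subset of $X$ witnessing the failure of $A\subset C_z$ really is a legitimate condition in the Gandy--Harrington forcing, which is immediate once one notes that $A\setminus C_z$ is the intersection of a $\Sigma^1_1(z)$ set with a $\Pi^1_1(z)$ complement, hence $\Sigma^1_1(z)$, and nonempty by assumption.
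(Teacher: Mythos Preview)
Your proof is correct and follows essentially the same approach as the paper: both directions match, with the forward direction using that $C_z$ is a countable union of Borel anticliques and the reverse direction using the condition $A\setminus C_z$ in the Gandy--Harrington forcing together with Claim~\ref{v2claim}. One very minor imprecision: from $p\Vdash\dotxgen\in\bigcup_n\dot B_n$ you cannot conclude that $p$ itself forces $\dotxgen\in\dot B_n$ for a specific $n$, only that some $q\leq p$ does---but since $q$ is still below $X\setminus C_z$, Claim~\ref{v2claim} yields the contradiction all the same (or simply note that $p\Vdash\forall n\ \dotxgen\notin\dot B_n$ directly contradicts $p\Vdash\dotxgen\in\bigcup_n\dot B_n$).
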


\begin{proof}
First of all, the set $C_z$ belongs to the $\gs$-ideal $I$. It is a union of countably many analytic sets, each of which is a $G$-anticlique for some $G\in \cG$. By the first reflection theorem, every analytic $G$-anticlique is a subset of a Borel $G$-anticlique, and so $C_z\in I$.
This proves the right-to-left implication.

For the left-to-right implication, prove the contrapositive. If $A\not\subset C_z$, then the set $A\setminus C_z$ is nonempty, and it is $\gS^1_1(z)$ by Claim~\ref{v1claim}. Let $\{B_n\colon n\in\gw\}$  be a collection of Borel sets such that for each $n$ there is a hypergraph
$G_n\in\cG$ such that $B_n$ is $G_n$-anticlique. The set $A\setminus C_z$ is a condition in the poset $P_z$, and it forces the generic point to belong to $A$ and not to any of the sets $B_n$ for $n\in\gw$ by Claim~\ref{v2claim}. Thus, in the $P_z$-extension, $A\not\subset\bigcup_nB_n$, and by the Mostowski absoluteness $A\not\subset\bigcup_nB_n$ holds in the ground model as well.
\end{proof}

Now, (1) follows immediately. Suppose that $A\subset Y\times X$ is an analytic set for some Polish space $Y$. Without loss of generality assume that $Y=\baire$, and $A$ is $\gS^1_1(z)$ for some parameter $z\in\baire$. Then, for every $y\in Y$ the vertical section $A_y$
is $\gS^1_1(z, y)$ and so by Claim~\ref{v3claim} it  is $I$-positive if and only if it contains some point which is not in $C_{z,y}$. The latter statement is an analytic condition.

To prove (2), suppose that $A\subset X$ is an analytic $I$-positive set, $A\in\gS^1_1(z)$ for some parameter $z\in \baire$. Let $M$ be a countable elementary submodel of a large structure containing $z$, and consider the set $A'=\{x\in X\colon
x$ is $P_z$-generic over the model $M$, below the condition $A\setminus C_z\}$. The set $A'$ is a subset of $A$, as all sets of generic points over any countable model, the set $A'$ is Borel by \cite[Fact 1.4.8]{z:book2}, and it is $I$-positive by Claim~\ref{v2claim}.

To prove (3), I will need some additional considerations. Assume without loss of generality that $Y=\baire$. For each hypergraph $G\in \cG$, consider the $\gS^1_1$ hypergraph $\hat G$ on $Y\times X$ by setting $\langle \langle y_n, x_n\rangle\colon n\in\gw\rangle\in\hat G$ if all $y_n$'s are pairwise equal
and $\langle x_n\colon n\in\gw\rangle\in G$. For every parameter $z\in\baire$, let $\hat P_z$ be the Gandy--Harrington poset on $Y\times X$ and let $\hat C_z\subset Y\times X$ be the union of all $\gS^1_1(z)$ sets which are $\hat G$ anticliques for some $G\in\cG$. Claim~\ref{v1claim} applied to the hatted family of hypergraphs shows that the set $\hat C_z$ is $\Pi^1_1(z)$ uniformly in $z$. The poset $P_z$ adds a pair
$\langle\dotygen, \dotxgen\rangle\in Y\times X$.

\begin{claim}
\label{v4claim}
Let $A\subset Y\times X$ be a $\gS^1_1(z)$ set. The condition $(Y\times X)\setminus\hat C_z$ forces the following: if $\langle \dotygen,\dotxgen\rangle\in A$, then the vertical section $A_{\vecygen}$ is not a $G$-anticlique for any $G\in\cG$.
\end{claim}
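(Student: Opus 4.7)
The plan is to mirror the argument of Claim~\ref{v2claim} very closely, but applied to the hatted hypergraphs $\hat G$ on $Y\times X$ and to the Gandy--Harrington poset $\hat P_z$. Suppose towards contradiction that some condition $\hat p\subset (Y\times X)\setminus \hat C_z$ forces $\langle\dotygen,\dotxgen\rangle\in A$ and also forces $A_{\dotygen}$ to be a $G$-anticlique for some $G\in\cG$. Since $\cG$ is countable, I can strengthen $\hat p$ to a $\gS^1_1(z)$ subcondition (still disjoint from $\hat C_z$) that decides $G$ outright.

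Next I argue that $\hat p$ fails to be a $\hat G$-anticlique. Indeed, $\hat p$ is a nonempty $\gS^1_1(z)$ set and it is disjoint from $\hat C_z$; if it were a $\hat G$-anticlique then by the definition of $\hat C_z$ it would be a subset of $\hat C_z$, contradiction. Thus the set $\hat q=\hat G\cap\hat p^{\gw}$ is a nonempty $\gS^1_1(z)$ subset of $(Y\times X)^{\gw}$, i.e.\ a condition in the Gandy--Harrington poset on $(Y\times X)^{\gw}$. Let $\dot w$ be its generic point.

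The analogue of Proposition~\ref{projectionproposition} applied to the product space $(Y\times X)^{\gw}$ yields, for every $n\in\gw$, a projection of the Gandy--Harrington poset onto $\hat P_z$ sending $\hat q$ below $\hat p$. So each coordinate $\dot w(n)\in Y\times X$ is $\hat P_z$-generic over $V$ and meets the condition $\hat p$. By the definition of $\hat G$, all first coordinates of the tuple $\dot w$ coincide with some point $y\in Y$, and by the forcing theorem applied in each coordinate, in $V[\dot w(n)]$ we have $\langle y,\dot w(n)_1\rangle\in A$ and $A_y$ is a $G$-anticlique (writing $\dot w(n)_1$ for the second coordinate of $\dot w(n)$).

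The conclusion is now immediate: in $V[\dot w]$, the sequence $\langle \dot w(n)_1\colon n\in\gw\rangle$ is a $G$-edge consisting of points in $A_y$. However, ``$A_y$ is a $G$-anticlique'' is a $\Pi^1_1(z,y)$ statement, and so by Mostowski absoluteness it transfers from each $V[\dot w(n)]$ up to $V[\dot w]$, contradicting the existence of this edge. The main delicate point, as in Claim~\ref{v2claim}, is the coordinate-wise projection lemma and the correct handling of absoluteness for the $\Pi^1_1$ anticlique property across the various intermediate models; everything else is bookkeeping.
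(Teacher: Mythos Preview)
Your proof is correct and follows essentially the same route as the paper's. Both arguments strengthen to a condition $\hat p$ disjoint from $\hat C_z$, observe that $\hat p$ cannot be a $\hat G$-anticlique, pass to the condition $\hat G\cap \hat p^\gw$ in the product Gandy--Harrington poset, and use Proposition~\ref{projectionproposition} to obtain coordinatewise $\hat P_z$-generics with a common $y$ whose second coordinates form a $G$-edge inside $A_y$. The paper streamlines one step by replacing $p$ with $p\cap A$ (so membership of each coordinate in $A$ is immediate rather than via the forcing theorem), and it leaves the final absoluteness implicit where you spell it out; conversely, your explicit deciding of $G\in\cG$ is a harmless extra step the paper folds into the contradiction hypothesis. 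The one terminological quibble is that transferring the $\Pi^1_1$ statement ``$A_y$ is a $G$-anticlique'' upward from $V[\dot w(n)]$ to $V[\dot w]$ is really Shoenfield rather than Mostowski absoluteness, but the paper itself uses ``Mostowski absoluteness'' in the same loose sense (cf.\ Claim~\ref{v2claim}), so this is consistent with the ambient conventions.
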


\begin{proof}
Suppose towards a contradiction that $p\in\hat P_z$ disjoint from $\hat C_z$ is a condition forcing $\langle\dotygen, \dotxgen\rangle\in\dot A$ and the vertical section $A_{\vecygen}$ is a $G$-anticlique. 
Replacing $p$ with $p\cap A$ if necessary we may assume that $p\subset A$. Let $G\in\cG$ be a hypergraph. Since $p$ is disjoint from $\hat C_z$,
it is not a $\hat G$-anticlique. Consider the poset $\hat Q_z$ of all nonempty $\gS^1_1(z)$ subsets of $(Y\times X)^\gw$ and the condition $q=\hat G\cap [p]^\gw$ in it. By Proposition~\ref{projectionproposition}, 
the $\hat Q_z$-generic filter yields points $y\in Y$ and $x_n\in X$ for $n\in\gw$
such that each pair $\langle y, x_n\rangle$ is $\hat P_z$-generic below $p$, and $\langle x_n\colon n\in\gw\rangle\in G$. This is a contradiction with the initial choice of the condition $p$.
\end{proof}

\begin{claim}
\label{v5claim}
Let $p\in\hat P_z$ be a condition disjoint from the set $\hat C_z$. Then $p$ forces the vertical section $p_{\vecygen}$ to be $I$-positive.
\end{claim}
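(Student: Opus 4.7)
The plan is to argue by contradiction using Claim~\ref{v4claim} combined with an effective version of Claim~\ref{v3claim} in the generic extension. Suppose toward contradiction that some $q\le p$ in $\hat P_z$ forces $p_{\vecygen}\in I$; I would aim to contradict Claim~\ref{v4claim}.

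First I would observe that in the generic extension $V[G]$, the vertical section $p_{\vecygen}$ is a $\gS^1_1(z,\vecygen)$ subset of $X$---simply as a section of the ground-model $\gS^1_1(z)$ set $p$. Since the proof of Claim~\ref{v3claim} goes through via effective descriptive set theory and absolute facts (first reflection and Mostowski absoluteness), it applies verbatim in $V[G]$ with parameter $\langle z,\vecygen\rangle$. Hence the $I$-membership of $p_{\vecygen}$ amounts to the set-theoretic inclusion $p_{\vecygen}\subset C_{z,\vecygen}$. Combined with the standard fact that the generic pair $\langle\vecygen,\vecxgen\rangle$ lies in every condition of the generic filter, and in particular $\vecxgen\in p_{\vecygen}$, this gives that $q$ forces $\vecxgen\in C_{z,\vecygen}$.

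Next I would unpack $C_{z,\vecygen}$: there must exist a $\gS^1_1$-code $e\in\gw$ and a hypergraph $G\in\cG$ such that the $\gS^1_1(z,\vecygen)$ set defined by the formula $\phi_e(z,\vecygen,\cdot)$ is a $G$-anticlique containing $\vecxgen$. Since $\cG$ and $\gw$ are countable, I may further strengthen $q$ to decide both $e$ and $G$. The key transfer step is to recast the witnessing vertical set as a section of the ground-model $\gS^1_1(z)$ set $A=\{\langle y,x\rangle\in Y\times X\colon\phi_e(z,y,x)\}$; this uses that the $\gS^1_1$-sets in a parameter form a uniformly definable family, so $A$ is genuinely $\gS^1_1(z)$ and $A_{\vecygen}$ equals the witnessing set. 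Then $q\le p$ forces both $\langle\vecygen,\vecxgen\rangle\in A$ and that $A_{\vecygen}$ is a $G$-anticlique, in direct contradiction with Claim~\ref{v4claim}.

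The main obstacle I anticipate is precisely this uniform parameterization: one must carefully recast the $\gS^1_1(z,\vecygen)$-witness of $\vecxgen\in C_{z,\vecygen}$ as a ground-model-definable vertical section $A_{\vecygen}$ of some $A\in\gS^1_1(z)$, so that the earlier Claim~\ref{v4claim} can be invoked. Once this bookkeeping from effective descriptive set theory is in place, the proof reduces to a single application of the dichotomy already established in Claim~\ref{v4claim}.
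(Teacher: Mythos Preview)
Your proposal is correct and follows essentially the same route as the paper's proof: reduce $I$-positivity of $p_{\vecygen}$ via Claim~\ref{v3claim} to the statement $\dotxgen\notin C_{z,\vecygen}$, then unpack a witnessing $\gS^1_1(z,\vecygen)$ anticlique as a vertical section of a ground-model $\gS^1_1(z)$ set $A\subset Y\times X$, and derive a contradiction from Claim~\ref{v4claim}. The paper is simply terser about the uniform-parameterization bookkeeping that you spell out explicitly.
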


\begin{proof}
In view of Claim~\ref{v3claim}, it will be enough to show that $p\Vdash\dotxgen\notin C_{\vecygen}$ holds. Suppose towards a contradiction that this fails, and thinning out the condition $p$ if necessary, use the definition of the set $C_{\vecygen}$ to find a hypergraph $G\in\cG$ and a $\gS^1_1$ set $A\subset Y\times X$ such that $p\Vdash A_{\vecygen}$ is a $G$-anticlique containing the point $\dotxgen$. This contradicts Claim~\ref{v4claim} though.
\end{proof}

Towards the proof of (4), let $B\subset Y\times X$ be a Borel set with all sections in the $\gs$-ideal $I$. Let $z\in\baire$ be a parameter such that $B$ is $\gS^1_1(z)$. It must be the case that $B\subset\hat C_z$ holds. Otherwise, the condition $B\setminus C_z$
would force $B$ to have an $I$-positive vertical section by Claim~\ref{v5claim}, and by Mostowski absoluteness the set $B$ would have to have an $I$-positive section in the ground model as well, contradicting the initial assumption. Thus, $B\subset\bigcup_nA_n$
for some $\gS_1^1(z)$ sets $A_n\subset Y\times X$ such that for some hypergraphs $G_n\in\cG$, every vertical section of $A_n$ is a $G_n$-anticlique. By the first reflection theorem, there are Borel sets $B_n\subset Y\times X$ such that
$A_n\subset B_n$ and every vertical section of $B_n$ is a $G_n$-anticlique. The sets $B_n$ for $n\in\gw$ work as required.
\end {proof}

\begin{corollary}
\label{meagercorollary}
Let $I$ be a hypergraphable ideal on a Polish space $X$. For every $I$-positive analytic set $A\subset X$ there is a Polish topology $\tau$ on $X$ generating the same Borel structure as the original one, such that $A$ is $\tau$-comeager while all sets in $I$ are $\tau$-meager.
\end{corollary}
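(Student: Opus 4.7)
By Theorem~\ref{millerfact}(2), I may replace $A$ with a Borel $I$-positive subset, so WLOG $A$ is Borel. The aim is to construct a Polish topology $\tau$ on $X$ generating the same Borel $\gs$-algebra, making $A$ $\tau$-comeager and every Borel set in $I$ $\tau$-meager; the latter yields that all sets in $I$ are $\tau$-meager, since the meager ideal is a $\gs$-ideal and every set in $I$ is by definition a subset of a countable union of Borel generators.

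First, for the comeagerness of $A$: using a Borel isomorphism $X\to\cantor$ sending $A$ to a dense $G_\gd$ subset (available by the standard Borel isomorphism theorem, by choosing the image of $A$ to be $\cantor$ minus a closed nowhere dense set of appropriate cardinality in case $X\setminus A$ is uncountable, or minus a countable dense set otherwise), I obtain a Polish topology $\tau_0$ on $X$ in which $A$ is a dense $G_\gd$ and hence $\tau_0$-comeager.

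For the meagerness of $I$-sets, I would refine $\tau_0$ using a Gandy--Harrington construction as in the proof of Theorem~\ref{millerfact}. Fix a parameter $z\in\baire$ coding $\cG$, $A$, and $\tau_0$, and a countable elementary submodel $M\ni z$; take the countable basis of $\tau$ to be the collection of $I$-positive $\gS^1_1(z)$ subsets of $A$ in $M$, dense in the Gandy--Harrington forcing $P_z$ below $A\setminus C_z$ (with $C_z$ as in Claim~\ref{v1claim}). After passing to the standard Polish refinement of the Gandy--Harrington topology, $\tau$ is Polish with the required Borel structure. Each basic open of $\tau$ is $I$-positive by construction, and the set of $P_z$-generic points over $M$ below $A\setminus C_z$ is a $\tau$-comeager $G_\gd$ subset of $A$. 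By Claim~\ref{v2claim}, these generics avoid every $\gS^1_1(z)$ set in $I$; in particular the set $C_z$, which contains all $\gS^1_1(z)$-anticliques, is $\tau$-meager, and so is every Borel $G$-anticlique $\gS^1_1(z)$-coded by a parameter in $M$.

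The main obstacle is to extend this meagerness to Borel $G$-anticliques coded by parameters outside $M$. Such an anticlique $D$ is $\gS^1_1(z')$ for some $z'$ and is contained in $C_{z\oplus z'}$ from Claim~\ref{v1claim} relativized to the parameter $z\oplus z'$. To show that $C_{z\oplus z'}$ is $\tau$-meager requires a relativization argument: one verifies, using the $\pioneoneonsigmaoneone\ $ property from Theorem~\ref{millerfact}(1) and the uniformization of Theorem~\ref{millerfact}(3), that every basic open $B$ of $\tau$ admits a further $I$-positive Borel subset disjoint from $D$, and then invokes the Baire property of $D$ to conclude it is $\tau$-meager. This style of iterated/relativized Gandy--Harrington argument is standard in the theory of $\pioneoneonsigmaoneone$ $\gs$-ideals with Borel basis developed in \cite[Section 3]{z:book2}, and the details can be transcribed from that framework.
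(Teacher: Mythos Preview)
Your proposal identifies the right ingredients---Gandy--Harrington forcing below $A\setminus C_z$, a countable elementary submodel $M$, and Claim~\ref{v2claim}---but two steps do not go through as written.

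\textbf{The Polish topology.} The Gandy--Harrington topology generated by $\Sigma^1_1(z)$ sets is not Polish on $X$, and ``passing to the standard Polish refinement'' does not name an actual construction (a finer topology would not help, and the strong Choquet property alone does not give Polishness). Your preliminary $\tau_0$ is also a red herring: it is never used again, and the later $\tau$ is not a refinement of it. The paper avoids this by a clean transport-of-structure trick: let $Y$ be the space of $M$-generic filters on $P_z\restriction (A\setminus C_z)$, which \emph{is} a Polish space (a $G_\delta$ in the Stone space of a countable poset); let $f\colon Y\to X$ be the Borel injection $G\mapsto\dotxgen/G$, adjust it on a nowhere dense set to a Borel bijection, and declare $B\subset X$ to be $\tau$-open iff $f^{-1}B$ is open in $Y$. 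This immediately gives a Polish topology with the original Borel structure, and $A$ is $\tau$-comeager since $f(G)\in A$ for every unmodified $G\in Y$.

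\textbf{Anticliques coded outside $M$.} Your suggested argument---show every $\tau$-basic open admits an $I$-positive Borel subset disjoint from $D$, then invoke the Baire property---is circular. To conclude $D$ is $\tau$-meager from the Baire property you would need that $\tau$-comeager subsets of basic opens are $I$-positive, i.e., that $\tau$-meager implies $I$-small, which is exactly the converse of what you are proving and is not established. The $I$-positive subset $U\setminus D$ you find is $\Sigma^1_1(z\oplus z')$, not $\Sigma^1_1(z)$, so it is not a $\tau$-open set and cannot witness non-density directly. The paper's (implicit) argument is much simpler: given a Borel $G$-anticlique $D$, pick a countable elementary $N\supseteq M$ with $D\in N$. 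By elementarity, $N\models(A\setminus C_z\Vdash_{P_z}\dotxgen\notin D)$, so by the forcing theorem and Borel absoluteness every $N$-generic filter $G$ satisfies $f(G)\notin D$. Since the $N$-generic filters form a dense $G_\delta$ in $Y$, the set $f^{-1}D$ is meager in $Y$, i.e., $D$ is $\tau$-meager. No relativization to $z\oplus z'$ is needed.
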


\begin{proof}
Let $z\in\baire$ be a parameter which codes the Polish space $X$ as well as $A$ and all the hypergraphs generating the $\gs$-ideal $I$. Let $P$ be the poset of nonempty $\gS^1_1(z)$ sets ordered by inclusion, and let $p\in P$ be the condition $A\setminus C_z$.
Let $M$ be a countable elementary submodel of a large structure and let $Y$ be the space of all ultrafilters on $P\restriction p$ which are generic over $M$. Let $f\colon Y\to X$ be the map given by $f(G)=\dotxgen/G$; this is a Borel injection.
Adjust $f$ on a nowhere dense subset of $Y$ to obtain a Borel bijection between $Y$ and $X$. Now, declare a set $B\subset X$ to be $\tau$ open if its $f$-preimage is. It is clear that $\tau$ is a Polish topology on $X$ generating the same Borel structure as the old one.
Clearly, $A$ is comeager in $\tau$; at the same time every set in $I$ is $\tau$-meager by Claim~\ref{v2claim}.
\end{proof}

\subsection{Suslin forcings}

A number of the arguments in the paper are stated in the language of Suslin forcings.

\begin{definition}
A poset $\langle P,\leq\rangle$ is \emph{Suslin} if $P$ is an analytic subset of some ambient Polish space $X$, $\leq$ is an analytic relation on $X$, and incompatibility of conditions in $P$ is an analytic relation on $X$ as well.
\end{definition}

\noindent The c.c.c. of a Suslin forcing is highly absolute among transitive models of set theory. This is recorded in the following standard fact. ZFC- denotes a suitable large finite fragment of ZFC.

\begin{fact}
\label{suslinfact}
Let $P$ be a Suslin forcing. The following are equivalent:

\begin{itemize}
\item in some transitive model $M$ of ZFC- containing the code for $P$, $M\models P$ is c.c.c.;
\item in every transitive model $M$ of ZFC- containing the code for $P$, $M\models P$ is c.c.c.
\end{itemize}
\end{fact}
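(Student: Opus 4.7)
The plan is to reduce the c.c.c.\ of a Suslin forcing to a $\Sigma^1_2$ statement about its code and then invoke Shoenfield absoluteness.

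\textbf{Step 1: perfect-antichain dichotomy.} I would show that a Suslin forcing $P$ fails the c.c.c.\ if and only if it admits a \emph{perfect antichain} --- a continuous injection $f\colon 2^\gw\to P$ such that $f(x)\perp f(y)$ for all $x\neq y$. The backward direction is trivial since a perfect set is uncountable. For the forward direction, given an uncountable antichain $A\subset P$, one exploits the analyticity of the incompatibility relation: the set $D=\{(p,q)\in X\times X\colon p\perp q\}\cup\Delta_X$ is analytic, and $A\times A\subset D$. A standard perfect-rectangle theorem for analytic relations (via a Mansfield--Solovay style argument, or the Gandy--Harrington forcing on $X\times X$ restricted to $D$) then yields a perfect set $C\subset X$ with $C\times C\subset D$; after deleting a countable set of repeated values one obtains the desired perfect antichain.

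\textbf{Step 2: complexity.} The statement ``$P$ admits a perfect antichain'' is $\Sigma^1_2$ in a real parameter $c$ coding $P$, $\leq$, and $\perp$. Explicitly, it asserts the existence of a real $e$ coding a continuous function $f_e\colon 2^\gw\to X$ whose image lies in $P$, which is injective, and which satisfies $f_e(x)\perp f_e(y)$ whenever $x\neq y$. Using the analyticity of $P$ and of $\perp$, the matrix is $\Pi^1_1$ in $e,c$; the overall statement is thus $\Sigma^1_2$.

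\textbf{Step 3: absoluteness.} By Shoenfield absoluteness, $\Sigma^1_2$ statements with a real parameter $c$ are absolute between $V$ and every transitive model of ZFC$^-$ containing $c$. Applying this to ``$P$ admits a perfect antichain'' shows that this statement has the same truth value in every transitive model of ZFC$^-$ containing the code of $P$. Combined with Step 1 carried out internally in each such model (which is legitimate because Step 1 is provable in ZFC$^-$), this yields that ``$P$ is c.c.c.''\ has the same truth value across all such transitive models.

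The \textbf{main obstacle} is Step 1: turning an uncountable antichain into a perfect one is where the analyticity of $\perp$ --- that is, the full Suslin hypothesis --- is genuinely used. Without Suslinness, the equivalence fails in general (under CH there are non-Suslin c.c.c.-violating posets with no perfect antichain), so the Suslin assumption must be invoked precisely at this perfect-set refinement step.
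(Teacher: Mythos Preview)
The paper does not prove this statement at all: it is recorded as a ``Fact'' and used as a black box in the subsequent Claim~\ref{oclaim}. So there is no paper proof to compare against; your outline is essentially the standard argument one finds in the literature (e.g.\ Judah--Shelah, \emph{Souslin forcing}, JSL 1988, or Bartoszynski--Judah, \emph{Set Theory}), and Steps~2 and~3 are entirely correct.

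The one soft spot is your justification of Step~1. The ``standard perfect-rectangle theorem for analytic relations'' you invoke---that an analytic $D\subset X^2$ with an uncountable homogeneous set $A$ (i.e.\ $A\times A\subset D$) must contain $C\times C$ for some perfect $C$---is not a theorem one can simply cite; in that bare generality it is not obvious, and a naive Cantor-scheme construction inside $A$ fails because the limit points need not lie in $A$ and $D$ is not closed. What actually makes Step~1 go through is a fusion that \emph{unfolds} the analytic relation $\perp=\mathrm{proj}(F)$ with $F$ closed, and builds the Cantor scheme $\langle p_s:s\in 2^{<\gw}\rangle$ simultaneously with coherent finite approximations $\gs_{s,t}$ to the witnesses in $F$; the uncountability of the antichain is used at each splitting step to keep the scheme alive, and in the limit the closedness of $F$ (not of $\perp$) delivers incompatibility of the limit points. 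This is exactly the kind of argument your parenthetical ``via Gandy--Harrington'' gestures at, but it is more delicate than a one-line appeal suggests. You are right that this is where the Suslin hypothesis does its work, and you have correctly located the obstacle; just be aware that filling it in is a genuine (if routine, to experts) fusion argument rather than an off-the-shelf citation.
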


\noindent The absoluteness of c.c.c. yields the absoluteness of the forcing relation of the Suslin forcing. I will state this feature of Suslin c.c.c. forcings in the following way:

\begin{claim}
\label{oclaim}
Suppose $P$ is a c.c.c. Suslin forcing, $Y$ is a Polish space and $A\subset Y$ is an analytic set. If $M$ is a transitive model of ZFC- containing the codes for $P, Y$ and $A$ and $M\models\dot y$ is a $P$-name
for an element of $Y$ and $p\in P$ is a condition such that $p\Vdash\dot y\in\dot A$, then this statement holds in $V$ as well.
\end{claim}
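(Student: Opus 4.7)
The plan is to reduce the claim to Mostowski absoluteness of $\Sigma^1_1$ statements between transitive models, using Fact~\ref{suslinfact} as a bridge. The argument naturally breaks into three steps: transferring c.c.c.\ from $M$ to $V$, transferring maximal antichains, and then exploiting analytic absoluteness in the generic extension.

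First, since $M\models\text{``}P\text{ is c.c.c.''}$, Fact~\ref{suslinfact} gives immediately that $P$ is c.c.c.\ in $V$ as well. Second, I would show that every maximal antichain $A$ of $P$ lying in $M$ remains maximal in $V$. By c.c.c.\ inside $M$, $A$ is countable in $M$, enumerated as $\{a_n:n\in\gw\}$. The set $S=\{q\in P:\forall n\ q\perp a_n\}$ is a countable intersection of analytic sets (incompatibility is analytic by the Suslin assumption), and a countable intersection of analytic sets in a Polish space is analytic. Maximality of $A$ in $M$ says $M\models S=\emptyset$. But ``the analytic set with code $c$ is empty'' is a $\Pi^1_1(c)$ statement (the projecting tree is well-founded), hence absolute between transitive models containing $c$; so $S=\emptyset$ in $V$, meaning $A$ is still a maximal antichain in $V$.

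Third, I would finish by a forcing-theorem shuffle. Take an arbitrary $V$-generic filter $G$ on $P$ meeting $p$ (reading this either inside a further generic extension of $V$, or equivalently inside the Boolean-valued model over the completion of $P$). By the second step every maximal antichain of $P$ in $M$ is maximal in $V$, and hence met by $G$; therefore $G$ is automatically $M$-generic. By the forcing theorem inside $M$ we have $M[G]\models\dot y/G\in A$. Now $M[G]\subseteq V[G]$ are both transitive models of ZFC- containing the analytic code for $A$, and ``$\dot y/G\in A$'' is a $\Sigma^1_1$ statement in that code; by Mostowski absoluteness it transfers to $V[G]$. Since this is true for every $V$-generic $G\ni p$, the $V$-side forcing theorem gives $p\Vdash_V\dot y\in\dot A$.

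The main obstacle is the middle step: it is the only place where genuine descriptive set theory (rather than pure forcing bookkeeping) enters, and it depends crucially on the Suslin hypothesis to keep incompatibility analytic, so that $S$ is analytic and the absoluteness of its emptiness can be invoked. The pedantry about ``$V$-generic filters'' in the last step is standard and is handled cleanly by passing to the Boolean-valued model of the completion of $P$ if one prefers to avoid any outer generic extension.
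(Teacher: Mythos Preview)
Your argument is correct and shares with the paper the key observation that countable maximal antichains of $P$ lying in $M$ remain maximal in $V$ (this is exactly what the paper means by ``their maximality is then a coanalytic statement''). From that point on, however, you take a different route. The paper reformulates ``$p\Vdash\dot y\in\dot A$'' \emph{syntactically} as the existence of a sequence of refining countable maximal antichains $B_n$ together with functions $f_n$ assigning small basic open sets in $Y\times\baire$ approximating a point of a fixed closed set projecting to $A$; it then checks that each clause in this description is analytic or coanalytic in the relevant codes, so the whole package transfers from $M$ to $V$ by Mostowski absoluteness. You instead argue \emph{semantically}: once maximal antichains transfer, any $V$-generic filter is $M$-generic, the forcing theorem in $M$ yields $M[G]\models\dot y/G\in A$, and $\Sigma^1_1$ upward absoluteness from $M[G]$ to $V[G]$ finishes the job. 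Your route is shorter and more conceptual, and it avoids spelling out the combinatorial witness; the paper's route has the minor advantage of never needing to speak of $V$-generic filters or Boolean-valued models, staying entirely inside $V$.
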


\begin{proof}
To specify the name $y$, for each basic open set $O\subset Y$ pick a maximal antichain of conditions deciding the statement $\dot y\in\dot O$. 
Let $C\subset Y\times\baire$ be a closed set projecting into $Y$, with a code in the model $M$. Let $d$ be a compatible complete metric on $Y\times\baire$, with a code in the model $M$. The statement ``$p\Vdash\dot y\in\dot A$'' is provably equivalent to 
the existence of $B_n, f_n$ for $n\in\gw$ such that

\begin{itemize}
\item each $B_n$ is a maximal antichain in $P$ below $p$, $B_{n+1}$ refines $B_n$;
\item $f_n$ is a function with domain $B_n$, the values of $f_n$ are basic open subsets of $Y\times\baire$ of $d$-diameter $\leq 2^{-n}$ with nonempty intersection with the set $C$;
\item if $q\in B_n$ and $r\in B_{n+1}$ and $r\leq q$ then the closure of the set $f_{n+1}(r)$ is a subset of $f_n(q)$;
\item if $q\in B_n$ then $q$ is below some condition forcing $\dot y\in O$ for some basic open set $O\subset Y$ which is a subset of the projection of $f_n(q)$.
\end{itemize}

Thus, there must be objects with these properties in the model $M$. However, their properties are analytic or coanalytic--note that the poset $P$ is c.c.c.\ in the model $M$ and so the maximal antichains mentioned above are countable there
by Fact~\ref{suslinfact}, and their maximality is then a coanalytic statement. Since the model $M$ is transitive, these properties survive to $V$ and therefore $p\Vdash\dot y\in A$ as desired.
\end{proof}

\noindent The complexity computations yield the following uniformization theorem. Its statement may be long, but it has the advantage of being directly applicable at several places in the main body of the paper.

\begin{theorem}
\label{uniformizationtheorem}
Let $P$ be a Suslin c.c.c.\ poset, $Y$ a Polish space, $\dot y$ a $P$-name for an element of a Polish space $Y$, and $B_Y\subset Y$ a Borel set such that for some condition $p\in P$, $p\Vdash\dot y\in \dot B_Y$.
Let $X$ be a Polish space, let $I$ be a hypergraphable $\gs$-ideal on $X$ and let $B_X\subset X$ be a Borel $I$-positive set.
Suppose that $B_X\times B_Y=\bigcup_nC_n$ a union of Borel sets.
Then there is a number $n\in\gw$, a Borel $I$-positive set $B\subset B_X$, and a Borel function $f\colon B\to P$ such that for every $x\in B$, $f(x)\Vdash_P\dot y\in (C_n)_x$.
\end{theorem}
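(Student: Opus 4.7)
The plan is to pin down the correct index $n$ by a c.c.c.\ density argument combined with the $\gs$-additivity of $I$, apply classical analytic uniformization on an $I$-positive Borel subset, and use Corollary~\ref{meagercorollary} to upgrade the resulting meager exceptional set into an $I$-small one.

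First I would define, for each $n\in\gw$, the set $A_n=\{x\in B_X:\exists q\leq p,\ q\Vdash_P\dot y\in (C_n)_x\}$. The forcing relation ``$q\Vdash_P(\check x,\dot y)\in\dot C_n$'' is analytic in $(q,x)$: by Claim~\ref{oclaim} it is equivalent to the existence of a countable transitive model of ZFC- containing the relevant codes in which the corresponding forcing statement holds (one direction being Claim~\ref{oclaim} itself, the other being L\"owenheim--Skolem reflection). Thus each $A_n$ is analytic. For every $x\in B_X$ one has $B_Y=\bigcup_n(C_n)_x$ and $p\Vdash\dot y\in\dot B_Y$; a c.c.c.\ density argument using countable antichains (Fact~\ref{suslinfact}) yields $q\leq p$ and some $n$ with $q\Vdash\dot y\in(C_n)_x$, so $x\in A_n$ and hence $B_X=\bigcup_n A_n$. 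By the $\gs$-additivity of $I$ some $A_n\notin I$, and Theorem~\ref{millerfact}(2) provides a Borel $I$-positive set $B'\subseteq A_n$.

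Next I would uniformize. The analytic relation $E=\{(x,q)\in B'\times P:q\leq p\land q\Vdash\dot y\in(C_n)_x\}$ has all vertical sections nonempty. The Jankov--von Neumann uniformization theorem produces a $\gs({\mathbf\Sigma}^1_1)$-measurable selector $f\colon B'\to P$ with graph in $E$. To convert the Baire-categorical content of such a selector into an $I$-positivity statement, I would invoke Corollary~\ref{meagercorollary} to choose a Polish topology $\tau$ on $X$ generating the same Borel sets, in which $B'$ is $\tau$-comeager and every set in $I$ is $\tau$-meager. Since analytic sets have the Baire property with respect to $\tau$, $f$ is $\tau$-Baire-measurable, hence $\tau$-continuous off some $\tau$-meager Borel set $M\subseteq B'$. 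Setting $B=B'\setminus M$ produces a Borel set that is $\tau$-comeager in $X$; if $B$ were in $I$ it would be $\tau$-meager, whence $B'=M\cup B$ would be $\tau$-meager, contradicting the $\tau$-comeagerness of $B'$. Therefore $B\notin I$, and $f\restriction B$ is a Borel function (being $\tau$-continuous, hence Borel in the common Borel structure) with $f(x)\Vdash\dot y\in(C_n)_x$ for all $x\in B$.

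The only nontrivial step is the analytic complexity of the Suslin c.c.c.\ forcing relation invoked in the first paragraph; the rest is classical bookkeeping with selection theorems and the topology-change trick of Corollary~\ref{meagercorollary}.
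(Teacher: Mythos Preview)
Your proof is correct; the one delicate point is the $\Sigma^1_1$ bound on the forcing relation, and your characterization via Claim~\ref{oclaim} plus L\"owenheim--Skolem reflection does yield it once $\dot y$ is replaced by a nice name coded by a real (available since $P$ is c.c.c.), so that ``$M$ contains the relevant codes'' becomes a condition on a fixed real parameter together with $q$ and $x$.

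The paper organizes the argument differently. It invokes Corollary~\ref{meagercorollary} \emph{first} to obtain the topology $\tau$, and then forces with the Cohen poset $Q$ of nonempty $\tau$-open sets. Claim~\ref{oclaim} transfers the statement $p\Vdash\dot y\in\dot B_Y$ into the $Q$-extension, so there one finds a definite $n$ and a $Q$-name $\sigma$ for a condition below $p$ forcing $\dot y\in(\dot C_n)_{\dotxgen}$; the desired function is then simply $f(x)=\sigma/x$ on the Borel set of $Q$-generics over a countable elementary submodel, with Claim~\ref{oclaim} invoked once more to lift the forcing statement from the submodel to $V$. Your route replaces this Cohen-forcing detour by a direct Jankov--von~Neumann uniformization of the analytic relation $E$, postponing the topology change to a final cleanup step that upgrades the $\sigma(\mathbf{\Sigma}^1_1)$-measurable selector to a Borel one on an $I$-positive set. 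Your version makes the descriptive complexity of the Suslin c.c.c.\ forcing relation explicit and stays close to classical selection theory; the paper's version hides that complexity inside the forcing machinery and avoids citing an external uniformization theorem. Both depend essentially on the same two ingredients, Claim~\ref{oclaim} and Corollary~\ref{meagercorollary}, just deployed in the opposite order.
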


\begin{proof}
Without loss of generality assume that the underlying Polish space $X$ is the Baire space $\baire$. Use Corollary~\ref{meagercorollary} to find a Polish topology $\tau$ on $X$ such that $B$ is $\tau$-comeager
and every set in $I$ is $\tau$-meager. Let $Q$ be the poset of nonempty $\tau$-open sets ordered by inclusion, with $\dotxgen$ its name for a generic element of $X$.
The statement $p\Vdash\dot y\in\dot B_Y$ persists into the $Q$-extension by Claim~\ref{oclaim} applied in the $Q$-extension.
Thus, $Q$ forces that there is $n\in\gw$ and a condition in $P$ which is stronger than $p$ and forces that $\dot y\in (C_n)_{\dotxgen}$. Thinning out the set $B_X$ if necessary, I may assume that it decides the value of $n$.
Let $\gs$ be a $Q$-name such that $B_X\Vdash\gs\in P_n$, $\gs\Vdash\dot y\in (C_n)_{\dotxgen}$. Let $M$ be a countable elementary submodel of a large structure containing all objects named so far, and let
$B=\{x\in B_X\colon x$ is $Q$-generic over the model $M\}$. The set $B\subset B_X$ is Borel by its definition, and it is $I$-positive since it is nonmeager in the topology $\tau$. Let $f$ be the map with domain $B$ defined by $f(x)=\gs/x$.
The map $f$ is Borel, and for each $x\in B$, $M[x]\models f(x)\in P$ and $f(x)\Vdash_P\dot y\in (C_n)_x$ holds by the forcing theorem applied in the model $M$. By Claim~\ref{oclaim} again, for every
point $x\in X$ the statement $f(x)\in P$ and $f(x)\Vdash_P\dot y\in (C_n)_x$ persists to $V$. This completes the proof.
\end{proof}

\bibliographystyle{plain} 
\bibliography{odkazy,zapletal,shelah}

\end{document}